\def\sideremark#1{\ifvmode\leavevmode\fi\vadjust{\vbox to0pt{\vss
 \hbox to 0pt{\hskip\hsize\hskip1em
 \vbox{\hsize2.1cm\tiny\raggedright\pretolerance10000
  \noindent #1\hfill}\hss}\vbox to15pt{\vfil}\vss}}}%
\numberwithin{equation}{section}
\newtheorem{thm}{Theorem}[section]
\newtheorem{lemma}[thm]{Lemma}
\newtheorem{prop}[thm]{Proposition}
\newtheorem{cor}[thm]{Corollary}
\theoremstyle{definition}
\newtheorem{remark}[thm]{Remark}
\newtheorem{defn}{Definition}[section]
\newcommand{\s}{\section}
\newcommand{\R}{\mathbb R}
\newcommand{\bt}{\begin{theorem}}
\newcommand{\et}{\end{theorem}}
\newcommand{\bl}{\begin{lemma}}
\newcommand{\el}{\end{lemma}}
\newcommand{\bd}{\begin{definition}}
\newcommand{\ed}{\end{definition}}
\newcommand{\bc}{\begin{corollary}}
\newcommand{\ec}{\end{corollary}}
\newcommand{\bp}{\begin{proof}}
\newcommand{\ep}{\end{proof}}
\newcommand{\bx}{\begin{example}}
\newcommand{\ex}{\end{example}}
\newcommand{\bi}{\begin{exercise}}
\newcommand{\ei}{\end{exercise}}
\newcommand{\bo}{\begin{proposition}}
\newcommand{\eo}{\end{proposition}}
\newcommand{\br}{\begin{remark}}
\newcommand{\er}{\end{remark}}
\newcommand{\beq}{\begin{equation}}
\newcommand{\eeq}{\end{equation}}
\newcommand{\ba}{\begin{align}}
\newcommand{\ea}{\end{align}}
\newcommand{\bn}{\begin{enumerate}}
\newcommand{\en}{\end{enumerate}}
\newcommand{\bg}{\begin{align*}}
\newcommand{\bcs}{\begin{cases}}
\newcommand{\ecs}{\end{cases}}
\newcommand{\bean}{\begin{eqnarray*}}
\newcommand{\eean}{\end{eqnarray*}}
\def\R{\mathbb{R}}
\def\bd{\mathrm{bd}\,}
\title[Existence and nonexistence]{Existence and nonexistence of least energy positive solutions to critical Schr\"{o}dinger systems with Hardy potential}
\author[S.~You]{Song You}
\author[J.~J.~Zhang]{Jianjun Zhang}
\address[S.~You]{\newline\indent School of Mathematical Sciences
\newline\indent
Chongqing Normal University
\newline\indent
Chongqing, 401331, PR China}
\email{\href{mailto:yousong@cqnu.edu.cn}{yousong@cqnu.edu.cn}}
\address[J.~J.~Zhang]{\newline\indent College of Mathematics and Statistics
\newline\indent
Chongqing Jiaotong University
\newline\indent
Chongqing 400074, PR China}
\email{\href{mailto:zhangjianjun09@tsinghua.org.cn}{zhangjianjun09@tsinghua.org.cn}}
\subjclass[2000]{35B09, 35B33, 35J50, 35J57}
\keywords{Schr\"{o}dinger system; Hardy potential; Critical exponent; Least energy; Ground state; Variational method}
\begin{document}

\begin{abstract}
We are concerned with the following coupled Schr\"{o}dinger system with Hardy potential in the critical case
  \begin{equation*}
  \begin{cases}
  -\Delta u_{i}-\frac{\lambda_{i}}{|x|^2}u_{i}=|u_i|^{2^*-2}u_i+\sum_{j\neq i}^{3}\beta_{ij}|u_{j}|^{\frac{2^*}{2}}|u_i|^{\frac{2^*}{2}-2}u_i, ~x\in \mathbb{R}^N,\\
  u_i\in D^{1,2}(\mathbb{R}^N),\,\, N\geq 3,\,\, i=1,2,3,
  \end{cases}
  \end{equation*}
  where $2^*=\frac{2N}{N-2}$, $\lambda_i\in (0,\Lambda_N), \Lambda_N:= \frac{(N-2)^2}{4}$, $\beta_{ij}=\beta_{ji}$ for $i \neq j$. By virtue of variational methods, we establish  the existence and nonexistence of least energy solutions for the purely cooperative case ($\beta_{ij}> 0$ for any $i\neq j$) and the simultaneous cooperation and competition case ($\beta_{i_{1}j_{1}}>0$ and $\beta_{i_{2}j_{2}}<0$ for some $(i_{1}, j_{1})$ and $(i_{2}, j_{2})$).  Moreover, it is shown that fully nontrivial ground state solutions exist when $\beta_{ij}\ge0$ and $N\ge5$, but {\bf NOT} in the weakly pure cooperative case ($\beta_{ij}>0$ and small, $i\neq j$) when $N=3,4$. We emphasize that this reveals that the existence of ground state solutions differs dramatically between $N=3, 4$ and higher dimensions $N\geq 5$. In particular, the cases of $N=3$ and $N\geq 5$ are more complicated than the case of $N=4$ and the proofs heavily depend on the dimension. Some novel tricks are introduced for $N=3$ and $N\ge5$.

\end{abstract}

\maketitle

\s{Introduction}
\renewcommand{\theequation}{1.\arabic{equation}}

\subsection{Background}
Consider the following Schr\"{o}dinger system
\begin{equation}\label{system1}
\begin{cases}
\imath \partial_t \Psi_i +\Delta \Psi_i-a_i(x)\Psi_i +  \sum_{j=1}^{d}\beta_{ij}|\Psi_j|^{p}|\Psi_i|^{p-2}\Psi_i=0, \\
\Psi_i=\Psi_i(x,t),  ~i=1,\ldots,d,~x\in \mathbb{R}^N, ~t>0,\\
\Psi_i(x,t)\rightarrow 0, \text{ as } |x|\rightarrow +\infty, ~i=1,\ldots,d,
\end{cases}
\end{equation}
where $\imath$ is the imaginary unit, $a_i(x)$ are potential functions, $\beta_{ii}>0, i=1,\ldots,d$, $\beta_{ij}=\beta_{ji}$ for $i \neq j$. System \eqref{system1} comes from many physical models; for example, it can be used to explain the coexistence of several different Bose-Einstein condensates (see \cite{Timmermans1998}). In quantum mechanics, the solutions $\Psi_i(i=1,...3)$ are the corresponding wave amplitudes, $\beta_{ii}$ represent self-interactions within the same component, while $\beta_{ij}$ $(i\neq j)$ describe the strength and type of interactions between different components $\Psi_i$ and $\Psi_j$. Furthermore, $\beta_{ij}>0$ represents a cooperative interaction, while, when $\beta_{ij}<0$, the interaction is competitive.

In order to obtain standing wave solutions of system \eqref{system1}, set $\Psi_i(x,t)=e^{\imath \lambda_{i}t}u_i(x)$. System \eqref{system1} is reduced to the following elliptic system
\begin{equation}\label{system2}
\begin{cases}
-\Delta u_i+V_{i}(x)u_i=\sum\limits_{j=1}^{d} \beta_{ij}|u_j|^{p}|u_i|^{p-2}u_i, ~i=1,\ldots,d, ~x\in \mathbb{R}^N,\\
u_i(x)\rightarrow 0, \text{ as } |x|\rightarrow +\infty, ~i=1,\ldots,d,
\end{cases}
\end{equation}
where $V_i(x)=a_i(x)+\lambda_i$, $d\geq 2$. For the subcritical case $1<p< 2^*/2$, $2^*=\frac{2N}{N-2}$, $N\geq 3$, and constant positive potentials $V_{i}(x)=\lambda_i>0$ for any $i=1,\ldots, d$, the existence of solutions to \eqref{system2} has been studied intensively in the last two decades with $d=2$, see \cite{Ambrosetti2007,Bartsch-Dancer-Wang2010,MaiaMontefuscoPellacci,Mandel2015,Sirakov2007,Terracini-Verzini2009,WT2008} and references therein. In this situation, there is only one interaction constant $\beta=\beta_{12}=\beta_{21}$. For an arbitrary number of equations $d\geq 3$, the problem becomes more complicated due to the possible occurrence of simultaneous cooperation and competition, that is, the existence of at least two pairs, $(i_1,j_1)$ and $(i_2,j_2)$, such that $i_1\neq j_1, i_2\neq j_2$, $\beta_{i_1 j_1}>0$ and $\beta_{i_2 j_2}<0$; the existence of solutions to \eqref{system2} has attracted a lot of attention, see for example \cite{BSWang2016,BLWang2019,Correia-Oliveria-Tavares2016,Lin-Wei2005,Peng-Wang2019,Sato-Wang2015,Soave-Tavares2016,Soave2015} and references therein.

\subsection{Motivation}
For the critical case $2p=2^*$ and $N\geq 3$, if the potentials are constant $V_{i}(x)=\lambda_i$ for any $i=1,\ldots, d$, by letting $\mathbf{u}=(u_1,\ldots,u_d)$ be any solution of \eqref{system2},  ones deduce from Pohozaev identity that $\int_{\mathbb{R}^N}\sum_{i=1}^d \lambda_i u_i^2=0$, which yields that, if $\lambda_1,\ldots, \lambda_d\neq 0$, then $u_1,\ldots, u_d\equiv 0$ for any $i=1,\ldots, d$. Next, we consider the case where $V_{i}(x)$ is a nonconstant function for any $i=1,\ldots, d$; in particular, consider the case when $V_i(x)=-\frac{\lambda_{i}}{|x|^2}$ for any $i=1,\ldots, d$. When $V_i(x)=-\frac{\lambda_{i}}{|x|^2}$, $d=2$, $2p=2^*$ and $N\geq 3$, in \cite{Abdellaoui2009} the authors proved the existence and nonexistence of ground state solution to \eqref{system2} with more general powers by using the concentration-compactness principle. Subsequently, Chen and Zou \cite{Chen-Zou2015} obtained the existence of least energy positive solutions to \eqref{system2} with $V_i(x)=-\frac{\lambda_{i}}{|x|^2}$ and more general powers by complicated analysis.

To the best of our knowledge, less considered for system \eqref{system2} is the critical case ($2p=2^*$) with $d\geq3$ and $V_i(x)=-\frac{\lambda_{i}}{|x|^2}$, $i=1,\ldots, d$. In this aspect, we refer to \cite{Guo-Luo-Zou2021} for ground state solutions of \eqref{system2}. A natural question is
\begin{center}
{\it whether system \eqref{system2} with $V_i(x)=-\frac{\lambda_{i}}{|x|^2}$ admits a least energy positive solution}
\end{center}
 when $d\geq3$. Here we refer to \eqref{leastenergylevel} for the definition of least energy positive solutions and point out that ground state solutions can be semi-trivial. The main purpose of the present paper is to make a contribution in this direction. Precisely, in what follows, we consider the coupled elliptic systems as follows
\begin{equation}\label{mainsystem}
\begin{cases}
-\Delta u_{i}-\frac{\lambda_{i}}{|x|^2}u_{i}=|u_i|^{2^*-2}u_i+\sum_{j\neq i}^{3}\beta_{ij}|u_{j}|^{\frac{2^*}{2}}|u_i|^{\frac{2^*}{2}-2}u_i, ~x\in \mathbb{R}^N, ~i=1,2,3,\\
u_i\in D^{1,2}(\mathbb{R}^N), i=1,2,3,
\end{cases}
\end{equation}
where $2^*=\frac{2N}{N-2}$, $N\geq3$, $\lambda_i\in (0,\Lambda_N), \Lambda_N:= \frac{(N-2)^2}{4}$, $\beta_{ii}=1, i=1,2,3$, $\beta_{ij}=\beta_{ji}$ for $i \neq j$, and $D^{1,2}(\mathbb{R}^N):= \{u\in L^{2^*}(\mathbb{R}^N): |\nabla u|\in L^2(\mathbb{R}^N)\}$ with norm
\begin{equation*}
\|u\|^2:=\int_{\mathbb{R}^N}|\nabla u|^2.
\end{equation*}
By Hardy's inequality,
\begin{equation*}
\Lambda_N \int_{\mathbb{R}^N}\frac{v^{2}}{|x|^2} \leq  \int_{\mathbb{R}^N}|\nabla v|^2, \quad \forall v\in D^{1,2}(\mathbb{R}^N),
\end{equation*}
we have that, for $\lambda_i\in (0,\Lambda_N)$,
\begin{equation*}
\|u\|_i^2:=\int_{\mathbb{R}^N}\left(|\nabla u|^2-\frac{\lambda_{i}}{|x|^2} u^{2}\right)
\end{equation*}
are equivalent norms to $\|\cdot\|$, for $i=1,2,3$. By \cite{Terracini1996}, the following problem
\begin{equation}\label{scalarequation}
\begin{cases}
-\Delta v-\frac{\lambda_{i}}{|x|^2}v=v^{2^*-1}, ~ x\in \R^N,\\
v\in D^{1,2}(\mathbb{R}^N), ~ v>0 \text{ in } \R^N\backslash \{0\}
\end{cases}
\end{equation}
has exactly a one-dimensional $C^2$ positive solutions given by
\begin{equation}\label{tem512-3}
\mathcal{Z}_i=\left\{ z^i_\mu(x)=\mu^{-\frac{N-2}{2}} z^i_1(\frac{x}{\mu}), \mu>0 \right\},
~\text{ where }
z^i_1(x)=\frac{C}{|x|^a\left( 1+ |x|^{2-\frac{4a}{N-2}}\right)^{\frac{N-2}{2}}},
\end{equation}
$a=\frac{N-2}{2}-\sqrt{\frac{(N-2)^2}{4}-\lambda_i}$ and $C>0$ is a constant.
Moreover, all positive solutions of \eqref{scalarequation} satisfy
\begin{equation}\label{tem529-8-4}
S_i:= \inf_{v\in D^{1,2}(\mathbb{R}^N)\backslash \{0\}} \frac{\|v\|_i^2}{|v|^2_{2^*}}=\frac{\|\omega_i\|_i^2}{|\omega_i|^2_{2^*}}=\left(1-\frac{4\lambda_i}{(N-2)^2}
\right)^{\frac{N-1}{N}}\mathcal{S},
\end{equation}
and
\begin{equation}\label{tem529-1}
A_i=I_i(\omega_i):=\frac{1}{2}\|\omega_i\|_i^2-\frac{1}{2^*}|\omega_i|_{2^*}^{2^*}
=\frac{1}{N}S_i^{\frac{N}{2}}<\frac{1}{N}\mathcal{S}^{\frac{N}{2}},
\end{equation}
where $\mathcal{S}$ is the sharp constant of the embedding $D^{1,2}(\mathbb{R}^N)\hookrightarrow L^{2^*}(\mathbb{R}^N)$,
\begin{equation*}
\mathcal{S}\left( \int_{\mathbb{R}^N} |v|^{2^*}\right)^{\frac{2}{2^*}}\leq \int_{\mathbb{R}^N} |\nabla v|^2.
\end{equation*}
Moreover, we have
\begin{equation}\label{tem127-4}
\int_{\mathbb{R}^N}\left(|\nabla u|^{2}-\frac{\lambda_{i}}{|x|^2} u^{2}\right)\geq
(NA_i)^{\frac{2}{N}}\left(\int_{\mathbb{R}^N}|u|^{2^*}\right)^{\frac{2}{2^*}}, ~\forall u\in D^{1,2}(\mathbb{R}^N).
\end{equation}

Let $\mathbf{D}=D^{1,2}(\R^N;\R^3)$. It is well known that any solution of \eqref{mainsystem} corresponds to a critical point of the $C^{1}$ functional $I:\mathbf{D} \rightarrow \mathbb{R}$ given by
\begin{align}\label{1.10}
I(\mathbf{u}) & =\frac{1}{2}\sum_{i=1}^3\int_{\mathbb{R}^N}\left(|\nabla u|^{2}-\frac{\lambda_{i}}{|x|^2} u_i^{2}\right)-\frac{1}{2^*}\sum_{i=1}^3\int_{\mathbb{R}^N}|u_i|^{2^*}
      -\frac{1}{2^*}\sum_{i\neq j}^3\int_{\mathbb{R}^N}\beta_{ij} |u_i|^{\frac{2^*}{2}}|u_j|^{\frac{2^*}{2}}.
\end{align}

\begin{defn}
A solution $\mathbf{u}=(u_1,u_2,u_3)$ of \eqref{mainsystem} is called {\it trivial } if  all its components are zero, i.e., $u_1,u_2,u_3\equiv 0$. A solution is called {\it semi-trivial } if there exist at least one (but not all) vanishing component. A solution $\mathbf{u}$ is called {\it fully nontrivial} if all of its components $u_1,u_2,u_3$ are nontrivial. A solution $\mathbf{u}$ is {\it positive} (resp. {\it nonnegative}), if $u_i>0$ (resp. $u_i\geq 0$) for every $i=1,2,3$.
\end{defn}

In this paper, we focus on the existence of {\it ground state solutions}, achieving the following ground state level
\begin{equation}\label{eqgroundstatelevel}
\inf\{I(\mathbf{u}):\  I'(\mathbf{u})= 0,\ \mathbf{u}\in \mathbf{D}, \mathbf{u}\neq \mathbf{0} \}
\end{equation}
and
the existence of  {\it least energy positive solutions,} which attain the  least energy positive level
\begin{equation}\label{leastenergylevel}
\inf\left\lbrace J(\mathbf{u}): I'(\mathbf{u})= 0,\ \mathbf{u}\in \mathbf{D}, \text{ such that }  u_i >0 \text{ for all } i=1,2,3 \right\rbrace.
\end{equation}
Define
\begin{align*}
\mathcal{N}=
\left \{\mathbf{u}=(u_1,u_2,u_3)\in \mathbf{D}: \mathbf{u}\neq \mathbf{0} \text{ and } I'(\mathbf{u})(\mathbf{u})=0 \right \},
\end{align*}
and
\begin{align*}
\mathcal{M}=
\left \{\mathbf{u}=(u_1,u_2,u_3)\in \mathbf{D}: u_i\neq 0 \text{ and } \partial_iI(\mathbf{u})u_i=0 \text{ for every } i=1,2,3\right \}.
\end{align*}
Let
\begin{equation}\label{1.12}
A:= \inf_{\mathbf{u}\in \mathcal{M}}I(\mathbf{u})=\inf _{(u_1,u_2,u_3)\in \mathcal{M}}\frac{1}{N}\sum_{i=1}^3\|u_i\|_i^2, \quad
~\mathcal{A}:= \inf_{\mathbf{u}\in \mathcal{N}}I(\mathbf{u}),
\end{equation}
it is standard to show that $0<\mathcal{A}\leq A$. Next, we introduce the main results of this paper.

\subsection{Main results}

Our first result is related to the existence of {\it ground state solutions} to \eqref{mainsystem}.
Before proceeding, we introduce the following condition about the coupled coefficient $\beta_{ij}$
\begin{equation}\label{Conditions}
\sum_{i,j=1}^{3}\int_{\Omega}\beta_{ij}|\varphi_i|^{\frac{2^*}{2}}|\varphi_j|^{\frac{2^*}{2}}>0,\ ~~ \forall (\varphi_{1},\varphi_{2}, \varphi_{3})\in \mathbf{D}\setminus \{\mathbf{0}\},
\end{equation}
where we assume that $\beta_{ii}=1, ~i=1,2,3$.
Note that the condition \eqref{Conditions} is true if $\beta_{ij}\geq 0$ for any $i\neq j$. In fact, the condition \eqref{Conditions} holds if the matrix $\mathcal{B}:=(\beta_{ij})_{1\leq i,j\leq d}$ is positively definite.
\begin{thm}\label{thm1}

\begin{itemize}
  \item[(1)] Assume that $N\geq 3$, $\lambda_i\in (0, \Lambda_N)$ and the condition \eqref{Conditions} holds, then $\mathcal{A}$ is attained by a nonnegative solution $\mathbf{u}$. Moreover, $\mathbf{u}$ is a {\it ground state solution} of \eqref{mainsystem}.

  \item[(2)] Assume that $N=4$, $\lambda_i\in (0, \Lambda_4)$ and
  \begin{equation*}
  0<\beta_{ij}< \frac{\sqrt{2}}{2}, \quad \text{ for any } i\neq j,
  \end{equation*}
  then all ground state solutions of system \eqref{mainsystem} are semi-trivial, i.e., ${\mathcal{A}}<A$.

  \item[(3)] Assume that $N=3$, $\lambda_i\in (0, \Lambda_3)$ and
  \begin{equation*}
  0< \beta_{ij} < 1, \quad \text{ for any } i\neq j,
  \end{equation*}
  then all ground state solutions of system \eqref{mainsystem} are semi-trivial, i.e., ${\mathcal{A}}<A$.

  \item[(4)] Assume that $N\geq 5$, $\lambda_i\in (0, \Lambda_N)$ and
  \begin{equation}
  \beta_{ij}\geq 0, ~\text{ for any } i\neq j,
  \end{equation}
  then \eqref{mainsystem} has a positive ground state solution $\mathbf{u}$. Moreover, $\mathbf{u}$ is also a least energy positive solution in this case, and $\mathcal{A}=A$.
  \end{itemize}
\end{thm}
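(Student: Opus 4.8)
The plan is to leverage part~(1), which already produces a \emph{nonnegative} ground state $\mathbf{u}=(u_1,u_2,u_3)$ (with $I(\mathbf{u})=\mathcal{A}$, $I'(\mathbf{u})=\mathbf{0}$, since \eqref{Conditions} holds when $\beta_{ij}\ge0$); the task is then to show that for $N\ge5$ this $\mathbf{u}$ must be fully nontrivial, after which positivity, the identity $\mathcal{A}=A$ and the least--energy--positive property follow quickly. Two elementary facts will be used repeatedly: every positive solution $\mathbf{v}$ of \eqref{mainsystem} satisfies $v_i\neq0$ and $\partial_i I(\mathbf{v})v_i=0$, hence $\mathbf{v}\in\mathcal{M}$ and $I(\mathbf{v})\ge A$; and on a ray, $I(t\mathbf{w})=\tfrac{t^2}{2}\sum_i\|w_i\|_i^2-\tfrac{t^{2^*}}{2^*}P(\mathbf{w})$ with $P(\mathbf{w})=\sum_i|w_i|_{2^*}^{2^*}+\sum_{i\neq j}\beta_{ij}\int_{\mathbb{R}^N}|w_i|^{2^*/2}|w_j|^{2^*/2}$ (every nonlinear term being $2^*$-homogeneous in $t$), so that $\max_{t>0}I(t\mathbf{w})=\tfrac1N\big(\sum_i\|w_i\|_i^2\big)^{N/2}P(\mathbf{w})^{-(N-2)/2}$ whenever $P(\mathbf{w})>0$. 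Also recall $I(\mathbf{w})=\tfrac1N\sum_i\|w_i\|_i^2$ on $\mathcal{N}$.

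The crux is to rule out a semi-trivial ground state. Suppose $u_3\equiv0$; since $\mathcal{A}>0$, $u_1,u_2$ do not both vanish, and the Harnack inequality for $-\Delta-\lambda_i/|x|^2$ on $\mathbb{R}^N\setminus\{0\}$ (valid as $0<\lambda_i<\Lambda_N$ and the couplings are $\ge0$) makes the nonzero components strictly positive off the origin. Now switch on the third component: with a positive test function $\psi$ (e.g.\ $\psi=z^3_1$ from \eqref{tem512-3}) set $\mathbf{v}_\varepsilon=(u_1,u_2,\varepsilon\psi)$. Because $(u_1,u_2,0)$ lies on its own two--component Nehari manifold, $\sum_i\|v_{\varepsilon,i}\|_i^2=Q+\varepsilon^2\|\psi\|_3^2$ and $P(\mathbf{v}_\varepsilon)=Q+\varepsilon^{2^*}|\psi|_{2^*}^{2^*}+2\varepsilon^{2^*/2}R$, where $Q:=\|u_1\|_1^2+\|u_2\|_2^2=N\mathcal{A}$ and $R:=\beta_{13}\int_{\mathbb{R}^N}|u_1|^{2^*/2}\psi^{2^*/2}+\beta_{23}\int_{\mathbb{R}^N}|u_2|^{2^*/2}\psi^{2^*/2}$. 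If $\beta_{13}>0$ or $\beta_{23}>0$ then $R>0$, so the Nehari projection $t_\varepsilon\mathbf{v}_\varepsilon\in\mathcal{N}$ has energy
\[
\max_{t>0}I(t\mathbf{v}_\varepsilon)=\mathcal{A}\,\frac{\big(1+\varepsilon^2\|\psi\|_3^2/Q\big)^{N/2}}{\big(1+2\varepsilon^{2^*/2}R/Q+O(\varepsilon^{2^*})\big)^{(N-2)/2}}=\mathcal{A}\big(1-(N-2)R\varepsilon^{2^*/2}/Q+o(\varepsilon^{2^*/2})\big),
\]
the last step using $2^*/2=\tfrac{N}{N-2}<2$ for $N\ge5$, so that the denominator's $\varepsilon^{2^*/2}$ contribution (which lowers the energy) dominates the numerator's $\varepsilon^2$ contribution (which raises it). Hence $\max_tI(t\mathbf{v}_\varepsilon)<\mathcal{A}$ for small $\varepsilon>0$, contradicting $\mathcal{A}=\inf_{\mathcal{N}}I$. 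Running this perturbation over all choices of which component is absent and which to switch on shows $\mathbf{u}$ is not semi-trivial, at least when the coupling graph on $\{1,2,3\}$ (edges $\{ij:\beta_{ij}>0\}$) is connected --- in particular whenever $\beta_{ij}>0$ for all $i\neq j$; if some $\beta_{ij}$ vanish, one applies the same analysis inside each coupled block. For $N=3$ ($2^*/2=3$) and $N=4$ ($2^*/2=2$) the perturbation is $o(\varepsilon^2)$, or of the exact order $\varepsilon^2$ of the cost, so for small $\beta_{ij}$ it fails to lower the energy --- this is precisely the mechanism behind parts~(2)--(3).

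Once $\mathbf{u}$ is fully nontrivial and nonnegative, the same Harnack argument gives $u_i>0$ on $\mathbb{R}^N\setminus\{0\}$, so $\mathbf{u}$ is a positive solution; then $\mathbf{u}\in\mathcal{M}$ forces $\mathcal{A}=I(\mathbf{u})\ge A$, which with the known $\mathcal{A}\le A$ yields $\mathcal{A}=A$ and shows $\mathbf{u}$ attains $A$. Since $I(\mathbf{v})\ge A=I(\mathbf{u})$ for every positive solution $\mathbf{v}$, $\mathbf{u}$ realizes the least energy positive level \eqref{leastenergylevel}, and being a critical point with $I(\mathbf{u})=\mathcal{A}$ it is simultaneously a ground state. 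The genuinely hard point is the exclusion step --- exhibiting a competitor in $\mathcal{N}$ strictly below the semi-trivial level --- and the computation makes clear that the restriction $N\ge5$ there is essential rather than technical: it is exactly the range in which switching on a cooperating component is energetically favourable at every scale, so that no smallness of the $\beta_{ij}$ can restore a semi-trivial ground state. (An alternative route bypassing part~(1): minimize $I$ directly on $\mathcal{M}$; a minimizing sequence is automatically bounded, and one closes with a profile--decomposition/splitting lemma for Palais--Smale sequences of $I$ together with the strict estimate $A<(\text{each reduced level})$, i.e.\ below the energy of every configuration with a component or a bubble split off, obtained from the same ``full block plus an $\varepsilon$-small extra component'' test configuration, with the dimension entering identically.)
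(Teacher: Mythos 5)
Your proposal only engages substantively with item (4) of the theorem, and even there it presupposes item (1). The genuine gap is items (2) and (3): your closing remark that for $N=3,4$ the perturbation gain is of order $\varepsilon^{2^*/2}\gtrsim\varepsilon^2$ and so ``fails to lower the energy'' for small $\beta_{ij}$ is not a proof of semi-triviality. The fact that one particular family of competitors does not beat a semi-trivial configuration says nothing about whether a fully nontrivial critical point could still attain $\mathcal{A}$, which is what (2)--(3) assert must not happen. The paper's argument for (2)--(3) is of a different nature: assuming a fully nontrivial ground state $\mathbf{u}$, it uses $I(\mathbf{u})\le I(tu_1,0,0)$ with $(tu_1,0,0)\in\mathcal{N}$ and turns the resulting inequality, via Cauchy--Schwarz-type estimates on the coupling terms (and a three-term elementary inequality in the $N=3$ case), into the explicit bounds $\max_{i\neq j}\beta_{ij}\ge\frac{\sqrt2}{2}$ (for $N=4$) and $\ge 1$ (for $N=3$), which contradict the hypotheses; nothing playing this role appears in your proposal. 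Item (1) is likewise not argued at all (the paper at least reduces it to \eqref{tem105-1} and the scheme of \cite[Lemma 3.3]{Guo-Luo-Zou2021}).

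For item (4) your perturbation computation (Nehari projection of $(u_1,u_2,\varepsilon\psi)$, gain of order $\varepsilon^{2^*/2}$ versus cost $\varepsilon^2$, with $2^*/2<2$ exactly when $N\ge5$) is essentially correct and is, in substance, the sublinear-coupling mechanism behind \cite[Theorem 1.2]{TO2016}, which the paper invokes without reproducing; so on this item you have in effect reconstructed the omitted cited argument, and the subsequent deductions (positivity, $\mathbf{u}\in\mathcal{M}$, hence $\mathcal{A}\ge A$ and so $\mathcal{A}=A$, least energy positivity) are fine. One caveat: the contradiction needs a strictly positive coupling from the absent component to a nonvanishing one, and your fallback ``apply the same analysis inside each coupled block'' cannot rescue the conclusion $\mathcal{A}=A$ when some $\beta_{ij}=0$ (e.g.\ if $\beta_{13}=\beta_{23}=0$ then $\mathcal{A}\le A_3<A$), so strict positivity of the relevant couplings is genuinely used at that point and the block remark should be dropped or the hypothesis acknowledged.
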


\begin{remark}
\begin{itemize}
  \item[(1)]  In \cite{Guo-Luo-Zou2021}, a similar result to Theorem \ref{thm1}-(1) was obtained only for the purely cooperative case ($\beta_{ij}\geq 0$, for any $i\neq j$). Here, Theorem \ref{thm1}-(1) holds under the condition \eqref{Conditions}, which includes not only the purely cooperative case but also the case of simultaneous cooperation and competition.

  \item[(2)] When $N=3,4$, Theorem \ref{thm1}-(2) and (3) show that all ground state solutions of system \eqref{mainsystem} are semi-trivial for $\beta_{ij}\geq 0$ small. However, when $N\geq 5$, Theorem \ref{thm1}-(4) states that system \eqref{mainsystem} has a positive ground state solution for any $\beta_{ij}\geq 0$ ($i\neq j$). Therefore, the structure of ground state solution to system \eqref{mainsystem} differs significantly between $N=3, 4$ and higher dimensions $N\geq 5$.
  \item[(3)] Although we only consider the case $d=3$, Theorem \ref{thm1} is true for any number of equations $d\geq 2$
\end{itemize}
\end{remark}

The proof of Theorem \ref{thm1}-(2) and (3) is inspired form \cite[Theorem 1.7]{Correia-Oliveria-Tavares2016}; however, the assumption $\beta_{ij}=b$ required therein is not needed in our proof. We highlight that there are technical differences between $N=3$ and $N=4$, please see the proof of Theorem \ref{thm1}-(2) and (3) for more details.

\bigbreak
The above theorem reveals that all ground state solutions of system \eqref{mainsystem} are semi-trivial for weakly cooperative case ($\beta_{ij}>0$ small, for any $i\neq j$) when $N=3,4$. A natural question is whether system \eqref{mainsystem} admits higher energy positive solutions for the weakly cooperative case when $N=3,4$ and whether the level $A$ is attained, where the level $A$ is defined in \eqref{1.12}. Here, we have the following results.

\begin{thm}\label{thm2}
\begin{itemize}
  \item[(1)] Assume that $N=4$, $\lambda_i\in (0, \Lambda_4)$ and there exists $\widetilde{\beta}>0$ such that
  \begin{equation}
  0\leq \beta_{ij}< \widetilde{\beta}, ~\text{ for any } i\neq j,
  \end{equation}
  then $A=I(\mathbf{u})$ and system \eqref{mainsystem} has a least energy positive solution $\mathbf{u}$.

  \item[(2)] Assume that $N=3$, $\lambda_i\in (0, \Lambda_3)$ and there exists $\widehat{\beta}>0$ such that
  \begin{equation}
  0\leq \beta_{ij}< \widehat{\beta}, ~\text{ for any } i\neq j,
  \end{equation}
  then $A=I(\mathbf{u})$ and system \eqref{mainsystem} has a least energy positive solution.
\end{itemize}
\end{thm}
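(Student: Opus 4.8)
The plan is to realize $\mathbf{u}$ as a minimizer of $I$ over the set $\mathcal{M}$ of \eqref{1.12}, so that $I(\mathbf{u})=A$, and then to upgrade this minimizer to a positive solution. Since the relevant non-compactness (``dichotomy'') levels are built from least positive levels of two-component subsystems, it is natural to argue by induction on the number of equations, the two-component analogue being available in \cite{Chen-Zou2015}; I describe the case $d=3$ of the excerpt, writing $A$ for the level \eqref{1.12} and $A^{(ij)}$ for the least positive level of the $\{i,j\}$-subsystem, and assuming (as we may, reducing otherwise to lower-dimensional systems) that the matrix $(\beta_{ij})$ is not block-decomposable. Put $G_i(\mathbf{u})=\partial_iI(\mathbf{u})u_i=\|u_i\|_i^2-|u_i|_{2^*}^{2^*}-\sum_{j\neq i}\beta_{ij}\int_{\R^N}|u_i|^{2^*/2}|u_j|^{2^*/2}$. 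First I would check that $\mathcal{M}$ is a natural $C^1$ constraint: on $\mathcal{M}$ the matrix $\bigl(\langle G_i'(\mathbf{u}),(0,\dots,u_\ell,\dots,0)\rangle\bigr)_{i,\ell}$ has diagonal entries $(2-2^*)|u_i|_{2^*}^{2^*}+\tfrac{N-4}{N-2}\sum_{j\neq i}\beta_{ij}\int|u_i|^{2^*/2}|u_j|^{2^*/2}$ and off-diagonal entries $O(\beta_{ij})$, and since $2-2^*=-\tfrac4{N-2}\neq0$ it is invertible for $\beta_{ij}$ small, while $I|_{\mathcal{M}}=\tfrac1N\sum_i\|u_i\|_i^2$ by \eqref{1.12}. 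The decisive structural fact, valid precisely because $\tfrac{2^*}{2}\geq 2$ when $N\leq 4$, is that a short Hardy--Sobolev argument applied to the identity $\|u_i\|_i^2=|u_i|_{2^*}^{2^*}+\sum_{j\neq i}\beta_{ij}\int|u_i|^{2^*/2}|u_j|^{2^*/2}$ gives, for $\beta_{ij}<\widetilde\beta$ (resp.\ $<\widehat\beta$) small, a uniform bound $\|u_i\|_i\geq c_0>0$ on each component over all $\mathbf{u}\in\mathcal{M}$ of bounded energy --- this is exactly where $N=3,4$ enters and is what fails for $N\geq 5$. Consequently $A>0$, every minimizing sequence is bounded with all components bounded away from $0$, and Ekeland's principle on $\mathcal{M}$ (using invertibility of the matrix above) produces a minimizing sequence $\mathbf{u}^n\in\mathcal{M}$ with $I(\mathbf{u}^n)\to A$ and $I'(\mathbf{u}^n)\to0$.

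The functional $I$ fails $(PS)$ globally: a non-compact minimizing sequence loses one or more components through bubbles, each of which is either a scalar Hardy ground state $z^k_\mu$ concentrating at the origin, of energy $A_k$ by \eqref{tem529-1}, or a standard Aubin--Talenti bubble escaping to a point $\neq 0$ or to infinity, of energy $\tfrac1N\mathcal{S}^{N/2}>A_k$, while the surviving components converge to a critical point of a subsystem, which for $N=3,4$ and small coupling is semi-trivial (by Theorem \ref{thm1}-(2)--(3)) or fully nontrivial with energy $\geq A^{(ij)}$. Tracking the energy through a profile decomposition, every non-compact scenario sits at level at least $\min\bigl\{\sum_iA_i,\ \min_k(A^{(ij)}+A_k)\bigr\}$, where $\{i,j,k\}=\{1,2,3\}$. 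Since $A^{(ij)}<A_i+A_j$ whenever some $\beta_{ij}>0$ --- solve $G_i=0$ near $(t_i,t_j)=(1,1)$ by the implicit function theorem, again using $2-2^*\neq0$; the resulting scalars are $<1$, forcing the energy $<A_i+A_j$ --- the same construction with three bubbles gives $A<\sum_iA_i$, so the only thresholds that can obstruct compactness are the numbers $A^{(ij)}+A_k$.

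Proving $A<A^{(ij)}+A_k$ is the heart of the matter. By the inductive hypothesis $A^{(ij)}$ is attained at a positive pair $\mathbf{v}^{(k)}=(v_i^{(k)},v_j^{(k)})$; testing $I$ on $(\tau_i v_i^{(k)},\tau_j v_j^{(k)},\tau_k z^k_\mu)$ after projecting onto $\mathcal{M}$ (the projection exists near $\tau=(1,1,1)$ by the implicit function theorem) and expanding, the new cross-couplings $\beta_{ik},\beta_{jk}$ --- at least one of which is positive, by the irreducibility assumption --- contribute a strictly negative term, which will yield $A<A^{(ij)}+A_k$ provided one shows the interaction integrals $\int|v_i^{(k)}|^{2^*/2}(z^k_\mu)^{2^*/2}$ and the analogous cross terms are negligible as $\mu\to0$ or $\mu\to\infty$. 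This is the delicate point: the Hardy bubbles decay like $|x|^{-a}$ near the origin with $a=\tfrac{N-2}{2}-\sqrt{\Lambda_N-\lambda_k}$ (see \eqref{tem512-3}), the surviving profile has a comparable singularity, and in low dimensions the resulting subcritical error terms are of borderline order, so one must choose the concentration scales $\mu$ carefully, and where needed truncate or translate some bubbles, handling $N=3$ and $N=4$ separately. I expect this estimate --- the ``novel trick'' alluded to in the introduction --- to be the principal obstacle.

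Once $A<\sum_iA_i$ and $A<\min_k(A^{(ij)}+A_k)$, the profile decomposition of $\mathbf{u}^n$ cannot shed any component, so $\mathbf{u}^n\to\mathbf{u}$ strongly in $\mathbf{D}$; then $\mathbf{u}\in\mathcal{M}$ (the constraints are preserved in the strong limit and $\|u_i\|_i\geq c_0$ keeps every component nontrivial), $I(\mathbf{u})=A$, and $I'(\mathbf{u})=0$ since the Lagrange-multiplier system is non-degenerate, so $\mathbf{u}$ is a fully nontrivial solution of \eqref{mainsystem}. Replacing each component by its modulus leaves the nonnegative coupling terms and $|u_i|_{2^*}$ unchanged and does not increase $\|u_i\|_i$, so after readjusting the scalars to return to $\mathcal{M}$ the energy is still $A$; the strong maximum principle for $-\Delta-\lambda_i/|x|^2$ on $\R^N\setminus\{0\}$ (as in \cite{Terracini1996,Chen-Zou2015}) then gives $u_i>0$. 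Hence $A=I(\mathbf{u})$ is attained by a least energy positive solution, proving Theorem \ref{thm2}; the cases $N=4$ and $N=3$ follow the same outline and differ only in the smallness thresholds $\widetilde\beta,\widehat\beta$ and in the bubble-interaction estimates of the previous paragraph.
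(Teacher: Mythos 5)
Your overall skeleton (minimize $I$ on $\mathcal{M}$ from \eqref{1.12}, uniform lower bounds on the components for small couplings when $N=3,4$, a PS minimizing sequence, exclusion of all non-compact scenarios by strict energy inequalities, then positivity by modulus plus maximum principle) is the same as the paper's, and several ingredients (the natural-constraint matrix, the lower bound $\|u_i\|_i\geq c_0$ and why it fails for $N\geq5$, $A_{ij}<A_i+A_j$ for small $\beta_{ij}$) are correct. However, there is a genuine gap at the step you yourself single out as ``the heart of the matter'': you do not prove $A<A_{ij}+A_k$, and the route you sketch misidentifies where the difficulty lies. You propose testing with $(\tau_i v_i,\tau_j v_j,\tau_k z^k_\mu)$ and then showing the interaction integrals $\int|v_i|^{2^*/2}(z^k_\mu)^{2^*/2}$ are negligible as $\mu\to0$ or $\mu\to\infty$, calling this borderline estimate the principal obstacle. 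In Theorem \ref{thm2} all couplings are nonnegative, so these cross terms enter $I$ with a minus sign and can only \emph{lower} the energy; no limit in $\mu$ and no interaction asymptotics are needed -- a fixed scalar bubble $\omega_k$ suffices, and strictness comes precisely from the cross terms being strictly positive. The real work, which your sketch does not address, is (i) to show that the maximum of $t\mapsto I$ along the projected family is attained at an \emph{interior} point of $\overline{\mathbb{R}^3_+}$ (so that the projection onto $\mathcal{M}$ exists with all coefficients positive), which is where the smallness of $\beta_{ij}$ enters quantitatively (Lemmas \ref{1021-7} and \ref{Energyestimates4}); and (ii) for $N=3$, to handle the fact that the Nehari system for the coefficients is nonlinear, so neither Cramer's rule nor a simple IFT identification of the two-component maximum at $(1,1)$ is available -- the paper resolves this by the substitution $t_i\mapsto t_i^{3}$ which makes the reduced functional strictly concave and its critical point unique (Lemma \ref{Energyestimates6}), an argument absent from your proposal. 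The deferred estimate you flag is thus both unnecessary and not a substitute for the missing steps.

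A second, related gap is in the compactness analysis. You assert, via an unspecified profile decomposition, that every non-compact scenario occurs at level at least $\min\{\sum_iA_i,\ \min_k(A_{ij}+A_k)\}$. This threshold list is not justified as stated: profiles may be joint bubbles of several components (solutions of two- or three-component limiting systems, with or without the Hardy potential depending on where the concentration point sits), a single component may split its mass between a surviving profile and a bubble, and semi-trivial limits must be compared with the correct sublevels; organizing this bookkeeping is exactly what the paper's dichotomy lemma (harmonic replacement on annuli, Lemma \ref{estimation3}, following Chen--Zou and Terracini) together with the case-by-case analysis of the weak limit accomplishes, and every one of those cases is closed using the strict inequalities of Lemmas \ref{Energyestimates4} and \ref{Energyestimates6} that your proposal leaves unproved. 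So while the strategy is aligned with the paper's, the two load-bearing steps -- the strict energy comparisons and the exclusion of dichotomy/vanishing for the minimizing sequence -- are only asserted, and the one technical difficulty you do identify is not the one that actually arises.
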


\begin{remark}
The constant $\widetilde{\beta}$ only depends on $\mathcal{S}, \lambda_{i}, i=1,2, 3$, and the explicit expression for $\widetilde{\beta}$ can be found in \eqref{tem529-12}; $\widehat{\beta}$ only depends on $\mathcal{S}, \lambda_{i}, i=1,2, 3$, and the explicit expression can be found in \eqref{tem529-4}.
\end{remark}

\begin{remark}
For the two components case, Chen and Zou \cite{Chen-Zou2015} studied the following general system
\begin{equation}\label{twoequations}
\begin{cases}
-\Delta u -\frac{\lambda_{1}}{|x|^2}u=|u|^{2^*-2}u+
\nu \alpha u^{\alpha-1}v^{\beta}, x \in \mathbb{R}^N,\\
-\Delta v -\frac{\lambda_{2}}{|x|^2}v=|v|^{2^*-2}u+\nu \beta u^{\alpha}v^{\beta-1}, ~x\in \mathbb{R}^N,\\
u, v \in D^{1,2}(\mathbb{R}^N), u, v >0 \text{ in } \mathbb{R}^N \backslash \{0\},
\end{cases}
\end{equation}
where $N\geq 3$, $\alpha+\beta=2^*$, $2^*=2N/(N-2)$. In that paper, when $N=3$ the authors proved that there exists a positive constant $\beta_1$ such that \eqref{twoequations} has a least energy positive solution if $\beta\in (0,\beta_1)$. The accurate definition of $\beta_1$ is not given in \cite{Chen-Zou2015}. Here, we show that there exists $\widehat{\beta}>0$ such that \eqref{twoequations} with $\alpha=\beta=3$ has a least energy positive solution if $\nu\in (0,\widehat{\beta})$ (see Corollary \ref{1107-1}) and give a precise definition of $\widehat{\beta}$ by a different method.
\end{remark}

The proof of Theorem \ref{thm2} (1) is inspired by the one of \cite[Theorem 1.2]{Chen-Zou2015}. However, the method in \cite{Chen-Zou2015} cannot be used here directly due to the presence of multi-components, and we need some crucial modifications for our proof. Firstly, we need establish new estimates (see Lemma \ref{1021-7} and Lemma \ref{Energyestimates4}). Secondly, unlike \cite{Chen-Zou2015}, our method does
not require a comparison between the level $A$ and the ground state level of the limiting system ($\lambda_i=0$). Thirdly, we mention that dealing with many components does not allow to perform some explicit computations as in the two equation case. For instance, when projecting in
the Nehari manifold, one can not, in general, obtain the explicit expression of the coefficients; so, we depend on the notion of strictly diagonally dominant matrices as
in \cite{Soave-Tavares2016,TavaresYou2019,Hugo-You-Zou2022} to show positive definiteness
of the coupled matrices, see Lemma \ref{estimation3}.

We employ the idea in proving Theorem \ref{thm2}-(1) to finish the proof of Theorem \ref{thm2}-(2). However, there are technical differences between $N=3$ and $N=4$. For example, when projecting on the Nehari manifold, the equations the coefficients comply to are a linear system for $N=4$ (see \eqref{tem1125-6} and \eqref{tem0511-5}) and the Cramer's rule can be used. While the corresponding equations is a nonlinear system for $N=3$ (see \eqref{2325-9}) and the Cramer's rule is not available. Thus, we introduced a new technique to deal with the case $N=3$ (see the proof of Lemma \ref{Energyestimates6} and Lemma \ref{estimation4-4}).

\medbreak
Theorem \ref{thm2} shows that system \eqref{mainsystem} has a least energy positive solution for weakly cooperative case ($\beta_{ij}>0$ small, for any $i\neq j$) when $N=3,4$. While \cite[Theorem 1.1]{Guo-Luo-Zou2021} states that system \eqref{mainsystem} with $N\geq 3$ has no least energy positive solutions for the purely competitive case ($\beta_{ij}<0$ for any $i\neq j$). A nature question is whether system \eqref{mainsystem} has a least energy positive solution for the mixed case. For the case $\beta_{12}>0$, $\beta_{13}, \beta_{23}<0$, we have the following result.

\begin{thm}\label{thm3}
\begin{itemize}
  \item[(1)] Assume that $N=4$, $\lambda_i\in (0, \Lambda_4)$, $\lambda_1=\lambda_2$ and
  $$
  0<\beta_{12}< \widetilde{\beta}, \quad \beta_{13}<0, \quad \beta_{23}<0,
  $$
  then $A$ is not attained, which implies that \eqref{mainsystem} has no least energy positive solutions.

  \item[(2)] Assume that $N=3$, $\lambda_i\in (0, \Lambda_3)$, $\lambda_1=\lambda_2$ and
  $$
  0<\beta_{12}< \widehat{\beta}, \quad \beta_{13}<0, \quad \beta_{23}<0,
  $$
  then $A$ is not achieved, which means that \eqref{mainsystem} has no least energy positive solutions.
\end{itemize}
\end{thm}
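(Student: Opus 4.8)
The plan is to show that $A=\mathcal A_{12}+A_3$ and that this value is never attained on $\mathcal M$. Here $A_3=\tfrac1N S_3^{N/2}$ is the scalar least–energy level of the third equation (cf.\ \eqref{tem529-1}), and $\mathcal A_{12}:=\inf_{\mathcal M_{12}}I_{12}$ is the least–energy positive level of the cooperative two–component subsystem
\[
\begin{cases}
-\Delta u_1-\frac{\lambda_1}{|x|^2}u_1=|u_1|^{2^*-2}u_1+\beta_{12}|u_2|^{\frac{2^*}{2}}|u_1|^{\frac{2^*}{2}-2}u_1,\\
-\Delta u_2-\frac{\lambda_2}{|x|^2}u_2=|u_2|^{2^*-2}u_2+\beta_{12}|u_1|^{\frac{2^*}{2}}|u_2|^{\frac{2^*}{2}-2}u_2,
\end{cases}
\]
with $\mathcal M_{12}=\{(w_1,w_2):w_i\neq0,\ \partial_iI_{12}(\mathbf w)w_i=0\}$. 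Since $\lambda_1=\lambda_2=:\lambda$ we have $S_1=S_2=S_\lambda$ and $\omega_1=\omega_2=\omega_\lambda$, so a direct substitution shows the synchronized pair $(\sqrt k\,\omega_\lambda,\sqrt k\,\omega_\lambda)$, $k=(1+\beta_{12})^{-(N-2)/2}$, solves the subsystem and lies in $\mathcal M_{12}$ with energy $\tfrac2N k\,S_\lambda^{N/2}$; the hypothesis $\lambda_1=\lambda_2$ is used exactly here, to produce this explicit solution and to symmetrize the estimate below, and one notes $\mathcal A_{12}+A_3<A_1+A_2+A_3$ because $\beta_{12}>0$.

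For the lower bound, take any $\mathbf u\in\mathcal M$, so $I(\mathbf u)=\tfrac1N\sum_i\|u_i\|_i^2$. The Nehari identity for the $u_3$–equation together with $\beta_{13},\beta_{23}<0$ gives $\|u_3\|_3^2\le|u_3|_{2^*}^{2^*}$, and then \eqref{tem127-4} forces $|u_3|_{2^*}^2\ge S_3^{(N-2)/2}$, hence $\tfrac1N\|u_3\|_3^2\ge A_3$. Likewise $h_i:=\|u_i\|_i^2-|u_i|_{2^*}^{2^*}-\beta_{12}\!\int|u_1|^{\frac{2^*}{2}}|u_2|^{\frac{2^*}{2}}=\beta_{i3}\!\int|u_i|^{\frac{2^*}{2}}|u_3|^{\frac{2^*}{2}}\le0$ for $i=1,2$; inserting $|u_i|_{2^*}^{2^*}\le S_\lambda^{-2^*/2}\|u_i\|_i^{2^*}$ and $\int|u_1|^{\frac{2^*}{2}}|u_2|^{\frac{2^*}{2}}\le S_\lambda^{-2^*/2}\|u_1\|_1^{2^*/2}\|u_2\|_2^{2^*/2}$ turns each $h_i\le0$ into an inequality in $(\|u_1\|_1^2,\|u_2\|_2^2)$, and minimizing $\tfrac1N(\|u_1\|_1^2+\|u_2\|_2^2)$ subject to both yields $\tfrac1N(\|u_1\|_1^2+\|u_2\|_2^2)\ge\mathcal A_{12}$ (the same computation on $\mathcal M_{12}$ identifies $\mathcal A_{12}=\tfrac2N kS_\lambda^{N/2}$). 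Thus $I(\mathbf u)\ge\mathcal A_{12}+A_3$, and this is in fact strict for every $\mathbf u\in\mathcal M$: if $\tfrac1N\|u_3\|_3^2=A_3$ then $u_3$ is a signed scalar extremal $z^3_\mu$, hence nowhere zero, so the cross integrals are $>0$, $h_1,h_2<0$, and the sharp form of the two–component estimate gives $\tfrac1N(\|u_1\|_1^2+\|u_2\|_2^2)>\mathcal A_{12}$; otherwise $\tfrac1N\|u_3\|_3^2>A_3$ already. Hence $I(\mathbf u)>\mathcal A_{12}+A_3$ for all $\mathbf u\in\mathcal M$, so $A\ge\mathcal A_{12}+A_3$.

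For the matching upper bound I would set $\omega_\lambda^\sigma(x)=\sigma^{-\frac{N-2}{2}}\omega_\lambda(x/\sigma)$ (so $(\sqrt k\,\omega_\lambda^\sigma,\sqrt k\,\omega_\lambda^\sigma)\in\mathcal M_{12}$ with energy $\mathcal A_{12}$ for every $\sigma$) and $\omega_3:=z^3_1$; from the explicit profiles \eqref{tem512-3} one checks $\int|\omega_\lambda^\sigma|^{\frac{2^*}{2}}|\omega_3|^{\frac{2^*}{2}}\to0$ as $\sigma\to0$. Then for all small $\sigma$ there exist $t^\sigma_i\to1$ with $\mathbf w^\sigma:=(t^\sigma_1\sqrt k\,\omega_\lambda^\sigma,\,t^\sigma_2\sqrt k\,\omega_\lambda^\sigma,\,t^\sigma_3\omega_3)\in\mathcal M$ — the fibering map has, up to $o(1)$, a block–diagonal Jacobian at $(1,1,1)$ whose $2\times2$ block is invertible once $\beta_{12}<\widetilde\beta$ (resp.\ $\widehat\beta$), which is where that smallness is needed — and $I(\mathbf w^\sigma)=\tfrac1N\sum_i(t^\sigma_i)^2\|\cdot\|_i^2\to\mathcal A_{12}+A_3$, so $A\le\mathcal A_{12}+A_3$. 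Combining, $A=\mathcal A_{12}+A_3$, while $I(\mathbf u)>\mathcal A_{12}+A_3=A$ for every $\mathbf u\in\mathcal M$; thus $A$ is not attained, which (as in the statement) means \eqref{mainsystem} has no least energy positive solution. I expect the main obstacle to be the two–component bound $\tfrac1N(\|u_1\|_1^2+\|u_2\|_2^2)\ge\mathcal A_{12}$: for $N=4$ the two constraints on $(\|u_1\|_1^2,\|u_2\|_2^2)$ are linear and one merely adds them, whereas for $N=3$ they are genuinely nonlinear (powers $2$ and $3/2$), Cramer's rule fails, and one must run a constrained minimization showing the minimum sits at the symmetric point — the ``novel trick for $N=3$'' (cf.\ Lemmas \ref{Energyestimates6} and \ref{estimation4-4}); a secondary point is the uniform solvability of the fibering problem onto $\mathcal M$ in the upper bound, handled via the strictly–diagonally–dominant/positive–definiteness arguments (Lemma \ref{estimation3}).
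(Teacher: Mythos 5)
Your overall strategy is the same as the paper's: sandwich $A$ between a lower and an upper bound to get $A=A_{12}+A_3$ (in your notation $\mathcal A_{12}+A_3$), with the upper bound obtained by scaling the third component (equivalently the $(1,2)$-pair) so that the cross integrals vanish in the limit, and non-attainment coming from the strict negativity of the $\beta_{13},\beta_{23}$ terms. For $N=4$ your treatment of the key two-component lower bound is fine and in fact more self-contained than the paper's: the relaxed constraints $\|u_i\|_i^2\le |u_i|_4^4+\beta_{12}\int|u_1|^2|u_2|^2$ become, after the Hardy--Sobolev substitutions, the \emph{linear} inequalities $x_i+\beta_{12}x_j\ge S_\lambda^2$, and adding them gives $x_1+x_2\ge 2S_\lambda^2/(1+\beta_{12})$, which is exactly the synchronized value; the paper instead proves this step (Lemma \ref{tem1010-1}) by invoking the Chen--Zou classification of least energy solutions as synchronized pairs, which is precisely where $\lambda_1=\lambda_2$ enters.

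For $N=3$, however, there is a genuine gap, and it sits exactly at the point you yourself flag as ``the main obstacle''. Your lower bound needs: for $x_i=\|u_i\|_i^2$ satisfying the two nonlinear constraints $x_i^2+\beta_{12}x_i^{1/2}x_j^{3/2}\ge S_\lambda^3$ ($i\ne j$), one has $x_1+x_2\ge 2S_\lambda^{3/2}(1+\beta_{12})^{-1/2}$, i.e.\ the constrained minimum of $x_1+x_2$ is attained at the symmetric point, and moreover that this symmetric value coincides with $A_{12}=\inf_{\mathcal M_{12}}I_{12}$. Neither claim is proved in your proposal; you only assert that ``one must run a constrained minimization showing the minimum sits at the symmetric point'' and point to Lemmas \ref{Energyestimates6} and \ref{estimation4-4}, but those lemmas concern uniqueness of critical points of the fibering maps and the dichotomy for minimizing sequences, not this minimization, so they do not close the step. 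The paper resolves exactly this point differently: Lemma \ref{tem1206-1} shows $\widehat A_{12}=A_{12}$ (the relaxed, inequality-constrained level equals the Nehari level) by appealing to the Guo--Zou result that, for $\lambda_1=\lambda_2$ and $\beta_{12}$ small, the minimizers are synchronized pairs $(\sqrt{k_0}\,\omega,\sqrt{l_0}\,\omega)$; without that (or a completed version of your constrained minimization, including the case where only one constraint is active) the lower bound $A\ge A_{12}+A_3$ for $N=3$ is not established. A secondary, repairable issue: your $N=3$ upper bound is argued via an implicit-function/``block-diagonal Jacobian'' step giving $t_i^\sigma\to1$, which needs uniformity as $\sigma\to0$ and is delicate because the Nehari system is nonlinear and $\beta_{13},\beta_{23}<0$; the paper avoids this by proving only a uniform bound $t_{i,\mu}\le\widehat t$ through strict diagonal dominance of $M_B[e_1,e_2,e_3]$ (Lemmas \ref{tem1012-12} and \ref{tem1012-20}) and then using $\max_{t_1,t_2}I_{12}(t_1e_1,t_2e_2)=A_{12}$ together with the vanishing cross terms, so no convergence $t_i\to1$ is required.
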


\begin{remark}
We mention that the condition $\lambda_1=\lambda_2$ is only used in Lemma \ref{tem1010-1} and Lemma \ref{tem1206-1}. We conjecture that the condition $\lambda_1=\lambda_2$ in Theorem \ref{thm3} can be removed.
\end{remark}

The proof of Theorem \ref{thm3}-(1) and (2) is inspired by \cite[Theorem 3]{Lin-Wei2005}, where the authors proved the nonexistence of least energy positive solutions to Schr\"{o}dinger systems for the subcritical case in $\mathbb{R}^N$. But in \cite{Lin-Wei2005} the matrix $\mathcal{B}:=(\beta_{ij})_{1\leq i,j\leq d}$ is assumed to be positively definite. Here, the positive definiteness of the matrix $\mathcal{B}$ can be removed. Thus, the method in \cite{Lin-Wei2005} cannot be used here and we need some additional new tricks to deal with this problem. We mainly depend on the notion of strictly diagonally dominant matrices to show positive definiteness of the coupled matrices and control the growth of the energy functional $I$. Compared with the proof of Theorem \ref{thm3}-(1), the proof of Theorem \ref{thm3}-(2) is more complicated and we use different techniques to deal with the case $N=3$ (see Lemma \ref{tem1012-12} and Lemma \ref{tem1012-20}).

\bigbreak
When $N\geq5$, the Nehari Manifold $\mathcal{M}$ is not a closed set and so is not a natural constraint. Then in turn we consider a sub-manifold of $\mathcal{M}$
\begin{align*}
\mathcal{N}_{12,3}:&=\Bigg\{(u_1,u_2)\neq \mathbf{0}, u_3\neq 0: \|u_3\|_3^2=\sum_{i=1}^3\int_{\mathbb{R}^N}\beta_{i3}|u_i|^{\frac{2^*}{2}}|u_3|^{\frac{2^*}{2}}\\
&\,\,\,\,\,\,\, \,\,\,\,\,\,\, \sum_{i=1}^2\|u_i\|_i^2=\sum_{i,j=1}^2\int_{\mathbb{R}^N}\beta_{ij}|u_i|^{\frac{2^*}{2}}|u_j|^{\frac{2^*}{2}}
+\sum_{i=1}^2\int_{\mathbb{R}^N}\beta_{i3}|u_i|^{\frac{2^*}{2}}|u_3|^{\frac{2^*}{2}} \Bigg \}
\end{align*}
and
\begin{equation}\label{tem512-5}
\widetilde{A}:=\inf_{\mathbf{u}\in \mathcal{N}_{12,3}}I(u_1,u_2,u_3).
\end{equation}
Parallel with Theorem \ref{thm3}, for the case $N\geq 5$ we have
\begin{thm}\label{thm4}
Assume that $N\geq 5$ and $\lambda_i\in (0, \Lambda_N)$ and
$$
\beta_{12}>0, \quad \beta_{13}<0, \quad \beta_{23}<0,
$$
then the level $\widetilde{A}$ is not attained and \eqref{mainsystem} has no least energy solutions $\mathbf{u}=(u_1,u_2,u_3)$ with $(u_1,u_2)\neq \mathbf{0}, u_3\neq 0$.
\end{thm}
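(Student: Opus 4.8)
The plan is to argue by contradiction: suppose $\widetilde A$ is attained by some $\mathbf{u}=(u_1,u_2,u_3)\in\mathcal{N}_{12,3}$ with $(u_1,u_2)\ne\mathbf{0}$ and $u_3\ne 0$, and derive a contradiction by constructing a path that lowers the energy below $\widetilde A$. The heuristic is the standard one for mixed cooperative/competitive critical systems (as in Lin--Wei and in Theorem~\ref{thm3} above): because $\beta_{13},\beta_{23}<0$, the third component ``wants'' to separate from the first two, and the only way to remain on the constraint with $u_3\ne 0$ while the competitive coupling pushes its effective level up is to let $u_3$ concentrate (rescale by $\mu\to 0$ or $\mu\to\infty$) away from $(u_1,u_2)$. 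In the limit, the coupling terms $\int \beta_{i3}|u_i|^{2^*/2}|u_3|^{2^*/2}$ vanish, and one expects the infimum over $\mathcal{N}_{12,3}$ to equal the strictly smaller quantity obtained by decoupling the third equation, namely (a $2$-system ground/least-energy level for $(u_1,u_2)$) plus $A_3=\tfrac1N S_3^{N/2}$; strict inequality then says $\widetilde A$ cannot be a minimum.

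The key steps, in order. First, record the variational identities: on $\mathcal{N}_{12,3}$ one has $I(\mathbf u)=\tfrac1N\sum_{i=1}^3\|u_i\|_i^2$, so minimizing $I$ is minimizing $\tfrac1N\sum\|u_i\|_i^2$ subject to the two balance equations defining $\mathcal{N}_{12,3}$. Second, show the level splits: prove $\widetilde A \le c_{12} + A_3$ where $c_{12}$ is the least-energy level of the two-component subsystem (with coupling $\beta_{12}>0$) and $A_3=\frac1N S_3^{N/2}$, by taking near-optimal $(u_1,u_2)$ for $c_{12}$ together with a rescaled Aubin--Talenti--type bubble $z^3_\mu$, and letting $\mu\to 0$: the competitive cross terms $\int\beta_{i3}|u_i|^{2^*/2}|z^3_\mu|^{2^*/2}\to 0$ (they are nonpositive, which is what we need to stay below the sum), and a projection onto $\mathcal{N}_{12,3}$ changes the energy by $o(1)$ since the projection coefficients tend to $1$ — here one invokes the strictly diagonally dominant matrix argument (as in Lemma~\ref{estimation3}) to solve for the projection coefficients and control them. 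Third, and conversely, show $\widetilde A \ge c_{12}+A_3$ is \emph{not} attained: if $\mathbf u$ realized $\widetilde A$, then because $\beta_{13},\beta_{23}<0$ the balance equation for $u_3$ gives $\|u_3\|_3^2 = |u_3|_{2^*}^{2^*} + \sum_{i=1}^2\beta_{i3}\int|u_i|^{2^*/2}|u_3|^{2^*/2} < |u_3|_{2^*}^{2^*}$ (strictly, since $u_1,u_2$ are positive somewhere), hence by \eqref{tem127-4} applied to $u_3$ one gets $\|u_3\|_3^2 > S_3^{N/2}$ strictly, i.e. the third-component contribution to $I(\mathbf u)$ strictly exceeds $A_3$; similarly the $(u_1,u_2)$-part is $\ge c_{12}$ minus a correction coming from the same negative cross terms — one must bookkeep carefully to see that the total strictly exceeds $c_{12}+A_3$, contradicting the upper bound from step two. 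The cleanest route is: combine the upper bound $\widetilde A\le c_{12}+A_3$ with a direct lower bound showing any $\mathbf u\in\mathcal{N}_{12,3}$ with $u_3\ne 0$ has $I(\mathbf u) > c_{12}+A_3$ unless $u_3$ degenerates, which is impossible on the constraint.

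The main obstacle I expect is making step three rigorous: turning the ``$u_3$ wants to escape'' heuristic into a genuine strict inequality $I(\mathbf u) > c_{12} + A_3$ for \emph{every} admissible $\mathbf u$, rather than merely $\ge$. The subtlety is that the negative cross terms appear in \emph{both} balance equations, so they lower $\sum_{i=1}^2\|u_i\|_i^2$ (helping the $(u_1,u_2)$-part) while forcing $\|u_3\|_3^2$ up; one must show the net effect is strictly positive. The tool for this is again the strictly diagonally dominant / positive-definiteness machinery: rewrite the constrained minimization as an unconstrained minimization over dilations of a fixed profile, so that the relevant matrix (of coupling integrals, with $\beta_{ii}=1$ on the diagonal and the negative $\beta_{i3}$ off-diagonal in the relevant block) is strictly diagonally dominant, hence invertible with controlled inverse, and the quadratic form being minimized is then seen to be bounded below by $c_{12}+A_3$ with equality only in the decoupled limit — which is not in $\mathcal{N}_{12,3}$. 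A secondary technical point, which the paper flags, is that for $N\ge 5$ the relevant manifolds are not closed, so one cannot simply extract a minimizing-sequence limit; the argument must be phrased purely in terms of levels and explicit test configurations, avoiding any compactness claim for $\widetilde A$ itself.
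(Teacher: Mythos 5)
Your overall strategy coincides with the paper's: sandwich $\widetilde{A}$ between the decoupled level from both sides, i.e. prove $\widetilde{A}\ge \widehat{A}_{12}+A_3$ and $\widetilde{A}\le \widehat{A}_{12}+A_3$ (this is exactly Lemma \ref{tem1013-25} and Proposition \ref{tem1013-24}), with the upper bound obtained from a two-component minimizer together with an escaping bubble $z^3_\mu$, and non-attainment from the strict inequality produced by the negative couplings. However, two of the steps you propose would fail as written. First, the projection onto $\mathcal{N}_{12,3}$: for $N\ge 5$ the exponent $p=2^*/2=N/(N-2)$ lies in $(1,2)$, so the equations satisfied by the projection coefficients contain terms $t^{p}s^{p}$ and form a genuinely nonlinear system (moreover $\mathcal{N}_{12,3}$ carries only two constraint equations, so the natural projection has two parameters: a common factor for $(u_1,u_2)$ and one for $u_3$). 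The strictly-diagonally-dominant/Cramer machinery of Lemma \ref{estimation3}, which you invoke, applies only when the system is linear in the coefficients, i.e. for $N=4$ where $p=2$; here it gives you neither existence of $(t_\mu,s_\mu)$ nor the crucial limit $t_\mu,s_\mu\to 1$. The paper replaces it by an ad hoc two-parameter analysis (Lemma \ref{tem1013-13}): eliminate $s$ from one equation, solve the resulting scalar equation $g(t)=0$ by the intermediate value theorem once the cross term $B_\mu$ is small, then prove uniform boundedness of $t_\mu,s_\mu$ and pass to the limit in \eqref{tem1013-21}. You would need an argument of this kind.

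Second, and more seriously, your lower bound is left unproved. The difficulty you flag (the negative cross terms lower $\|u_1\|_1^2+\|u_2\|_2^2$ while raising $\|u_3\|_3^2$) is real, but the remedy you sketch, an ``unconstrained minimization over dilations'' controlled by positive definiteness of a coupling matrix and a ``quadratic form'' bounded below, does not make sense for $p<2$ (the functional is not quadratic in the dilation parameters) and is not a proof; also the claim that $I(\mathbf{u})>c_{12}+A_3$ for \emph{every} admissible $\mathbf{u}$ is stronger than what is true, since the cross integrals can vanish for disjointly supported components. The missing idea, which is how the paper closes the argument, is the relaxed-constraint comparison: since $\beta_{13},\beta_{23}<0$, the two identities defining $\mathcal{N}_{12,3}$ immediately give the separate inequalities $\sum_{i=1}^{2}\|u_i\|_i^2\le\sum_{i,j=1}^{2}\int_{\R^N}\beta_{ij}|u_i|^{p}|u_j|^{p}$ and $\|u_3\|_3^2\le\int_{\R^N}|u_3|^{2p}$, and the infima of $\frac1N\sum_{i\le 2}\|u_i\|_i^2$, resp. $\frac1N\|u_3\|_3^2$, over these relaxed sets equal $\widehat{A}_{12}$, resp. $A_3$, by the one-line scaling argument of Lemma \ref{tem512-1} (each relaxed set is cut out by a single scalar inequality, so one scales down by $t\le 1$). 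Since $I=\frac1N\sum_i\|u_i\|_i^2$ on $\mathcal{N}_{12,3}$, this yields $\widetilde{A}\ge\widehat{A}_{12}+A_3$ with no correction terms to track; at a putative minimizer the energy strictly exceeds $\widehat{A}_{12}+A_3$ whenever some cross integral is nonzero (and if they all vanish the components decouple, forcing them to be minimizers of the decoupled problems, which are positive a.e. in $\R^N\setminus\{0\}$, incompatible with disjoint supports), contradicting the upper bound. Finally, be precise about which two-component level you use: it must be the sum-type Nehari level $\widehat{A}_{12}$ over $\mathcal{N}_{12}$ (attained for $N\ge 5$ by Chen--Zou), not a componentwise level over $\mathcal{M}_{12}$; the constraint in $\mathcal{N}_{12,3}$ only controls the sum $\|u_1\|_1^2+\|u_2\|_2^2$, so with a componentwise level the lower bound would not follow, and for $N\ge5$ componentwise projections need not even exist.
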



\begin{remark}
At the end of this section, we would like to give the following remark. In the papers \cite{LiuYouZou2023,TavaresYou2019,Hugo-You-Zou2022}, the authors consider the following critical Schr\"{o}dinger system
\begin{equation}\label{system4}
\begin{cases}
-\Delta u_{i}-\lambda_iu_{i}=|u_i|^{2^*-2}u_i+\sum_{j\neq i}^{d}\beta_{ij}|u_{j}|^{\frac{2^*}{2}}|u_i|^{\frac{2^*}{2}-2}u_i \text{ in } \Omega,\\
u_i\geq 0 \text{ in } \Omega, u_i=0 \text{ on } \partial \Omega, i=1,\cdots,d,
\end{cases}
\end{equation}
where $\Omega$ is a bounded domain in $\mathbb{R}^N$, $2^*=\frac{2N}{N-2}$, $N\geq3$, $\beta_{ii}>0, ~i=1,\cdots,d$, $\beta_{ij}=\beta_{ji}$ for any $i \neq j$ (the cases $N=3$, $N=4$ and $N\ge5$ were considered respectively in \cite{LiuYouZou2023}, \cite{TavaresYou2019} and \cite{Hugo-You-Zou2022}). However, problem \eqref{mainsystem} is considerably different from problem \eqref{system4}.
\begin{itemize}
\item [(i)] On the one hand, as implicitly pointed out by Chen and Zou in \cite{Chen-Zou2015}, the Palais-Smale condition of problem \eqref{mainsystem} does not hold for any energy level, so that it makes problem \eqref{mainsystem} very complicated. In \cite{LiuYouZou2023,TavaresYou2019,Hugo-You-Zou2022}, it shows that the Palais-Smale condition for problem \eqref{system4} holds for some special ranges of the energy level thanks to some accurate energy estimates.
\item [(ii)] On the other hand, the structure of least energy solutions to system \eqref{mainsystem} is much different from that of problem \eqref{system4}. For example, when $N=4$, least energy positive solutions of system \eqref{system4} are obtained in \cite{TavaresYou2019} if
\begin{equation}\label{en}
\beta_{13}<0, \beta_{23}<0, \beta_{12}>0\,\,\mbox{small},
\end{equation}
but in this paper, we show that system \eqref{mainsystem} with $N=4$ has no least energy positive solutions under the assumption \eqref{en}(see Theorem \ref{thm3}-(1)). When $N\geq 5$, it is showed in  \cite[Theorem 1.1]{Hugo-You-Zou2022}($d=3, m=2$) that the level $\mathcal{A}$ is attained if
\begin{equation}\label{en1}
\beta_{13}<0, \beta_{23}<0, \beta_{12}>0,
\end{equation}
whereas, in this paper, we show that the level $\mathcal{A}$ is not attained under the condition \eqref{en1} (see Theorem \ref{thm4}). The reason underlying is the fact that system \eqref{mainsystem} is invariant in $\R^N$ under the transformation
$$
(u_1, u_2, u_3)\mapsto (a^{\frac{N-2}{2}}u_1(a\cdot),a^{\frac{N-2}{2}}u_2(a\cdot),a^{\frac{N-2}{2}}u_3(a\cdot)),\,\,a>0.
$$
\end{itemize}
\end{remark}

\subsection{Further notations}

\begin{itemize}

  \item Denote by $|\cdot|_p$ the norm of the $L^p(\mathbb{R}^N)$, $1\leq p \leq \infty$.

  \item  $\mathbf{D}= D^{1,2}(\mathbb{R}^N; \R^3)$, with norm $\|\mathbf{u}\|^{2}=\sum_{i=1}^{3}\|u_{i}\|_{i}^{2}$, where
  \begin{equation*}
  \|u\|_i^2:=\int_{\mathbb{R}^N}\left(|\nabla u|^2-\frac{\lambda_{i}}{|x|^2} u^{2}\right).
  \end{equation*}

  \item Set
  \begin{equation}\label{Constant-1125}
  S:=\min\{S_1, S_2, S_3\},
  \end{equation}
  where $S_i$ is defined in \eqref{tem529-8-4}.

  \item We always assume that $\beta_{ii}=1$, for any $i=1,2,3.$

  \item Given $\Gamma\subseteq\{1,2,3\}$ with $|\Gamma|=2,3$, consider the following subsystem
  \begin{equation}\label{subsystem}
  \begin{cases}
  -\Delta u_{i}-\frac{\lambda_{i}}{|x|^2}u_{i}=|u_i|^{2^*-2}u_i+\sum_{j\neq i}^{3}\beta_{ij}|u_{j}|^
  {\frac{2^*}{2}}|u_i|^{\frac{2^*}{2}-2}u_i, ~x\in \mathbb{R}^N, ~i\in \Gamma,\\
  u_i\in D^{1,2}(\mathbb{R}^N), i\in \Gamma,
  \end{cases}
  \end{equation}
  and define
  \begin{equation}\label{1.10-4}
  I_\Gamma(\mathbf{u}_\Gamma):=\frac{1}{2}\sum_{i\in \Gamma} \left\|u_i \right\|_i^2-\frac{1}{2^*}\sum_{i,j\in \Gamma} \int_{\mathbb{R}^N}\beta_{ij} |u_i|^
  {\frac{2^*}{2}}|u_j|^{\frac{2^*}{2}},
  \end{equation}
  \begin{equation}\label{tem107-4}
  \mathcal{M}_{\Gamma}=
  \left \{\mathbf{u}_\Gamma\in D^{1,2}(\mathbb{R}^N; \R^{|\Gamma|}): u_i\neq 0,  \|u_{i}\|_i^2=\sum_{j\in \Gamma} \int_{\mathbb{R}^N}\beta_{ij}|u_{i}|^{\frac{2^*}{2}}|u_{j}|^{\frac{2^*}{2}}, \ i\in \Gamma\right \}.
  \end{equation}
  Denote
  \begin{equation}\label{tem529-10}
  A_{\Gamma}:= \inf_{\mathbf{u}\in \mathcal{M}_{\Gamma}}I_\Gamma(\mathbf{u}_\Gamma)=\inf _{\mathbf{u}_\Gamma\in \mathcal{M}_{\Gamma}}\frac{1}{N}\sum_{i\in \Gamma}\|u_i\|_i^2,
  \end{equation}
  and $A_{\Gamma}=A, \mathcal{M}_{\Gamma}=\mathcal{M}$ when $\Gamma=\{1,2,3\}$.

  \item Given $\Gamma\subseteq\{1,2,3\}$ with $|\Gamma|=2,3$, $\mathbf{u}=(u_1,\cdots,u_{|\Gamma|})\in D^{1,2}(\mathbb{R}^N; \R^{|\Gamma|})$, define the $|\Gamma|\times |\Gamma|$ matrix
    \begin{equation}\label{Matrix-Def-2}
    M_{B}[u_1,\cdots,u_{|\Gamma|}]:=\left( M_{B}[u_1,\cdots,u_{|\Gamma|}]_{hk}\right)_{ h,k\in \Gamma}:= \left( \int_{\mathbb{R}^N}\beta_{hk}|u_{h}|^{\frac{2^*}{2}}|u_{k}|^{\frac{2^*}{2}}\right)_{h,k\in \Gamma}.
    \end{equation}

\end{itemize}

\section{Ground state solutions}
\renewcommand{\theequation}{2.\arabic{equation}}
\subsection{Existence of ground state solutions}
Denote
\begin{equation*}
E(u_1,u_2,u_3):=\sum_{i=1}^3\left(|\nabla u_i|^{2}-\frac{\lambda_{i}}{|x|^2} u_i^{2}\right), \quad
F(u_1,u_2,u_3):=\sum_{i,j=1}^3 \beta_{ij}|u_{i}|^{\frac{2^*}{2}}|u_{j}|^{\frac{2^*}{2}}.
\end{equation*}
For any $\mathbf{u}\in \mathbf{D}\setminus \{\mathbf{0}\}$, by the condition \eqref{Conditions}, we know that
\begin{equation*}
\int_{\mathbb{R}^N}F(u_1,u_2,u_3)>0.
\end{equation*}
Then, thanks to the definition of $\mathcal{A}$ in \eqref{1.12},
\begin{equation*}
\mathcal{A}=\inf_{\mathbf{u}\in \mathbf{D}\setminus \{\mathbf{0}\}}\frac{1}{N}
\left[\frac{\int_{\mathbb{R}^N}E(u_1,u_2,u_3)}{\int_{\mathbb{R}^N}F(u_1,u_2,u_3)}\right]^{\frac{N}{2}}
\end{equation*}
and then
\begin{equation}\label{tem105-1}
\int_{\mathbb{R}^N}E(u_1,u_2,u_3)\geq (N\mathcal{A})^{\frac{2}{N}}\left(\int_{\mathbb{R}^N}F(u_1,u_2,u_3)\right)^{\frac{2}{2^*}}, ~~\forall ~\mathbf{u}\in \mathbf{D}.
\end{equation}
\begin{proof}[\bf The proof of Theorem \ref{thm1}-(1)]
Based on the inequality \eqref{tem105-1}, following the proof of \cite[Lemma 3.3]{Guo-Luo-Zou2021} we can get that $\mathcal{A}$ is attained by a nonnegative solution $\mathbf{u}$, which is a ground state solution.

\end{proof}

\subsection{Nonexistence of fully nontrivial ground state solutions for $N=3,4$}
In this subsection, we prove (2) and (3) of Theorem \ref{thm1}.
\begin{proof}[\bf The proof of Theorem \ref{thm1}-(2)]
The following proof is inspired by \cite[Theorem 1.7]{Correia-Oliveria-Tavares2016}, and we sketch it here for completeness. Based on Theorem \ref{thm1}-(1), let $\mathbf{u}=(u_1,u_2,u_3)\in \mathcal{N}$ be a ground state solution of system \eqref{mainsystem}. Assume now that the ground state solution $\mathbf{u}$ is fully nontrivial, that is $u_i\neq 0$, ~$i=1,2,3$.
Note that
\begin{equation}
(tu_1,0,0) \in \mathcal{N}\  \Leftrightarrow \  t^2=\cfrac{\left\| u_1\right\|_1^2 }{|u_1|_4^4},
\end{equation}
Since $\mathbf{u}=(u_1,u_2,u_3)$ is a ground state solution of system \eqref{mainsystem}, then $I(u_1,u_2,u_3)\leq I(tu_1,0,0)$, which yields that
\begin{equation*}
\sum_{i=1}^3|u_i|_4^4+2\sum_{i<j}\beta_{ij}|u_iu_j|_2^2\leq t^4|u_1|_4^4.
\end{equation*}
Then
\begin{equation*}
|u_1|_4^4\left(\sum_{i=1}^3|u_i|_4^4+2\sum_{i<j}\beta_{ij}|u_iu_j|_2^2\right)\leq
\left(|u_1|_4^4+\sum_{i=2}^3\beta_{1j}|u_1u_j|_2^2\right)^2,
\end{equation*}
which implies that
\begin{equation*}
\sum_{i=2}^3|u_i|_4^4 \leq \left(\sum_{i=2}^3\beta_{1j}|u_j|_4^2\right)^2\leq 2\left(\max_{i\neq j}\{\beta_{ij}\}\right)^2 \sum_{i=2}^3|u_i|_4^4,
\end{equation*}
and so
\begin{equation*}
\max_{i\neq j}\{\beta_{ij}\}\geq \frac{\sqrt{2}}{2}.
\end{equation*}
Therefore, if
$$
0< \beta_{ij} < \frac{\sqrt{2}}{2},
$$
all ground state solutions of system \eqref{mainsystem} are semi-trivial.
\end{proof}

\begin{proof}[\bf Conclusion of the proof of Theorem \ref{thm1}-(3)]
Based on Theorem \ref{thm1}-(1), let $\mathbf{u}=(u_1,u_2,u_3)\in \mathcal{N}$ be a ground state solution of system \eqref{mainsystem}. Assume now that the ground state solution $\mathbf{u}$ is fully nontrivial, that is $u_i\neq 0$,$i=1,2,3$.
Note that
\begin{equation}
(tu_1,0,0) \in \mathcal{N}\  \Leftrightarrow \  t^4=\cfrac{\left\| u_1\right\|_1^2 }{|u_1|_6^6},
\end{equation}
Since $\mathbf{u}=(u_1,u_2,u_3)$ is a ground state solution of system \eqref{mainsystem}, then $I(u_1,u_2,u_3)\leq I(tu_1,0,0)$, that is
\begin{equation*}
\sum_{i=1}^3|u_i|_6^6+2\sum_{i<j}\beta_{ij}|u_iu_j|_3^3\leq t^2\|u_1\|_1^2.
\end{equation*}
Therefore,
\begin{equation*}
|u_1|_6^6\left(\sum_{i=1}^3|u_i|_6^6+2\sum_{i<j}\beta_{ij}|u_iu_j|_3^3\right)^2\leq
\left(|u_1|_6^6+ \sum_{j=2}^3\beta_{1j}|u_1u_j|_3^3\right)^3,
\end{equation*}
which implies that
\begin{align*}
 |u_1|_6^6\Big( \sum_{i=1}^3|u_i|_6^6 &+ 2\sum_{j=2}^3\beta_{1j}|u_1u_j|_3^3  +2\beta_{23}|u_2u_3|_3^3 \Big)^2  \leq |u_1|_6^6+(\sum_{j=2}^3\beta_{1j}|u_1u_j|_3^3)^3\\
   & + 3|u_1|_6^{12}\sum_{j=2}^3\beta_{1j}|u_1u_j|_3^3
  +3|u_1|_6^{6}\left(\sum_{j=2}^3\beta_{1j}|u_1u_j|_3^3\right)^2,
\end{align*}
that is,
\begin{align}\label{tem105-9}
|u_1|_6^6 \Big( \sum_{i=2}^3|u_i|_6^6\Big)^2+2|u_1|_6^{12}\sum_{i=2}^3|u_i|_6^6
& \leq \left(\sum_{j=2}^3\beta_{1j}|u_1u_j|_3^3\right)^3\leq \left(\sum_{j=2}^3\beta_{1j}|u_1|_6^3|u_j|_6^3\right)^3\nonumber\\
& \leq |u_1|_6^9 \left[\left(\sum_{j=2}^3\beta_{1j}|u_j|_6^3\right)^2\right]^{\frac{3}{2}} \\
& \leq 2^{\frac{3}{2}}|u_1|_6^9 \left(\max_{i\neq j}\{\beta_{ij}\}\right)^3\Big( \sum_{i=2}^3|u_i|_6^6\Big)^{\frac{3}{2}}.\nonumber
\end{align}
On the other hand,
\begin{equation*}
|u_1|_6^6 \Big( \sum_{i=2}^3|u_i|_6^6\Big)^2+2|u_1|_6^{12}\sum_{i=2}^3|u_i|_6^6\geq
2^{\frac{3}{2}}|u_1|_6^9\left( \sum_{i=2}^3|u_i|_6^6\right)^{\frac{3}{2}},
\end{equation*}
Thus, combining this with \eqref{tem105-9} we see that
\begin{equation*}
\max_{i\neq j}\{\beta_{ij}\}\geq 1.
\end{equation*}
Therefore, if
$$
0< \beta_{ij} < 1,
$$
all ground state solutions of system \eqref{mainsystem} are semi-trivial.
\end{proof}


\begin{proof}[\bf The proof of Theorem \ref{thm1}-(4)]
Note that $\|u\|_i^2=\int_{\mathbb{R}^N}\left(|\nabla u|^2-\frac{\lambda_{i}}{|x|^2} u^{2}\right)$ is a norm and the exponent $2^*/2\in (1,2)$ when $N\geq 5$, then the proof is same as that of \cite[Theorem 1.2]{TO2016}, and so we omit it.
\end{proof}

\section{Existence of least energy positive solutions}
\renewcommand{\theequation}{3.\arabic{equation}}
\subsection{The case for $N=4$}

\subsubsection{Preliminary results}
\begin{lemma}\label{EnergyEstimate1}
Set
\begin{equation}\label{Constant-2-1}
\overline{C}=\frac{3}{N} \mathcal{S}^{\frac{N}{2}},
\end{equation}
then we have
\begin{equation*}
A \leq \overline{C}.
\end{equation*}
\end{lemma}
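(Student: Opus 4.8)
The plan is to bound the least energy level $A$ on the full Nehari-type set $\mathcal{M}$ by testing with a decoupled configuration in which the three components concentrate at widely separated points, so that the interaction terms $\int_{\mathbb{R}^N}\beta_{ij}|u_i|^{2^*/2}|u_j|^{2^*/2}$ become negligible. Concretely, I would take $u_i = t_i\, z_1(\cdot - y_i)$ (or more simply use the standard Aubin--Talenti bubbles, or $\omega_i$ from \eqref{tem529-8-4} with their Hardy singularities placed at distinct points $y_i$), with the $y_i$ pairwise far apart, and choose the scalars $t_i>0$ so that $(u_1,u_2,u_3)\in\mathcal{M}$, i.e. $\|u_i\|_i^2 = \sum_j \int \beta_{ij}|u_i|^{2^*/2}|u_j|^{2^*/2}$ for each $i$.

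The key steps, in order, are as follows. First, recall from \eqref{tem529-1} that for the scalar problem $A_i = \frac{1}{N}S_i^{N/2} < \frac{1}{N}\mathcal{S}^{N/2}$, and that the constrained scalar energy (on $(tu_1,0,0)\in\mathcal{M}_{\{1\}}$) equals exactly $A_1$ when $u_1=\omega_1$. Second, I would consider the limiting configuration: for the fully decoupled system one has $I(\omega_1,0,0)+I(0,\omega_2,0)+I(0,0,\omega_3)$ evaluated on the product Nehari set, but since only one component is ever nonzero at a time this is not in $\mathcal{M}$ (which requires all $u_i\neq 0$). So instead I perturb: place disjointly supported (or far-separated) bubbles, giving $\sum_{i=1}^3 A_i + o(1) < \frac{3}{N}\mathcal{S}^{N/2}$ once the separation is large enough, after adjusting the $t_i$ to land exactly on $\mathcal{M}$. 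Third, I must check that the adjusted $t_i$ stay bounded and bounded away from $0$ as the separation $\to\infty$ (the interaction terms are lower order, so $t_i\to 1$), and that the resulting energy is $\frac{1}{N}\sum_i \|t_iu_i\|_i^2 = \sum_i A_i + o(1)$. Taking the separation large and using $A_i<\frac{1}{N}\mathcal{S}^{N/2}$ gives $A \le \sum_{i=1}^3 A_i < \frac{3}{N}\mathcal{S}^{N/2} = \overline{C}$; in fact one gets the stronger bound $A\le \sum_{i=1}^3 A_i$, but the statement only needs $A\le \overline C$.

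The main obstacle I anticipate is making the "far-separated bubbles lie on $\mathcal{M}$" step fully rigorous: one needs a quantitative estimate on the cross terms $\int |z_1(\cdot-y_i)|^{2^*/2}|z_1(\cdot-y_j)|^{2^*/2}$ as $|y_i-y_j|\to\infty$ (these decay polynomially, not exponentially, because the bubbles have heavy tails), and then a fixed-point or implicit-function argument to solve the $3\times3$ system for $(t_1,t_2,t_3)$ showing a solution exists with $t_i$ near $1$. Because the coupling is weak/perturbative this is routine but slightly delicate due to the Hardy singularities at the $y_i$; a cleaner alternative that avoids separation entirely is to simply test with a single scaled bubble in one component, $(tu_1,0,0)$: this already lies in the larger set $\mathcal{N}$ but \emph{not} in $\mathcal{M}$, so it does not directly bound $A$. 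Hence the separated-bubble construction (or an equivalent disjoint-support truncation argument, as in standard critical-exponent gluing) seems unavoidable, and that gluing/truncation estimate is where the real work lies; everything else is bookkeeping with \eqref{tem529-1} and \eqref{tem529-10}.
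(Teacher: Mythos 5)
Your construction can be repaired to give the weak bound $A\le\overline C$, but as written it has two problems, and it is considerably heavier than what is actually needed. First, the genuine error: the system is \emph{not} translation invariant, because the Hardy potential $-\lambda_i|x|^{-2}$ is anchored at the origin. You cannot take "$\omega_i$ from \eqref{tem529-8-4} with their Hardy singularities placed at distinct points $y_i$": the norm $\|\cdot\|_i$ has its singularity at $0$ only, so a far-translated profile satisfies $\|z(\cdot-y_i)\|_i^2\to\|z\|^2$ and its quotient tends to a value $\ge\mathcal S$, not to $S_i$. Consequently the "stronger bound" $A\le\sum_{i=1}^3A_i$ does not follow from translation-separation; to approach $\sum_i A_i$ one must keep all components centred at the singularity and separate them by \emph{dilations} (the problem is scale invariant about $0$), which is exactly what the paper does elsewhere (cf.\ the proof of Lemma \ref{tem106-3}), and that route requires the nonlinear projection analysis you only sketch (cross-term decay plus an implicit-function/fixed-point step, which for $N\ne4$ is a nonlinear system). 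For the present lemma that entire machinery is unnecessary.

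Second, you misjudge where the work lies in the "disjoint-support" alternative: if you simply pick $v_1,v_2,v_3\not\equiv0$ with pairwise disjoint supports, every coupling integral $\int\beta_{ij}|v_i|^{2^*/2}|v_j|^{2^*/2}$ vanishes \emph{identically} (for any sign of $\beta_{ij}$), so the Nehari system decouples exactly and $(\,t_1v_1,t_2v_2,t_3v_3)\in\mathcal M$ with the explicit scalar choice $t_i^{2^*-2}=\|v_i\|_i^2/|v_i|_{2^*}^{2^*}$ -- no gluing, no tail estimates, no IFT. This is the paper's proof: the energy is $\frac1N\sum_i\bigl(\|v_i\|_i^2/|v_i|_{2^*}^2\bigr)^{N/2}\le\frac1N\sum_i\bigl(\|v_i\|^2/|v_i|_{2^*}^2\bigr)^{N/2}$ (using $\lambda_i>0$ to drop the Hardy term), and since the Sobolev quotient on any open set has infimum $\mathcal S$ (\cite{Terracini1996}), infimizing over disjoint triples gives $A\le\frac3N\mathcal S^{N/2}=\overline C$ as in \eqref{Constant-2-1}. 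So the step you flagged as "where the real work lies" disappears entirely once the supports are taken exactly disjoint, and the only nontrivial input is the domain-independence of the Sobolev constant.
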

\begin{proof}[\bf Proof]
Taking $v_1,v_2,v_3 \not\equiv 0$ such that $v_i \cdot v_j \equiv 0$ when $i\neq j$ and setting $\widehat{v}_i=t_i v_i$, where
\begin{equation}\label{Bounded-3}
t_i^{2^*-2}=\frac{\|v_i\|_i^2}{|v_i|_{2^*}^{2^*}}.
\end{equation}
we have $(\widehat{v}_1,\widehat{v}_2,\widehat{v}_3)\in \mathcal{M}$. By \eqref{Bounded-3},
\begin{align}\label{Bounded-3-1}
 A & \leq I(\widehat{v}_1,\widehat{v}_2,\widehat{v}_3)=\frac{1}{N}\sum_{i=1}^3 \|\widehat{v}_i\|_i^2= \frac{1}{N}\sum_{i=1}^3 t_i^2\|v_i\|_i^2\nonumber\\
   & = \frac{1}{N}\sum_{i=1}^3\left(\frac{\|v_i\|_i^2}{|v_i|_{2^*}^{2^*}}\right)^{\frac{2}{2^*-2}}\|v_i\|_i^2
   \leq\frac{1}{N}\sum_{i=1}^3\left(\frac{\|v_i\|^2}{|v_i|_{2^*}^{2}}\right)^{\frac{N}{2}}.
\end{align}
Denote
\begin{equation*}
\widehat{S}(\Omega):=\inf_{u\in H_{0}^{1}(\Omega)\setminus \{0\}}\frac{\|u\|^2}{|u|_{2^*}^{2}}.
\end{equation*}
By \cite{Terracini1996} we know that $\widehat{S}(\Omega)=\mathcal{S}$. Combining this with \eqref{Bounded-3-1},
\begin{align}\label{Bounded-3-2}
 A & \leq \frac{1}{N}\sum_{i=1}^3\inf_{\Omega \supset\Omega_1,\Omega_2,\Omega_3\neq \emptyset \atop \Omega_{i}\cap\Omega_{j}=\emptyset, i\neq j}\widehat{S}^{\frac{N}{2}}(\Omega_i)=\frac{3}{N} \mathcal{S}^{\frac{N}{2}}=\overline{C},
\end{align}
where $\overline{C}$ is defined in \eqref{Constant-2-1}.
\end{proof}

Denote
\begin{equation}\label{tem529-13}
\beta_1:=\frac{S^2}{16\overline{C}}.
\end{equation}

\begin{lemma}\label{Estimate2}
Assume that $N=4$ and $0\leq\beta_{ij}< \beta_1$, for any $i\neq j$, then there exist $C_1,C_2>0$ dependent on $\lambda_i, S, \overline{C}$, such that for any $\mathbf{u} \in \mathcal{M}$ with $I(\mathbf{u})\leq 2\overline{C}$,
\begin{equation*}
C_1\leq|u_i|^2_{4}\leq C_2, ~i=1,2,3.
\end{equation*}
\end{lemma}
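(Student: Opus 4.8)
The goal is a two-sided bound on $|u_i|_4^2$ valid on the sublevel set $\{\mathbf{u}\in\mathcal{M}:I(\mathbf{u})\le 2\overline C\}$, under the smallness hypothesis $0\le\beta_{ij}<\beta_1:=S^2/(16\overline C)$ with $N=4$. The plan is to extract from membership in $\mathcal{M}$ the three Nehari identities $\|u_i\|_i^2=\sum_{j=1}^3\int_{\mathbb{R}^4}\beta_{ij}|u_i|^2|u_j|^2$ (recall $2^*/2=2$ when $N=4$, and $\beta_{ii}=1$), and to combine them with the energy ceiling $I(\mathbf{u})=\frac1N\sum_i\|u_i\|_i^2\le 2\overline C$, which immediately gives $\sum_i\|u_i\|_i^2\le 2N\overline C$ and hence a uniform \emph{upper} bound $\|u_i\|_i^2\le 2N\overline C$ for each $i$. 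Feeding this into the Sobolev/Hardy inequality \eqref{tem127-4}, i.e. $\|u_i\|_i^2\ge (NA_i)^{2/N}|u_i|_{2^*}^2\ge S\,|u_i|_4^2$ (using $S=\min_i S_i$ and $(NA_i)^{2/N}=S_i$ from \eqref{tem529-8-4}--\eqref{tem529-1}), yields $|u_i|_4^2\le 2N\overline C/S=:C_2$, the desired upper estimate.

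For the lower bound I would argue by contradiction-free direct estimation. Fix $i$; from the $i$-th Nehari identity and Cauchy--Schwarz, $\int|u_i|^2|u_j|^2\le |u_i|_4^2|u_j|_4^2$, so
\begin{equation*}
\|u_i\|_i^2\le |u_i|_4^4+\sum_{j\ne i}\beta_{ij}|u_i|_4^2|u_j|_4^2\le |u_i|_4^4+\beta_1|u_i|_4^2\sum_{j\ne i}|u_j|_4^2.
\end{equation*}
On the other hand $\|u_i\|_i^2\ge S|u_i|_4^2$, and since $u_i\ne0$ we may divide by $|u_i|_4^2>0$ to get $S\le |u_i|_4^2+\beta_1\sum_{j\ne i}|u_j|_4^2$. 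Summing over $i=1,2,3$ gives $3S\le (1+2\beta_1)\sum_i|u_i|_4^2$, hence $\sum_i|u_i|_4^2\ge 3S/(1+2\beta_1)$. This is a lower bound on the \emph{sum}; to get it for each individual component, I would instead keep the per-$i$ inequality $S\le |u_i|_4^2+\beta_1\sum_{j\ne i}|u_j|_4^2\le |u_i|_4^2+2\beta_1 C_2$ (using the already-proved upper bound on each $|u_j|_4^2$), which yields
\begin{equation*}
|u_i|_4^2\ge S-2\beta_1 C_2 = S-2\cdot\frac{S^2}{16\overline C}\cdot\frac{2N\overline C}{S}=S-\frac{NS}{4}=S\Bigl(1-\frac N4\Bigr).
\end{equation*}
With $N=4$ this degenerates, so the crude bound $C_2=2N\overline C/S$ is too lossy and the constant $\beta_1$ must be used more carefully.

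The honest route, and what I expect the paper does, is to avoid the wasteful step $|u_i|_{2^*}^2\mapsto$ (drop two components) and instead bootstrap: having $|u_j|_4^2\le C_2$ for all $j$, re-run the upper-bound argument component-wise. From $\|u_i\|_i^2\le|u_i|_4^4+\beta_1|u_i|_4^2\sum_{j\ne i}|u_j|_4^2$ and $\|u_i\|_i^2\ge S|u_i|_4^2$ we get $S\le|u_i|_4^2+2\beta_1C_2$; the point is that $C_2$ should really be taken as $\max_i|u_i|_4^2$ over the sublevel set, or one argues: if $m:=\min_i|u_i|_4^2$ and $M:=\max_i|u_i|_4^2$, then $S\le M+2\beta_1 M=(1+2\beta_1)M$ gives the sum bound again, while for the min, $S\le m + 2\beta_1 M$. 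One still needs $M$ controlled independently of $m$; this comes from the energy ceiling as above ($M\le 2N\overline C/S$). Then $m\ge S-2\beta_1\cdot 2N\overline C/S = S - N S/4$, which for $N=4$ gives $m\ge 0$ only. \textbf{The main obstacle is precisely this borderline: for $N=4$ the naive constants cancel,} so the real proof must be sharper — presumably using that $I(\mathbf{u})\le 2\overline C$ together with $A\le\overline C$ and a more refined lower bound on $\|u_i\|_i^2$ (e.g. not all mass can sit in one component because then $I$ would be close to $A_i=\frac1N S_i^{N/2}$, contradicting nothing directly, but combined with the coupling terms being positive it forces each $\|u_i\|_i^2$ to be bounded below by a definite fraction of $S^{N/2}$). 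I would therefore close the argument by: (i) noting each single-component projection $(tu_i,0,0)\in\mathcal{N}$ has energy $\frac1N\|u_i\|_i^2\cdot(\|u_i\|_i^2/|u_i|_4^4)\ge\frac1N S^2$ by Sobolev, so if some $|u_i|_4^2$ were tiny the identity $\|u_i\|_i^2=|u_i|_4^4+\beta_1$-terms would force $\|u_i\|_i^2$ tiny, then (ii) using $\sum_j\|u_j\|_j^2\le 2N\overline C$ and the three Nehari identities to bound the off-diagonal terms $\beta_1\sum\int|u_iu_j|^2\le\beta_1 C_2\sum|u_j|_4^2\le\beta_1 C_2\cdot 3C_2$, and (iii) choosing $\beta_1=S^2/(16\overline C)$ exactly so that this is dominated by $\frac12 S|u_i|_4^2$ on the relevant range, giving $|u_i|_4^4\ge\frac12 S|u_i|_4^2$, i.e. $|u_i|_4^2\ge S/2=:C_1$. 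I expect the precise bookkeeping of which $C_2$ enters where — whether it is $2N\overline C/S$ or something smaller coming from $I\le 2\overline C$ more cleverly — to be the only genuinely delicate point, and the definition $\beta_1=S^2/(16\overline C)$ is the tell that the constants are tuned to make the lower estimate survive exactly at $N=4$.
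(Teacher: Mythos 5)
Your upper bound ($S|u_i|_4^2\le\|u_i\|_i^2\le\sum_{j}\|u_j\|_j^2=4I(\mathbf{u})\le 8\overline C$, hence $|u_i|_4^2\le 8\overline C/S$) is exactly the paper's. The lower bound, however, is not actually proved. Your first attempt reaches $S\le|u_i|_4^2+\beta_1\sum_{j\ne i}|u_j|_4^2$ and then replaces each $|u_j|_4^2$, $j\ne i$, by the individual cap $C_2=8\overline C/S$; this doubles the loss and, as you note yourself, yields only $|u_i|_4^2\ge S(1-N/4)=0$ at $N=4$. Your proposed repair (i)--(iii) does not close the gap either: in (ii) you drop the factor $|u_i|_4^2$ and bound the coupling term by $\beta_1C_2\cdot 3C_2$, which is a constant (equal to $12\overline C$ with your $C_2$), and a constant cannot be ``dominated by $\tfrac12 S|u_i|_4^2$'' precisely in the regime where $|u_i|_4^2$ is small --- which is the very case to be excluded. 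So that step is circular, even though your guessed constant $C_1=S/2$ is the correct one.

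The missing idea is the paper's sharper use of the energy ceiling: keep the factor $|u_i|_4^2$ in the coupling estimate and bound the sum over the \emph{other} components by the \emph{total} energy budget, not by twice the individual cap. Concretely, from the $i$-th Nehari identity and $|u_j|_4^2\le S^{-1}\|u_j\|_j^2$,
\begin{align*}
S|u_i|_4^2\le\|u_i\|_i^2&\le |u_i|_4^4+\beta_1|u_i|_4^2\sum_{j\ne i}|u_j|_4^2
\le |u_i|_4^4+\beta_1 S^{-1}|u_i|_4^2\sum_{j=1}^3\|u_j\|_j^2\\
&\le |u_i|_4^4+\beta_1 S^{-1}\,8\overline C\,|u_i|_4^2
=|u_i|_4^4+\tfrac{S}{2}|u_i|_4^2,
\end{align*}
because $\beta_1=S^2/(16\overline C)$; dividing by $|u_i|_4^2>0$ gives $|u_i|_4^2\ge S/2$. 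The point you missed is that $\sum_{j\ne i}|u_j|_4^2\le S^{-1}\sum_{j=1}^3\|u_j\|_j^2\le 8\overline C/S$, i.e.\ the remaining two components share the single budget $8\overline C$; using $2C_2=16\overline C/S$ instead introduces exactly the factor $2$ that makes your estimate degenerate at $N=4$.
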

\begin{proof}[\bf Proof]
Note that $\mathbf{u} \in \mathcal{M}$ and $I(\mathbf{u})\leq 2\overline{C}$, then we have
\begin{equation*}
S|u_i|_4^2\leq \|u_i\|_i^2\leq \sum_{i=1}^3\|u_i\|_i^2\leq 8\overline{C},
\end{equation*}
which implies that
\begin{equation*}
|u_i|_4^2\leq C_1:=8\overline{C}/S.
\end{equation*}
On the other hand, since
\begin{equation*}
\|u_i\|_i^2 = \sum_{j=1}^3 \int_{\mathbb{R}^4}\beta_{ij}|u_i|^2|u_j|^2,
\end{equation*}
we have
\begin{align*}
  S|u_i|_4^2 & \leq \sum_{j=1}^3 \int_{\mathbb{R}^4}\beta_{ij}|u_i|^2|u_j|^2
   \leq |u_i|_4^4 + \sum_{j\neq i}\beta_{ij}|u_i|_4^2|u_j|_4^2\\
   & \leq |u_i|_4^4 +\beta_1\sum_{j\neq i}S^{-1}\|u_j\|_j^2|u_i|_4^2\\
   & \leq |u_i|_4^4+\beta_1 S^{-1}8\overline{C}|u_i|_4^2=|u_i|_4^4+\frac{S}{2}|u_i|_4^2.
\end{align*}
Thus, we know that
\begin{equation*}
|u_i|_4^2\geq C_2:=\frac{S}{2}.
\end{equation*}
\end{proof}

Denote
\begin{equation}\label{tem529-14}
\beta_2:=\min\left\{\beta_1, \frac{SC_1^2}{16\overline{C}C_2}\right\}.
\end{equation}

\begin{lemma}\label{PS-sequence}
Suppose that $N=4$ and $0\leq\beta_{ij}< \beta_2$, for any $i\neq j$. Let $\widehat{\mathbf{u}}_n=(\widehat{u}_1^n,\widehat{u}_2^n,\widehat{u}_3^n)
\in\mathcal{M}$ be a minimizing sequence of $A$, then there exists a sequence $\{\mathbf{u}_n\}\in\mathcal{M}$ such that
\begin{equation*}
\lim_{n\rightarrow \infty} I(\mathbf{u}_n)=A, \quad \lim_{n\rightarrow \infty} I'(\mathbf{u}_n)=0.
\end{equation*}
\end{lemma}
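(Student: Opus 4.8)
The statement is an Ekeland-type variational principle applied on the manifold $\mathcal{M}$, so the natural route is: (i) show that $\mathcal{M}$ is a $C^1$-manifold near any point with $I(\mathbf{u})\le 2\overline{C}$, so that the constrained minimization makes sense; (ii) apply Ekeland's variational principle to the minimizing sequence $\widehat{\mathbf{u}}_n$ to obtain $\mathbf{u}_n\in\mathcal{M}$ with $I(\mathbf{u}_n)\to A$ and $\|I'|_{\mathcal{M}}(\mathbf{u}_n)\|\to 0$; (iii) convert the constrained derivative bound into the free derivative bound $I'(\mathbf{u}_n)\to 0$ by controlling the Lagrange multipliers. The main obstacle is step (iii), i.e., showing the Lagrange multipliers vanish, and this is exactly where Lemma~\ref{Estimate2} (with the constant $\beta_2$) enters.

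**Setting up the manifold and Ekeland.** First I would note that $\mathcal{M}$ is the zero set of the map $G=(G_1,G_2,G_3):\mathbf{D}\to\mathbb{R}^3$ with $G_i(\mathbf{u})=\partial_iI(\mathbf{u})u_i=\|u_i\|_i^2-\sum_{j=1}^3\int_{\mathbb{R}^4}\beta_{ij}|u_i|^2|u_j|^2$. Differentiating along $(0,\dots,u_i,\dots,0)$, one computes $\partial_iG_i(\mathbf{u})[u_i]=2\|u_i\|_i^2-4\int|u_i|^4-2\sum_{j\ne i}\beta_{ij}\int|u_i|^2|u_j|^2=-2\int|u_i|^4-2\sum_{j\ne i}\beta_{ij}\int|u_i|^2|u_j|^2<0$ on $\mathcal{M}$, and $\partial_jG_i(\mathbf{u})[u_j]=-2\beta_{ij}\int|u_i|^2|u_j|^2$ for $j\ne i$. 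So the relevant $3\times3$ Jacobian matrix is (up to the factor $-2$) the matrix with diagonal entries $\int|u_i|^4+\sum_{j\ne i}\beta_{ij}\int|u_i|^2|u_j|^2$ and off-diagonal entries $\beta_{ij}\int|u_i|^2|u_j|^2$. Using Lemma~\ref{Estimate2} (so $C_1\le|u_i|_4^2\le C_2$) together with the smallness $\beta_{ij}<\beta_2$, this matrix is strictly diagonally dominant, hence invertible with a uniform lower bound on $|\det|$; this gives that $\mathcal{M}\cap\{I\le 2\overline C\}$ is a $C^1$-submanifold and, by Ekeland's variational principle applied to the (lower semicontinuous, bounded below) functional $I|_{\mathcal{M}}$, produces a sequence $\mathbf{u}_n\in\mathcal{M}$ with $I(\mathbf{u}_n)\to A$ and $\bigl(I|_{\mathcal{M}}\bigr)'(\mathbf{u}_n)\to0$. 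Since $A\le\overline C<2\overline C$, we may assume $I(\mathbf{u}_n)\le 2\overline C$ for all large $n$, so Lemma~\ref{Estimate2} applies along the whole sequence.

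**Killing the Lagrange multipliers.** By the Lagrange multiplier rule there are $\mu_1^n,\mu_2^n,\mu_3^n\in\mathbb{R}$ with
\[
I'(\mathbf{u}_n)=\sum_{i=1}^3\mu_i^n\, G_i'(\mathbf{u}_n)+o(1)\quad\text{in }\mathbf{D}^*.
\]
Testing this identity against $(0,\dots,u_i^n,\dots,0)$ and using $G_i(\mathbf{u}_n)=0$, i.e. $\partial_iI(\mathbf{u}_n)u_i^n=0$, yields a linear system for $(\mu_1^n,\mu_2^n,\mu_3^n)$ whose coefficient matrix is precisely the matrix $M^n$ described above (entries $\int|u_i^n|^4+\sum_{j\ne i}\beta_{ij}\int|u_i^nu_j^n|^2$ on the diagonal, $\beta_{ij}\int|u_i^nu_j^n|^2$ off-diagonal) and whose right-hand side is $o(1)$ (from the $o(1)$ term tested against the bounded vectors $u_i^n$, bounded by Lemma~\ref{Estimate2} and $\|u_i^n\|_i^2\le 8\overline C$). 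Strict diagonal dominance of $M^n$, again guaranteed by $\beta_{ij}<\beta_2$ and the two-sided bounds $C_1\le|u_i^n|_4^2\le C_2$, gives a uniform bound $\|(M^n)^{-1}\|\le C$, hence $\mu_i^n\to0$. Feeding this back, $I'(\mathbf{u}_n)=\sum_i\mu_i^nG_i'(\mathbf{u}_n)+o(1)\to0$ because $G_i'(\mathbf{u}_n)$ is bounded in $\mathbf{D}^*$ (the quantities $\|u_i^n\|_i$ and $|u_i^n|_4$ are uniformly bounded by Lemma~\ref{Estimate2}). This finishes the proof. The only real subtlety, and the step I expect to require care, is verifying the strict diagonal dominance with the correct dependence on the constants $C_1,C_2,\overline C,S$ — which is exactly the point of choosing $\beta_2=\min\{\beta_1,\,SC_1^2/(16\overline C C_2)\}$ in \eqref{tem529-14}, and this is where a "strictly diagonally dominant matrix" argument in the spirit of \cite{Soave-Tavares2016,TavaresYou2019,Hugo-You-Zou2022} is invoked.
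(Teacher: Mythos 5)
Your proposal is correct and follows essentially the same route as the paper: the paper's own proof consists precisely of verifying strict diagonal dominance (hence uniform positive definiteness) of $M_B[\widehat{u}_1^n,\widehat{u}_2^n,\widehat{u}_3^n]$ via Lemma \ref{Estimate2} and the choice of $\beta_2$, and then invoking the Ekeland-plus-Lagrange-multiplier argument of \cite[Proposition 3.13]{TavaresYou2019}, which is exactly the machinery you spell out. Only a harmless slip: on $\mathcal{M}$ the cross terms in $\partial_iG_i(\mathbf{u})[u_i]$ cancel, so this quantity equals $-2\int_{\mathbb{R}^4}|u_i|^4$ and the relevant matrix is (up to the factor $-2$) precisely $M_B[u_1,u_2,u_3]$; your version has an even larger diagonal with the same off-diagonal entries, so the diagonal dominance and the uniform bound on the inverse still hold and the argument is unaffected.
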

\begin{proof}[\bf Proof]
We assume that $I(\mathbf{u}_n)\leq 2\overline{C}$ for $n$ large enough. Next, we claim that the matrix $M_B[\widehat{u}_1^n,\widehat{u}_2^n,\widehat{u}_3^n]$ is positively definite, where $M_B[\widehat{u}_1^n,\widehat{u}_2^n,\widehat{u}_3^n]$ is defined in \eqref{Matrix-Def-2}. In fact, for any $i=1,2,3$
\begin{align*}
  &\int_{\mathbb{R}^4}\beta_{ii}|\widehat{u}_i^n|^4-\sum_{j\neq i}\left|\int_{\mathbb{R}^4}\beta_{ij}|\widehat{u}_i^n|^2|\widehat{u}_j^n|^2\right|\\
   & \geq C_1^2- \beta_2\sum_{j\neq i}|\widehat{u}_i^n|_4^2|\widehat{u}_j^n|_4^2\geq C_1^2-\beta_2S \sum_{j\neq i}|\widehat{u}_i^n|_4^2\|\widehat{u}_j^n\|_j^2\\
   & \geq C_1^2- \beta_28\overline{C} S^{-1}|\widehat{u}_i^n|_4^2\geq C_1^2-\beta_28\overline{C}S^{-1}C_2\geq \frac{1}{2}C_1^2,
\end{align*}
which yields that the matrix $M_B[\widehat{u}_1^n,\widehat{u}_2^n,\widehat{u}_3^n]$ is strictly diagonally dominant. Thus, by the Gershgorin circle theorem we know that the matrix $M_B[\widehat{u}_1^n,\widehat{u}_2^n,\widehat{u}_3^n]$ is positive definite. Based on these arguments, following the proof of \cite[Proposition 3.13]{TavaresYou2019}, there exists a sequence $\{\mathbf{u}_n\}\in\mathcal{M}$ such that
\begin{equation*}
\lim_{n\rightarrow \infty} I(\mathbf{u}_n)=A, \quad \lim_{n\rightarrow \infty} I'(\mathbf{u}_n)=0.
\end{equation*}

\end{proof}

\subsubsection{Energy estimates}
In this subsection, we establish energy estimates.
Denote
\begin{equation}\label{tem529-12}
\beta'_3:= \frac{1}{2}\min_{i\neq j}\left\{\frac{1-\lambda_i}{1-\lambda_j}, \frac{(1-\lambda_i)^{\frac{3}{4}}(1-\lambda_j)^{\frac{3}{4}}}
{(1-\lambda_i)^{\frac{3}{2}}+(1-\lambda_j)^{\frac{3}{2}}}\right\}.
\end{equation}
 It follows from \cite[Lemma 3.1]{Chen-Zou2015} that
\begin{equation}\label{1019-8}
A_{ij}<A_i+A_j \text{ when } \beta_{ij}\in (0, \beta'_3), ~i\neq j.
\end{equation}
Consider
\begin{equation*}
f_{ij}(t_i,t_j):=\frac{1}{2}\sum_{i=1}^2t_i\|\omega^{ij}_i\|_i^2-\frac{1}{4}\sum_{i=1}^2t_i^{2}
\int_{\mathbb{R}^4}|\omega^{ij}_i|^{4}-\frac{1}{2}t_it_j\int_{\mathbb{R}^4}\beta_{ij}|\omega^{ij}_i|^2|\omega^{ij}_j|^2.
\end{equation*}
Set
\begin{equation*}
\beta_3:=\min\left\{1, \beta'_3,\min_{i\neq j}\left\{ \frac{S^4}{16(A_i+A_j)} \right\}  \right\}.
\end{equation*}
\begin{lemma}\label{1021-7}
Assume that $N=4$ and $\beta_{ij}\in (0, \beta_3),i\neq j$. Then we have
\begin{equation}\label{tab1122-1}
\max_{t_{i},t_{j}\geq0}f_{ij}(t_{i},t_{j})=f_{ij}(1,1)=A_{ij}, ~\text{ for any } i\neq j,
\end{equation}
and
\begin{equation}\label{1021-12}
A_{ij}>\max\{A_i, A_j\}, ~\text{ for any } i\neq j.
\end{equation}
where $A_{ij}$ is defined in \eqref{tem529-10} and $A_i$ is defined in \eqref{tem529-1}.
\end{lemma}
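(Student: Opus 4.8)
The plan is to read $f_{ij}$ as the energy functional $I_{ij}$ of the two–component $\{i,j\}$–subsystem evaluated along the two–parameter family of rescalings $\bigl(\sqrt{t_i}\,\omega^{ij}_i,\sqrt{t_j}\,\omega^{ij}_j\bigr)$ of a least–energy positive solution $(\omega^{ij}_i,\omega^{ij}_j)\in\mathcal M_{ij}$ of that subsystem, i.e. $f_{ij}(t_i,t_j)=I_{ij}\bigl(\sqrt{t_i}\,\omega^{ij}_i,\sqrt{t_j}\,\omega^{ij}_j\bigr)$. Since $\beta_{ij}\in(0,\beta_3)\subseteq(0,\beta'_3)$, the existence of such a fully nontrivial minimizer — and hence the Nehari identities $\|\omega^{ij}_i\|_i^2=|\omega^{ij}_i|_4^4+\beta_{ij}\int_{\R^4}|\omega^{ij}_i|^2|\omega^{ij}_j|^2$ together with its symmetric counterpart — follows from \eqref{1019-8} and the two–component theory of \cite{Chen-Zou2015}. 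Expanding, $f_{ij}$ is a quadratic polynomial in $(t_i,t_j)$ whose (constant) Hessian is $-\tfrac12 M_B[\omega^{ij}_i,\omega^{ij}_j]$, with $M_B$ as in \eqref{Matrix-Def-2}.

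First I would prove \eqref{tab1122-1}. The matrix $M_B[\omega^{ij}_i,\omega^{ij}_j]$ has positive diagonal entries $|\omega^{ij}_i|_4^4,|\omega^{ij}_j|_4^4$ and determinant $|\omega^{ij}_i|_4^4|\omega^{ij}_j|_4^4-\beta_{ij}^2\bigl(\int_{\R^4}|\omega^{ij}_i|^2|\omega^{ij}_j|^2\bigr)^2\ge(1-\beta_{ij}^2)\,|\omega^{ij}_i|_4^4|\omega^{ij}_j|_4^4>0$ by Cauchy–Schwarz and $\beta_{ij}<\beta_3\le1$; hence $M_B[\omega^{ij}_i,\omega^{ij}_j]$ is positive definite and $f_{ij}$ is strictly concave on $\R^2$. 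The Nehari identities for $(\omega^{ij}_i,\omega^{ij}_j)$ say exactly that $\nabla f_{ij}(1,1)=0$, and a strictly concave function has a unique critical point, which is its global maximum; therefore $\max_{t_i,t_j\ge0}f_{ij}=f_{ij}(1,1)$, and $f_{ij}(1,1)=I_{ij}(\omega^{ij}_i,\omega^{ij}_j)=A_{ij}$ since $(\omega^{ij}_i,\omega^{ij}_j)$ minimizes $I_{ij}$ on $\mathcal M_{ij}$.

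For \eqref{1021-12} I would restrict $f_{ij}$ to the boundary half–line $\{t_j=0\}$: there $f_{ij}(t_i,0)=\tfrac12 t_i\|\omega^{ij}_i\|_i^2-\tfrac14 t_i^2|\omega^{ij}_i|_4^4=I_i(\sqrt{t_i}\,\omega^{ij}_i)$, whose maximum over $t_i\ge0$ equals $\tfrac14\bigl(\|\omega^{ij}_i\|_i^2/|\omega^{ij}_i|_4^2\bigr)^2\ge\tfrac14 S_i^2=A_i$, using the definition \eqref{tem529-8-4} of $S_i$ and $N=4$. Because $\omega^{ij}_j\not\equiv0$, the point $(1,1)$ lies in the open first quadrant and is therefore distinct from the maximizer of $f_{ij}(\cdot,0)$; strict concavity then forces $A_{ij}=f_{ij}(1,1)>\max_{t_i\ge0}f_{ij}(t_i,0)\ge A_i$, and exchanging the roles of $i$ and $j$ gives $A_{ij}>A_j$, hence \eqref{1021-12}.

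The only external input is the existence of a fully nontrivial minimizer for $A_{ij}$ in the admissible range of $\beta_{ij}$, taken from \cite{Chen-Zou2015}; after that the argument reduces to the elementary fact that a strictly concave quadratic attains its maximum at its unique critical point, together with Cauchy–Schwarz. The one point that needs a little care is the strict inequality in \eqref{1021-12}: it rests on $(1,1)$ being an interior point of the first quadrant — equivalently, on both ground–state components being nontrivial — which is precisely what makes the constrained maximum along $\{t_j=0\}$ lie strictly below $f_{ij}(1,1)$.
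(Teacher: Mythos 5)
Your proof is correct, and although it rests on the same two pillars as the paper's argument (strict concavity of $f_{ij}$ coming from positive definiteness of $M_B[\omega^{ij}_i,\omega^{ij}_j]$ when $\beta_{ij}<1$, and the fact that the Nehari identities for the minimizer make $(1,1)$ a critical point), you organize it more economically. The paper first carries out a quantitative boundary exclusion on $\overline{\R_+^2}$: it uses $A_{12}\le A_1+A_2$ from \eqref{1019-8}, the lower bound $|\omega^{12}_i|_4^2\ge S/2$, and the smallness $\beta_{12}<\beta_3$ to show $f_{12}(s,\widetilde t_2)>f_{12}(0,\widetilde t_2)$ for small $s>0$, and only then invokes concavity to identify the interior maximizer with the unique critical point $(1,1)$. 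You shortcut this by noting that $f_{ij}$ is a strictly concave quadratic on all of $\R^2$, so its critical point $(1,1)$ is automatically the global maximum over the plane and hence over the closed quadrant; this renders the boundary analysis, and in particular the $S$-dependent quantitative part of $\beta_3$, unnecessary for this lemma --- only $\beta_{ij}<1$ and the attainment of $A_{ij}$ by a fully nontrivial pair (for which $\beta_{ij}<\beta'_3$ and the two-component results of Chen--Zou are invoked, exactly the assumption $I_{12}(\omega^{12}_1,\omega^{12}_2)=A_{12}$ made tacitly in the paper) are used. For \eqref{1021-12} both proofs compare with the restriction to an axis, and your treatment is in fact slightly more careful: you use $\max_{t_i\ge0}f_{ij}(t_i,0)=\tfrac14\,\|\omega^{ij}_i\|_i^4/|\omega^{ij}_i|_4^4\ge\tfrac14 S_i^2=A_i$ as an inequality (the paper writes this maximum as being equal to $A_i$, which is not needed and not literally true unless $\omega^{ij}_i$ is extremal for $S_i$), and the strict gap then follows from strict concavity together with the interiority of $(1,1)$, which is exactly what the conclusion requires.
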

\begin{proof}[\bf Proof]
Without loss of generality, we will present that
$
\max_{t_{1},t_{2}\geq 0}f_{12}(t_{1},t_{2})=f_{12}(1,1)=A_{12}
$
and $A_{12}>\max\{A_1, A_2\}$.
Assume that $I_{12}(\omega^{12}_1,\omega^{12}_2)=A_{12}$.
By \eqref{1019-8} we know that $A_{12}=\frac{1}{4}(\|\omega^{12}_1\|_1^2+\|\omega^{12}_2\|_2^2)\leq A_1+A_2$, combining this with $\beta_{ij}\in (0, \beta_3)$, and following the proof of Lemma \ref{Estimate2} we get that
\begin{equation}\label{1019-10}
|\omega^{12}_i|_4^2\geq \frac{S}{2}, ~~i=1,2.
\end{equation}
Note that $\beta_{ij}>0$ for any $i\neq j$, then by \eqref{1019-10} we have
\begin{align}\label{1019-13}
 \max_{t_{1},t_{2}\geq0}f_{12}(t_{1},t_{2})  & \leq \frac{1}{2}t_1\|\omega^{12}_1\|_1^2-\frac{1}{4}t_1^{2}
\int_{\mathbb{R}^4}|\omega^{12}_1|^{4}+\frac{1}{2}t_2\|\omega^{12}_2\|_2^2-\frac{1}{4}t_2^{2}
\int_{\mathbb{R}^4}|\omega^{12}_2|^{4}\nonumber\\
   & \leq \frac{1}{4}\sum_{i=1}^2 \frac{\|\omega^{12}_i\|_i^2}{|\omega^{12}_i|_4^{4}}\|\omega^{12}_i\|_i^2
    \leq \frac{4(A_1+A_2)^2}{S^2}.
\end{align}
Thus, $f_{12}$ has a global maximum point $(\widetilde{t}_1, \widetilde{t}_2)$ in $\overline{\mathbb{R}_{+}^2}$. Assume that the global maximum point $(\widetilde{t}_1, \widetilde{t}_2)\in \partial \overline{\mathbb{R}_{+}^2}$, without loss of generality we assume that $\widetilde{t}_1=0$.
Thus, $\partial_2f_{12}(0, \widetilde{t}_2)=0$, that is
\begin{equation}\label{1019-12}
\|\omega^{12}_2\|_2^2=\widetilde{t}_2|\omega^{12}_2|_4^{4}.
\end{equation}
It follows from \eqref{1019-13} and \eqref{1019-12} that
\begin{equation}\label{1021-1}
\frac{1}{4}\widetilde{t}_2\|\omega^{12}_2\|_2^2=I_{12}\left(0,\sqrt{\widetilde{t}_2}\omega^{12}_2\right)\leq \max_{t_{1},t_{2}\geq0}f_{12}(t_{1},t_{2})\leq
\frac{4(A_1+A_2)^2}{S^2}.
\end{equation}
Note that $\beta_{ij}\in (0, \beta_3)$, then by \eqref{1021-1} we get that
\begin{align}\label{1021-2}
\widetilde{t}_2\beta_{12}|\omega^{12}_1\omega^{12}_2|_2^2 & \leq \beta_{12}\widetilde{t}_2\frac{\|\omega^{12}_1\|_1^2\|\omega^{12}_2\|_2^2}{S^2}\nonumber\\
& \leq4\beta_{12}\frac{(A_1+A_2)^2}{S^2}
\|\omega^{12}_1\|_1^2\leq \frac{1}{4}\|\omega^{12}_1\|_1^2.
\end{align}
It follows from \eqref{1021-2} that
\begin{align*}
  f_{12}(s, \widetilde{t}_2) - f_{12}(0, \widetilde{t}_2) & = \frac{1}{2}s\|\omega^{12}_1\|_1^2-\frac{1}{4}s^2|\omega^{12}_1|_4^4- \frac{1}{2}s\widetilde{t}_2\beta_{12}|\omega^{12}_1\omega^{12}_2|_2^2 \\
   & \geq \frac{1}{4}s\|\omega^{12}_1\|_1^2-\frac{1}{4}s^2|\omega^{12}_1|_4^4>0  ~\text{ for } s>0 \text{ small enough, }
\end{align*}
a contradiction. Therefore, the global maximum point $(\widetilde{t}_1, \widetilde{t}_2)$ belongs to $\mathbb{R}_{+}^2$.

On the other hand, we deduce from $\beta_{ij}<1$ that
\begin{equation*}
\int_{\mathbb{R}^4}|\omega^{12}_1|^4 \int_{\mathbb{R}^4}|\omega^{12}_2|^4-\left(\beta_{12}\int_{\mathbb{R}^4}|\omega^{12}_1|^2
|\omega^{12}_2|^2\right)^2>0,
\end{equation*}
which implies that the matrix $M_B[\omega_1,\omega_2]$ is positive definite, where $M_B[\omega_1,\omega_2]$ is defined in \eqref{Matrix-Def-2}. Thus, $f_{12}$ is strictly concave and the critical point of $f_{12}$ is unique. Note that $(\omega_1,\omega_2)$ is a solution of the corresponding subsystem, and so the point $(1,1)$ is a critical point of $f_{12}$. Therefore, we know that $(1,1)$ is the unique critical point of $f_{12}$ and is the global maximum point, which yields that $\max_{t_{1},t_{2}\geq 0}f_{12}(t_{1},t_{2})=f_{12}(1,1)=A_{12}$.

Based on the above arguments, we also know that
\begin{equation}
A_{12}=\max_{t_{1},t_{2}\geq0}f_{12}(t_{1},t_{2})>\max\left\{\max_{t_{1}>0}f_{12}(t_{1},0), \max_{t_{2}>0}f_{12}(0,t_{2})\right\}=\max\{A_1, A_2\}.
\end{equation}
\end{proof}

Denote
\begin{equation}\label{tem529-8}
\beta_4:=\min\left\{\beta_2, ~\beta_3, ~\frac{S^2}{16(A_1+A_2+A_3)} \right\},
\end{equation}
where $S$ is defined in \eqref{Constant-1125}, and $A_i$ is defined in \eqref{tem529-1}.

\begin{lemma}\label{Energyestimates4}
Assume that $N=4$ and $\beta_{ij}\in (0, \beta_4),i\neq j$. Then we have
\begin{equation*}
A <\min\{ A_{12}+A_3, A_{13}+A_2, A_{23}+A_1\},
\end{equation*}
\begin{equation*}
A < A_1+A_2+A_3.
\end{equation*}
\end{lemma}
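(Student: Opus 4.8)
The plan is, for each asserted strict inequality, to exhibit a test element of $\mathcal{M}$ whose $I$-energy lies strictly below the threshold. The device: choose positive $v_1,v_2,v_3\in D^{1,2}(\R^4)\setminus\{0\}$, project $(\sqrt{s_1}\,v_1,\sqrt{s_2}\,v_2,\sqrt{s_3}\,v_3)$ onto $\mathcal{M}$ by suitable $s_i>0$, and exploit that the coupling terms in $I$ are strictly negative (since $\beta_{ij}>0$) while the cross interactions $\int_{\R^4}|v_i|^2|v_j|^2$ become arbitrarily small once the components concentrate at very different scales. For the first inequality it suffices, by relabelling, to treat $A<A_{23}+A_1$. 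Since $\beta_{23}\in(0,\beta_4)\subseteq(0,\beta_3)$, \eqref{1019-8} and Lemma~\ref{1021-7} give that $A_{23}$ is attained by a positive pair $(\omega_2^{23},\omega_3^{23})$ with $M_B[\omega_2^{23},\omega_3^{23}]$ positive definite. Take $v_1=z^1_\mu\in\mathcal{Z}_1$ (see \eqref{tem512-3}) with $\mu\to0^+$, and $v_2=\omega_2^{23}$, $v_3=\omega_3^{23}$ fixed. First I would show $\int_{\R^4}|z^1_\mu|^2|v_j|^2\to0$ for $j=2,3$ by a Hölder split at $|x|=\sqrt{\mu}$,
\begin{equation*}
\int_{\R^4}|z^1_\mu|^2|v_j|^2\le |z^1_\mu|_4^2\Big(\int_{|x|<\sqrt\mu}|v_j|^4\Big)^{\frac12}+|v_j|_4^2\Big(\int_{|x|>\sqrt\mu}|z^1_\mu|^4\Big)^{\frac12},
\end{equation*}
both terms being tails of the convergent integrals $\int_{\R^4}|v_j|^4$ and $\int_{\R^4}|z^1_1|^4$ (using only $v_j,z^1_1\in L^4(\R^4)$).

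Next, for $s\in\overline{\R^3_+}$ set $\Phi_\mu(s):=I(\sqrt{s_1}\,v_1,\sqrt{s_2}\,v_2,\sqrt{s_3}\,v_3)=\tfrac12\,b\cdot s-\tfrac14\,s^{\top}M_\mu s$, where $b_i:=\|v_i\|_i^2>0$ and $M_\mu:=M_B[v_1,v_2,v_3]$ as in \eqref{Matrix-Def-2}. A direct computation identifies the interior critical points $\bar s$ of $\Phi_\mu$ precisely with the points where $(\sqrt{\bar s_i}\,v_i)\in\mathcal{M}$, and then $I(\sqrt{\bar s_i}\,v_i)=\Phi_\mu(\bar s)$. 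Since $M_\mu\to D:=\mathrm{diag}(|z^1_1|_4^4)\oplus M_B[\omega_2^{23},\omega_3^{23}]$ as $\mu\to0$ and $D$ is positive definite, $M_\mu$ is positive definite for $\mu$ small; hence $\Phi_\mu$ is strictly concave with unique critical point $\bar s=M_\mu^{-1}b$, which converges to $D^{-1}b\in\R^3_{>0}$ (its last two coordinates tend to $(1,1)$ because $(\omega_2^{23},\omega_3^{23})$ solves the $\{2,3\}$-subsystem). Thus for $\mu$ small, $\bar s\in\R^3_{>0}$ and $A\le\Phi_\mu(\bar s)=\tfrac14\,b^{\top}M_\mu^{-1}b$.

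It remains to compare $\tfrac14\,b^{\top}M_\mu^{-1}b$ with $A_1+A_{23}$. Writing $M_\mu=D+E_\mu$, where $E_\mu$ is symmetric and carries only the strictly positive $(1,2)$ and $(1,3)$ entries of $M_\mu$, one checks that $\tfrac14\,b^{\top}D^{-1}b=A_1+A_{23}$ (the block $M_B[\omega_2^{23},\omega_3^{23}]$ contributes exactly $A_{23}$, since $(1,1)$ is its associated critical point), and the Neumann expansion gives
\begin{equation*}
b^{\top}M_\mu^{-1}b=b^{\top}D^{-1}b-(D^{-1}b)^{\top}E_\mu(D^{-1}b)+O(\|E_\mu\|^2).
\end{equation*}
Because $D^{-1}b>0$ componentwise while the nonzero entries of $E_\mu$ are $\asymp\eta(\mu):=\max_{i\neq j}\beta_{ij}\int_{\R^4}|v_i|^2|v_j|^2\to0$, the middle term is $\gtrsim\eta(\mu)$ and dominates the $O(\eta(\mu)^2)$ remainder for $\mu$ small; hence $A\le\tfrac14\,b^{\top}M_\mu^{-1}b<A_1+A_{23}$. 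For the second inequality $A<A_1+A_2+A_3$ I would run the identical scheme with $v_i=z^i_{\mu_i}$ and scales $\mu_1\to0$, $\mu_2=1$, $\mu_3\to\infty$ (so that all three pairwise interactions vanish by the same Hölder split), now with $D=\mathrm{diag}(|z^1_1|_4^4,|z^2_1|_4^4,|z^3_1|_4^4)$, $\tfrac14\,b^{\top}D^{-1}b=\sum_{i=1}^3A_i$, and $E_\mu$ carrying all three off-diagonal entries.

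I expect the main obstacle to be twofold. First, the vanishing of the cross interactions under scale separation: this is comfortably handled by Hölder and tails of convergent $L^4(\R^4)$ integrals — which is where $2^*=4$, i.e.\ $N=4$, is convenient — but the split must be placed so as to exploit both concentration scales simultaneously. Second, and more delicate, controlling the sign of $b^{\top}M_\mu^{-1}b-b^{\top}D^{-1}b$: one must ensure that the first-order gain coming from the strictly negative couplings genuinely beats the second-order perturbation of $M_\mu^{-1}$, which is exactly where $\beta_{ij}>0$ and the positive definiteness supplied by Lemma~\ref{1021-7} (under $\beta_{ij}<\beta_4\le\beta_3$) are essential.
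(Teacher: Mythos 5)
Your proof is correct, but it takes a genuinely different route from the paper's. The paper tests with the $A_{12}$-minimizer $(\omega_1^{12},\omega_2^{12})$ together with the scalar minimizer $\omega_3$ at a fixed scale: since $\beta_{13},\beta_{23}>0$ enter $I$ with a negative sign, $I(\sqrt{t_1}\,\omega_1^{12},\sqrt{t_2}\,\omega_2^{12},\sqrt{t_3}\,\omega_3)\le f_{12}(t_1,t_2)+f_3(t_3)$, and Lemma~\ref{1021-7} identifies $\max f_{12}=A_{12}$; the smallness $\beta_{ij}<\beta_4$ is then used only to show that the global maximum of the three-variable function is interior, so the maximizer projects onto $\mathcal{M}$ and the strictly negative cross terms give $A<A_{12}+A_3$, while $A<A_1+A_2+A_3$ follows at once from $A_{12}<A_1+A_2$, i.e.\ \eqref{1019-8}. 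You instead separate scales (a bubble $z^1_\mu$ concentrating away from the fixed $A_{23}$-minimizer), kill the cross interactions by the H\"older/tail argument, and obtain strictness from a perturbation of $M_\mu^{-1}$; for $A<A_1+A_2+A_3$ you use three separated bubbles, which has the advantage of not invoking \eqref{1019-8} at all, whereas the paper needs no scale separation and no matrix expansion. Three small remarks: attainment of $A_{23}$ is not supplied by \eqref{1019-8} or Lemma~\ref{1021-7} (the latter assumes it) but by the two-component result of Chen--Zou, which is the same external input the paper uses; your $\eta(\mu)$ should be the maximum over the entries of $E_\mu$ only, since as literally written it includes the fixed, non-vanishing $(2,3)$-interaction; and the Neumann expansion can be bypassed, because at the critical point $\bar s\to(1,1,1)$ one has directly
\begin{equation*}
\Phi_\mu(\bar s)=\Bigl[\tfrac12\,b\cdot\bar s-\tfrac14\,\bar s^{\top}D\,\bar s\Bigr]-\tfrac14\,\bar s^{\top}E_\mu\,\bar s\le\tfrac14\,b^{\top}D^{-1}b-\tfrac14\,\bar s^{\top}E_\mu\,\bar s<A_1+A_{23},
\end{equation*}
which is in effect the paper's mechanism written in matrix form.
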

\begin{proof}[\bf Proof]
Without loss of generality, we only show that $A<A_{12}+A_3$. Assume that $I_{12}(\omega^{12}_1,\omega^{12}_2)=A_{12}$ and consider
\begin{align*}
g(t_1,t_2,t_3):&=I(\sqrt{t_1}\omega^{12}_1,\sqrt{t_2}\omega^{12}_2,\sqrt{t_3}\omega_3)\\
& =\frac{1}{2}\sum_{i=1}^2t_i\|\omega^{12}_i\|_i^2+\frac{1}{2}t_3\|\omega_3\|_3^2-\frac{1}{4}\sum_{i=1}^2t_i^{2}
\int_{\mathbb{R}^4}|\omega^{12}_i|^{4}-\frac{1}{4}t_3^{2}\int_{\mathbb{R}^4}|\omega_3|^{4}\\
&\quad-\frac{1}{2}t_1t_2\int_{\mathbb{R}^4}\beta_{12}|\omega^{12}_1|^2|\omega^{12}_2|^2-\frac{1}{2}t_1t_3
\int_{\mathbb{R}^4}\beta_{13}|\omega^{12}_1|^2|\omega_3|^2
-\frac{1}{2}t_2t_3\int_{\mathbb{R}^4}\beta_{23}|\omega^{12}_2|^2|\omega_3|^2 \\
& \leq f_{12}(t_1,t_2)+f_3(t_3),
\end{align*}
where
\begin{equation}
f_3(t_3):=\frac{1}{2}t_3\|\omega_3\|_3^2-\frac{1}{4}t_3^{2}\int_{\mathbb{R}^4}|\omega_3|^{4}.
\end{equation}
It is easy to see that $\max_{t_{3}>0}f_{3}(t_{3})=A_3$. By Lemma \ref{1021-7} we have
$\max_{t_{1},t_{2}\geq 0}f_{12}(t_{1},t_{2})=f_{12}(1,1)=A_{12}$.
Therefore,
\begin{equation}\label{tab1123-1}
\max_{t_{1},t_{2},t_3\geq0}I(\sqrt{t_1}\omega^{12}_1,\sqrt{t_2}\omega^{12}_2,\sqrt{t_3}\omega_3)=
\max_{t_{1},t_{2},t_3\geq0}g(t_{1},t_{2},t_3)<A_{12}+A_3.
\end{equation}
By \eqref{1019-8} we know that $A_{12}<A_1+A_2$, then
\begin{equation}\label{tab1123-1-2}
\max_{t_{1},t_{2},t_3\geq 0}I(\sqrt{t_1}\omega^{12}_1,\sqrt{t_2}\omega^{12}_2,\sqrt{t_3}\omega_3)<A_{12}+A_3<A_1+A_2+A_3.
\end{equation}

To obtain the desired result, it suffices to show that there exist $\overline{t}_1,\overline{t}_2,\overline{t}_3>0$ such that $$(\sqrt{\overline{t}_1}\omega^{12}_1,\sqrt{\overline{t}_2}\omega^{12}_2,\sqrt{\overline{t}_3}\omega_3)\in\mathcal{M}.$$ If so,
it follows from \eqref{tab1123-1} that
\begin{equation}
A\leq \left(\sqrt{\overline{t}_1}\omega^{12}_1,\sqrt{\overline{t}_2}\omega^{12}_2,\sqrt{\overline{t}_3}\omega_3\right)\leq
\max_{t_{1},t_{2},t_3\geq0}I\left(\sqrt{t_1}\omega^{12}_1,\sqrt{t_2}\omega^{12}_2,\sqrt{t_3}\omega_3\right)<A_{12}+A_3.
\end{equation}
Thanks to \eqref{1019-8}, $A_{12}<A_1+A_2$ and then $A<A_1+A_2+A_3$. The following proof is similar to that of Lemma \ref{1021-7}.

{\bf Step 1.} Similarly as above, $g$ has a global maximum point $(\overline{t}_1,\overline{t}_2,\overline{t}_3)$ in $\overline{\mathbb{R}^{3}_{+}}$. Note that $\partial_{i}g(\overline{t}_1,\overline{t}_2,\overline{t}_3)\leq 0$ if $\overline{t}_i=0$ and $\partial_{i}g(\overline{t}_1,\overline{t}_2,\overline{t}_3)= 0$ if $\overline{t}_i>0$.
Thus,
\begin{equation}\label{1123-2-1}
\overline{t}_i\|\omega^{12}_i\|_i^2=\sum_{k=1}^3M_B[\omega^{12}_1,\omega^{12}_2,\omega_3]_{ik}\overline{t}_i\overline{t}_k, ~i=1,2,
\end{equation}
\begin{equation}\label{1123-2-2}
\overline{t}_3\|\omega_3\|_3^2=\sum_{k=1}^3M_B[\omega^{12}_1,\omega^{12}_2,\omega_3]_{3k}\overline{t}_3\overline{t}_k.
\end{equation}
It is easy to see that the above two equations are still true when $\overline{t}_i=0$. Then \eqref{1123-2-1} and \eqref{1123-2-2} imply that
\begin{equation}\label{tab1123-3}
I\left(\sqrt{\overline{t}_1}\omega^{12}_1,\sqrt{\overline{t}_2}\omega^{12}_2,\sqrt{\overline{t}_3}\omega_3\right)=
\frac{1}{4}\left(\overline{t}_1\|\omega^{12}_1\|_1^2+\overline{t}_2\|\omega^{12}_2\|_2^2+\overline{t}_3\|\omega_3\|_3^2\right)
\end{equation}
Combining this with \eqref{tab1123-1-2} we have
\begin{equation}\label{tab1123-2}
\overline{t}_1\|\omega^{12}_1\|_1^2+\overline{t}_2\|\omega^{12}_2\|_2^2+\overline{t}_3\|\omega_3\|_3^2\leq 4(A_1+A_2+A_3).
\end{equation}

{\bf Step 2.} $\overline{t}_i>0, i=1,2,3$. By contradiction, we assume that $\overline{t}_1=0$, and $(0, \overline{t}_2, \overline{t}_3)$ is the maximum point. Consider the following function
\begin{align}\label{tem1125-2}
h(s):&=I(\sqrt{s}\omega^{12}_1,\sqrt{\overline{t}_2}\omega^{12}_2,\sqrt{\overline{t}_3}\omega_3) \nonumber\\
   &=\frac{1}{2}s\|\omega^{12}_1\|_1^2-\frac{1}{2}\sum_{i=2}^{3}M_B[\omega^{12}_1,\omega^{12}_2,\omega_3]_{1i}s\overline{t}_i
-\frac{1}{4}s^2\int_{\mathbb{R}^4}|\omega^{12}_1|^{4}\nonumber\\
&+ \frac{1}{2}\overline{t}_2\|\omega^{12}_2\|_2^2+\frac{1}{2}\overline{t}_3\|\omega_3\|_3^2-\frac{1}{4}\sum_{i,j=2}^3
M_B[\omega^{12}_1,\omega^{12}_2,\omega_3]_{ij}\overline{t}_i\overline{t}_j.
\end{align}
On the other hand, by \eqref{tab1123-2} we see that
\begin{align}
\sum_{i=2}^{3}M_B[\omega^{12}_1,\omega^{12}_2,\omega_3]_{1i}\overline{t}_i & =\overline{t}_2\int_{\mathbb{R}^4}
\beta_{12}|\omega^{12}_1|^{2}|\omega^{12}_2|^{2}+\overline{t}_3\int_{\mathbb{R}^4}
\beta_{13}|\omega^{12}_1|^{2}|\omega_3|^{3}\nonumber\\
& \leq \frac{\beta_3}{S^2}\overline{t}_2\|\omega^{12}_1\|_1^2\|\omega^{12}_2\|_2^2
+\frac{\beta_3}{S^2}\overline{t}_3\|\omega^{12}_1\|_1^2\|\omega_3\|_3^2\nonumber\\
& \leq 4\frac{\beta_3}{S^2}\|\omega^{12}_1\|_1^2(A_1+A_2+A_3)\leq \frac{1}{4}\|\omega^{12}_1\|_1^2.
\end{align}
Thus,
\begin{align}\label{tem1125-3}
\frac{1}{2}s&\|\omega^{12}_1\|_1^2-\frac{1}{2}\sum_{i=2}^{3}M_B[\omega^{12}_1,\omega^{12}_2,\omega_3]_{1i}s\overline{t}_i
-\frac{1}{4}s^2\int_{\mathbb{R}^4}|\omega^{12}_1|^{4}\nonumber\\
&=\frac{1}{2}s\left(\|\omega^{12}_1\|_1^2-\sum_{i=2}^{3}M_B[\omega^{12}_1,\omega^{12}_2,\omega_3]_{1i}\overline{t}_i
-\frac{1}{4}s\int_{\mathbb{R}^4}|\omega^{12}_1|^{4} \right)\nonumber\\
& \geq \frac{1}{2}s\left(\frac{3}{4}\|\omega^{12}_1\|_1^2-\frac{1}{4}s\int_{\mathbb{R}^4}|\omega^{12}_1|^{4} \right)>0,
\end{align}
when $s>0$ small enough. Therefore, by \eqref{tem1125-2} and \eqref{tem1125-3} we have
\begin{equation}
I\left(\sqrt{s}\omega^{12}_1,\sqrt{\overline{t}_2}\omega^{12}_2,\sqrt{\overline{t}_3}\omega_3\right)>
I\left(0,\sqrt{\overline{t}_2}\omega^{12}_2,\sqrt{\overline{t}_3}\omega_3\right),
\end{equation}
which is a contradiction.
\end{proof}

\begin{lemma}\label{estimation3}
Suppose that $\beta_{ij}\in (0, \beta_4)$  and let $(u_1^n,u_2^n,u_3^n)\in \mathcal{M}$ be a minimizing sequence of $A$ with $I(u_1^n,u_2^n,u_3^n)\leq 2\overline{C}$, and $(u_1^n,u_2^n,u_3^n)\rightharpoonup (0,0,0)$ weakly in $\mathbb{D}$. Then for any $\sigma>0$ and $\delta\in (-\sigma,0)\cup (0,\sigma)$, there exists $\rho\in (-\delta,0)\cup (0,\delta)$ such that, up to a subsequence,
\begin{equation}\label{estimation3-1}
\text{ either } ~\sum_{i=1}^3\int_{B_{\sigma+\rho}}|\nabla u_i^n|^2\rightarrow 0 ~\text{ or }
~\sum_{i=1}^3\int_{\mathbb{R}^N \backslash B_{\sigma+\rho}}|\nabla u_i^n|^2\rightarrow 0.
\end{equation}
\end{lemma}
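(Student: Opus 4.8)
The plan is to argue by contradiction via a concentration--compactness truncation at a carefully chosen radius. By Lemma~\ref{PS-sequence} we may assume that the minimizing sequence is also a Palais--Smale sequence: $I(\mathbf u_n)\to A$, $I'(\mathbf u_n)\to\mathbf 0$, $\mathbf u_n\rightharpoonup\mathbf 0$, and $\sum_i\int_{\mathbb R^N}|\nabla u_i^n|^2$ is bounded (Hardy's inequality). Set $g_n(r):=\sum_{i=1}^3\int_{B_r}|\nabla u_i^n|^2$ and $\widetilde g_i^n(r):=\int_{B_r}|u_i^n|^{2^*}$; these are nondecreasing and uniformly bounded in $r$, so after a diagonal extraction they converge for every rational $r>0$ to nondecreasing limits $g,\widetilde g_1,\widetilde g_2,\widetilde g_3$, each with at most countably many points of discontinuity. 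Since $\{\sigma+\rho:0<|\rho|<|\delta|\}$ is uncountable, I choose $\rho$ in this set so that $R:=\sigma+\rho$ is a common continuity point of all four limit functions. Using monotonicity to control the (possibly irrational) endpoints $R\pm\epsilon$, one then gets: for every $\eta>0$ there is $\epsilon\in(0,R)$ such that, along a further subsequence, $\sum_i\int_{B_{R+\epsilon}\setminus B_{R-\epsilon}}\bigl(|\nabla u_i^n|^2+|u_i^n|^{2^*}\bigr)<\eta$ for all $n$.

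Next, fix a cut-off $\psi\in C_c^\infty(B_{R+\epsilon})$ with $\psi\equiv1$ on $B_{R-\epsilon}$, $0\le\psi\le1$, and decompose $u_i^n=v_i^n+w_i^n$ with $v_i^n=\psi u_i^n$ (supported in $B_{R+\epsilon}$) and $w_i^n=(1-\psi)u_i^n$ (supported in $\mathbb R^N\setminus B_{R-\epsilon}$). All the interaction between $\mathbf v_n$ and $\mathbf w_n$ lives on the annulus $B_{R+\epsilon}\setminus B_{R-\epsilon}$, on which the gradient and critical energies are $<\eta$ and the $L^2$-norm tends to $0$ (Rellich, since $\mathbf u_n\rightharpoonup\mathbf 0$); letting $n\to\infty$ and then $\eta\to0$ yields
\begin{equation*}
I(\mathbf u_n)=I(\mathbf v_n)+I(\mathbf w_n)+o(1),\qquad 0=\partial_iI(\mathbf u_n)u_i^n=P_i(\mathbf v_n)+P_i(\mathbf w_n)+o(1),
\end{equation*}
where $P_i(\mathbf u):=\|u_i\|_i^2-\sum_j\beta_{ij}\int_{\mathbb R^N}|u_i|^{2^*/2}|u_j|^{2^*/2}$. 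By Lemma~\ref{Estimate2}, $|u_i^n|_{2^*}^2\ge C_2>0$ for each $i$, and since $|u_i^n|_{2^*}^{2^*}=|v_i^n|_{2^*}^{2^*}+|w_i^n|_{2^*}^{2^*}+o(1)$ with $|v_i^n|_{2^*}^{2^*}\to\widetilde g_i(R)$, at least one of $\liminf_n|v_i^n|_{2^*}$, $\liminf_n|w_i^n|_{2^*}$ is positive for every $i$. The point of choosing $R$ at continuity points, together with the passage to a Palais--Smale sequence and the bound $A<A_1+A_2+A_3$ of Lemma~\ref{Energyestimates4}, is that one moreover gets $P_i(\mathbf v_n)=o(1)$ and $P_i(\mathbf w_n)=o(1)$ for all $i$: no single component is genuinely cut in two by $\partial B_R$, and no ``dust'' of gradient energy is left on either side.

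Define $\Gamma_1:=\{i:\liminf_n|v_i^n|_{2^*}>0\}$ and $\Gamma_2:=\{i:\liminf_n|w_i^n|_{2^*}>0\}$ (after passing to a subsequence making these limits); then $\Gamma_1\cup\Gamma_2=\{1,2,3\}$. If the conclusion of the lemma failed, $\sum_i\int_{B_R}|\nabla u_i^n|^2$ and $\sum_i\int_{\mathbb R^N\setminus B_R}|\nabla u_i^n|^2$ would both stay bounded away from $0$ along every subsequence, forcing $\Gamma_1,\Gamma_2\ne\emptyset$. For $i\in\Gamma_1$ we have $P_i(\mathbf v_n)=o(1)$ and the terms of $\mathbf v_n$ with index outside $\Gamma_1$ are $o(1)$, so $\mathbf v_n|_{\Gamma_1}$ lies asymptotically on $\mathcal M_{\Gamma_1}$; since $\beta_{ij}\in(0,\beta_4)$ and $|v_i^n|_{2^*}^2$ is bounded below on $\Gamma_1$, the matrix $M_B[\mathbf v_n|_{\Gamma_1}]$ is strictly diagonally dominant, hence positive definite, so (as in Lemma~\ref{1021-7}) $\mathbf v_n|_{\Gamma_1}$ projects onto $\mathcal M_{\Gamma_1}$ with scaling factors tending to $1$, whence $I(\mathbf v_n)\ge I_{\Gamma_1}(\mathbf v_n|_{\Gamma_1})+o(1)\ge A_{\Gamma_1}-o(1)$, and symmetrically $I(\mathbf w_n)\ge A_{\Gamma_2}-o(1)$. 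Therefore $A=\lim_n\bigl(I(\mathbf v_n)+I(\mathbf w_n)\bigr)\ge A_{\Gamma_1}+A_{\Gamma_2}$. But for any cover $\Gamma_1\cup\Gamma_2=\{1,2,3\}$ with both parts nonempty one has $A_{\Gamma_1}+A_{\Gamma_2}>A$: if one part equals $\{1,2,3\}$ this is clear because $A_{\Gamma}>0$; for a partition $\{i\}\cup\{j,k\}$ it is exactly $A<A_{jk}+A_i$ from Lemma~\ref{Energyestimates4}; and for an overlapping pair such as $\{i,j\}\cup\{i,k\}$ or $\{i,j\}\cup\{j,k\}$ it follows from $A_{ik}>A_k$, $A_{jk}>A_k$ (Lemma~\ref{1021-7}) combined with $A<A_{ij}+A_k$ (Lemma~\ref{Energyestimates4}). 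This contradiction proves the lemma.

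The main obstacle is the truncation/splitting step: one must show that a thin annulus around a generic radius carries vanishing gradient and critical energy along the (Palais--Smale, weakly null) sequence, and then that the truncated pieces $\mathbf v_n$ and $\mathbf w_n$ not only split the energy functional additively up to $o(1)$ but are \emph{asymptotically admissible} for the sub-Nehari manifolds $\mathcal M_{\Gamma_1},\mathcal M_{\Gamma_2}$ (equivalently, no single component is distributed across $\partial B_R$ and no gradient ``dust'' survives inside either region), so that the projections of Lemma~\ref{1021-7} apply with factors tending to $1$. This is precisely where one exploits $\mathbf u_n\rightharpoonup\mathbf 0$, the upgrade to a Palais--Smale sequence, the $L^{2^*}$-lower bound of Lemma~\ref{Estimate2}, and the strict energy inequality $A<A_1+A_2+A_3$ of Lemma~\ref{Energyestimates4}; once it is in place, the combinatorial conclusion above is forced.
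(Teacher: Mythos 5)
Your overall strategy coincides with the paper's: split the weakly null minimizing Palais--Smale sequence at a well-chosen radius into an inner and an outer piece with negligible interaction, show that each piece satisfies the Nehari identities up to $o(1)$, use the lower $L^{2^*}$ bound of Lemma \ref{Estimate2} to see that the indices of nonvanishing components of the two pieces cover $\{1,2,3\}$, project onto the corresponding sub-Nehari manifolds with factors tending to $1$, and contradict the strict inequalities of Lemma \ref{1021-7} and Lemma \ref{Energyestimates4}. The difference is the splitting device: the paper chooses $\rho$ by an averaging argument so that the traces of $u_i^n$ on $(\sigma+\rho)\mathbb{S}$ are bounded in $H^1$ of that sphere, hence converge to $0$ strongly in $H^{1/2}$ (compact trace embedding plus $u_i^n\rightharpoonup 0$), and then glues in harmonic extensions on adjacent annuli; this yields the exact splitting $\|u_i^n\|_i^2=\|u_{i,1}^n\|_i^2+\|u_{i,2}^n\|_i^2+o(1)$ and the exact identities \eqref{26-1} with no cross terms. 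You instead take $\sigma+\rho$ to be a common continuity point of monotone limit functions and cut with a smooth $\psi$ on a thin annulus of energy less than $\eta$, paying an extra diagonal extraction in $\eta$; this is a legitimate standard alternative. Your endgame, ruling out every cover $\Gamma_1\cup\Gamma_2=\{1,2,3\}$ with both parts nonempty by $A<A_{ij}+A_k$, $A_{ij}>\max\{A_i,A_j\}$ and $A_{\Gamma}>0$, is a compact repackaging of the paper's Cases 1--3 and invokes exactly the same lemmas.

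One step is asserted rather than proved, and with a misplaced justification: that your $P_i(\mathbf{v}_n)=o(1)$ and $P_i(\mathbf{w}_n)=o(1)$ hold \emph{separately} for each $i$ (only their sum is immediate from $\mathbf{u}_n\in\mathcal{M}$). Neither the continuity-point choice of the radius nor the bound $A<A_1+A_2+A_3$ yields this; what does is testing $\partial_iI(\mathbf{u}_n)\to 0$ against the localized functions $\psi^2u_i^n$ and $(1-\psi)^2u_i^n$, using Rellich compactness on the bounded annulus (so $u_i^n\to 0$ in $L^2$ there, which kills the $\nabla\psi$ cross terms and the Hardy contribution of the annulus) together with the smallness of the annular $L^{2^*}$ mass; this is precisely what the paper's harmonic-extension construction packages as \eqref{26-1}. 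Relatedly, your claim that both gradient energies being bounded below forces $\Gamma_1,\Gamma_2\neq\emptyset$ is true only \emph{via} these approximate identities combined with the Sobolev inequality (vanishing $L^{2^*}$ mass on one side plus the identity forces vanishing norm there), so the identities must be established first. With these points written out, your argument is correct.
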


\begin{proof}[\bf Proof]
Without loss of generality, we only consider the case $\delta\in (0,\sigma)$. Since $(u_1^n,u_2^n,u_3^n)\in \mathcal{M}$ is a minimizing sequence of $A$, then $(u_1^n,u_2^n,u_3^n)$ is uniformly bounded in $\mathbb{D}$.  By Lemma \ref{PS-sequence}, $I'(u_1^n,u_2^n,u_3^n)\rightarrow 0$ as $n\rightarrow \infty$. In the following, we adopt some idea in \cite{Chen-Zou2015,Terracini1996} to prove \eqref{estimation3-1}.

Let $\mathbb{S}$ be the unit sphere of $\R^N$. Since
\begin{equation}
\int_{\sigma}^{\sigma+\delta}\mathrm{d}\rho\int_{\rho\mathbb{S}}\sum_{i=1}^3|\nabla u_i^n|^2=\int_{\sigma\leq |x|\leq\sigma+\delta}\sum_{i=1}^3|\nabla u_i^n|^2,
\end{equation}
there exists $\rho\in (0,\delta)$ such that
\begin{equation}
\int_{(\sigma+\rho)\mathbb{S}}\sum_{i=1}^3|\nabla u_i^n|^2 \leq \frac{3}{\delta}\int_{\sigma\leq |x|\leq\sigma+\delta}\sum_{i=1}^3|\nabla u_i^n|^2
\end{equation}
holds for infinitely many $n$. Therefore, $u_i^n$ is uniformly bounded in $H^1((\sigma+\rho)\mathbb{S})$, $i=1,2,3$. Note that $H^1((\sigma+\rho)\mathbb{S})$ is compactly embedded into $H^{\frac{1}{2}}((\sigma+\rho)\mathbb{S})$. Thus, we can assume that $u_i^n\rightarrow u_i$ strongly in $H^{\frac{1}{2}}((\sigma+\rho)\mathbb{S})$, $i=1,2,3$. On the other hand, by the continuity of the embedding $H^1(B_{\sigma+\rho})\hookrightarrow H^{\frac{1}{2}}((\sigma+\rho)\mathbb{S})$, we know that $u_i=0$, $i=1,2,3$, which means that
\begin{equation}\label{1021-2-2}
u_i^n\rightarrow 0 ~\text{ strongly in } H^{\frac{1}{2}}((\sigma+\rho)\mathbb{S}), ~i=1,2,3.
\end{equation}
Let $\omega^n_{i,j}, i=1,2,3,j=1,2$, be the solutions to the following problems
\begin{equation}
\begin{cases}
\Delta \omega^n_{i,1}=0 ~~ \text{ in } B_{\sigma+\delta}\setminus B_{\sigma+\rho},\\
\omega^n_{i,1}=0 ~~  \text{ on } (\sigma+\delta)\mathbb{S},\\
\omega^n_{i,1}=u^n_1 ~~  \text{ on } (\sigma+\rho)\mathbb{S},
\end{cases}
~~
\begin{cases}
\Delta \omega^n_{i,2}=0 ~~ \text{ in } B_{\sigma+\rho}\setminus B_{\sigma-\delta},\\
\omega^n_{i,2}=0 ~~  \text{ on } (\sigma-\delta)\mathbb{S},\\
\omega^n_{i,2}=u^n_1 ~~  \text{ on } (\sigma+\rho)\mathbb{S}.
\end{cases}
\end{equation}
Combining \eqref{1021-2-2} with continuity of the inverse Laplace operator from $H^{\frac{1}{2}}(\partial\Omega)$ to $H^1(\Omega)$, we see that $\omega^n_{i,1}\rightarrow 0$ strongly in $H^1(B_{\sigma+\delta}\backslash B_{\sigma+\rho})$
and $\omega^n_{i,2}\rightarrow 0$ strongly in $H^1(B_{\sigma+\rho}\backslash B_{\sigma-\delta})$, i=1, 2, 3. For any $i=1, 2, 3$, define
\begin{equation}
u_{i,1}^n(x)=
\begin{cases}
u_i^n(x)  ~~ \text{ if } x\in B_{\sigma+\rho},\\
\omega_{i,1}^n(x)  ~~ \text{ if } x\in B_{\sigma+\delta}\setminus B_{\sigma+\rho},\\
0 ~~\text{ elsewhere},
\end{cases}
~~u_{i,2}^n(x)=
\begin{cases}
0  ~~ \text{ if } x\in B_{\sigma-\delta},\\
\omega_{i,2}^n(x)  ~~ \text{ if } x\in B_{\sigma+\rho}\setminus B_{\sigma-\delta},\\
u_i^n(x) ~~\text{ elsewhere}.
\end{cases}
\end{equation}
Obviously,
\begin{equation}
\|u_i^n\|_i^2= \|u_{i,1}^n\|_i^2+\|u_{i,2}^n\|_i^2+o(1).
\end{equation}
Moreover,
\begin{equation}\label{26-1}
I'(u_{1,1}^n,u_{2,1}^n,u_{3,1}^n)\mathbf{u}^n_{1,i}=0, \quad I'(u_{1,2}^n,u_{2,2}^n,u_{3,2}^n)\mathbf{u}^n_{2,i}=0,
\end{equation}
where
\begin{equation}
(\mathbf{u}^n_{1,i})_j=0 ~\text{ when } j\neq i, \quad (\mathbf{u}^n_{1,i})_j=u_{i,1}^n ~\text{ when } j= i,
\end{equation}
\begin{equation}
(\mathbf{u}^n_{2,i})_j=0 ~\text{ when } j\neq i, \quad (\mathbf{u}^n_{2,i})_j=u_{i,2}^n ~\text{ when } j= i.
\end{equation}

Next, we claim that
\begin{equation}\label{26-3}
\text{ either } \lim_{n\rightarrow\infty}\sum_{i=1}^3\|u_{i,1}^n\|^2=0 \text{ or }
\lim_{n\rightarrow\infty}\sum_{i=1}^3\|u_{i,2}^n\|^2=0.
\end{equation}
By contradiction, if \eqref{26-3} does not hold, then up to a subsequence,
\begin{equation}\label{26-4}
\text{ both } \sum_{i=1}^3\lim_{n\rightarrow\infty}\|u_{i,1}^n\|^2>0 \text{ and }
\sum_{i=1}^3\lim_{n\rightarrow\infty}\|u_{i,2}^n\|^2>0.
\end{equation}
Set
\begin{equation}
a_i:=\lim_{n\rightarrow\infty}\|u_{i,1}^n\|^2, \quad \mathbf{a}:=(a_1, a_2, a_3).
\end{equation}
It follows from \eqref{26-4} that $\sum_{i=1}a_i>0$, which implies that there exists at least one component of $\mathbf{a}$ is positive. Then we consider the following several cases.\\

{\bf Case 1.} Three components of $\mathbf{a}$ are positive, that is, $a_i=\lim_{n\rightarrow \infty}\|u_{i,1}^n\|^2>0$ for any $i=1, 2, 3$.

Note that norms $\|\cdot\|_{i}, i=1,2,3$ are equivalent to $\|\cdot\|$. By \eqref{26-1}, for any $i=1,2,3$ we have
\begin{equation}\label{26-5}
\|u_{i,1}^n\|_i^2=\sum_{j=1}^3\beta_{ij}\int_{\R^4}|u_{i,1}^n|^{2}|u_{j,1}^n|^{2}+o(1),
\end{equation}
which yields that
\begin{equation*}
\liminf_{n\rightarrow \infty}|u_{i,1}^n|_{4}^{4}>0, \text{ for any } i=1,2,3.
\end{equation*}
We claim that there exist $s_i^n>0$,$i=1,2,3$ such that
\begin{equation}\label{tem0511-3}
 \left(s_1^nu_{1,1}^n,s_2^nu_{2,1}^n,s_3^nu_{3,1}^n\right)\in \mathcal{M} \text{ and }  \lim_{n\rightarrow \infty}s_i^n=1.
\end{equation}
In order to prove this claim, we consider the following system
\begin{equation}\label{tem1125-6}
\sum_{j=1}^3M_B[u_{1,1}^n,u_{2,1}^n,u_{3,1}^n]_{ij}s_j=\|u_{i,1}^n\|_i^2, ~i=1,2,3.
\end{equation}
By the proof of Lemma \ref{PS-sequence} we see that
\begin{equation}\label{tem1127-1}
M_B[u_{1,1}^n,u_{2,1}^n,u_{3,1}^n]_{ii}-\sum_{j\neq i}\left|M_B[u_{1,1}^n,u_{2,1}^n,u_{3,1}^n]_{ij}\right|\geq C, ~i=1,2,3,
\end{equation}
which implies that the matrix $M_B[u_{1,1}^n,u_{2,1}^n,u_{3,1}^n]$ is strictly diagonally dominant.
Thus, by the Gershgorin circle theorem we know that the matrix $M_B[u_{1,1}^n,u_{2,1}^n,u_{3,1}^n]$ is positive definite, and
\begin{equation}\label{tem1127-2}
\det(M_B[u_{1,1}^n,u_{2,1}^n,u_{3,1}^n])\geq C,
\end{equation}
where the constant $C$ is independent on $n$.
Therefore system \eqref{tem1125-6} has a unique solution $(s_1^n, s_2^n, s_3^n)$, and so $\left(s_1^nu_{1,1}^n,s_2^nu_{2,1}^n,s_3^nu_{3,1}^n\right)\in \mathcal{M}$.
Moreover, by Cramer's rule we know that $s_i^n$ are uniformly bounded. Set $\overline{s}_i:=\lim_{n\rightarrow\infty}s_i^n$,
and
\begin{equation}\label{tem1127-3}
b_{ij}:=\lim_{n\rightarrow\infty}M_B[u_{1,1}^n,u_{2,1}^n,u_{3,1}^n]_{ij}.
\end{equation}
It follows from \eqref{tem1127-1} that
\begin{equation}
b_{ii}-\sum_{j\neq i}\left|b_{ij} \right|\geq C.
\end{equation}
Hence, by the Gershgorin circle theorem we see that the matrix $B$ is positive definite and $\det(B)\neq 0$, where
\begin{equation}
B:= (b_{ij})_{1\leq i,j\leq 3}.
\end{equation}
By \eqref{26-5} and \eqref{tem1125-6} we have
\begin{equation}\label{tem0511-5}
\sum_{j=1}^3b_{ij}(s_j-1)=0, ~i=1,2,3.
\end{equation}
We deduce form $\det(B)\neq 0$ that $\overline{s}_i=1, i=1,2,3$. So, \eqref{tem0511-3} is true. Hence,
\begin{align*}
A & = \lim_{n\rightarrow \infty} I(\mathbf{u}_n)=\frac{1}{4}\lim_{n\rightarrow \infty}(\|u_1^n\|_1^2+\|u_2^n\|_2^2+\|u_3^n\|_3^2)  \\
   & = \frac{1}{4}\lim_{n\rightarrow \infty}\sum_{j=i}^3|s_i^n|^2\|u_{i,1}^n\|_i^2+\frac{1}{4}\lim_{n\rightarrow \infty}\sum_{j=i}^3\|u_{i,2}^n\|_i^2\\
   & > \frac{1}{4}\lim_{n\rightarrow \infty}\sum_{j=i}^3|s_i^n|^2\|u_{i,1}^n\|_i^2\geq A,
\end{align*}
which is a contradiction. Therefore, Case 1 is impossible.\\

{\bf Case 2.} Only two components of $\mathbf{a}$ are positive. Without loss of generality, we assume that $a_1>0, a_2>0, a_3=0$, that is $\lim_{n\rightarrow \infty}\|u_{1,1}^n\|^2>0$, $\lim_{n\rightarrow \infty}\|u_{2,1}^n\|^2>0$, $\lim_{n\rightarrow \infty}\|u_{3,1}^n\|^2=0$.

Based on Lemma \ref{Estimate2}, it is standard to see that $\lim_{n\rightarrow \infty}\|u_3^n\|^2>0$. Then we have
\begin{equation}
\lim_{n\rightarrow \infty}\|u_{3,2}^n\|^2=\left(\lim_{n\rightarrow \infty}\|u_3^n\|^2-\lim_{n\rightarrow \infty}\|u_{3,1}^n\|^2\right)>0.
\end{equation}

{\bf Case 2.1} $\lim_{n\rightarrow \infty}\|u_{1,2}^n\|^2=0$ and $\lim_{n\rightarrow \infty}\|u_{2,2}^n\|^2=0$.

By \eqref{26-1} we get that
\begin{equation}
\|u_{3,2}^n\|_3^2= \int_{\R^4}|u_{3,2}^n|^{4}+\beta_{31}\int_{\R^4}|u_{3,2}^n|^{2}|u_{1,2}^n|^{2}
+\beta_{32}\int_{\R^4}|u_{3,2}^n|^{2}|u_{2,2}^n|^{2} +o(1),
\end{equation}
which implies that
\begin{equation}\label{1019-1}
\lim_{n\rightarrow \infty}\|u_{3,2}^n\|_3^2\geq S_3^{2}.
\end{equation}
Note that $\lim_{n\rightarrow \infty}\|u_{3,1}^n\|^2=0$, then we deduce from \eqref{26-1} that
\begin{equation}
\|u_{1,1}^n\|_1^2=\int_{\R^4}|u_{1,1}^n|^{4}+\beta_{21}\int_{\R^4}|u_{1,1}^n|^{2}|u_{2,1}^n|^{2}+o(1),
\end{equation}
\begin{equation}
\|u_{2,1}^n\|_2^2=\int_{\R^4}|u_{2,1}^n|^{4}+\beta_{21}\int_{\R^4}|u_{1,1}^n|^{2}|u_{2,1}^n|^{2}+o(1).
\end{equation}
Since $\beta_{ij}<1$ for any $i\neq j$, then it is easy to see that
there exist $a_1^n,a_2^n$ such tat
\begin{equation}
(a_1^nu_{1,1}^n, a_2^nu_{2,1}^n)\in \mathcal{M}_{12} ~\text{ and } \lim_{n\rightarrow \infty}a_i^n=1,
\end{equation}
where $\mathcal{M}_{12}$ is defined in \eqref{tem107-4}.
Thus,
\begin{align}\label{1019-2}
A_{12} & \leq \lim_{n\rightarrow \infty}I_{12}(a_1^nu_{1,1}^n, a_2^nu_{2,1}^n)=\frac{1}{4}\lim_{n\rightarrow \infty}
(|a_1^n|^2\|u_{1,1}^n\|_1^2+|a_2^n|^2\|u_{2,1}^n\|_2^2) \nonumber\\
   & = \frac{1}{4}\lim_{n\rightarrow \infty}
(\|u_{1,1}^n\|_1^2+\|u_{2,1}^n\|_2^2)
\end{align}
It follows from \eqref{1019-1} and \eqref{1019-2} that
\begin{align}\label{26-8}
A & = \lim_{n\rightarrow \infty} I(\mathbf{u}_n)=\frac{1}{4}\lim_{n\rightarrow \infty}(\|u_1^n\|_1^2+\|u_2^n\|_2^2+\|u_3^n\|_3^2)  \nonumber\\
&\geq \frac{1}{4}\lim_{n\rightarrow \infty}
(|a_1^n|^2\|u_{1,1}^n\|_1^2+|a_2^n|^2\|u_{2,1}^n\|_2^2+\|u_{3,2}^n\|_3^2) \nonumber\\
&\geq A_{12}+A_3,
\end{align}
which contradicts with Lemma \ref{Energyestimates4}. Therefore, Case 2.1 is impossible.\\

{\bf Case 2.2} Either $\lim_{n\rightarrow \infty}\|u_{1,2}^n\|^2=0$ or $\lim_{n\rightarrow \infty}\|u_{2,2}^n\|^2=0$. Without loss of generality, we assume that $\lim_{n\rightarrow \infty}\|u_{1,2}^n\|^2=0$ and $\lim_{n\rightarrow \infty}\|u_{2,2}^n\|^2>0$.

Note that $\lim_{n\rightarrow \infty}\|u_{1,2}^n\|^2=0$, combining this with \eqref{26-1}
we know that there exist $\widehat{s}_2^n,\widehat{s}_3^n$ such tat
\begin{equation}
(\widehat{s}_2^nu_{2,2}^n, \widehat{s}_3^nu_{3,2}^n)\in \mathcal{M}_{23} ~\text{ and } \lim_{n\rightarrow \infty}\widehat{s}_i^n=1.
\end{equation}
Thus,
\begin{align}\label{26-9}
A_{23}  \leq \lim_{n\rightarrow \infty}I_{23}(\widehat{s}_2^nu_{2,2}^n, \widehat{s}_3^nu_{3,2}^n)
   = \frac{1}{4}\lim_{n\rightarrow \infty}
(\|u_{2,2}^n\|_2^2+\|u_{3,2}^n\|_3^2).
\end{align}
Combining this with Lemma \ref{1021-7} and \eqref{1019-2} we get that
\begin{align}\label{26-10}
A & = \lim_{n\rightarrow \infty} I(\mathbf{u}_n)=\frac{1}{4}\lim_{n\rightarrow \infty}(\|u_1^n\|_1^2+\|u_2^n\|_2^2+\|u_3^n\|_3^2)  \nonumber\\
&\geq \frac{1}{4}\lim_{n\rightarrow \infty}
\left(a_1^n\|u_{1,1}^n\|_1^2+a_2^n\|u_{2,1}^n\|_2^2\right)+\frac{1}{4}\lim_{n\rightarrow \infty}\left(\widehat{s}_2^n\|u_{2,2}^n\|_2^2+\widehat{s}_3^n\|u_{3,2}^n\|_3^2\right) \nonumber\\
&\geq A_{12}+A_{23}>A_{12}+A_3,
\end{align}
which contradicts with Lemma \ref{Energyestimates4}. Therefore, {\bf Case 2.2} is impossible.\\

{\bf Case 3.} Only one component of $\mathbf{a}$ is positive. Without loss of generality, we assume that $a_1=0, a_2=0, a_3>0$, that is $\lim_{n\rightarrow \infty}\|u_{1,1}^n\|^2=0$, $\lim_{n\rightarrow \infty}\|u_{2,1}^n\|^2=0$, $\lim_{n\rightarrow \infty}\|u_{3,1}^n\|^2>0$.

It is easy to see that
\begin{equation}
\lim_{n\rightarrow \infty}\|u_{1,2}^n\|^2>0 \text{ and } \lim_{n\rightarrow \infty}\|u_{2,2}^n\|^2>0.
\end{equation}
Similarly to \eqref{1019-1} we have
\begin{equation}\label{26-12}
\lim_{n\rightarrow \infty}\|u_{3,1}^n\|_3^2\geq S_3^{2}.
\end{equation}
If $\lim_{n\rightarrow \infty}\|u_{3,2}^n\|^2>0$, then similarly to case 1 we can get a contradiction. If $\lim_{n\rightarrow \infty}\|u_{3,2}^n\|^2=0$,
it is easy to get that there exist $\widehat{a}_1^n, \widehat{a}_2^n$ such tat
\begin{equation}
(\widehat{a}_1^nu_{1,2}^n, \widehat{a}_2^nu_{2,2}^n)\in \mathcal{M}_{12} ~\text{ and } \lim_{n\rightarrow \infty}\widehat{a}_i^n=1,
\end{equation}
and so
\begin{equation}
\frac{1}{4}\lim_{n\rightarrow \infty}
(\|u_{1,1}^n\|_1^2+\|u_{2,1}^n\|_2^2)\geq A_{12}.
\end{equation}
Combining this with \eqref{1019-1} we have
\begin{equation}
A = \lim_{n\rightarrow \infty} I(\mathbf{u}_n)=\frac{1}{4}\lim_{n\rightarrow \infty}(\|u_1^n\|_1^2+\|u_2^n\|_2^2+\|u_3^n\|_3^2)\geq A_{12}+A_3,
\end{equation}
which contradicts with Lemma \ref{Energyestimates4}. Therefore, {\bf Case 3} is impossible.

From the above arguments,  Case 1-3 are impossible, then \eqref{26-4} is not true.
Therefore, \eqref{26-3} holds true and so does for \eqref{estimation3-1} thanks to the definition of $u_{i,j}$.
\end{proof}

\subsubsection{\bf Proof of Theorem \ref{thm2}-(1)}\label{the proof}

Set
\begin{equation}\label{1031-1}
\kappa:=\frac{1}{2}\min_{1\leq i\leq3}\{\mathcal{S}^2-4A_i\}.
\end{equation}
It is easy to see that $\mathcal{S}^2-\kappa>4A_i$, for any $i=1,2,3$.
Denote
\begin{equation}\label{tem529-12}
\beta_{5}:=\frac{\kappa S}{6\mathcal{S}^3}, \quad
\widetilde{\beta}:=\min\{ \beta_4, \beta_{5}\},
\end{equation}
where $\beta_4$ is defined in \eqref{tem529-8}. From now on, we assume that $\beta_{ij}\in (0, \widetilde{\beta})$ and fix such an $\beta_{ij}$.

\begin{proof}[\bf The proof of Theorem \ref{thm2}-(1)]
Let $\widetilde{\mathbf{u}}^n=(\widetilde{u}_1^n, \widetilde{u}_2^n,\widetilde{u}_3^n)$ be a minimizing sequence for the level $A$. Denote
\begin{equation}
G(u_1,u_2,u_3):=\sum_{i=1}^3\left(|\nabla u_i|^2-\frac{\lambda_i}{|x|^2}\right).
\end{equation}
It is easy to see that there exists $a_n>0$ such that
\begin{equation}\label{thm1-1}
\int_{B_{a_n}}G(\widetilde{\mathbf{u}}^n)=\int_{\R^4\backslash B_{a_n}}G(\widetilde{\mathbf{u}}^n)=
\frac{1}{2}\sum_{i=1}^3\|\widetilde{u}_i^n\|_i^2.
\end{equation}
Define
\begin{equation}
\widehat{u}_i^n:=a_n^{\frac{N-2}{2}}\widetilde{u}_i^n(a_nx), i=1,2,3, \text{ and }
\widehat{\mathbf{u}}^n:=(\widehat{u}_1^n,\widehat{u}_2^n,\widehat{u}_3^n).
\end{equation}
Then we have $\widehat{\mathbf{u}}^n\in \mathcal{M}$ and $I(\widehat{\mathbf{u}}^n)\rightarrow A$.
Moreover, by \eqref{thm1-1} we know that
\begin{equation}\label{1019-4}
\int_{B_1}G(\widehat{\mathbf{u}}^n)=\int_{\R^4\backslash B_1}G(\widehat{\mathbf{u}}^n)=
\frac{1}{2}\sum_{i=1}^3\|\widehat{u}_i^n\|_i^2\rightarrow 2A.
\end{equation}
By Lemma \ref{PS-sequence} we see that
\begin{equation}
\lim_{n\rightarrow \infty}I(\mathbf{u}^n)=A  \text{ and } \lim_{n\rightarrow \infty}I'(\mathbf{u}^n)=0.
\end{equation}
We deduce from \eqref{1019-4} that
\begin{equation}
\lim_{n\rightarrow \infty}\int_{B_1}G(\mathbf{u}^n)=\lim_{n\rightarrow \infty}\int_{B_1}G(\widehat{\mathbf{u}}^n)=2A,
\quad
\lim_{n\rightarrow \infty}\int_{\R^4\backslash B_1}G(\mathbf{u}^n)=\int_{\R^N\backslash B_1}G(\widehat{\mathbf{u}}^n)=2A.
\end{equation}

Observe that $\{(u_1^n,u_2^n,u_3^n)\}$ are uniformly bounded in $\mathbf{D}$, then up to a subsequence we suppose that
$(u_1^n,u_2^n,u_3^n)\rightharpoonup (u_1,u_2,u_3)$ weakly in $\mathbf{D}$. Hence, $I'(u_1,u_2,u_3)=0$. Next, we will present that $u_i\neq 0$ for any $i=1,2,3$ and $\mathbf{u}=(u_1,u_2,u_3)\in \mathcal{M}$ by contradiction.

{\bf Case 1}. $(u_1,u_2,u_3)=(0,0,0)$.\\
Based upon Lemma \ref{estimation3}, following the arguments in the proof of \cite[Theorem 1.2]{Zou 2015}, we can get that
\begin{equation}
\int_{\R^4}\frac{|u_i^n|^2}{|x|^2}=o(1), ~\text{ for any } i=1,2,3.
\end{equation}
Combining this with $(u_1^n,u_2^n,u_3^n)\in \mathcal{M}$ we have

\begin{align}
 \int_{\R^4}|\nabla u_i^n|^2 & =\int_{\R^4}|u_i^n|^4+ \sum_{j\neq i}\int_{\R^4}\beta_{ij}|u_i^n|^2|u_j^n|^2+o(1)  \\
   & \leq \mathcal{S}^{-2}\left(\int_{\R^4}|\nabla u_i^n|^2\right)^2+\sum_{j\neq i}\frac{\beta}{\mathcal{S}S}\|u_j^n\|_j^2\int_{\R^4}|\nabla u_i^n|^2+o(1)\\
   & \leq \mathcal{S}^{-2}\left(\int_{\R^4}|\nabla u_i^n|^2\right)^2+ \beta\frac{6\mathcal{S}}{S}\int_{\R^4}|\nabla u_i^n|^2+o(1),
\end{align}
which yields that
\begin{equation*}
\lim_{n\rightarrow \infty}\int_{\R^4}|\nabla u_i^n|^2\geq \mathcal{S}^2-\beta\frac{6\mathcal{S}^3}{S}\geq\mathcal{S}^2-\kappa>4A_i, ~\text{ for any } i=1,2,3.
\end{equation*}
Therefore,
\begin{equation*}
  A = \lim_{n\rightarrow \infty}\frac{1}{4}\sum_{i=1}^3\|u_i^n\|_i^2
   = \lim_{n\rightarrow \infty}\frac{1}{4}\sum_{i=1}^3\int_{\R^4}|\nabla u_i^n|^2
   \geq \sum_{i=1}^3A_i,
\end{equation*}
which contradicts with Lemma \ref{Energyestimates4}.

{\bf Case 2}. Only one component of $\mathbf{u}$ is not zero.\\
Without loss of generality, we may assume that $u_1\neq 0, u_2=0, u_3=0$. Since $I'(u_1,u_2,u_3)(u_1,0,0)=0$, then
\begin{equation}
\|u_1\|_1^2=\int_{\R^4}|u_1|^4\leq S_1^{-2}\|u_1\|_1^4,
\end{equation}
and so
\begin{equation}
\|u_1\|_1^2\geq S_1^{2}.
\end{equation}

{\bf Case 2.1}. $u_1^n\rightarrow u_1$ strongly in $D^{1,2}(\R^4)$.\\
Note that $u_1^n\rightarrow u_1$ strongly in $L^4(\R^4)$. Note that $u_i^n\rightharpoonup 0$ weakly in $D^{1,2}(\R^4)$, $i=2,3$, then we see that $(u_i^n)^2\rightharpoonup 0$ weakly in $L^2(\R^4)$. Hence,
\begin{equation}
\int_{\R^4}|u_1^n|^2|u_i^n|^2=o(1), ~\text{ for any } i=2,3.
\end{equation}
Similarly to the proof of Lemma \ref{estimation3} we know that there exist $\overline{s}_2^n,\overline{s}_3^n$ such tat
\begin{equation}
(\sqrt{\overline{s}_2^n}u_{2}^n, \sqrt{\overline{s}_3^n}u_{3}^n)\in \mathcal{M}_{23} ~\text{ and } \lim_{n\rightarrow \infty}\overline{s}_i^n=1.
\end{equation}
Therefore,
\begin{align}
A & = \lim_{n\rightarrow \infty} I(\mathbf{u}_n) \geq \frac{1}{4}\|u_1\|_1^2+\frac{1}{4}\lim_{n\rightarrow \infty}\left(\|u_{2}^n\|_2^2+\|u_{3}^n\|_3^2\right) \nonumber\\
&\geq \frac{1}{4}S_1^2+\frac{1}{4}\lim_{n\rightarrow \infty}\left(\overline{s}_2^n|\|u_{2}^n\|_2^2+\overline{s}_3^n\|u_{3}^n\|_3^2\right) \nonumber\\
&\geq A_{1}+A_{23},
\end{align}

{\bf Case 2.2}. $\lim_{n\rightarrow \infty}\|u_1^n -u_1\|_1^2>0$.\\
Set $\omega_1^n=u_1^n -u_1$. Note that $(u_1^n,u_2^n,u_3^n)\in \mathcal{M}$, then by the Brezis-Lieb Lemma we get that
\begin{equation}
\|\omega_1^n\|_1^2=\sum_{j=1}M_B[\omega_{1}^n,u_{2}^n,u_{3}^n]_{1j}+o(1), ~\quad \|u_i^n\|_i^2=\sum_{j=1}M_B[\omega_{1}^n,u_{2}^n,u_{3}^n]_{ij}+o(1), i=2,3.
\end{equation}
Similarly to the proof of Lemma \ref{estimation3} there exist $\widetilde{s}_i^n>0$,$i=1,2,3$ such that
\begin{equation}
 \left(\sqrt{\widetilde{s}_1^n}\omega_{1}^n,\sqrt{\widetilde{s}_2^n}u_{2}^n,\sqrt{\widetilde{s}_3^n}u_{3}^n\right)\in \mathcal{M} \text{ and }  \lim_{n\rightarrow \infty}\widetilde{s}_i^n=1.
\end{equation}
Therefore,
\begin{align*}
A & = \lim_{n\rightarrow \infty} I(\mathbf{u}_n)= \frac{1}{4}\|u_1\|_1^2+\frac{1}{4}\lim_{n\rightarrow \infty}(\|\omega_{1}^n\|_1^2+\|u_{2}^n\|_2^2+\|u_{3}^n\|_3^2)\\
   & >\frac{1}{4}\lim_{n\rightarrow \infty}(\widetilde{s}_1^n\|\omega_{1}^n\|_1^2+\widetilde{s}_2^n\|u_{2}^n\|_2^2+\widetilde{s}_3^n\|u_{3}^n\|_3^2)\\
   & \geq A,
\end{align*}
which contradicts with Lemma \ref{Energyestimates4}.

{\bf Case 3}. Only two components of $\mathbf{u}$ are nontrivial.\\
Without loss of generality, we may assume that $u_1\neq 0, u_2\neq0, u_3=0$. Since $I'(u_1,u_2,0)=0$, then we have
\begin{equation}\label{1021-11}
I(u_1,u_2,0)\geq A_{12}.
\end{equation}

{\bf Case 3.1}. $u_1^n\rightarrow u_1$ strongly in $D^{1,2}(\R^4)$ and $u_2^n\rightarrow u_2$ strongly in $D^{1,2}(\R^4)$.\\
Since $u_3^n\rightarrow 0$ weakly in $D^{1,2}(\R^4)$, then we see that
\begin{equation}
\int_{\R^4}|u_1^n|^2|u_3^n|^2=o(1), \quad \int_{\R^4}|u_2^n|^2|u_3^n|^2=o(1).
\end{equation}
Combining this with $(u_1^n,u_2^n,u_3^n)\in \mathcal{M}$ we know that
\begin{equation}
\|u_3^n\|_3^2=\int_{\R^4}|u_{3}^n|^{4}+o(1)\leq S_3^{-2}\|u_3^n\|_3^4+o(1),
\end{equation}
which implies that
\begin{equation}\label{1031-3}
\lim_{n\rightarrow \infty}\|u_3^n\|_3^2\geq S_3^{2}.
\end{equation}
It follows from \eqref{1021-11} and \eqref{1031-3} that
\begin{align*}
A = \lim_{n\rightarrow \infty} I(\mathbf{u}_n) \geq \frac{1}{4}(\|u_1\|_1^2+\|u_2\|_2^2)+\frac{1}{4}\lim_{n\rightarrow \infty}\|u_{3}^n\|_3^2
    \geq A_{12}+A_3,
\end{align*}
which contradicts with Lemma \ref{Energyestimates4}.

{\bf Case 3.2}. $\lim_{n\rightarrow \infty}\|u_1^n -u_1\|_1^2>0$ and $u_2^n\rightarrow u_2$ strongly in $D^{1,2}(\R^4)$.\\
Set $\omega_1^n=u_1^n -u_1$. Since $u_3^n\rightarrow 0$ weakly in $D^{1,2}(\R^4)$, then we see that
\begin{equation}\label{1021-9}
\int_{\R^4}|u_2^n|^2|u_3^n|^2=o(1), \quad \int_{\R^4}|u_1^n|^2|u_2^n|^2=\int_{\R^4}|u_1|^2|u_2|^2.
\end{equation}
Therefore, by Brezis-Lieb Lemma and \eqref{1021-9} and $I'(u_1,u_2,0)(u_1,0,0)=0$ we have
\begin{equation}
\|\omega_1^n\|_1^2=\int_{\R^4}|\omega_1^n|^{4}+\beta_{13}\int_{\R^4}|\omega_1^n|^{2}|u_3^n|^{2}+o(1),\quad
\|u_3^n\|_1^2=\int_{\R^4}|u_3^n|^{4}+\beta_{13}\int_{\R^4}|\omega_1^n|^{2}|u_3^n|^{2}+o(1).
\end{equation}
Similarly to the proof of Lemma \ref{estimation3} we know that there exist $\pi_1^n,\pi_3^n>0$ such tat
\begin{equation}\label{1021-10}
\left(\sqrt{\pi_1^n}\omega_1^n, \sqrt{\pi_3^n}u_{3}^n\right)\in \mathcal{M}_{13} ~\text{ and } \lim_{n\rightarrow \infty}\pi_i^n=1.
\end{equation}
It follows from \eqref{1021-12}, \eqref{1021-11}, \eqref{1021-10} and Lemma \ref{1021-7} that
\begin{align*}
A & = \lim_{n\rightarrow \infty} I(\mathbf{u}_n)\geq \frac{1}{4}(\|u_1\|_1^2+\|u_2\|_2^2)+\frac{1}{4}\lim_{n\rightarrow \infty}(\|u_{3}^n\|_3^2+\|\omega_1^n\|_1^2)\\
   & \geq A_{12}+\frac{1}{4}\lim_{n\rightarrow \infty}(\pi_3^n\|u_{3}^n\|_3^2+\pi_1^n\|\omega_1^n\|_1^2),\\
   & \geq A_{12}+ A_{13}>A_2+A_{13},
\end{align*}
which contradicts with Lemma \ref{Energyestimates4}.
\end{proof}

\subsection{The case for $N=3$}

In this subsection, we present the proof of Theorem \ref{thm2}-(2).
Given $\Gamma\subseteq\{1,2,3\}$ with $|\Gamma|=2,3$, consider the following system
\begin{equation}\label{mainsystem9}
\begin{cases}
-\Delta u_{i}-\frac{\lambda_{i}}{|x|^2}u_{i}=|u_i|^{4}u_i+\sum_{j\neq i}\beta_{ij}|u_{j}|^{3}|u_i|u_i, ~x\in \mathbb{R}^3, \\
u_i\in D^{1,2}(\mathbb{R}^3), i \in \Gamma.
\end{cases}
\end{equation}
Recalling $I_\Gamma, A_\Gamma, \mathcal{M}_\Gamma$ are defined in \eqref{1.10-4}, \eqref{tem107-4}, \eqref{tem529-10}, then we have the following proposition.
\begin{prop}\label{1103-1}
Assume that $\lambda_i\in (0, \Lambda_3)$. There exists $\widehat{\beta}>0$ such that, if
\begin{equation}
0\leq \beta_{ij}< \widehat{\beta}, ~\text{ for any } i\neq j,
\end{equation}
then $A_\Gamma=I_\Gamma(\mathbf{u})$ and \eqref{mainsystem9} has a least energy positive solution.
\end{prop}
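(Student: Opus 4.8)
The plan is to run the scheme of the proof of Theorem \ref{thm2}-(1) with $N=3$ (so $2^*=6$, $2^*/2=3$), arguing by induction on $|\Gamma|$; for $|\Gamma|=2$ one recovers a Chen--Zou type statement \cite{Chen-Zou2015}, which I would re-derive by the same method in order to fix an explicit threshold depending only on $\mathcal{S}$ and $\lambda_1,\lambda_2,\lambda_3$. Assume the conclusion for all proper subsets of $\Gamma$. First I would collect the $N=3$ analogues of the preliminary lemmas: an a priori bound $0<C_1\le|u_i|_6^2\le C_2<\infty$ for every $\mathbf{u}\in\mathcal{M}_\Gamma$ with $I_\Gamma(\mathbf{u})$ below a fixed multiple of $\frac{|\Gamma|}{N}\mathcal{S}^{N/2}$ (the counterpart of Lemma \ref{Estimate2}), valid once the $\beta_{ij}$ are small; strict diagonal dominance of $M_B[u_1,\dots,u_{|\Gamma|}]$ along such configurations, hence positive definiteness by the Gershgorin circle theorem; and the existence of a Palais--Smale sequence for $I_\Gamma$ at level $A_\Gamma$ lying on $\mathcal{M}_\Gamma$, obtained exactly as in Lemma \ref{PS-sequence} via a deformation/implicit function argument, using positive definiteness to know that $\mathcal{M}_\Gamma$ is a natural constraint near minimizers.

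The second ingredient is the strict energy inequality $A_\Gamma<A_{\Gamma_1}+A_{\Gamma_2}$ for every nontrivial splitting $\Gamma=\Gamma_1\sqcup\Gamma_2$, and in particular $A_\Gamma<\sum_{i\in\Gamma}A_i$, i.e.\ the $N=3$ version of Lemma \ref{Energyestimates4}. This is where I expect the main difficulty. For $N=4$ one tests $I$ on $(\sqrt{t_i}\,\omega_i)$ and the projection onto $\mathcal{M}$ amounts to the \emph{linear} system \eqref{tem1125-6}, so Cramer's rule yields solvability together with the normalisation $t_i\to1$. For $N=3$ the analogous system is \emph{nonlinear} (the diagonal terms scale like $t_i^{3/2}$ against cross terms $t_i^{1/2}t_j^{1/2}$, compare \eqref{2325-9}), so Cramer's rule is unavailable. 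Instead I would show, using smallness of $\beta_{ij}$ together with the bounds above, that the relevant map $\mathbf{t}\mapsto\big(t_i\|\omega_i\|_i^2-\sum_{j}M_B[\dots]_{ij}\,t_i^{1/2}t_j^{1/2}\big)_{i\in\Gamma}$ has a strictly diagonally dominant (hence invertible) Jacobian along the set under consideration, which makes the projection exist, be unique, and depend continuously on the $\omega_i$; since $(1,\dots,1)$ solves this system whenever the $\omega_i$ already form a solution of the subsystem \eqref{mainsystem9}, the normalisation follows. Then, arguing as in Lemma \ref{1021-7}, the fibering map attains its maximum in the interior of $\R_+^{|\Gamma|}$ (boundary maxima are excluded by the small-$\beta$ estimate) and the value is strictly below the corresponding sum of $A$'s because the coupling terms are strictly positive. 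Throughout this step one must phrase everything in terms of strict diagonal dominance, since the projection coefficients cannot be written explicitly.

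With the preliminaries and the strict energy estimates in hand, the endgame is a concentration--compactness argument. Using the scaling invariance $u\mapsto a^{(N-2)/2}u(a\cdot)$, rescale a minimizing sequence for $A_\Gamma$ so that exactly half of $\sum_{i}\|u_i\|_i^2$ lies inside $B_1$ and half outside; by the Palais--Smale lemma this yields a minimizing sequence $\{\mathbf{u}^n\}\subset\mathcal{M}_\Gamma$ with $I_\Gamma'(\mathbf{u}^n)\to0$ and balanced mass across $\partial B_1$. Passing to a weak limit $\mathbf{u}^n\rightharpoonup\mathbf{u}$, one has $I_\Gamma'(\mathbf{u})=0$. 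If $\mathbf{u}=\mathbf{0}$, the $N=3$ analogue of Lemma \ref{estimation3} --- a dichotomy between vanishing of $\sum_i\int_{B_{\sigma+\rho}}|\nabla u_i^n|^2$ and of $\sum_i\int_{\R^3\setminus B_{\sigma+\rho}}|\nabla u_i^n|^2$, proved by harmonic replacement on thin annuli via $H^1\hookrightarrow H^{1/2}$ on spheres and ruling out genuine splitting by the strict energy estimates of the previous paragraph --- contradicts the balanced normalisation. If $\mathbf{u}$ has some, but not all, components vanishing, a case analysis parallel to Cases 2--3 in the proof of Theorem \ref{thm2}-(1) (split off the nonvanishing part, apply the Brezis--Lieb lemma, project the remainder onto a lower $\mathcal{M}_{\Gamma'}$) again contradicts $A_\Gamma<A_{\Gamma_1}+A_{\Gamma_2}$. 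Hence $\mathbf{u}$ is fully nontrivial; the constraint $\mathbf{u}\in\mathcal{M}_\Gamma$ together with weak lower semicontinuity gives $I_\Gamma(\mathbf{u})=A_\Gamma$, and replacing each $u_i$ by $|u_i|$ and invoking the strong maximum principle and elliptic regularity on $\R^3\setminus\{0\}$ shows $\mathbf{u}$ may be taken positive. Finally, $\widehat{\beta}$ is the minimum of the finitely many smallness thresholds appearing above, each depending only on $\mathcal{S}$ and $\lambda_1,\lambda_2,\lambda_3$.
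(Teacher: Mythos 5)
Your overall architecture matches the paper's: induction on $|\Gamma|$, a priori bounds on $|u_i|_6$ (Lemma \ref{Estimate2-3}), strict diagonal dominance of $M_B$ via the Gershgorin theorem, a Palais--Smale sequence on $\mathcal{M}_\Gamma$ (Lemma \ref{PS-sequence-3}), energy comparisons of $A_\Gamma$ with sums of sub-levels (Lemma \ref{Energyestimates6}), the harmonic-replacement dichotomy, the rescaled balanced minimizing sequence and the case analysis on the weak limit. The genuine gap sits exactly at the $N=3$-specific difficulty you correctly identify: the projection onto $\mathcal{M}_\Gamma$ is governed by a nonlinear system (cf.\ \eqref{2325-9}), and you propose to obtain existence, uniqueness and the normalisation $t_i^n\to 1$ from strict diagonal dominance of the Jacobian of your map $\mathbf{t}\mapsto\bigl(t_i\|\omega_i\|_i^2-\sum_j M_B[\cdots]_{ij}\,t_i^{1/2}t_j^{1/2}\bigr)_i$ ``along the set under consideration''. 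This does not suffice. First, invertibility of the Jacobian near a known solution only yields \emph{local} uniqueness, whereas the splitting steps (the analogue of Lemma \ref{estimation4-4}) require \emph{global} uniqueness of the critical point of the limiting fibering map $\overline f(s)=\frac{1}{2}\sum_i a_i s_i^2-\frac{1}{6}\sum_{i,j}\overline B_{ij}s_i^3s_j^3$ on the open octant: there both $(1,\dots,1)$ and the limit $(\overline t_1,\overline t_2,\overline t_3)$ of the projection coefficients are critical points, and without global uniqueness you cannot identify them, so the energy accounting that rules out dichotomy (and likewise the identity $\max g_{12}=g_{12}(1,1)=A_{12}$ in Lemma \ref{Energyestimates6}) collapses. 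Second, strict diagonal dominance of that Jacobian is in fact false on all of $(\R^+)^{|\Gamma|}$: the diagonal entry changes sign along a hypersurface (roughly where $t_i^2\int_{\R^3}|u_i|^6$ is comparable to $\|u_i\|_i^2$), and near it the off-diagonal entries, though only $O(\beta)$, dominate; so the hypothesis you invoke holds only in a neighbourhood of $(1,\dots,1)$, which is again merely local information. (Incidentally, the exponents in your map are not the natural ones for the cubic--cubic coupling, but that is cosmetic.)

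The paper closes precisely this gap by a different device: after the substitution $s_i=t_i^3$ the fibering function becomes $\tilde g(\mathbf{s})=\frac{1}{2}\sum_i s_i^{2/3}\|\varphi_i\|_i^2-\frac{1}{6}\sum_{i,j}\beta_{ij}s_is_j\int_{\R^3}|\varphi_i|^3|\varphi_j|^3$, whose Hessian is $-\frac{1}{9}B(\mathbf{s})-\frac{1}{3}M_B$; positive definiteness of $M_B$ (from $\beta_{ij}<1$ in the $2\times2$ case, and from Gershgorin with the $C_3,C_4$ bounds in the $3\times3$ case) makes $\tilde g$ strictly concave, hence with a unique critical point, necessarily the global maximum; this global uniqueness is what drives both Lemma \ref{Energyestimates6} and the normalisation in Lemma \ref{estimation4-4}. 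If you replace your Jacobian argument by this concavity argument (or any other genuinely global uniqueness statement), the remainder of your outline goes through essentially as in the paper; a minor further remark is that the paper only needs, and only proves, the non-strict inequalities $A_\Gamma\le A_Q+\sum_{i\in\Gamma\setminus Q}A_i$ for $N=3$, so the strict versions you assert, while obtainable from interior maximisers and positivity of the couplings, are not required.
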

We mention that $\widehat{\beta}$ is defined in \eqref{tem529-4}.  When $|\Gamma|=2$, by Proposition \ref{1103-1} we have
\begin{cor}\label{1107-1}
When $N=3$, $\Gamma=\{i,j\}$ and $\lambda_i\in (0, \Lambda_3)$. There exists $\widehat{\beta}>0$ such that, if
\begin{equation}
0\leq \beta_{ij}< \widehat{\beta}, ~\text{ for any } i\neq j,
\end{equation}
then $A_\Gamma=I_\Gamma(\mathbf{u})$ and \eqref{mainsystem9} has a least energy positive solution.
\end{cor}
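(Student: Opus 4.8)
The plan is to obtain Corollary~\ref{1107-1} directly from Proposition~\ref{1103-1} by taking $|\Gamma|=2$. Proposition~\ref{1103-1} is stated for every $\Gamma\subseteq\{1,2,3\}$ with $|\Gamma|\in\{2,3\}$, so specializing to a two-element set $\Gamma=\{i,j\}$, $i\neq j$, with the threshold $\widehat\beta$ of \eqref{tem529-4}, immediately yields $A_{\Gamma}=I_{\Gamma}(\mathbf u)$ together with a least energy positive solution of \eqref{mainsystem9}. Hence the whole content of the corollary is already contained in the proposition, and all that is needed is to confirm that the proof of Proposition~\ref{1103-1} is uniform in $|\Gamma|$ --- which it is, since each auxiliary ingredient (the $N=3$ counterparts of Lemma~\ref{EnergyEstimate1}, Lemma~\ref{Estimate2} and Lemma~\ref{PS-sequence}, the energy comparisons $A_\Gamma<\sum_{i\in\Gamma}A_i$ and their refinements, and the concentration/splitting lemma in the spirit of Lemma~\ref{estimation3}) is formulated for a general index set $\Gamma$.

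For a self-contained argument in the two-component case, I would run exactly the scheme underlying Theorem~\ref{thm2}-(2). Starting from a minimizing sequence for $A_{\{i,j\}}$, I would first use the dilation invariance $(u_i,u_j)\mapsto\bigl(a^{(N-2)/2}u_i(a\cdot),a^{(N-2)/2}u_j(a\cdot)\bigr)$ to normalize the sequence so that half of $\|u_i\|_i^2+\|u_j\|_j^2$ sits inside the unit ball $B_1$ and half outside. Next I would pass from the minimizing sequence to a Palais--Smale sequence on the Nehari-type set $\mathcal M_{\{i,j\}}$ of \eqref{tem107-4}; this requires positive definiteness of the $2\times2$ matrix $M_B[u_i,u_j]$, which holds because $\beta_{ij}$ is small while $|u_i|_6^6$ and $|u_j|_6^6$ stay bounded away from $0$, so the matrix is strictly diagonally dominant. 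Extracting a weak limit $(u_i,u_j)$ that solves the two-component subsystem, I would exclude the vanishing limit by combining the Hardy-term decay $\int_{\R^3}|u^n_i|^2/|x|^2=o(1)$ with the energy dichotomy for the rescaled sequence (the $N=3$ analogue of Lemma~\ref{estimation3}), and exclude the semi-trivial limits by the strict inequalities $A_{\{i,j\}}<A_i+A_j$ valid for $0<\beta_{ij}<\beta'_3$ (cf.\ \eqref{1019-8}) and $A_{\{i,j\}}>\max\{A_i,A_j\}$. Positivity of the two components then follows from the usual replacement of $u_i$ by $|u_i|$ and the strong maximum principle on $\R^3\setminus\{0\}$.

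The step I expect to be the main obstacle, and the one where $N=3$ genuinely differs from $N=4$, is the projection onto $\mathcal M_{\{i,j\}}$: since $2^*/2=3\neq2$ for $N=3$, the equations determining the rescaling parameters $(s_i,s_j)$ form a \emph{nonlinear} system (as in \eqref{2325-9}), so Cramer's rule is unavailable and one cannot read off the coefficients explicitly. I would handle this exactly as the authors do for Lemma~\ref{Energyestimates6} and Lemma~\ref{estimation4-4}, namely by recasting the projection as a fixed-point problem and exploiting the smallness of $\beta_{ij}$ together with the uniform bounds $0<c\le|u_i|_6^6\le C$ to show that it has a unique positive solution with $(s_i,s_j)\to(1,1)$ along the sequence. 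Once Proposition~\ref{1103-1} is in hand, however, the corollary itself requires nothing beyond setting $|\Gamma|=2$.
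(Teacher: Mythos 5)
Your proposal matches the paper exactly: the corollary is obtained by specializing Proposition~\ref{1103-1} to a two-element index set $\Gamma=\{i,j\}$ with the same threshold $\widehat\beta$ from \eqref{tem529-4}, and the paper offers no separate argument beyond this. Your supplementary sketch of a self-contained two-component proof also follows the paper's own scheme (it is the $q=2$ step of the induction proving Proposition~\ref{1103-1}), so no gap needs to be addressed; only note that for $N=3$ the semi-trivial limits are excluded via the $N=3$ comparison Lemma~\ref{Energyestimates6} rather than the $N=4$ estimate \eqref{1019-8} you cite, a cosmetic point that does not affect the argument.
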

Noting that Theorem \ref{thm2}-(2) is the consequence of Proposition \ref{1103-1}, in the following, we turn to prove that Proposition \ref{1103-1} holds.
\subsubsection{Preliminary results}
Denote
\begin{equation}\label{tem529-5}
\widehat{\beta}_1:=\frac{7S^3}{12(6\overline{C})^2},
\end{equation}
where $S$ is defined in \eqref{Constant-1125} and $\overline{C}$ is defined in \eqref{Constant-2-1}.
\begin{lemma}\label{Estimate2-3}
Assume that $N=3$ and $0\leq\beta_{ij}< \widehat{\beta}_1$, for any $i\neq j$, then there exist $C_3,C_4>0$ dependent on $\lambda_i, S, \overline{C}$, such that for any $\mathbf{u}_\Gamma \in \mathcal{M}_\Gamma$ with $I_\Gamma(\mathbf{u}_\Gamma)\leq 2\overline{C}$ we have
\begin{equation}
C_3\leq|u_i|^6_{6}\leq C_4, ~i=1,2,3.
\end{equation}
\end{lemma}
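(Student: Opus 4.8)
The plan is to argue exactly as in Lemma~\ref{Estimate2}, the only real difference being that for $N=3$ the coupling term is cubic in each component, so one must be a little more careful with the Hölder/Sobolev bookkeeping. Throughout I will use two facts recorded earlier: on $\mathcal{M}_\Gamma$ (see \eqref{tem107-4}) the energy reduces to $I_\Gamma(\mathbf{u}_\Gamma)=\tfrac13\sum_{i\in\Gamma}\|u_i\|_i^2$ by \eqref{tem529-10}, and the Sobolev--Hardy inequality $S\,|v|_6^2\le\|v\|_i^2$ holds for every $v\in D^{1,2}(\mathbb{R}^3)$ and $i=1,2,3$, by \eqref{tem529-8-4} and \eqref{Constant-1125}.

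\emph{Step 1 (upper bound).} If $\mathbf{u}_\Gamma\in\mathcal{M}_\Gamma$ satisfies $I_\Gamma(\mathbf{u}_\Gamma)\le 2\overline{C}$, then $\sum_{i\in\Gamma}\|u_i\|_i^2\le 6\overline{C}$, so in particular $\|u_i\|_i^2\le 6\overline{C}$ for each $i\in\Gamma$. Combined with the Sobolev--Hardy inequality this gives $|u_i|_6^2\le 6\overline{C}/S$, hence $|u_i|_6^6\le(6\overline{C}/S)^3=:C_4$.

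\emph{Step 2 (coupling estimate).} Since $\mathbf{u}_\Gamma\in\mathcal{M}_\Gamma$ and $\beta_{ii}=1$, for each $i$ one has
\[
\|u_i\|_i^2=|u_i|_6^6+\sum_{j\ne i}\beta_{ij}\int_{\mathbb{R}^3}|u_i|^3|u_j|^3 .
\]
By Hölder, $\int_{\mathbb{R}^3}|u_i|^3|u_j|^3\le|u_i|_6^3|u_j|_6^3$; writing $|u_i|_6^3=|u_i|_6\,|u_i|_6^2$ and $|u_j|_6^3=|u_j|_6\,|u_j|_6^2$, then bounding $|u_i|_6,|u_j|_6\le(6\overline{C}/S)^{1/2}$ by Step~1 and $|u_j|_6^2\le\|u_j\|_j^2/S$ by Sobolev--Hardy, we obtain
\[
\sum_{j\ne i}\beta_{ij}|u_i|_6^3|u_j|_6^3\le\widehat{\beta}_1\,\frac{6\overline{C}}{S^2}\,|u_i|_6^2\sum_{j\in\Gamma}\|u_j\|_j^2\le\widehat{\beta}_1\,\frac{(6\overline{C})^2}{S^2}\,|u_i|_6^2 .
\]

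\emph{Step 3 (conclusion).} Plugging Step~2 and $S\,|u_i|_6^2\le\|u_i\|_i^2$ into the Nehari identity, and recalling from \eqref{tem529-5} that $\widehat{\beta}_1=\tfrac{7S^3}{12(6\overline{C})^2}$, so that $\widehat{\beta}_1(6\overline{C})^2/S^2=\tfrac{7}{12}S$, we get $S\,|u_i|_6^2\le|u_i|_6^6+\tfrac{7}{12}S\,|u_i|_6^2$, that is $\tfrac{5}{12}S\,|u_i|_6^2\le|u_i|_6^6=(|u_i|_6^2)^3$. Dividing by $|u_i|_6^2>0$ (using $u_i\ne0$ on $\mathcal{M}_\Gamma$) yields $(|u_i|_6^2)^2\ge\tfrac{5}{12}S$, hence $|u_i|_6^6\ge(\tfrac{5}{12}S)^{3/2}=:C_3>0$. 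Since $C_3,C_4$ depend only on $S$ and $\overline{C}$ (and $\overline{C}$ on $\mathcal{S},N$), this is the claim. The computation is elementary; the only delicate point is the bookkeeping in Step~2, where one factor of the $L^6$-norm from each of $|u_i|_6^3$ and $|u_j|_6^3$ must be absorbed by the a priori bound of Step~1 and the remaining $|u_j|_6^2$ by Sobolev--Hardy, so that the coupling term ends up proportional to $|u_i|_6^2$ and can be absorbed on the left-hand side — this is exactly what pins down the threshold $\widehat{\beta}_1$ in \eqref{tem529-5}.
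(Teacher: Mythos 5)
Your proof is correct, and it is exactly the intended argument: the paper omits the proof by referring to the analogous three-dimensional lemma in \cite{LiuYouZou2023}, which is the same adaptation of Lemma \ref{Estimate2} that you carry out (bound $\sum_{i}\|u_i\|_i^2\le 6\overline{C}$ on $\mathcal{M}_\Gamma$, use Sobolev--Hardy for the upper bound, and absorb the coupling term via H\"older so that the choice of $\widehat{\beta}_1$ in \eqref{tem529-5} leaves $\tfrac{5}{12}S|u_i|_6^2\le |u_i|_6^6$). The explicit constants $C_4=(6\overline{C}/S)^3$ and $C_3=(\tfrac{5}{12}S)^{3/2}$ are consistent with the stated dependence on $S$ and $\overline{C}$.
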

\begin{proof}[\bf Proof]
The proof is similar to that of \cite[Lemma 2.2]{LiuYouZou2023}, so we omit it.

\end{proof}

Before proceeding, we define the matrix $G_\Gamma(\mathbf{u})=(a_{ij}(\mathbf{u}))_{i,j\in \Gamma}$ by
\begin{equation*}
a_{ii}(\mathbf{u})=4\int_{\mathbb{R}^3} |u_i|^6 +\sum_{\substack{ j\neq i}} \int_{\mathbb{R}^3} \beta_{ij}|u_i|^3|u_j|^3,\quad
a_{ij}(\mathbf{u})=3\int_{\mathbb{R}^3}\beta_{ij}|u_i|^3|u_j|^3,  ~i\neq j.
\end{equation*}
Set	
\begin{equation*}
\Lambda_\Gamma:=\{\mathbf{u}\in \mathbb{H}:G_\Gamma(\mathbf{u}) \text{ is strictly diagonally dominant}\}.
\end{equation*}

Based on Lemma \ref{Estimate2-3} and definition of $\widehat{\beta}_1$, following the proof of Lemma 2.3, Lemma 2.4, Lemma 2.5 in \cite{LiuYouZou2023}, we can get the following three lemmas, and we omit the proofs.
\begin{lemma}\label{naturalconstraint}
Assume that
$$
  0<\beta_{ij} < \widehat{\beta}_1 \quad  \forall i,j = 1,2,3, i \neq j,
$$
then the set $\mathcal{M}_\Gamma \cap\Lambda_\Gamma$ is a smooth manifold.
Moreover, the constrained critical points of $I$ on $\mathcal{M} \cap\Lambda$ are free critical points of $I$. In other words, $\mathcal{M}_\Gamma \cap\Lambda_\Gamma$ is a natural constraint.
\end{lemma}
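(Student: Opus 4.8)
The plan is to realize $\mathcal{M}_\Gamma\cap\Lambda_\Gamma$ as a regular level set of the constraint map and then exploit the invertibility of $G_\Gamma(\mathbf{u})$ guaranteed on $\Lambda_\Gamma$. For $i\in\Gamma$ I would set
\[
\Phi_i(\mathbf{u}):=\|u_i\|_i^2-\sum_{j\in\Gamma}\int_{\mathbb{R}^3}\beta_{ij}|u_i|^3|u_j|^3,\qquad \Phi:=(\Phi_i)_{i\in\Gamma},
\]
so that $\mathcal{M}_\Gamma=\{\mathbf{u}: u_i\neq0\ \forall i\in\Gamma,\ \Phi(\mathbf{u})=\mathbf{0}\}$, and each $\Phi_i$ is $C^1$ on $\mathbb{H}$. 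Since strict diagonal dominance is an open condition, $\Lambda_\Gamma$ is open in $\mathbb{H}$, and $\{\mathbf{u}: u_i\neq0\ \forall i\}$ is open as well; hence it suffices to prove that $\mathbf{0}$ is a regular value of $\Phi$ on the open set $\Omega_\Gamma:=\{u_i\neq0\ \forall i\}\cap\Lambda_\Gamma$, and the implicit function theorem then gives that $\mathcal{M}_\Gamma\cap\Lambda_\Gamma=\Phi^{-1}(\mathbf{0})\cap\Omega_\Gamma$ is a $C^1$ submanifold of codimension $|\Gamma|$.

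The key computation is to pair $d\Phi_i(\mathbf{u})$ with the directions $\mathbf{e}_j:=(0,\dots,u_j,\dots,0)$ (with $u_j$ in the $j$-th slot). A direct differentiation, together with the defining identities $\|u_i\|_i^2=\int_{\mathbb{R}^3}|u_i|^6+\sum_{k\neq i}\beta_{ik}\int_{\mathbb{R}^3}|u_i|^3|u_k|^3$ valid on $\mathcal{M}_\Gamma$, yields
\[
\langle\Phi_i'(\mathbf{u}),\mathbf{e}_i\rangle=-\Bigl(4\int_{\mathbb{R}^3}|u_i|^6+\sum_{k\neq i}\beta_{ik}\int_{\mathbb{R}^3}|u_i|^3|u_k|^3\Bigr)=-a_{ii}(\mathbf{u}),
\]
\[
\langle\Phi_i'(\mathbf{u}),\mathbf{e}_j\rangle=-3\int_{\mathbb{R}^3}\beta_{ij}|u_i|^3|u_j|^3=-a_{ij}(\mathbf{u})\qquad(i\neq j),
\]
so the $|\Gamma|\times|\Gamma|$ matrix $\bigl(\langle\Phi_i'(\mathbf{u}),\mathbf{e}_j\rangle\bigr)_{i,j\in\Gamma}$ equals $-G_\Gamma(\mathbf{u})$. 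On $\Lambda_\Gamma$ the matrix $G_\Gamma(\mathbf{u})$ is strictly diagonally dominant, and since $\beta_{ij}>0$ and $u_i\neq0$ its diagonal entries $a_{ii}(\mathbf{u})$ are strictly positive; by the Gershgorin circle theorem $G_\Gamma(\mathbf{u})$ is then positive definite, in particular invertible. Hence $d\Phi(\mathbf{u})$ is surjective at every $\mathbf{u}\in\Omega_\Gamma$, which proves the manifold assertion.

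For the natural-constraint part, let $\mathbf{u}\in\mathcal{M}_\Gamma\cap\Lambda_\Gamma$ be a critical point of $I_\Gamma$ restricted to $\mathcal{M}_\Gamma\cap\Lambda_\Gamma$. The Lagrange multiplier rule provides $\mu_i\in\mathbb{R}$ ($i\in\Gamma$) with $I_\Gamma'(\mathbf{u})=\sum_{i\in\Gamma}\mu_i\,\Phi_i'(\mathbf{u})$. Pairing with $\mathbf{e}_j$ and using that $\langle I_\Gamma'(\mathbf{u}),\mathbf{e}_j\rangle=\partial_jI_\Gamma(\mathbf{u})u_j=\Phi_j(\mathbf{u})=0$ on $\mathcal{M}_\Gamma$, one gets $0=\sum_{i\in\Gamma}\mu_i\langle\Phi_i'(\mathbf{u}),\mathbf{e}_j\rangle=-\sum_{i\in\Gamma}a_{ij}(\mathbf{u})\mu_i$ for every $j\in\Gamma$, i.e. $G_\Gamma(\mathbf{u})^{\mathsf T}(\mu_i)_{i\in\Gamma}=\mathbf{0}$; since $G_\Gamma(\mathbf{u})$ is invertible, all $\mu_i=0$, so $I_\Gamma'(\mathbf{u})=0$ and $\mathbf{u}$ solves \eqref{mainsystem9}. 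The routine part is the differentiation producing the entries $-a_{ij}(\mathbf{u})$; the one point deserving care is the invertibility of $G_\Gamma(\mathbf{u})$ on $\Lambda_\Gamma$, where one must combine strict diagonal dominance with strict positivity of the diagonal entries — exactly where the hypothesis $0<\beta_{ij}$ (and $u_i\neq0$) enters — before invoking Gershgorin. This follows the scheme of Lemma 2.3--2.5 in \cite{LiuYouZou2023}.
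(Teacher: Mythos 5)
Your proposal is correct and follows essentially the approach the paper relies on: the paper omits the proof, deferring to Lemmas 2.3--2.5 of \cite{LiuYouZou2023}, and your argument (realizing $\mathcal{M}_\Gamma\cap\Lambda_\Gamma$ as a regular level set, computing the pairing matrix $\bigl(\langle\Phi_i'(\mathbf{u}),\mathbf{e}_j\rangle\bigr)=-G_\Gamma(\mathbf{u})$, and killing the Lagrange multipliers via strict diagonal dominance and Gershgorin) is exactly that scheme. The computations yielding $-a_{ii}(\mathbf{u})$ and $-a_{ij}(\mathbf{u})$ on $\mathcal{M}_\Gamma$ are correct, so nothing is missing.
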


\begin{lemma}\label{diagonally dominant}
Assume that
$$ 0<\beta_{ij} < \widehat{\beta}_1 \quad  \forall i,j = 1,2,3, i \neq j ,
$$
then we have
$$
\mathcal{M}_\Gamma \cap \{\mathbf{u}\in \mathbb{H}_\Gamma: I_\Gamma(\mathbf{u})\leq 2 \overline{C}\} \subset \Lambda_\Gamma.
$$
Moreover, the constrained critical points of $I_\Gamma$ on $\mathcal{M}_\Gamma$ satisfying $I_\Gamma(\mathbf{u})\leq 2 \overline{C}$ are free critical points of $I_\Gamma$.
\end{lemma}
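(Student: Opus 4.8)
The plan is to split the statement into two parts: first establish the set inclusion $\mathcal{M}_\Gamma\cap\{I_\Gamma\le 2\overline{C}\}\subset\Lambda_\Gamma$ by a direct estimate on the entries of the matrix $G_\Gamma(\mathbf{u})$, and then deduce the ``moreover'' assertion by combining this inclusion with the natural-constraint property already recorded in Lemma \ref{naturalconstraint}.

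For the inclusion, I would fix $\mathbf{u}=(u_i)_{i\in\Gamma}\in\mathcal{M}_\Gamma$ with $I_\Gamma(\mathbf{u})\le 2\overline{C}$ and invoke Lemma \ref{Estimate2-3} to get the two-sided bound $C_3\le\int_{\mathbb{R}^3}|u_i|^6\le C_4$ for every $i\in\Gamma$. The Cauchy--Schwarz inequality then gives $\int_{\mathbb{R}^3}|u_i|^3|u_j|^3\le(\int_{\mathbb{R}^3}|u_i|^6)^{1/2}(\int_{\mathbb{R}^3}|u_j|^6)^{1/2}\le C_4$ for all $i,j\in\Gamma$. Since $\beta_{ij}>0$ in this lemma, $|a_{ij}(\mathbf{u})|=3\beta_{ij}\int_{\mathbb{R}^3}|u_i|^3|u_j|^3$ for $i\ne j$, so for each $i\in\Gamma$
\[
a_{ii}(\mathbf{u})-\sum_{j\ne i}|a_{ij}(\mathbf{u})|
=4\int_{\mathbb{R}^3}|u_i|^6-2\sum_{j\ne i}\beta_{ij}\int_{\mathbb{R}^3}|u_i|^3|u_j|^3
\ge 4C_3-2(|\Gamma|-1)\,\widehat{\beta}_1\,C_4 .
\]
As $|\Gamma|\le 3$, the right-hand side is strictly positive provided $\widehat{\beta}_1<C_3/C_4$, which is the purpose of the explicit value of $\widehat{\beta}_1$ fixed in \eqref{tem529-5} (built, like $C_3$ and $C_4$, out of $S$ and $\overline{C}$). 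Hence $G_\Gamma(\mathbf{u})$ is strictly diagonally dominant, i.e.\ $\mathbf{u}\in\Lambda_\Gamma$; having positive diagonal, $G_\Gamma(\mathbf{u})$ is then positive definite by the Gershgorin circle theorem, which is precisely what makes $\mathcal{M}_\Gamma$ a smooth manifold near $\mathbf{u}$.

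For the second assertion, let $\mathbf{u}$ be a constrained critical point of $I_\Gamma$ on $\mathcal{M}_\Gamma$ with $I_\Gamma(\mathbf{u})\le 2\overline{C}$. By the inclusion just proved, $\mathbf{u}\in\mathcal{M}_\Gamma\cap\Lambda_\Gamma$; since $\Lambda_\Gamma$ is open (it is cut out by strict inequalities among continuous functions of $\mathbf{u}$), this set is an open subset of $\mathcal{M}_\Gamma$, and because the Lagrange-multiplier condition is local, $\mathbf{u}$ is also a constrained critical point of $I_\Gamma$ restricted to $\mathcal{M}_\Gamma\cap\Lambda_\Gamma$. Lemma \ref{naturalconstraint} then yields that $\mathbf{u}$ is a free critical point of $I_\Gamma$, i.e.\ a nontrivial solution of \eqref{mainsystem9}. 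The only genuine obstacle I foresee is the bookkeeping in the inclusion step: one must verify that the explicit $\widehat{\beta}_1$ and the explicit $C_3,C_4$ coming from Lemma \ref{Estimate2-3} actually satisfy $4C_3>2(|\Gamma|-1)\widehat{\beta}_1 C_4$. This is routine, but it uses the energy threshold $2\overline{C}$ in an essential way, since that is exactly what delivers the \emph{lower} bound $\int_{\mathbb{R}^3}|u_i|^6\ge C_3$; everything else follows the pattern of \cite[Lemmas 2.4 and 2.5]{LiuYouZou2023} with $|\Gamma|\in\{2,3\}$.
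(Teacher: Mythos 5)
Your overall architecture (two-sided $L^6$ bounds from Lemma \ref{Estimate2-3} $\Rightarrow$ strict diagonal dominance of $G_\Gamma(\mathbf{u})$ $\Rightarrow$ membership in $\Lambda_\Gamma$, and then the ``moreover'' part via Lemma \ref{naturalconstraint}) is the intended one, but the step you dismiss as ``routine bookkeeping'' is exactly where your argument breaks. After bounding the off-diagonal terms by Cauchy--Schwarz as $\int_{\mathbb{R}^3}|u_i|^3|u_j|^3\le C_4$, your inclusion rests on the numerical inequality $4C_3>2(|\Gamma|-1)\widehat{\beta}_1 C_4$, and this is false for the natural constants. Indeed, the upper bound in Lemma \ref{Estimate2-3} is obtained (as in the $N=4$ analogue, Lemma \ref{Estimate2}) from $S|u_i|_6^2\le\|u_i\|_i^2\le 6\overline{C}$, i.e.\ $C_4=(6\overline{C}/S)^3$, so that with $\widehat{\beta}_1=\frac{7S^3}{12(6\overline{C})^2}$ one gets $\widehat{\beta}_1C_4=\frac{7}{2}\overline{C}$. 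On the other hand no admissible $C_3$ can exceed $S^{3/2}\le\mathcal{S}^{3/2}=\overline{C}$: taking near-optimizers of \eqref{scalarequation} with pairwise disjoint supports and projecting each component on its own Nehari set produces elements of $\mathcal{M}_\Gamma$ with $I_\Gamma\le 2\overline{C}$ and $|u_i|_6^6$ arbitrarily close to $3A_i=S_i^{3/2}$. Hence $C_3\le\overline{C}<\frac{7}{2}\overline{C}\le\widehat{\beta}_1C_4$, and your sufficient condition cannot hold; the smallness threshold $\widehat{\beta}_1$ was \emph{not} designed to beat the crude bound ``cross term $\le C_4$''.

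The estimate must be kept one power sharper: do not replace the factor $|u_i|_6^3$ by its maximum. Write
\begin{equation*}
\sum_{j\neq i}\beta_{ij}\int_{\mathbb{R}^3}|u_i|^3|u_j|^3
\;\le\;\widehat{\beta}_1\,|u_i|_6^3\sum_{j\neq i}|u_j|_6^3
\;\le\;\widehat{\beta}_1\,|u_i|_6^3\cdot 2\Bigl(\tfrac{6\overline{C}}{S}\Bigr)^{\frac32},
\end{equation*}
using $|u_j|_6^3\le S^{-3/2}\|u_j\|_j^3$ and $\sum_j\|u_j\|_j^2=3I_\Gamma(\mathbf{u})\le 6\overline{C}$. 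Then strict diagonal dominance of $G_\Gamma(\mathbf{u})$, i.e.\ $4|u_i|_6^6>2\sum_{j\neq i}\beta_{ij}\int|u_i|^3|u_j|^3$, follows as soon as $|u_i|_6^3>\widehat{\beta}_1(6\overline{C}/S)^{3/2}$, that is from the \emph{lower} bound of Lemma \ref{Estimate2-3} once $C_3>\widehat{\beta}_1^{\,2}(6\overline{C}/S)^{3}$ --- a condition quadratic in $\widehat{\beta}_1$, which the explicit value \eqref{tem529-5} satisfies with a large margin (alternatively, use the constraint $\|u_i\|_i^2=|u_i|_6^6+\sum_{j\neq i}\beta_{ij}\int|u_i|^3|u_j|^3$ to rewrite the off-diagonal sum and reduce dominance to $|u_i|_6^6>\frac13\|u_i\|_i^2$). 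With this correction the rest of your note is fine: strict diagonal dominance of the symmetric matrix $G_\Gamma(\mathbf{u})$ with positive diagonal gives positive definiteness, $\Lambda_\Gamma$ is open, and the ``moreover'' statement then follows from Lemma \ref{naturalconstraint} exactly as you say, which is how the paper (deferring to Lemmas 2.3--2.5 of \cite{LiuYouZou2023}) intends it.
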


\begin{lemma}\label{PS-sequence-3}
Suppose that $N=3$ and $0\leq\beta_{ij}< \widehat{\beta}_1$, for any $i\neq j$. Let $\widehat{\mathbf{u}}_{n}
\in\mathcal{M}_\Gamma$ be a minimizing sequence of $A_\Gamma$. Then there exists a sequence $\{\mathbf{u}_n\}\in\mathcal{M}_\Gamma$ such that
\begin{equation}
\lim_{n\rightarrow \infty} I_\Gamma(\mathbf{u}_n)=A_\Gamma, \quad \lim_{n\rightarrow \infty} I_\Gamma'(\mathbf{u}_n)=0.
\end{equation}
\end{lemma}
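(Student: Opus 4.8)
The plan is to manufacture the Palais--Smale sequence out of the minimizing sequence by Ekeland's variational principle applied on the constraint manifold, following the scheme of \cite[Lemma 2.5]{LiuYouZou2023} (see also \cite[Proposition 3.13]{TavaresYou2019}). \emph{First}, by the argument of Lemma \ref{EnergyEstimate1} applied with $|\Gamma|$ mutually disjointly supported functions projected onto $\mathcal{M}_\Gamma$, one has $A_\Gamma\le\overline{C}$, so we may assume $I_\Gamma(\widehat{\mathbf{u}}_n)\le 2\overline{C}$ for all large $n$. By Lemma \ref{diagonally dominant} this forces $\widehat{\mathbf{u}}_n\in\mathcal{M}_\Gamma\cap\Lambda_\Gamma$, and by Lemma \ref{naturalconstraint} the set $\mathcal{M}_\Gamma\cap\Lambda_\Gamma$ is a $C^1$ manifold on which $I_\Gamma$ is of class $C^1$ with $\inf I_\Gamma=A_\Gamma$. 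Since strict diagonal dominance is an open condition and $I_\Gamma$ is continuous, a small neighbourhood of $\widehat{\mathbf{u}}_n$ inside $\mathcal{M}_\Gamma$ still lies in $\Lambda_\Gamma\cap\{I_\Gamma\le 2\overline{C}\}$; thus Ekeland's variational principle applies (on a closed energy-sublevel neighbourhood of the sequence) and yields $\mathbf{u}_n\in\mathcal{M}_\Gamma\cap\Lambda_\Gamma$ with $I_\Gamma(\mathbf{u}_n)\to A_\Gamma$, $\|\mathbf{u}_n-\widehat{\mathbf{u}}_n\|\to 0$, and the constrained differential $\|(I_\Gamma|_{\mathcal{M}_\Gamma\cap\Lambda_\Gamma})'(\mathbf{u}_n)\|\to 0$.

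\emph{Next}, I would convert vanishing of the constrained differential into vanishing of $I_\Gamma'(\mathbf{u}_n)$ in $D^{1,2}(\mathbb{R}^3;\mathbb{R}^{|\Gamma|})$. Write $\Phi_i(\mathbf{u}):=\|u_i\|_i^2-\sum_{j\in\Gamma}\int_{\mathbb{R}^3}\beta_{ij}|u_i|^{3}|u_j|^{3}$ for the defining functions of $\mathcal{M}_\Gamma$. The Lagrange multiplier rule gives $\mu^n=(\mu_i^n)_{i\in\Gamma}$ with $I_\Gamma'(\mathbf{u}_n)=\sum_{i\in\Gamma}\mu_i^n\,\Phi_i'(\mathbf{u}_n)+o(1)$. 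Testing this identity against the vector whose $k$-th component equals $u_k^n$ and whose other components vanish, and using $\Phi_k(\mathbf{u}_n)=0$ together with the smallness of the constrained differential on that vector, one finds that $\mu^n$ solves $G_\Gamma(\mathbf{u}_n)\,\mu^n=o(1)$, where $G_\Gamma$ is the matrix introduced just before Lemma \ref{naturalconstraint}.

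\emph{The crux} is then a uniform lower bound on $\det G_\Gamma(\mathbf{u}_n)$. By Lemma \ref{Estimate2-3} we have $C_3\le|u_i^n|_6^6\le C_4$ for all $n$, so each diagonal entry of $G_\Gamma(\mathbf{u}_n)$ is at least $4C_3$, while, because $\beta_{ij}<\widehat{\beta}_1$ is small and the components are uniformly bounded in $L^6$, the corresponding off-diagonal row sum is at most half of the diagonal entry. Hence $G_\Gamma(\mathbf{u}_n)$ is strictly diagonally dominant with a dominance gap bounded below uniformly in $n$, so by the Gershgorin circle theorem it is positive definite with $\|G_\Gamma(\mathbf{u}_n)^{-1}\|$ uniformly bounded. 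Therefore $\mu^n\to 0$, and since $\|\Phi_i'(\mathbf{u}_n)\|$ stays bounded (again by the uniform $D^{1,2}$- and $L^6$-bounds), $\|I_\Gamma'(\mathbf{u}_n)\|\le\sum_{i\in\Gamma}|\mu_i^n|\,\|\Phi_i'(\mathbf{u}_n)\|+o(1)\to 0$, which is the assertion.

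\emph{Main obstacle.} The genuinely delicate step is this last transfer from the constrained to the free derivative: it needs the Lagrange multipliers to be uniformly controlled, which rests entirely on the uniform two-sided bounds of Lemma \ref{Estimate2-3} and on the smallness of $\beta_{ij}$ encoded in $\widehat{\beta}_1$, exactly as in the corresponding argument of \cite{LiuYouZou2023}. A minor secondary point is that $\mathcal{M}_\Gamma\cap\Lambda_\Gamma$ is not closed, so Ekeland's principle must be run on an energy-sublevel neighbourhood of the minimizing sequence rather than on the whole manifold; this causes no difficulty because both the energy bound $I_\Gamma\le 2\overline{C}$ and the diagonal-dominance condition are preserved under small perturbations.
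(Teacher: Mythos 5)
Your proposal is correct and is essentially the argument the paper has in mind: the paper omits the proof of this lemma, stating that it follows from Lemma \ref{Estimate2-3} and the definition of $\widehat{\beta}_1$ by repeating the proof of \cite[Lemma 2.5]{LiuYouZou2023} (cf. also \cite[Proposition 3.13]{TavaresYou2019}), which is precisely your scheme of Ekeland's principle on the energy sublevel of $\mathcal{M}_\Gamma\cap\Lambda_\Gamma$, followed by killing the Lagrange multipliers through the uniform strict diagonal dominance of $G_\Gamma(\mathbf{u}_n)$ guaranteed by the two-sided $L^6$ bounds and the smallness $\beta_{ij}<\widehat{\beta}_1$. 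The only caveat is that your quantitative claim that the off-diagonal row sums are at most half the diagonal entries should be stated simply as a uniform positive dominance gap coming from $C_3$, $C_4$ and $\widehat{\beta}_1$ (as in the $N=4$ analogue in the proof of Lemma \ref{PS-sequence}), which is all the argument needs.
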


\subsubsection{Energy estimates}
In this subsection, we will establish the energy estimates.

\begin{lemma}\label{Energyestimates6}
Assume that $N=3$ and $\beta_{ij}\in (0, 1),i\neq j$. Given $\Gamma \subseteq \{1,2, 3\}$, suppose that $ A_Q $ is achieved by $\mathbf{u}_Q$ for every $Q\subsetneq \Gamma$, then
\begin{equation}\label{1102-1}
A_\Gamma \leq \min \{A_Q+\sum_{\substack{i\in I\backslash Q}} A_i:Q\subsetneq \Gamma\},
\end{equation}
and
\begin{equation}
A_\Gamma \leq \sum_{i\in \Gamma }A_i.
\end{equation}
Moreover,
\begin{equation}\label{tab1122-2}
A_{ij}>\max\{A_i, A_j\}, ~\text{ for any } i\neq j.
\end{equation}
where we set $A_\Gamma=A_{ij}$ when $\Gamma=\{i,j\}$, $A_{\Gamma}$ is defined in \eqref{tem529-10} and $A_i$ is defined in \eqref{tem529-1}.
\end{lemma}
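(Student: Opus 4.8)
The plan is to adapt to $N=3$ (critical exponent $2^*=6$) the strategy used for $N=4$ in Lemma~\ref{1021-7} and Lemma~\ref{Energyestimates4}. The essential new ingredient, and the step I expect to be the main obstacle, is a fibering-map lemma for subsystems: for $N=4$ the Nehari equations obtained by projecting a test configuration onto the Nehari set form a \emph{linear} system and Cramer's rule applies, whereas for $N=3$ the fibering map is not concave and those equations are nonlinear, so one must instead combine an a priori estimate on the projection parameters with a fixed-point argument. Throughout I would keep $\beta_{ij}\in(0,\widehat{\beta})$ with $\widehat{\beta}$ small; the proof produces several smallness thresholds, and their minimum (together with $\widehat{\beta}_1$ from Lemma~\ref{Estimate2-3}) is the constant $\widehat{\beta}$ of \eqref{tem529-4}.

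\textbf{Step 1 (fibering lemma for subsystems).} For $Q\subseteq\{1,2,3\}$ with $|Q|\le2$ and a positive solution $\mathbf{u}_Q=(u_i)_{i\in Q}$ of the $Q$-subsystem with $I_Q(\mathbf{u}_Q)=A_Q$, set $F_Q(\mathbf{t}):=I_Q\big((t_iu_i)_{i\in Q}\big)$ for $\mathbf{t}=(t_i)_{i\in Q}\in\mathbb{R}^{|Q|}_{+}$; I would show $\max_{\mathbf{t}>0}F_Q(\mathbf{t})=F_Q(\mathbf{1})=A_Q$. For $|Q|=1$ this is the scalar computation, the maximum of $t\mapsto\frac12t^2\|u_i\|_i^2-\frac16t^6|u_i|_6^6$ being attained at $t=1$ since $u_i$ solves \eqref{scalarequation}. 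For $Q=\{i,j\}$: since $\beta_{ij}<1$, Cauchy--Schwarz gives $\beta_{ij}^2\big(\int_{\mathbb{R}^3}|u_i|^3|u_j|^3\big)^2<|u_i|_6^6|u_j|_6^6$, so $M_B[u_i,u_j]$ is positive definite and $F_Q(\mathbf{t})\to-\infty$ as $|\mathbf{t}|\to\infty$; hence $F_Q$ has a global maximum on $\overline{\mathbb{R}^2_+}$. As in Step~2 of the proof of Lemma~\ref{Energyestimates4}, a perturbation in a vanishing coordinate — with the cross terms controlled via the energy bounds of Lemma~\ref{Estimate2-3} and the smallness of $\beta_{ij}$ — rules out the boundary, so the maximum sits at an interior critical point. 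At each interior critical point the Hessian of $F_Q$ has negative diagonal and, because $\beta_{ij}<1$, positive determinant, hence is negative definite, so every interior critical point is a strict local maximum. Rewriting the critical-point equations as $t_i^4=1+\frac{\beta_{ij}}{|u_i|_6^6}\big(\int_{\mathbb{R}^3}|u_i|^3|u_j|^3\big)(1-t_it_j^3)$ and symmetrically in $j$, the positivity of $t_it_j^3$ and $t_i^3t_j$ will force $t_i,t_j=1+O(\beta_{ij})$ at every critical point; on that small box the associated map is a contraction (its differential being $O(\beta_{ij})$), so $(1,1)$ is its only fixed point. Hence $(1,1)$ is the unique interior critical point, so the global maximum, and $F_Q(\mathbf{1})=\frac1N\sum_{i\in Q}\|u_i\|_i^2=A_Q$. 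This a priori estimate plus contraction argument is the device that replaces Cramer's rule, and it is the part I expect to be the most delicate.

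\textbf{Step 2 (the upper bounds).} I would fix $Q\subsetneq\Gamma$ — with $A_Q$ attained by $\mathbf{u}_Q$ by hypothesis — and, for $i\in\Gamma\setminus Q$, the positive solution $\omega_i$ of \eqref{scalarequation} with $I_i(\omega_i)=A_i$. Put $\phi_i:=u_i$ for $i\in Q$, $\phi_i:=\omega_i$ for $i\in\Gamma\setminus Q$, and $g(\mathbf{t}):=I_\Gamma\big((t_i\phi_i)_{i\in\Gamma}\big)$. Since every $\beta_{ij}>0$, each coupling term between the $Q$-block and a singleton and between two distinct singletons contributes non-positively to $I_\Gamma$; dropping them gives $g(\mathbf{t})\le F_Q\big((t_i)_{i\in Q}\big)+\sum_{i\in\Gamma\setminus Q}F_i(t_i)$. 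For $\beta_{ij}$ small the matrix $M_B[(\phi_i)_{i\in\Gamma}]$ is strictly diagonally dominant — its off-diagonal entries are $O(\beta_{ij})$ while each $|\phi_i|_6^6$ is bounded below (by Lemma~\ref{Estimate2-3} for $i\in Q$, trivially for $i\in\Gamma\setminus Q$) — hence positive definite, so $g\to-\infty$ at infinity and attains its maximum on $\overline{\mathbb{R}^{|\Gamma|}_+}$; the boundary is excluded as in Step~1, so the maximum is reached at an interior $\bar{\mathbf{t}}$, at which $(\bar{t}_i\phi_i)_{i\in\Gamma}\in\mathcal{M}_\Gamma$. Then, by Step~1, $A_\Gamma\le g(\bar{\mathbf{t}})\le\max F_Q+\sum_{i\in\Gamma\setminus Q}\max F_i=A_Q+\sum_{i\in\Gamma\setminus Q}A_i$. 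Letting $Q$ run over the proper nonempty subsets of $\Gamma$ gives the first inequality, and taking $Q=\{i_0\}$ for arbitrary $i_0\in\Gamma$ gives $A_\Gamma\le\sum_{i\in\Gamma}A_i$.

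\textbf{Step 3 ($A_{ij}>\max\{A_i,A_j\}$).} I would take a minimizing sequence $(u^n,v^n)\in\mathcal{M}_{\{i,j\}}$ for $A_{ij}$. By Lemma~\ref{Estimate2-3}, $C_3\le|u^n|_6^6,|v^n|_6^6\le C_4$, and $\int_{\mathbb{R}^3}|u^n|^3|v^n|^3\le C_4$ by Cauchy--Schwarz. Rescaling onto the scalar Nehari sets, $\widetilde{u}^n:=s_nu^n$ with $s_n^4=\|u^n\|_i^2/|u^n|_6^6$ satisfies $\|\widetilde{u}^n\|_i^2=|\widetilde{u}^n|_6^6$ and $s_n>1$ (because $\beta_{ij}>0$), so $\frac1N s_n^2\|u^n\|_i^2\ge A_i$, and likewise $\frac1N r_n^2\|v^n\|_j^2\ge A_j$ with $r_n>1$. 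Since $s_n^2,r_n^2\le(1+\beta_{ij}C_4/C_3)^{1/2}$, summing gives $A_{ij}=\lim_n\frac1N\big(\|u^n\|_i^2+\|v^n\|_j^2\big)\ge(A_i+A_j)(1+\beta_{ij}C_4/C_3)^{-1/2}$, whose right-hand side exceeds $\max\{A_i,A_j\}$ once $\beta_{ij}$ is below a constant depending only on $\mathcal{S}$ and $\lambda_1,\lambda_2,\lambda_3$ (through $C_3,C_4$ and $\min_kA_k/\max_kA_k$); this is a further threshold entering $\widehat{\beta}$ in \eqref{tem529-4}. (When $A_{ij}$ is known to be attained, e.g. by $(\omega_i^{ij},\omega_j^{ij})$, a shorter route is available: Step~1 with $Q=\{i,j\}$ gives $A_{ij}=\max_{\mathbf{t}}F_{ij}(\mathbf{t})\ge\max_{t_i}F_{ij}(t_i,0)>A_i$, the strictness because $\omega_i^{ij}$ solves the coupled equation, not the scalar one, so does not realize $S_i$.)
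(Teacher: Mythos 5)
Your overall architecture (test functions built from a sub-minimizer times a scalar bubble, dropping the nonnegative cross couplings, a fibering lemma saying the reduced map is maximized at $(1,1)$, and an interior maximum producing an element of $\mathcal{M}_\Gamma$) is the same as the paper's. The substantive divergence is in the fibering lemma, and there it creates a gap relative to the statement: Lemma \ref{Energyestimates6} is asserted for all $\beta_{ij}\in(0,1)$, whereas your uniqueness argument (a priori localization of critical points near $(1,1)$ plus a contraction whose Lipschitz constant is $O(\beta_{ij}\,C_4/C_3)$) only works after imposing an additional smallness threshold on $\beta_{ij}$; the same is true of your coercivity and boundary-exclusion steps as you phrased them, although there smallness is in fact unnecessary, since for $N=3$ the coupling enters the fiber map as $s^3$, which is dominated by the quadratic term $s^2$ for small $s$ no matter how large $\beta_{ij}>0$ is. The paper avoids any smallness by a change of variables you did not consider: setting $s_i=t_i^3$, the reduced functional becomes $\tilde g(s_1,s_2)=\frac12\sum s_i^{2/3}\|\varphi_i\|_i^2-\frac16\sum_{i,j}\beta_{ij}s_is_j\int|\varphi_i|^3|\varphi_j|^3$, whose Hessian is $-\frac19 B(\mathbf{s})-\frac13 M_B[\varphi_1,\varphi_2]$ with $B(\mathbf{s})$ diagonal positive and $M_B$ positive definite precisely because $\beta_{12}<1$ (H\"older). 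Hence $\tilde g$ is strictly concave on $(\R^+)^2$, has a unique critical point, and $(1,1)$ is the global maximum — valid on the whole range $\beta_{ij}\in(0,1)$, with no recourse to Lemma \ref{Estimate2-3} or to Cramer's rule. So your contraction device does replace Cramer's rule, but it proves a strictly weaker lemma than stated (adequate for Proposition \ref{1103-1}, where $\beta_{ij}<\widehat\beta$ anyway, but not for the lemma as written).

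A second, more local flaw is in your parenthetical route to \eqref{tab1122-2}: the claimed strictness ``$\max_{t_i}F_{ij}(t_i,0)>A_i$ because $\omega_i^{ij}$ solves the coupled equation, so does not realize $S_i$'' is not justified. When $\lambda_i=\lambda_j$ the minimizer of $A_{ij}$ can be synchronized, i.e.\ each component a positive multiple of the same scalar bubble (exactly the situation exploited in Lemma \ref{tem1206-1}), and then $\|\omega_i^{ij}\|_i^2/|\omega_i^{ij}|_6^2=S_i$, so $\max_{t_i}F_{ij}(t_i,0)=A_i$ and your shortcut only gives $A_{ij}\ge A_i$. The paper obtains strictness differently: since the boundary maximum of $g_{12}$ in a ray $\{t_1=0\}$ is attained at some finite $t_2^*$ and the small-$s$ perturbation $g_{12}(s,t_2^*)-g_{12}(0,t_2^*)=\frac12 s^2\|\varphi_1^{12}\|_1^2+O(s^3)>0$, the supremum over the boundary (which is $\ge\max\{A_i,A_j\}$) is strictly below the interior global maximum $A_{ij}$. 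Your main Step 3 computation with the minimizing sequence and the bound $s_n^2\le(1+\beta_{ij}C_4/C_3)^{1/2}$ is arithmetically fine, but again it only yields \eqref{tab1122-2} for $\beta_{ij}$ below a further threshold depending on $\min_k A_k/\max_k A_k$, whereas the statement (and the paper's boundary-versus-interior comparison) requires only $\beta_{ij}\in(0,1)$.
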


\begin{proof}[\bf Proof]
Firstly, we show that \eqref{1102-1} is true. Note that $|\Gamma|=2$ or $3$, without loss of generality, we only show that $A\leq A_{12}+A_3$.

{\bf Step 1.}  Assume that $I_{12}(\varphi^{12}_1,\varphi^{12}_2)=A_{12}$. Consider the function $h(t_1,t_2,t_3):=I(t_1\varphi^{12}_1,t_2\varphi^{12}_2,t_3\omega_3)$. Note that $\beta_{ij}>0$ for any $i\neq j$, then we see that $h(t_1,t_2,t_3) \leq g_{12}(t_1,t_2)+g_3(t_3)$,
where
\begin{equation}
g_{12}(t_1,t_2):=\frac{1}{2}\sum_{i=1}^2t_i^2\|\varphi^{12}_i\|_i^2-\frac{1}{6}\sum_{i=1}^2t_i^{6}
\int_{\mathbb{R}^3}|\varphi^{12}_i|^{6}-\frac{1}{3}t_1^3t_2^3\int_{\mathbb{R}^3}\beta_{12}|\varphi^{12}_1|^3|\omega^{12}_2|^3,
\end{equation}
\begin{equation}
g_3(t_3):=\frac{1}{2}t_3^2\|\omega_3\|_3^2-\frac{1}{6}t_3^{6}\int_{\mathbb{R}^3}|\omega_3|^{6}.
\end{equation}
It is easy to see that $\max_{t_{3}>0}g_{3}(t_{3})=A_3$.

{\bf Step 2.}  We claim that
\begin{equation}\label{tab1122-1-4}
\max_{t_{1},t_{2}\geq0}g_{12}(t_{1},t_{2})=g_{12}(1,1)=A_{12}.
\end{equation}

{ \bf Firstly}, note that $\beta_{ij}\geq0$ for any $i\neq j$, then we see that
\begin{equation}\label{tem103-2}
g_{12}(t_1,t_2)\leq \frac{1}{2}\sum_{i=1}^{2} t_i^2\left\|\varphi^{12}_i \right\|_i^2-\sum_{i=1}^{2} t_i^6\int_{\mathbb{R}^3}|\varphi^{12}_i|^{6} \to -\infty ,\quad \text{ as } |\mathbf{t}|\to +\infty,
\end{equation}
where $\mathbf{t}:=(t_1,t_2)$. Thus, by \eqref{tem103-2} we see that $g_{12}(t_1,t_2)$ has a global maximum point in $\overline{(\R^+)^2}$. Assume the global maximum point $\mathbf{t}=(s_1,s_2)$ belongs to  $\partial \overline{(\R^+)^2}$. Without loss of generality, we assume that $s_1=0$ and $s_2>0$, then
$$
g_{12}(0,s_2)=\frac{1}{2} s_2^2\left\|\varphi^{12}_2 \right\|_2^2-\frac{1}{6}s_2^6\int_{\mathbb{R}^3}\beta_{22} |\varphi^{12}_2|^6.
$$
For  $s>0$ small enough, we have
$$
g_{12}(s,s_2)-g_{12}(0,s_2)=\frac{1}{2}s^2\left\|\omega_1 \right\|_1^2-\frac{1}{6}s^6\int_{\mathbb{R}^3}\beta_{11}|\omega_1|^6-\frac{1}{3}s^3 s_2^3\int_{\mathbb{R}^3}\beta_{12} |\omega_1|^3|\varphi^{12}_2|^3>0,
$$
which contradicts to the fact that $(0,s_2)$ is a global maximum of $\overline{(\R^+)^2}$. Thus,  the global maximum point $(s_1,s_2)$ is a interior point in $ (\R^+)^2$, and so $(s_1,s_2)$ is an critical point.

Based on the above arguments, we also know that
\begin{equation*}
\max_{t_{1},t_{2}\geq0}g_{12}(t_{1},t_{2})>\max\left\{\max_{t_{1}>0}g_{12}(t_{1},0), \max_{t_{2}>0}f_{12}(0,t_{2})\right\}=\max\{A_1, A_2\}.
\end{equation*}

{\bf Secondly}, we prove that the critical point of $g_{12}(t_{1},t_{2})$ is unique in $(\R^+)^2$. Consider
\begin{equation}
\tilde{g}_{12}(t_1,t_2)=\frac{1}{2}\sum_{i=1}^{2} t_i^{\frac{2}{3}}\left\|\varphi^{12}_i \right\|_i^2-\frac{1}{6}\sum_{i,j=1}^{2} t_it_j\int_{\mathbb{R}^3}\beta_{ij} |\varphi^{12}_i|^3|\varphi^{12}_j|^3 .
\end{equation}
By a direct calculation,
\begin{equation}
\begin{aligned}
&\frac{\partial \tilde{g}_{12}}{\partial t_i}(t_1,t_2)=\frac{1}{3}t_i^{-\frac{1}{3}}\left\|\varphi^{12}_i \right\|_i^2-\frac{1}{3}\sum_{j=1}^2 t_j\int_{\mathbb{R}^3}\beta_{ij} |\varphi^{12}_i|^3|\varphi^{12}_j|^3 ,\quad 1\leq i \leq 2,\\
			&\frac{\partial^2 \tilde{g}_{12}}{\partial t_i^2}(t_1,t_2)=-\frac{1}{9}t_i^{-\frac{4}{3}}\left\|\varphi^{12}_i \right\|_i^2-\frac{1}{3}\int_{\mathbb{R}^3}\beta_{ii} |\varphi^{12}_i|^6 , \quad 1\leq i \leq 2,\\
&\frac{\partial^2 \tilde{g}_{12}}{\partial t_i \partial t_j}(t_1,t_2)=-\frac{1}{3}\int_{\mathbb{R}^3}\beta_{12} |\varphi^{12}_1|^3|\varphi^{12}_2|^3.
\end{aligned}
\end{equation}
Thus the Hessian matrix of $\tilde{g}_{12}$ is
\begin{equation}
	\begin{aligned}
		H(\tilde{g}_{12})& =-\frac{1}{9}\left(\begin{matrix}
				t_1^{-\frac{4}{3}}\|\varphi^{12}_1 \|_1^2 & 0 \\
	0 & t_2^{-\frac{4}{3}}\|\varphi^{12}_2 \|_2^2
		\end{matrix}\right)
-\frac{1}{3}\left(\begin{matrix}
			\int_{\mathbb{R}^3}\beta_{11} |\varphi^{12}_1|^6 & \int_{\mathbb{R}^3}\beta_{12} |\varphi^{12}_1|^3|\varphi^{12}_2|^3\\
			\int_{\mathbb{R}^3}\beta_{12} |\varphi^{12}_1|^3|\varphi^{12}_2|^3  & \int_{\mathbb{R}^3}\beta_{22} |\varphi^{12}_2|^6
		\end{matrix}\right)\\
	&=: -\frac{1}{9}B(\mathbf{t})-\frac{1}{3}M_B[\varphi^{12}_1,\varphi^{12}_2],
	\end{aligned}	
\end{equation}
where $ \mathbf{t}=(t_1,t_2)\in (\R^+)^2 $. Note that the matrix $B(\mathbf{t})$ is positive definite. Next, we claim that the matrix
$M_B[\varphi^{12}_1,\varphi^{12}_2]$ is also positive definite. In fact, since $\beta_{ij}<1$ for any $i\neq j$, then we have
\begin{align*}
  \int_{\mathbb{R}^3}|\varphi^{12}_1|^6 \int_{\mathbb{R}^3}|\varphi^{12}_2|^6-\left(\beta_{12}\int_{\mathbb{R}^3}|\varphi^{12}_1|^3|\varphi^{12}_2|^3\right)^2 >0,
\end{align*}
which yields that the matrix $M_B[\varphi^{12}_1,\varphi^{12}_2]$ is positive definite.
 Therefore the Hessian matrix of $\tilde{g}_{12}$ is negative definite, which implies that $\tilde{g}_{12}$ has a unique critical point. Therefore, the critical point must be the global maximum point. Notice that $ \tilde{g}_{12}(t_1^3,t_2^3) = g_{12}(t_1,t_2)$,  thus $g_{12}$ has a unique critical point, which must be the global maximum point. Since $(1,1)$ is a critical point of $g_{12}(t_{1},t_{2})$, then $(1,1)$ is a maximum point of $g_{12}$. Therefore,
\begin{equation}
\max_{t_{1},t_{2}>0}g_{12}(t_{1},t_{2})=g_{12}(1,1)=A_{12}.
\end{equation}
Hence, based on the above arguments we get that
\begin{equation}\label{tem322-2}
\max_{t_{1},t_{2},t_3>0}h(t_{1},t_{2},t_3)\leq A_{12}+A_3.
\end{equation}

{\bf Step 3.} We show that there exist $\widetilde{t}_1,\widetilde{t}_2,\widetilde{t}_3>0$ such that $(\widetilde{t}_1\varphi^{12}_1,\widetilde{t}_2\varphi^{12}_2,
\widetilde{t}_3\omega_3)\in\mathcal{M}$. It is easy to see that  $h(t_1,t_2,t_3)$ has a global maximum in $\overline{(\R^+)^3}$. Following the proof of
\cite[Proposition 2.1]{LiuYouZou2023}, any global maximum point of
$h(t_1,t_2,t_3) $ does not belong to $\partial \overline{(\R^+)^3}$, and any global maximum point $(\widetilde{t}_1,\widetilde{t}_2,\widetilde{t}_3)$ is an interior point in $(\R^+)^3$. It is standard to see that
\begin{equation}\label{tem322-1}
(\widetilde{t}_1\varphi^{12}_1,\widetilde{t}_2\varphi^{12}_2,
\widetilde{t}_3\omega_3)\in\mathcal{M} \text{ and } A\leq I(\widetilde{t}_1\varphi^{12}_1,\widetilde{t}_2\varphi^{12}_2,
\widetilde{t}_3\omega_3)\leq A_{12}+A_3.
\end{equation}
If $|Q|=1$, then by \eqref{1102-1} we get that
$$
A_\Gamma \leq \sum_{i\in \Gamma }A_i.
$$
\end{proof}


Set
\begin{equation}\label{tem529-6}
\widehat{\beta}_2:=\min\left\{\frac{C_3S^{\frac{3}{2}}}{2C_4^{\frac{1}{2}}(6\overline{C})^{\frac{3}{2}}},
\frac{C_3S^3}{12(6\overline{C})^2}\right\}
\end{equation}
where $C_3, C_4$ are defined in Lemma \ref{Estimate2-3}, $S$ is defined in \eqref{Constant-1125} and $\overline{C}$ is defined in \eqref{Constant-2-1}.

\begin{lemma}\label{estimation4-4}
Suppose that $\beta_{ij}\in (0, \widehat{\beta}_2)$ and let $\mathbf{v}_{n}\in \mathcal{M}_\Gamma$ be a minimizing sequence of $A_\Gamma$ with $I(\mathbf{v}_{n})\leq 2\overline{C}$, and $\mathbf{v}_{n}\rightharpoonup \mathbf{0}$. Then for any $\widetilde{\sigma}>0$ and for every $\widetilde{\delta}\in (-\widetilde{\sigma},0)\cup (0,\widetilde{\sigma})$, there exists $\widetilde{\rho}\in (-\widetilde{\delta},0)\cup (0,\widetilde{\delta})$ such that, up to a subsequence,
\begin{equation}\label{estimation4-1}
\text{ either } ~\sum_{i\in \Gamma}\int_{B_{\widetilde{\sigma}+\widetilde{\rho}}}|\nabla v_i^n|^2\rightarrow 0 ~\text{ or }
~\sum_{i\in \Gamma}\int_{\mathbb{R}^3 \backslash B_{\widetilde{\sigma}+\widetilde{\rho}}}|\nabla v_i^n|^2\rightarrow 0.
\end{equation}
\end{lemma}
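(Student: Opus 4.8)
The plan is to transcribe the proof of Lemma~\ref{estimation3}, replacing the exponents of the four-dimensional case (powers $4$ and $2$) by those relevant here (powers $6$ and $3$), and replacing Lemma~\ref{PS-sequence}, Lemma~\ref{Estimate2} and Lemma~\ref{Energyestimates4} by their three-dimensional analogues Lemma~\ref{PS-sequence-3}, Lemma~\ref{Estimate2-3} and Lemma~\ref{Energyestimates6}. First I would invoke Lemma~\ref{PS-sequence-3} to pass to a subsequence along which, besides $\mathbf v_n\weakto\mathbf 0$ and $I_\Gamma(\mathbf v_n)\le 2\overline{C}$, one has $I_\Gamma(\mathbf v_n)\to A_\Gamma$ and $I_\Gamma'(\mathbf v_n)\to 0$; in particular $\{\mathbf v_n\}$ is bounded in $\mathbf D$. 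Treating the case $\widetilde\delta\in(0,\widetilde\sigma)$ (the case $\widetilde\delta<0$ being symmetric), I would average $\sum_{i\in\Gamma}\int|\nabla v_i^n|^2$ over the annulus $\widetilde\sigma\le|x|\le\widetilde\sigma+\widetilde\delta$ to choose a radius $\widetilde\sigma+\widetilde\rho$, $\widetilde\rho\in(0,\widetilde\delta)$, along which $\int_{(\widetilde\sigma+\widetilde\rho)\mathbb S}\sum_{i\in\Gamma}|\nabla v_i^n|^2$ stays bounded; then compactness of $H^1((\widetilde\sigma+\widetilde\rho)\mathbb S)\hookrightarrow H^{1/2}((\widetilde\sigma+\widetilde\rho)\mathbb S)$ together with $\mathbf v_n\weakto\mathbf 0$ gives $v_i^n\to 0$ in $H^{1/2}((\widetilde\sigma+\widetilde\rho)\mathbb S)$, and solving the harmonic Dirichlet problems in $B_{\widetilde\sigma+\widetilde\delta}\setminus B_{\widetilde\sigma+\widetilde\rho}$ and in $B_{\widetilde\sigma+\widetilde\rho}\setminus B_{\widetilde\sigma-\widetilde\delta}$ produces cut-offs $v_{i,1}^n$ (carried by $B_{\widetilde\sigma+\widetilde\delta}$, equal to $v_i^n$ on $B_{\widetilde\sigma+\widetilde\rho}$) and $v_{i,2}^n$ with $\|v_i^n\|_i^2=\|v_{i,1}^n\|_i^2+\|v_{i,2}^n\|_i^2+o(1)$ and the approximate Nehari relations $\|v_{i,\ell}^n\|_i^2=\sum_{j\in\Gamma}\int_{\R^3}\beta_{ij}|v_{i,\ell}^n|^3|v_{j,\ell}^n|^3+o(1)$, $\ell=1,2$. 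In view of the definition of the $v_{i,\ell}^n$, the conclusion \eqref{estimation4-1} reduces to the dichotomy: either $\sum_{i\in\Gamma}\|v_{i,1}^n\|^2\to 0$ or $\sum_{i\in\Gamma}\|v_{i,2}^n\|^2\to 0$, which I would prove by contradiction.

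Assume, along a subsequence, that $\liminf_n\sum_{i\in\Gamma}\|v_{i,1}^n\|^2>0$ and $\liminf_n\sum_{i\in\Gamma}\|v_{i,2}^n\|^2>0$, and set $\Gamma_\ell:=\{i\in\Gamma:\ \lim_n\|v_{i,\ell}^n\|^2>0\}$ for $\ell=1,2$ (after a further subsequence). Since Lemma~\ref{Estimate2-3} bounds $|v_i^n|_6^6$ away from $0$, so is $\|v_i^n\|_i^2$; hence $\Gamma_1\cup\Gamma_2=\Gamma$, and both $\Gamma_1,\Gamma_2$ are nonempty. The key point, at which the case $N=3$ genuinely departs from Lemma~\ref{estimation3}, is to show: for every nonempty $\Lambda\subseteq\Gamma$ with $\lim_n\|v_{i,\ell}^n\|^2>0$ for all $i\in\Lambda$, there exist $s_i^n>0$ $(i\in\Lambda)$ with $s_i^n\to 1$ and $(s_i^nv_{i,\ell}^n)_{i\in\Lambda}\in\mathcal{M}_\Lambda$. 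For $N=4$ the defining equations form a linear system and one uses Cramer's rule; for $N=3$ they are nonlinear in the $s_i^n$, so I would borrow the concavity device from the proof of Lemma~\ref{Energyestimates6}: under the substitution $s_i=t_i^{1/3}$ the function $\mathbf t\mapsto I_\Lambda\big((t_i^{1/3}v_{i,\ell}^n)_{i\in\Lambda}\big)$ has Hessian $-\frac{1}{9}B(\mathbf t)-\frac{1}{3}M_B[(v_{i,\ell}^n)_{i\in\Lambda}]$, with $B(\mathbf t)$ diagonal positive definite; moreover, because $\beta_{ij}<\widehat\beta_2$ while Lemma~\ref{Estimate2-3} keeps $|v_{i,\ell}^n|_6^6$ in a fixed compact subinterval of $(0,\infty)$ (up to $o(1)$), the matrix $M_B[(v_{i,\ell}^n)_{i\in\Lambda}]$ is strictly diagonally dominant with $n$-independent constants, hence uniformly positive definite by the Gershgorin circle theorem. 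Thus the rescaled functional is uniformly strictly concave, so it has a unique critical point, which is its global maximizer and is interior --- the boundary of $\overline{(\R^+)^{|\Lambda|}}$ is excluded exactly as in Step~3 of the proof of Lemma~\ref{Energyestimates6}, since moving a vanishing coordinate slightly off $0$ strictly increases the value. Since the approximate Nehari relations say that $(1,\dots,1)$ is a critical point up to $o(1)$, uniform strict concavity forces $((s_i^n)^3)_{i\in\Lambda}\to(1,\dots,1)$, i.e. $s_i^n\to1$.

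Granting this, from $(s_i^nv_{i,\ell}^n)_{i\in\Gamma_\ell}\in\mathcal{M}_{\Gamma_\ell}$ and $s_i^n\to1$ we obtain
\[
\frac{1}{4}\lim_n\sum_{i\in\Gamma_\ell}\|v_{i,\ell}^n\|_i^2=\lim_n I_{\Gamma_\ell}\big((s_i^nv_{i,\ell}^n)_{i\in\Gamma_\ell}\big)\ge A_{\Gamma_\ell},\qquad \ell=1,2,
\]
while $\|v_i^n\|_i^2=\|v_{i,1}^n\|_i^2+\|v_{i,2}^n\|_i^2+o(1)$ and $\|v_{i,\ell}^n\|\to 0$ for $i\notin\Gamma_\ell$ yield
\[
A_\Gamma=\frac{1}{4}\lim_n\sum_{i\in\Gamma}\|v_i^n\|_i^2=\frac{1}{4}\lim_n\sum_{i\in\Gamma_1}\|v_{i,1}^n\|_i^2+\frac{1}{4}\lim_n\sum_{i\in\Gamma_2}\|v_{i,2}^n\|_i^2\ge A_{\Gamma_1}+A_{\Gamma_2}.
\]
If $\Gamma_1=\Gamma$ then $A_\Gamma\ge A_\Gamma+A_{\Gamma_2}>A_\Gamma$, impossible; likewise $\Gamma_2=\Gamma$ is impossible. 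Hence $\Gamma_1$ and $\Gamma_2$ are proper nonempty subsets of $\Gamma$ with $\Gamma_1\cup\Gamma_2=\Gamma$, and then $A_\Gamma\ge A_{\Gamma_1}+A_{\Gamma_2}$ contradicts Lemma~\ref{Energyestimates6} (whose inequalities are in fact strict, as in Lemma~\ref{Energyestimates4}; when $\Gamma_1$ or $\Gamma_2$ has two elements one also uses $A_{ij}>\max\{A_i,A_j\}$ from \eqref{tab1122-2}). Thus a contradiction is reached in every case, so the dichotomy holds, and \eqref{estimation4-1} follows from the definition of the $v_{i,\ell}^n$.

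The hardest step is the second paragraph: producing the projection onto $\mathcal{M}_\Lambda$ with rescaling factors tending to $1$. Unlike the four-dimensional situation, the relevant algebraic system is nonlinear, so the Cramer's-rule argument of Lemma~\ref{estimation3} must be replaced by the uniform strict concavity of the rescaled energy, for which the quantitative $L^6$-mass bounds of Lemma~\ref{Estimate2-3} and the smallness $\beta_{ij}<\widehat\beta_2$ are indispensable.
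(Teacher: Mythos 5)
Your proposal is correct and follows essentially the same route as the paper's own proof: the same harmonic-extension cut-off and dichotomy reduction, and the same key $N=3$ device of replacing the Cramer's-rule step by uniqueness of the critical point of the (rescaled, strictly concave) energy, obtained from strict diagonal dominance of the coupling matrix via Lemma \ref{Estimate2-3}, the smallness $\beta_{ij}<\widehat{\beta}_2$ and the Gershgorin circle theorem. Only cosmetic remarks: for $N=3$ the Nehari identity gives the factor $\tfrac13$ (not $\tfrac14$) in your energy displays, and the final contradiction indeed needs the strict form of the inequalities of Lemma \ref{Energyestimates6}, which, as you observe, holds because $\beta_{ij}>0$ makes the coupling terms strictly negative at the interior maximum point.
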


\begin{proof}[\bf Proof]
Without loss of generality, we present the proof for the case of $\Gamma=\{1, 2, 3\}$. The idea of the following proof is similar to that of Lemma \ref{estimation3}. By Lemma \ref{PS-sequence-3}, $I'(v_1^n,v_2^n,v_3^n)\rightarrow 0$ as $n\rightarrow \infty$. As in Lemma \ref{estimation3}, for any $i=1, 2, 3$ we consider
\begin{equation}
v_{i,1}^n(x)=
\begin{cases}
v_i^n(x)  ~~ \text{ if } x\in B_{\widetilde{\sigma}+\widetilde{\rho}},\\
\widetilde{\omega}_{i,1}^n(x)  ~~ \text{ if } x\in B_{\widetilde{\sigma}+\widetilde{\delta}}\setminus B_{\widetilde{\sigma}+\widetilde{\rho}},\\
0 ~~\text{ elsewhere},
\end{cases}
~~v_{i,2}^n(x)=
\begin{cases}
0  ~~ \text{ if } x\in B_{\widetilde{\sigma}-\widetilde{\delta}},\\
\widetilde{\omega}_{i,2}^n(x)  ~~ \text{ if } x\in B_{\widetilde{\sigma}+\widetilde{\rho}}\setminus B_{\widetilde{\sigma}-\widetilde{\delta}},\\
v_i^n(x) ~~\text{ elsewhere},
\end{cases}
\end{equation}
where $\widetilde{\omega}^n_{i,1}\rightarrow 0$ strongly in $H^1(B_{\widetilde{\sigma}+\widetilde{\delta}}\backslash B_{\widetilde{\sigma}+\widetilde{\rho}})$
and $\widetilde{\omega}^n_{i,2}\rightarrow 0$ strongly in $H^1(B_{\widetilde{\sigma}+\widetilde{\rho}}\backslash B_{\widetilde{\sigma}-\widetilde{\delta}})$.
Obviously,
\begin{equation}\label{1125-1}
\|v_i^n\|_i^2= \|v_{i,1}^n\|_i^2+\|v_{i,2}^n\|_i^2+o(1).
\end{equation}

Next, we claim that
\begin{equation}\label{26-3-4}
\text{ either } \lim_{n\rightarrow\infty}\sum_{i=1}^3\|v_{i,1}^n\|^2=0 \text{ or }
\lim_{n\rightarrow\infty}\sum_{i=1}^3\|v_{i,2}^n\|^2=0.
\end{equation}
By contradiction, if \eqref{26-3-4} does not hold, then up to a subsequence,
\begin{equation}\label{26-4-4}
\text{ both } \sum_{i=1}^3\lim_{n\rightarrow\infty}\|v_{i,1}^n\|^2>0 \text{ and }
\sum_{i=1}^3\lim_{n\rightarrow\infty}\|v_{i,2}^n\|^2>0.
\end{equation}
Set
\begin{equation}
b_i:=\lim_{n\rightarrow\infty}\|v_{i,1}^n\|^2, \quad \mathbf{b}:=(b_1, b_2, b_3).
\end{equation}
It follows from \eqref{26-4-4} that $\sum_{i=1}^3b_i>0$, which implies that there exists at least one component of $\mathbf{b}$ is positive.

Assume that any component of $\mathbf{b}$ is positive, that is, $b_i=\lim_{n\rightarrow \infty}\|v_{i,1}^n\|^2>0$ for any $i=1,2,3$. Note that the norm $\|\cdot\|_{i}$ is equivalent to $\|\cdot\|$ for any $i=1,2,3$ and
\begin{equation}\label{26-5-4}
\|v_{i,1}^n\|_i^2=\sum_{j=1}^3\beta_{ij}\int_{\R^3}|v_{i,1}^n|^{3}|v_{j,1}^n|^{3}+o(1), \quad i=1,2,3,
\end{equation}
which yields that
\begin{equation}\label{2325-4}
\liminf_{n\rightarrow \infty}|v_{i,1}^n|_{6}^{6}>0.
\end{equation}
We claim that there exist $t_i^n>0$,$i=1,2,3$ such that
\begin{equation}
 \left(t_1^nv_{1,1}^n,t_2^nv_{2,1}^n,t_3^nv_{3,1}^n\right)\in \mathcal{M} \text{ and }  \lim_{n\rightarrow \infty}t_i^n=1.
\end{equation}
In order to prove this claim, we consider the following function
\begin{align*}
  f(t_1,t_2,t_3) & := I(t_1v_{1,1}^n,t_2v_{2,1}^n,t_3v_{3,1}^n) \\
   & = \frac{1}{2}\sum_{i=1}^{3}t_i^2\|v_{i,1}^n\|_i^2-\frac{1}{6}
   \sum_{i,j=1}^{3}t_i^3t_j^3\beta_{ij}\int_{\mathbb{R}^3}|v_{i,1}^n|^3|v_{j,1}^n|^3.
\end{align*}
By \eqref{1125-1} we get that $\|v_{i,1}^n\|_i^2\leq 7\overline{C}$ for $n$ large enough. Combining this with Lemma \ref{Estimate2-3} and $\beta_{ij}\geq 0$ for any $i\neq j$ we have
\begin{align}
   f(t_1,t_2,t_3) & \leq \frac{1}{2}\sum_{i=1}^{3}t_i^2\|v_{i,1}^n\|_i^2-\frac{1}{6}
   \sum_{i=1}^{3}t_i^6\int_{\mathbb{R}^3}|v_{i,1}^n|^6 \\
   & \leq \sum_{i=1}^{3}\left(\frac{7\overline{C}}{2}t_i^2-\frac{1}{6}C_3
   t_i^6\right),
\end{align}
where $C_3$ is defined in Lemma \ref{Estimate2-3}.
Hence,
\begin{equation}\label{1125-3}
f(t_1,t_2,t_3)<0 \text{ when } t_i\in (\overline{t}, +\infty), \text{ where } \overline{t}= \sqrt[4]{\frac{7\overline{C}}{C_3}}.
\end{equation}
Therefore, $f(t_1,t_2,t_3)$ has a global maximum in $\overline{(\R^+)^3}$. Similarly to the proof of Lemma \ref{Energyestimates6}, the global maximum point $(t_1^n,t_2^n,t_3^n)$ is an interior point in $(\R^+)^3$. It follows that
\begin{equation*}
(t_1^nv_{1,1}^n,t_2^nv_{2,1}^n,t_3^nv_{3,1}^n)\in \mathcal{M} \text{ and } 0<t_i^n<\overline{t},
\end{equation*}
where $\overline{t}$ is defined in \eqref{1125-3}. Denote $\widehat{v}_{i}^n=t_i^nv_{i,1}^n, i=1,2,3$, then $(\widehat{v}_{1}^n, \widehat{v}_{2}^n, \widehat{v}_{3}^n)\in \mathcal{M}$ and $I(\widehat{v}_{1}^n, \widehat{v}_{2}^n, \widehat{v}_{3}^n)\leq 2\overline{t}^2\overline{C}$. Following the proof of Lemma \ref{Estimate2-3}, there exists a constant $\widetilde{C}_3>0$ dependent on $\lambda_i, S, \overline{C}$ such that
\begin{equation}\label{23125-2}
|\widehat{v}_{i}^n|_6^6\geq \widetilde{C}_3 ~\text{ if } ~0<\beta_{ij}<\widehat{\beta}_2.
\end{equation}
Set
\begin{equation*}
B_{ij}^n:= \int_{\mathbb{R}^3}\beta_{ij}|v_{i,1}^n|^3|v_{j,1}^n|^3.
\end{equation*}
Note that $(t_1^nv_{1,1}^n,t_2^nv_{2,1}^n,t_3^nv_{3,1}^n)\in \mathcal{M}$, then we have
\begin{equation}\label{2325-9}
|t_i^n|^2\|v_{i,1}^n\|_i^2=\sum_{j=1}^3|t_i^n|^{3}|t_j^n|^{3}B_{ij}^n, ~i=1,2,3.
\end{equation}
Up to a subsequence, we can assume that
\begin{equation*}
\overline{B}_{ij}:=\lim_{n\rightarrow\infty} B_{ij}^n= \lim_{n\rightarrow\infty}\int_{\mathbb{R}^3}\beta_{ij}|v_{i,1}^n|^3|v_{j,1}^n|^3,\quad
a_i:= \lim_{n\rightarrow\infty} \|v_{i,1}^n\|_i^2,\quad
\overline{t}_i:=\lim_{n\rightarrow\infty} t_i^n.
\end{equation*}
It follows from \eqref{23125-2} that
\begin{equation}\label{2325-6}
\lim_{n\rightarrow\infty}|\widehat{v}_{i}^n|_6^6= \overline{t}_i^6 \lim_{n\rightarrow\infty}|v_{i,1}^n|_6^6\geq \widetilde{C}_3>0,
\end{equation}
which yields that
\begin{equation}\label{2325-6}
\overline{t}_i> 0 \text{ for any } i=1,2,3.
\end{equation}
Next, we prove that $\overline{t}_i=1, i=1,2,3.$
Consider
\begin{equation*}
\overline{f}(s_1,s_2,s_3):=\frac{1}{2}\sum_{i=1}^3a_is_i^2-\frac{1}{6}\sum_{i,j=1}^3\overline{B}_{ij}s_i^3s_j^3.
\end{equation*}
By \eqref{26-5-4} and \eqref{2325-9} we know that both $(1,1,1)$ and $(\overline{t}_1, \overline{t}_2, \overline{t}_3)$ are critical points of $\overline{f}(s_1,s_2,s_3)$.
We claim that the critical point of $\overline{f}(s_1,s_2,s_3)$ is unique in $(\R^+)^3$. In fact, for any $i=1,2,3$ we have
\begin{align*}
 \int_{\mathbb{R}^3}|v_{i,1}^n|^6- \sum_{j\neq i}\left|\int_{\mathbb{R}^3}\beta_{ij}|v_{i,1}^n|^3|v_{j,1}^n|^3\right|
   & \geq C_3-\widehat{\beta}_2 S^{-\frac{3}{2}} |v_{i,1}^n|_6^3\|v_{j,1}^n\|_j^3\\
   & \geq C_3-\widehat{\beta}_2 S^{-\frac{3}{2}}C_4^{\frac{1}{2}}(6\overline{C})^{\frac{3}{2}}\\
   & \geq \frac{1}{2}C_3,
\end{align*}
where $C_3,C_4$ are defined in Lemma \ref{Estimate2-3}. Thus, for any $i=1,2,3$ we have
\begin{equation*}
\overline{B}_{ii}-\sum_{j\neq i}\overline{B}_{ij}\geq \frac{1}{2}C_3.
\end{equation*}
By Gershgorin circle theorem we know that the matrix $\overline{B}=(\overline{B}_{ij})_{1\leq i,j\leq3}$ is positive definite. Then following the arguments of Lemma \ref{Energyestimates6}, we can get that the critical point of $\overline{f}(s_1,s_2,s_3)$ is unique in $(\R^+)^3$. By \eqref{2325-6} we get that $\overline{t}_i>0, i=1,2,3$, thus $(\overline{t}_1, \overline{t}_2, \overline{t}_3)=(1,1,1)$.
Therefore,
\begin{align*}
A & = \lim_{n\rightarrow \infty} I(\mathbf{v}_n)= \frac{1}{3}\lim_{n\rightarrow \infty}\sum_{j=i}^3|t_i^n|^2\|v_{i,1}^n\|_i^2+\frac{1}{3}\lim_{n\rightarrow \infty}\sum_{j=i}^3\|v_{i,2}^n\|_i^2\\
   & > \frac{1}{3}\lim_{n\rightarrow \infty}\sum_{j=i}^3|t_i^n|^2\|v_{i,1}^n\|_i^2\geq A,
\end{align*}
which contradicts with Lemma \ref{Energyestimates6}. Therefore, at least one component of $\mathbf{b}$ is negative. Similarly as above, by virtue of Lemma \ref{Energyestimates6} and \ref{estimation3}, ones can show that any component of $\mathbf{b}$ should be negative. This yields that \eqref{26-4-4} does not hold and then \eqref{estimation4-1} and \eqref{26-3-4} are true.

\end{proof}

\subsubsection{\bf Proof of Proposition \ref{1103-1}}

Recall that $ A_i<\frac{1}{3}\mathcal{S}^{\frac{3}{2}}$ (see \eqref{tem529-1}) for every $1 \leq i \leq 3.$ Set				
\begin{equation} \label{3.6}
\pi:=\frac{1}{2}\min_{1\leq i \leq 3}\{\mathcal{S}^3- (3A_i)^2\}>0,
\end{equation}
then we have
\begin{equation} \label{3.7}
(3A_i)^2 <\mathcal{S}^{3}-\pi, \quad 1\leq i \leq 3.
\end{equation}
Set
\begin{equation}\label{tem529-4}
\widehat{\beta}_3:=\frac{\pi S^3}{2\mathcal{S}^3(6\overline{C})^2} \quad
\text{ and }
\widehat{\beta}:=\min\{\widehat{\beta}_1, \widehat{\beta}_2, \widehat{\beta}_3, 1\},
\end{equation}
where $\widehat{\beta}_1$ is defined in \eqref{tem529-5}, $\widehat{\beta}_2$ is defined in \eqref{tem529-6}. From now on, we assume that $\beta_{ij}\in (0, \widehat{\beta})$ and fix such an $\beta_{ij}$.		

\begin{proof}[\bf The proof of Proposition \ref{1103-1}]
We will proceed by mathematical induction on the number of the equations in the subsystem. Set $|\Gamma|=q$, that is $q$ the number of the equations in the subsystem, and $q=1,\ldots, 3$.
		
When $q=1$, the system reduces to the following problem
\begin{equation}
-\Delta v-\frac{\lambda_{i}}{|x|^2}v=v^{2^*-1}, ~ x\in \R^N,
\end{equation}
and we know Proposition \ref{1103-1} is true.

Suppose by induction hypothesis that Proposition \ref{1103-1} holds true for every level $\mathcal{C}_\Gamma$ with $|\Gamma|\leq q$  for some $1\leq q\leq 2$, we need prove Proposition \ref{1103-1} for $\mathcal{C}_\Gamma$ with $|\Gamma|=q+1$. Without loss of generality, we only present the proof for $\Gamma=\{1,\ldots,q+1\}$.  By induction hypothesis, Lemma \ref{Energyestimates6} is true for $\mathcal{C}_\Gamma$.

Let $\widetilde{\mathbf{u}}_\Gamma^n=(\widetilde{u}_1^n, \cdots,\widetilde{u}_{q+1}^n)$ be a minimizing sequence for the level $A_\Gamma$. Recall that
\begin{equation}
G(\widetilde{\mathbf{u}}_\Gamma^n)=\sum_{i \in \Gamma}\left(|\nabla u_i|^2-\frac{\lambda_i}{|x|^2}\right).
\end{equation}
Similarly to the proof of Theorem \ref{thm2}-(1), we see that there exists a sequence $\{(u_1^n,\cdots,u_{q+1}^n)\}\in \mathcal{M}_\Gamma$ such that
\begin{equation}
\lim_{n\rightarrow \infty}I(\mathbf{u}^n)=A_\Gamma  \text{ and } \lim_{n\rightarrow \infty}I'(\mathbf{u}^n)=0.
\end{equation}

Observe that $\{(u_1^n,\cdots,u_{q+1}^n)\}$ are uniformly bounded in $\mathbf{D}$, then up to a subsequence we suppose that
$(u_1^n,\cdots,u_{q+1}^n)\rightharpoonup (u_1,\cdots,u_{q+1})$ weakly in $\mathbf{D}$. Hence, $I'(u_1,\cdots,u_{q+1})=0$. Next, we will present that $u_i\neq 0$ for any $i=1,\cdots,q+1$ and $\mathbf{u}=(u_1,\cdots,u_{q+1})\in \mathcal{M}_\Gamma$ by contradiction.

{\bf Case 1}. $(u_1,\cdots,u_{q+1})=(0,\cdots,0)$.\\
Based upon Lemma \ref{estimation4-4}, following the arguments in the proof of \cite[Theorem 1.2]{Zou2015}, we can get that
\begin{equation}
\int_{\R^3}\frac{|u_i^n|^2}{|x|^2}=o(1), ~\text{ for any } i\in \Gamma.
\end{equation}
Set
\begin{equation}
d_i:= \lim_{n\rightarrow\infty} \int_{\R^3}|\nabla u_i^n|^2.
\end{equation}
Combining this with $(u_1^n,\cdots,u_{q+1}^n)\in \mathcal{M}_\Gamma$, we have
\begin{align}
 \int_{\R^3}|\nabla u_i^n|^2 & =\int_{\R^3}|u_i^n|^6+ \sum_{j\neq i,j\in \Gamma}\int_{\R^3}\beta_{ij}|u_i^n|^3|u_j^n|^3+o(1)  \nonumber\\
   & \leq \mathcal{S}^{-2}\left(\int_{\R^3}|\nabla u_i^n|^2\right)^3+ \beta S^{-3}(6\overline{C})^{\frac{3}{2}}\left(\int_{\R^3}|\nabla u_i^n|^2+o(1)\right)^{\frac{3}{2}},
\end{align}
which yields that
\begin{equation}
d_i\leq \mathcal{S}^{-3}d_i^3+\beta S^{-3}(6\overline{C})^{\frac{3}{2}}d_i^{\frac{3}{2}}\leq \mathcal{S}^{-3}d_i^3+\beta S^{-3}(6\overline{C})^2d_i, ~\text{ for any } i\in \Gamma.
\end{equation}
Thus, we get that
\begin{equation}
d_i^2\geq \mathcal{S}^{3}-\beta \mathcal{S}^{3}S^{-3}(6\overline{C})^2\geq \mathcal{S}^{3}- \pi> (3A_i)^2,
\end{equation}
and so
\begin{equation*}
d_i > 3A_i.
\end{equation*}
Therefore,
\begin{align*}
  A_\Gamma & =\lim_{n\rightarrow \infty}\frac{1}{3}\sum_{i\in \Gamma}\|u_i^n\|_i^2
   = \lim_{n\rightarrow \infty}\frac{1}{3}\sum_{i\in \Gamma}\int_{\R^3}|\nabla u_i^n|^2\\
   &= \frac{1}{3}\sum_{i\in \Gamma} d_i
   > \sum_{i\in \Gamma}A_i,
\end{align*}
which contradicts with Lemma \ref{Energyestimates6}.

{\bf Case 2}. Only one component of $\mathbf{u}$ is nontrivial.\\
Without loss of generality, we may assume that $u_1\neq 0, u_2=0=\cdots=u_{q+1}=0$. Since $I'(u_1,\cdots,u_{q+1})(u_1,\cdots,0)=0$, then
\begin{equation*}
\|u_1\|_1^2=\int_{\R^3}|u_1|^6\leq S_1^{-3}\|u_1\|_1^6,
\end{equation*}
and so $\|u_1\|_1^2\geq S_1^{\frac{3}{2}}.$

{\bf Case 2.1}. $u_1^n\rightarrow u_1$ strongly in $D^{1,2}(\R^3)$.\\
Note that $u_1^n\rightarrow u_1$ strongly in $L^6(\R^3)$ and $u_i^n\rightharpoonup 0$ weakly in $D^{1,2}(\R^3)$, $i=2,\cdots,q+1$, then we see that
\begin{equation}\label{1104-1}
\int_{\R^3}|u_1^n|^3|u_i^n|^3=o(1), ~\text{ for any } i\in \Gamma.
\end{equation}
Note that $q=1$ or $2$. Then by \eqref{1104-1} and $\mathbf{u}_\Gamma^n \in \mathcal{M}_\Gamma$ we have
\begin{equation}
\|u_{i}^n\|_i^2=\int_{\R^3}|u_{i}^n|^{6}+\beta_{ij}\int_{\R^3}|u_{i}^n|^{3}|u_{j}^n|^{3}+o(1), ~i\neq 1, i\in \Gamma.
\end{equation}
Similarly to the proof of Lemma \ref{estimation3} we know that there exist $\tau_2^n,\cdots,\tau_{q+1}^n$ such tat
\begin{equation}
(\tau_2^nu_{2}^n,\cdots, \tau_{q+1}^nu_{q+1}^n)\in \mathcal{M}_{\{2,\cdots, q+1\}} ~\text{ and } \lim_{n\rightarrow \infty}\tau_i^n=1.
\end{equation}
Therefore,
\begin{align}
A_\Gamma & \geq \frac{1}{3}\|u_1\|_1^2+\frac{1}{3}\lim_{n\rightarrow \infty}\sum_{i=2}^{q+1}\|u_{i}^n\|_i^2\nonumber\\
&\geq \frac{1}{3}S_1^{\frac{3}{2}}+\frac{1}{3}\lim_{n\rightarrow \infty}\sum_{i=2}^{q+1}\tau_i^n|\|u_{i}^n\|_i^2 \nonumber\\
&\geq A_{1}+A_{\{2,\cdots, q+1\}},
\end{align}

{\bf Case 2.2}. $\lim_{n\rightarrow \infty}\|u_1^n -u_1\|_1^2>0$.\\
Set $\omega_1^n=u_1^n -u_1$.
Similarly to the proof of Lemma \ref{estimation4-4} there exist $\alpha_i^n>0$, $i\in \Gamma$ such that
\begin{equation}
 \left(\alpha_1^n\omega_{1}^n,\cdots,\alpha_{q+1}^nu_{q+1}^n\right)\in \mathcal{M}_\Gamma \text{ and }  \lim_{n\rightarrow \infty}\alpha_i^n=1.
\end{equation}
Therefore,
\begin{align*}
A_\Gamma &= \frac{1}{3}\|u_1\|_1^2+\frac{1}{3}\lim_{n\rightarrow \infty}(\|\omega_{1}^n\|_1^2+\sum_{i=2}^{q+1}\|u_{i}^n\|_i^2)\\
   & >\frac{1}{3}\lim_{n\rightarrow \infty}(\alpha_1^n\|\omega_{1}^n\|_1^2+\sum_{i=2}^{q+1}\alpha_i^n\|u_{i}^n\|_i^2)\\
   & \geq A_\Gamma,
\end{align*}
which contradicts with Lemma \ref{Energyestimates6}.

{\bf Case 3}. Only two component of $\mathbf{u}$ are nontrivial. Without loss of generality, we may assume that $u_1\neq 0, u_2\neq0, u_{q+1}=u_3=0$. Similarly to the proof of Theorem \ref{thm2}-(1) we see that {\bf Case 3} is impossible.
\end{proof}

\begin{proof}[\bf The proof of Theorem \ref{thm2}-(2)]
Taking $\Gamma=\{1, 2, 3\}$, by Proposition \ref{1103-1} we see that Theorem \ref{thm2}-(2) is true.
\end{proof}

\section{Nonexistence of least energy positive solutions for the mixed case}
\renewcommand{\theequation}{4.\arabic{equation}}
In this section, we focus on the nonexistence of  least energy solutions for the mixed case (i.e. $\beta_{12}>0, \beta_{13}<0, \beta_{23}<0$), and prove Theorem \ref{thm3}-\ref{thm4}.

\subsection{The case $N=4$}
In this subsection, we assume that $0<\beta_{12}<\widetilde{\beta}, \beta_{13}<0, \beta_{23}<0$, and show the proof of Theorem \ref{thm3}-(1).
Define
\begin{equation}\label{tem127-4}
\widetilde{\mathcal{M}}_{12}:=\left\{u_1\neq 0, u_2\neq 0,  \int_{\mathbb{R}^4}\left(|\nabla u_i|^{2}-\frac{\lambda_{i}}{|x|^2} u_i^{2}\right)\leq\sum_{j=1}^2\int_{\mathbb{R}^4}\beta_{ij}|u_i|^{2}|u_j|^{2}, i=1,2\right\}.
\end{equation}
and
\begin{equation}\label{tem127-2}
\widetilde{A}_{12}:=\inf_{(u_1,u_2)\in \widetilde{\mathcal{M}}_{12}}I_{12}(u_1,u_2).
\end{equation}

\begin{lemma}\label{tem1010-1}
Assume that $\lambda_1=\lambda_2$ and $0<\beta_{12}<\widetilde{\beta}$. Then we have
\begin{equation*}
\widetilde{A}_{12}=A_{12},
\end{equation*}
where $A_{12}$ is defined in \eqref{tem529-10}.
\end{lemma}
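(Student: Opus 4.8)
The plan is the following. One inequality is immediate: since $\mathcal{M}_{12}\subseteq\widetilde{\mathcal{M}}_{12}$ one has $\widetilde{A}_{12}=\inf_{\widetilde{\mathcal{M}}_{12}}I_{12}\le\inf_{\mathcal{M}_{12}}I_{12}=A_{12}$, so everything lies in proving the reverse bound $\widetilde{A}_{12}\ge A_{12}$. For this I take a minimizing sequence $(u_1^n,u_2^n)\subset\widetilde{\mathcal{M}}_{12}$ with $I_{12}(u_1^n,u_2^n)\to\widetilde{A}_{12}$; since $\widetilde{A}_{12}\le A_{12}\le\overline{C}$ one may assume $I_{12}(u_1^n,u_2^n)\le 2\overline{C}$ for all $n$, and then, running the argument of Lemma \ref{Estimate2} for the two-component subsystem with the defining inequalities of $\widetilde{\mathcal{M}}_{12}$ in place of the Nehari identities, one obtains uniform bounds $0<C_1\le|u_i^n|_4^2\le C_2$ and $\|u_i^n\|_i^2\le 8\overline{C}$ for $i=1,2$.

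The second step is to project these pairs onto the genuine Nehari set $\mathcal{M}_{12}$. For a pair $(u_1,u_2)$ with $|u_i|_4>0$ set $\psi(s_1,s_2):=I_{12}(\sqrt{s_1}\,u_1,\sqrt{s_2}\,u_2)$ for $s_1,s_2\ge 0$; since $N=4$ this is a quadratic polynomial with Hessian $-\tfrac14 M_B[u_1,u_2]$, and because $\widetilde{\beta}\le\beta_4\le 1$ forces $\beta_{12}<1$, Cauchy--Schwarz gives $\beta_{12}\int_{\mathbb{R}^4}|u_1|^2|u_2|^2<|u_1|_4^2|u_2|_4^2$, so $M_B[u_1,u_2]$ is positive definite and $\psi$ is strictly concave. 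A point $(s_1,s_2)\in(\mathbb{R}^+)^2$ is a critical point of $\psi$ precisely when $(\sqrt{s_1}\,u_1,\sqrt{s_2}\,u_2)\in\mathcal{M}_{12}$, while the membership $(u_1,u_2)\in\widetilde{\mathcal{M}}_{12}$ is precisely the inequality $\partial_{s_i}\psi(1,1)\le 0$, $i=1,2$. Using the uniform estimates of the first step and the smallness of $\beta_{12}$, and arguing as in the boundary analysis of Lemma \ref{1021-7}, I show that $\psi$ attains its maximum over $\overline{(\mathbb{R}^+)^2}$ at an interior point, producing uniformly bounded multipliers $t_1^n,t_2^n>0$ with $(t_1^n u_1^n,t_2^n u_2^n)\in\mathcal{M}_{12}$.

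What remains is to show that this projection does not raise the energy in the limit, i.e. $I_{12}(t_1^n u_1^n,t_2^n u_2^n)\le I_{12}(u_1^n,u_2^n)+o(1)$; this is the crux, and the only place where $\lambda_1=\lambda_2$ is used. Under $\lambda_1=\lambda_2$ the subsystem for $\Gamma=\{1,2\}$ is invariant under the interchange $u_1\leftrightarrow u_2$; this symmetry (together with the fact, from Lemma \ref{1021-7}, that $A_{12}$ is attained) makes the linear system governing the Nehari multipliers symmetric, so that via Cramer's rule and the a priori bounds $C_1\le|u_i^n|_4^2\le C_2$ one can locate $(t_1^n,t_2^n)$ relative to $(1,1)$ accurately enough to run the comparison; carrying it out yields $A_{12}\le I_{12}(t_1^n u_1^n,t_2^n u_2^n)\le I_{12}(u_1^n,u_2^n)+o(1)\to\widetilde{A}_{12}$, hence $\widetilde{A}_{12}=A_{12}$. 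The main obstacle is exactly this monotonicity of the energy under the Nehari projection: for a single equation it is elementary, but for the coupled pair the multipliers solve a linear system whose solution need not lie below $1$ componentwise, and it is the symmetry coming from $\lambda_1=\lambda_2$, together with the smallness of $\beta_{12}$ and the uniform $L^4$-bounds, that makes the comparison go through.
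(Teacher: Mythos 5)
Your plan breaks down at exactly the step you yourself call the crux, and nothing in the proposal actually delivers it. For $N=4$ the map $\psi(s_1,s_2)=I_{12}(\sqrt{s_1}\,u_1,\sqrt{s_2}\,u_2)$ is a concave quadratic, membership in $\widetilde{\mathcal{M}}_{12}$ says $\nabla\psi(1,1)\le 0$ componentwise, and the Nehari projection is the critical point $s^*$ solving $M_B[u_1,u_2]\,s^*=(\|u_1\|_1^2,\|u_2\|_2^2)$. But then $\psi(s^*)=\max\psi\ge\psi(1,1)$: the projection can only \emph{raise} $I_{12}$, which is the wrong direction for proving $\widetilde{A}_{12}\ge A_{12}$. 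The componentwise bound $s^*\le(1,1)$ that would rescue the comparison fails precisely because the coupling is cooperative: from $M_B(\mathbf{1}-s^*)\ge 0$ one cannot conclude $\mathbf{1}-s^*\ge 0$, since a positive definite matrix with positive off-diagonal entries has an inverse with negative off-diagonal entries. The hypothesis $\lambda_1=\lambda_2$ does not help in the way you claim: the components $u_1^n,u_2^n$ of a minimizing sequence need not be equal or exchangeable, and $M_B[u_1^n,u_2^n]$ is symmetric regardless of $\lambda_1,\lambda_2$, so "the linear system governing the multipliers becomes symmetric" carries no extra information. There is also a problem already in your first step: the bound $\|u_i^n\|_i^2\le 8\overline{C}$ uses the identity $I_{12}=\frac14(\|u_1\|_1^2+\|u_2\|_2^2)$, which holds only on $\mathcal{M}_{12}$; on the relaxed set one merely has $I_{12}\le\frac14\sum_i\|u_i\|_i^2$, so a bound on $I_{12}$ bounds nothing (indeed $I_{12}(tu_1,tu_2)\to-\infty$ along $\widetilde{\mathcal{M}}_{12}$ as $t\to\infty$; the quantity one should minimize over the relaxed set is $\frac14\sum_i\|u_i\|_i^2$, exactly as in Lemma \ref{tem512-1} and in the definition of $\widetilde{A}_{12}$ for $N\ge 5$).

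The paper's proof is of a different nature and genuinely uses $\lambda_1=\lambda_2$: by \cite[Theorem 1.1]{Zou2012}, both the minimization over $\mathcal{M}_{12}$ and the relaxed minimization defining $\widetilde{A}_{12}$ are achieved by a synchronized couple $(\sqrt{k}\,\omega_1,\sqrt{l}\,\omega_1)$, where $\omega_1$ is a ground state of the scalar problem \eqref{scalarequation}; since the two infima are attained at the same couple, $\widetilde{A}_{12}=A_{12}$. The equality of the Hardy parameters is what allows a common profile $\omega_1$ for both components (and is why the authors state the lemma only under this hypothesis). To make your route work you would instead have to show that the relaxed constraints are (asymptotically) active along a minimizing sequence, or that a minimizer of the relaxed problem lies on $\mathcal{M}_{12}$ via a Lagrange multiplier/scaling argument; that is the real content of the Chen--Zou argument and it is not supplied by the symmetry-plus-Cramer considerations you sketch.
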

\begin{proof}
Based on inequality \eqref{tem127-4}, employing the proof of \cite[Theorem 1.1]{Zou2012} we see that
there exist $k,l>0$ such that $A_{12}=(\sqrt{k}\omega_1, \sqrt{l}\omega_1)$, where $\omega_1$ is a least energy positive solution of \eqref{scalarequation}. Similarly, we also can get that $\widetilde{A}_{12}=(\sqrt{k}\omega_1, \sqrt{l}\omega_1)$. Therefore, $\widetilde{A}_{12}=A_{12}$.
\end{proof}
Consider
\begin{equation*}
\widetilde{\mathcal{M}}_i:=\left\{u\neq 0, ~\int_{\mathbb{R}^N}\left(|\nabla u|^{2}-\frac{\lambda_{i}}{|x|^2} u^{2}\right)\leq\int_{\mathbb{R}^N}|u|^{2^*}\right\},
\end{equation*}
and $\widetilde{A}_i:=\inf_{u\in \widetilde{\mathcal{M}}} \frac{1}{N}\int_{\mathbb{R}^N}\left(|\nabla u|^{2}-\frac{\lambda_{i}}{|x|^2} u^{2}\right)$.
\begin{lemma}\label{tem512-1}
Assume that $N\geq3$, then $\widetilde{A}_i=A_i$, where $A_i$ is defined in \eqref{tem529-10}.
\end{lemma}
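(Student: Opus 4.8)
The plan is to prove the two inequalities $\widetilde A_i\le A_i$ and $\widetilde A_i\ge A_i$ separately; the argument is purely variational (no compactness, no rescaling iteration), which is why the conclusion holds uniformly for every $N\ge 3$ rather than being dimension-dependent like the lemmas that follow.

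For $\widetilde A_i\le A_i$: by \eqref{tem529-1} the level $A_i$ equals $I_i(\omega_i)$, where $\omega_i$ is a positive solution of the scalar problem \eqref{scalarequation}; testing that equation against $\omega_i$ gives $\|\omega_i\|_i^2=|\omega_i|_{2^*}^{2^*}$, so $\omega_i\in\widetilde{\mathcal M}_i$ (the defining inequality of $\widetilde{\mathcal M}_i$ holds with equality) and $\tfrac1N\|\omega_i\|_i^2=A_i$ by \eqref{tem529-1} again. Hence the infimum defining $\widetilde A_i$ is at most $A_i$.

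For the reverse inequality I would use the sharp embedding constant. Take any $u\in\widetilde{\mathcal M}_i$, so $u\neq 0$ and $\|u\|_i^2\le|u|_{2^*}^{2^*}$. The definition of $S_i$ in \eqref{tem529-8-4} gives $S_i|u|_{2^*}^2\le\|u\|_i^2\le|u|_{2^*}^{2^*}$, whence $S_i\le|u|_{2^*}^{2^*-2}$; since $2^*-2=\tfrac{4}{N-2}$ this yields $|u|_{2^*}^2\ge S_i^{(N-2)/2}$, and substituting back into $S_i|u|_{2^*}^2\le\|u\|_i^2$ gives $\|u\|_i^2\ge S_i^{N/2}=NA_i$, the last equality being \eqref{tem529-1}. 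Therefore $\tfrac1N\|u\|_i^2\ge A_i$ for every $u\in\widetilde{\mathcal M}_i$, and taking the infimum gives $\widetilde A_i\ge A_i$; combined with the previous paragraph, $\widetilde A_i=A_i$.

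I do not expect a genuine obstacle here. The one structural point to keep in mind is that membership in $\widetilde{\mathcal M}_i$ is an inequality constraint of the ``correct sign'': scaling $u$ by the factor $t$ with $t^{2^*-2}=\|u\|_i^2/|u|_{2^*}^{2^*}\le 1$ (well defined since $\|\cdot\|_i$ is an equivalent norm and $u\neq0$ forces $|u|_{2^*}>0$) lands $tu$ on the scalar Nehari manifold while only decreasing $\tfrac1N\|u\|_i^2$, so $A_i\le\tfrac1N\|tu\|_i^2\le\tfrac1N\|u\|_i^2$. This is an alternative way to obtain the reverse inequality that avoids invoking the explicit value of $A_i$, and it is the route I would actually write out.
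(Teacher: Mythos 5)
Your proposal is correct, and the route you say you would actually write out (scaling $u\in\widetilde{\mathcal M}_i$ by $t$ with $t^{2^*-2}=\|u\|_i^2/|u|_{2^*}^{2^*}\le 1$ to land on the scalar Nehari manifold, which only decreases the energy) is exactly the paper's proof of $A_i\le\widetilde A_i$, while the easy inclusion $\mathcal M_i\subset\widetilde{\mathcal M}_i$ gives the other inequality just as in the paper. Your alternative computation via the sharp constant $S_i$ and \eqref{tem529-1} is a harmless variant that reaches the same bound $\|u\|_i^2\ge NA_i$.
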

\begin{proof}
 Note that $\widetilde{A}_i\leq A_i$. If $v\in \widetilde{\mathcal{M}}_i$ then take $t^{2^*-2}:=\int_{\mathbb{R}^N}\left(|\nabla v|^{2}-\frac{\lambda_{i}}{|x|^2} v^{2}\right)/(\int_{\mathbb{R}^N}|v|^{2^*})\leq 1 $, so that $tv\in \mathcal{M}_i$, where $\mathcal{M}_i$ is defined in \eqref{tem107-4}. Then
\[
A_i \leq  \frac{t^{\frac{2}{2^*-2}}}{N}\int_{\mathbb{R}^N}\left(|\nabla v|^{2}-\frac{\lambda_{i}}{|x|^2} v^{2}\right),
\]
and by taking the infimum for $v\in \widetilde{\mathcal{M}}$ we get $A_i\leq \widetilde{A}_i$ and so $\widetilde{A}_i=A_i$.
\end{proof}

\begin{lemma}\label{tem106-2}
Assume that $0<\beta_{12}<\widetilde{\beta}, \beta_{13}<0, \beta_{23}<0$, then $A\geq A_3+A_{12}$.
\end{lemma}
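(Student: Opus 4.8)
The plan is to use the sign assumptions $\beta_{13}<0$ and $\beta_{23}<0$ to decouple, on the Nehari manifold $\mathcal{M}$, the third component from the pair $(u_1,u_2)$: the constraints force $(u_1,u_2)$ into the relaxed set $\widetilde{\mathcal{M}}_{12}$ and $u_3$ into $\widetilde{\mathcal{M}}_3$, after which the identifications $\widetilde{A}_{12}=A_{12}$ and $\widetilde{A}_3=A_3$ from Lemmas~\ref{tem1010-1} and~\ref{tem512-1} finish the job. Fix an arbitrary $\mathbf{u}=(u_1,u_2,u_3)\in\mathcal{M}$; by definition each $u_i\neq 0$ and, since $N=4$ (so $2^*=4$),
\begin{equation*}
\|u_i\|_i^2=\int_{\mathbb{R}^4}|u_i|^4+\sum_{j\neq i}\beta_{ij}\int_{\mathbb{R}^4}|u_i|^2|u_j|^2,\qquad i=1,2,3.
\end{equation*}
First I would rewrite the energy as
\begin{equation*}
I(\mathbf{u})=I_{12}(u_1,u_2)+\Big(\tfrac12\|u_3\|_3^2-\tfrac14\int_{\mathbb{R}^4}|u_3|^4\Big)-\tfrac12\beta_{13}\int_{\mathbb{R}^4}|u_1|^2|u_3|^2-\tfrac12\beta_{23}\int_{\mathbb{R}^4}|u_2|^2|u_3|^2,
\end{equation*}
and note that, since $\beta_{13},\beta_{23}<0$, the last two terms are nonnegative, whence
\begin{equation*}
I(\mathbf{u})\ \geq\ I_{12}(u_1,u_2)+\Big(\tfrac12\|u_3\|_3^2-\tfrac14\int_{\mathbb{R}^4}|u_3|^4\Big).
\end{equation*}

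Next I would estimate the two pieces separately. For the $u_3$-piece, the Nehari identity for the third equation together with $\beta_{13},\beta_{23}<0$ yields $\|u_3\|_3^2\leq\int_{\mathbb{R}^4}|u_3|^4$, i.e.\ $u_3\in\widetilde{\mathcal{M}}_3$; hence $\tfrac12\|u_3\|_3^2-\tfrac14\int_{\mathbb{R}^4}|u_3|^4\geq\tfrac14\|u_3\|_3^2\geq\widetilde{A}_3$, and $\widetilde{A}_3=A_3$ by Lemma~\ref{tem512-1}. For the $(u_1,u_2)$-piece, the Nehari identities for the first two equations together with $\beta_{13},\beta_{23}<0$ give $\|u_i\|_i^2\leq\sum_{j=1}^2\beta_{ij}\int_{\mathbb{R}^4}|u_i|^2|u_j|^2$ for $i=1,2$, i.e.\ $(u_1,u_2)\in\widetilde{\mathcal{M}}_{12}$ (here $u_1,u_2\neq 0$ because $\mathbf{u}\in\mathcal{M}$); therefore $I_{12}(u_1,u_2)\geq\widetilde{A}_{12}$, and $\widetilde{A}_{12}=A_{12}$ by Lemma~\ref{tem1010-1}, which is where the hypothesis $\lambda_1=\lambda_2$ enters. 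Combining the two bounds gives $I(\mathbf{u})\geq A_{12}+A_3$ for every $\mathbf{u}\in\mathcal{M}$, and taking the infimum over $\mathcal{M}$ yields $A\geq A_3+A_{12}$.

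The algebraic decomposition and the sign bookkeeping are entirely routine; the substantive content is the identification of the relaxed infima with the genuine Nehari infima, $\widetilde{A}_3=A_3$ and $\widetilde{A}_{12}=A_{12}$, already established in Lemmas~\ref{tem512-1} and~\ref{tem1010-1}. Granting those, the argument above is immediate, so there is no serious obstacle beyond invoking them correctly. The only point requiring a moment's care is that membership in $\widetilde{\mathcal{M}}_{12}$ and $\widetilde{\mathcal{M}}_3$ also demands the relevant components be nontrivial, which is automatic from the definition of $\mathcal{M}$.
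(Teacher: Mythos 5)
Your overall strategy --- use $\beta_{13},\beta_{23}<0$ to pass from the full Nehari constraints to the relaxed sets $\widetilde{\mathcal{M}}_{12}$ and $\widetilde{\mathcal{M}}_3$ and then invoke Lemmas \ref{tem1010-1} and \ref{tem512-1} --- is exactly the paper's (which follows Lin--Wei). However, the way you close the estimate has a genuine error. From $\|u_3\|_3^2\le\int_{\mathbb{R}^4}|u_3|^4$ you deduce $\tfrac12\|u_3\|_3^2-\tfrac14\int_{\mathbb{R}^4}|u_3|^4\ge\tfrac14\|u_3\|_3^2$; the inequality goes the other way: membership in $\widetilde{\mathcal{M}}_3$ gives precisely $\tfrac12\|u_3\|_3^2-\tfrac14\int_{\mathbb{R}^4}|u_3|^4\le\tfrac14\|u_3\|_3^2$. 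Indeed, for $\mathbf{u}\in\mathcal{M}$ the third constraint yields $I_3(u_3)=\tfrac14\|u_3\|_3^2+\tfrac14\beta_{13}\int_{\mathbb{R}^4}|u_1|^2|u_3|^2+\tfrac14\beta_{23}\int_{\mathbb{R}^4}|u_2|^2|u_3|^2\le\tfrac14\|u_3\|_3^2$, so the termwise bound $I_3(u_3)\ge A_3$ is not justified; moreover $I_3$ is not bounded below by $A_3$ on $\widetilde{\mathcal{M}}_3$ at all (take $t\omega_3$ with $t\to\infty$: it stays in $\widetilde{\mathcal{M}}_3$ while $I_3\to-\infty$). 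The same scaling shows $I_{12}$ is unbounded below on $\widetilde{\mathcal{M}}_{12}$, so your step ``$I_{12}(u_1,u_2)\ge\widetilde{A}_{12}=A_{12}$'' only makes sense if $\widetilde{A}_{12}$ is the norm-based level $\inf_{\widetilde{\mathcal{M}}_{12}}\tfrac14\bigl(\|u_1\|_1^2+\|u_2\|_2^2\bigr)$ (this is what the proof of Lemma \ref{tem1010-1} via Chen--Zou provides, and it matches the definition of $\widetilde{A}_i$ in Lemma \ref{tem512-1} and of $\widetilde{A}_{12}$ in the $N\ge5$ section); a lower bound for the energy $I_{12}$ itself on the relaxed set is false.

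The repair is to bound quarter-norms, not the decoupled energies. For $\mathbf{u}\in\mathcal{M}$, summing the three constraints gives $I(\mathbf{u})=\tfrac14\sum_{i=1}^3\|u_i\|_i^2$. Since $\beta_{13},\beta_{23}<0$, the first two constraints give $(u_1,u_2)\in\widetilde{\mathcal{M}}_{12}$, hence $\tfrac14\bigl(\|u_1\|_1^2+\|u_2\|_2^2\bigr)\ge A_{12}$ by Lemma \ref{tem1010-1} (in the norm-based reading above, which uses $\lambda_1=\lambda_2$), while the third gives $u_3\in\widetilde{\mathcal{M}}_3$, hence $\tfrac14\|u_3\|_3^2\ge\widetilde{A}_3=A_3$ by Lemma \ref{tem512-1}. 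Adding yields $I(\mathbf{u})\ge A_{12}+A_3$ for every $\mathbf{u}\in\mathcal{M}$, so $A\ge A_3+A_{12}$. In short, your identification of the relaxed constraint sets is the right (and the paper's) idea, but the splitting $I\ge I_{12}+I_3$ followed by termwise lower bounds cannot work; the identity $I=\tfrac14\sum_i\|u_i\|_i^2$ on $\mathcal{M}$ is what makes the relaxed levels usable.
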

\begin{proof}
This was proved in \cite[Theorem 1]{Lin-Wei2005} for subcritical case, but we sketch it here for completeness. Since $\beta_{13}<0, \beta_{23}<0$, then for any $\mathbf{u}=(u_1,u_2,u_3)\in \mathcal{M}$ we have,
$
I(u_1,u_2,u_3)\geq I_{12}(u_1,u_2)+I_3(u_3),
$
where
\begin{equation*}
I_{12}(u_1,u_2)=\sum_{i=1}^2\frac{1}{2}\int_{\mathbb{R}^4}\left(|\nabla u_i|^{2}-\frac{\lambda_{i}}{|x|^2} u_i^{2}\right)-\frac{1}{4}\sum_{i,j=1}^2\int_{\mathbb{R}^4}\beta_{ij}|u_i|^{2}|u_j|^{2},
\end{equation*}
and
\begin{equation*}
I_3(u_3)=\frac{1}{2}\int_{\mathbb{R}^4}\left(|\nabla u_3|^{2}-\frac{\lambda_{3}}{|x|^2} u_3^{2}\right)-\frac{1}{4}\int_{\mathbb{R}^4}|u_3|^{4}.
\end{equation*}
Thus, Lemma \ref{tem1010-1} and Lemma \ref{tem512-1} yields that
\begin{equation}\label{tem107-8}
I(u_1,u_2,u_3)\geq I_{12}(u_1,u_2)+I_3(u_3)\geq A_3+A_{12}.
\end{equation}
Therefore, $A \geq A_3+A_{12}$.
\end{proof}

\begin{lemma}\label{tem106-3}
Assume that $0<\beta_{12}<\widetilde{\beta}, \beta_{13}<0, \beta_{23}<0$, then $A\leq A_3+A_{12}$.
\end{lemma}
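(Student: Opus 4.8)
The goal is the reverse inequality to Lemma~\ref{tem106-2}; combining the two yields $A=A_{12}+A_3$, which is the identity behind the nonattainment in Theorem~\ref{thm3}-(1). The plan is to test the level $A$ with a configuration obtained by gluing an almost-optimal pair for $A_{12}$ to an optimal bubble for $A_3$, rescaled so that the two negative couplings $\beta_{13},\beta_{23}<0$ decouple in the limit. So, fix $\varepsilon>0$, choose $(v_1,v_2)\in\mathcal{M}_{12}$ with $I_{12}(v_1,v_2)\le A_{12}+\varepsilon$, and let $\omega_3$ be the positive solution of \eqref{scalarequation} for the parameter $\lambda_3$, so that $\|\omega_3\|_3^2=\int_{\mathbb{R}^4}|\omega_3|^4$ and $I_3(\omega_3)=A_3$. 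For $\mu>0$ set $\omega_3^{\mu}(x):=\mu^{-1}\omega_3(x/\mu)$; since $N=4$ all the relevant functionals are invariant under this rescaling, hence $\|\omega_3^{\mu}\|_3^2=\|\omega_3\|_3^2$ and $\int_{\mathbb{R}^4}|\omega_3^{\mu}|^4=\int_{\mathbb{R}^4}|\omega_3|^4$. The first step is to find $t_1,t_2,t_3>0$ such that $(\sqrt{t_1}\,v_1,\sqrt{t_2}\,v_2,\sqrt{t_3}\,\omega_3^{\mu})\in\mathcal{M}$; because $2^*/2=2$ in dimension $4$, the three Nehari equations amount to the linear system $M_B[v_1,v_2,\omega_3^{\mu}]\,\mathbf{t}=\mathbf{b}$ with $\mathbf{b}=(\|v_1\|_1^2,\|v_2\|_2^2,\|\omega_3\|_3^2)^{\top}$ and $M_B$ as in \eqref{Matrix-Def-2}.

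The crucial point is that $\gamma_i(\mu):=\int_{\mathbb{R}^4}|v_i|^2|\omega_3^{\mu}|^2\to0$ as $\mu\to0^+$, for $i=1,2$. I would prove this by splitting $\mathbb{R}^4$ into $\{|x|<\delta\}$, $\{\delta\le|x|\le R\}$ and $\{|x|>R\}$: on the inner ball and the outer region, Hölder's inequality bounds the integral by $\big(\int_{|x|<\delta}|v_i|^4\big)^{1/2}|\omega_3|_4^2$ resp. $\big(\int_{|x|>R}|v_i|^4\big)^{1/2}|\omega_3|_4^2$, which is small once $\delta$ is small and $R$ large, since $v_i\in L^4(\mathbb{R}^4)$; on the annulus, a change of variables gives $\int_{\delta\le|x|\le R}|\omega_3^{\mu}|^4=\int_{\delta/\mu\le|y|\le R/\mu}|\omega_3|^4\le\int_{|y|\ge\delta/\mu}|\omega_3|^4\to0$. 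This is the only genuinely technical step; the rest is bookkeeping.

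Granting this, $M_B[v_1,v_2,\omega_3^{\mu}]$ converges as $\mu\to0$ to the matrix $M_\infty$ whose first $2\times2$ block has entries $\int|v_1|^4,\ \beta_{12}\int|v_1|^2|v_2|^2,\ \beta_{12}\int|v_1|^2|v_2|^2,\ \int|v_2|^4$, whose $(3,3)$ entry is $\int|\omega_3|^4$, and with all other entries zero. The condition $(v_1,v_2)\in\mathcal{M}_{12}$ says exactly $M_\infty(1,1,1)^{\top}=\mathbf{b}$, and since $\beta_{12}<\widetilde{\beta}\le1$ the $2\times2$ block is positive definite by Cauchy--Schwarz, so $\det M_\infty>0$. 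By continuity, for $\mu$ small $M_B[v_1,v_2,\omega_3^{\mu}]$ is invertible and $\mathbf{t}(\mu):=M_B[v_1,v_2,\omega_3^{\mu}]^{-1}\mathbf{b}\to(1,1,1)$, in particular $t_i(\mu)>0$. Then $(\sqrt{t_1(\mu)}\,v_1,\sqrt{t_2(\mu)}\,v_2,\sqrt{t_3(\mu)}\,\omega_3^{\mu})\in\mathcal{M}$, so, using $I=\tfrac14\sum_i\|\cdot\|_i^2$ on $\mathcal{M}$, $A\le\tfrac14\big(t_1(\mu)\|v_1\|_1^2+t_2(\mu)\|v_2\|_2^2+t_3(\mu)\|\omega_3\|_3^2\big)$, whose right-hand side tends to $I_{12}(v_1,v_2)+A_3\le A_{12}+A_3+\varepsilon$ as $\mu\to0$; letting $\varepsilon\to0$ gives $A\le A_{12}+A_3$. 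One could equally send $\mu\to\infty$, or take $(v_1,v_2)=(\omega_1^{12},\omega_2^{12})$ a genuine minimizer for $A_{12}$ (as used in Lemma~\ref{1021-7}, cf.~\cite{Chen-Zou2015}) and dispense with $\varepsilon$.
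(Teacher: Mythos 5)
Your proof is correct and follows essentially the same route as the paper: test $A$ with a minimizing pair for $A_{12}$ glued to a rescaled bubble for $\lambda_3$, project onto $\mathcal{M}$ by solving the $3\times3$ linear Nehari system (possible since $2^*/2=2$ when $N=4$), observe that the coupling matrix converges to an invertible block-diagonal limit so the coefficients tend to $(1,1,1)$, and pass to the limit in $\frac14\sum_i\|\cdot\|_i^2$. The only cosmetic differences are that you send $\mu\to0$ instead of $\mu\to\infty$, verify the vanishing of the cross terms $\int_{\R^4}|v_i|^2|\omega_3^{\mu}|^2$ by a splitting/H\"older argument rather than the weak convergence $(z^3_\mu)^2\rightharpoonup 0$ in $L^{2}(\R^4)$ used in the paper, and allow an $\varepsilon$-almost minimizer of $A_{12}$, whereas the paper uses the attained minimizer directly.
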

\begin{proof}
Note that $0<\beta_{12}<\widetilde{\beta}$, then $A_{12}$ is attained and we assume that $A_{12}=I_{12}(\omega^{12}_1,\omega^{12}_2)$. Recall that $z^3_\mu$ is defined in \eqref{tem512-3}. Since $z^3_\mu\rightharpoonup 0$ in $D^{1,2}(\mathbb{R}^4)$ as $\mu\rightarrow \infty$, then $(z^3_\mu)^2\rightharpoonup 0$ in $L^{2}(\mathbb{R}^4)$ as $\mu\rightarrow \infty$. Thus
\begin{equation}\label{tem107-12}
\int_{\mathbb{R}^4}|z^3_\mu|^{2}|\omega^{12}_i|^{2}\rightarrow 0 \text{ as } \mu\rightarrow \infty, ~\text{ for any } i=1,2.
\end{equation}
Set $v_1:= \omega^{12}_1, v_2:= \omega^{12}_2, v_3:= z^3_\mu$.
Then we have
\begin{equation}\label{tem107-17}
\int_{\mathbb{R}^4}\left(|\nabla v_3|^{2}-\frac{\lambda_{3}}{|x|^2} v_3^{2}\right)=\int_{\mathbb{R}^4}|v_3|^{4}=4A_3.
\end{equation}
Consider the following system
\begin{equation}\label{tem107-13}
\int_{\mathbb{R}^4}\left(|\nabla v_i|^{2}-\frac{\lambda_{i}}{|x|^2} v_i^{2}\right)=\sum_{j=1}^3t_j\int_{\mathbb{R}^4}\beta_{ij}|v_i|^{2}|v_j|^{2}, \quad i=1,2,3.
\end{equation}
Note that $0<\beta_{12}<\widetilde{\beta}\leq 1$, then the matrix $M_B[v_1, v_2]$ is positively definite, and so $det(M_B[v_1, v_2])>0$, which is independent on $\mu$. Define
\begin{equation*}
M_B^\infty:=
\left(
\begin{matrix}
M_B[v_1, v_2] & \mathbf{0} \\
\mathbf{0}  &  \int_{\mathbb{R}^4}|v_3|^{4}
\end{matrix}
\right).
\end{equation*}
Then $det(M_B^\infty)>0$, which is independent on $\mu$. Note that $det(M_B[v_1, v_2, v_3])$ is a continuous function about the variables $\int_{\mathbb{R}^4}\beta_{ij}|v_i|^{2}|v_j|^{2}$ with $1\leq i,j\leq 3$. It follows form \eqref{tem107-12} that
\begin{equation}\label{tem107-15}
det(M_B[v_1, v_2, v_3])>\frac{1}{2}det(M_B^\infty)>0 \text{ for large enough } \mu.
\end{equation}
Therefore, the system
\eqref{tem107-13} has a unique solution $(t_1^\mu, t_2^\mu, t_3^\mu)$ for large enough $\mu$. Note that every element of $M_B[v_1, v_2, v_3]$ is uniformly bounded, combining this with \eqref{tem107-15} and Cramer's rule we know that $t_i^\mu$ is uniformly bounded, $i=1,2,3$. Then we can assume that
\begin{equation*}
\widetilde{t}_i:=\lim_{\mu\rightarrow \infty}t_i^\mu.
\end{equation*}
Letting $\mu\rightarrow \infty$ in \eqref{tem107-13} we have
\begin{equation}\label{tem107-16}
\begin{cases}
\int_{\mathbb{R}^4}\left(|\nabla v_1|^{2}-\frac{\lambda_{1}}{|x|^2} v_1^{2}\right)=\sum_{j=1}^2\widetilde{t}_1\int_{\mathbb{R}^4}\beta_{12}|v_1|^{2}|v_2|^{2}\\
\int_{\mathbb{R}^4}\left(|\nabla v_2|^{2}-\frac{\lambda_{2}}{|x|^2} v_2^{2}\right)=\sum_{j=1}^2\widetilde{t}_2\int_{\mathbb{R}^4}\beta_{12}|v_1|^{2}|v_2|^{2}\\
\int_{\mathbb{R}^4}\left(|\nabla v_3|^{2}-\frac{\lambda_{3}}{|x|^2} v_3^{2}\right)=\widetilde{t}_3\int_{\mathbb{R}^4}|v_3|^{4}.
\end{cases}
\end{equation}
Note that $det(M_B^\infty)>0$, then the system \eqref{tem107-16} has a unique solution. It is easy to see that $(1,1,1)$ is a solution of system \eqref{tem107-16}. Thus, $\widetilde{t}_i=1, i=1,2,3$, that is,
\begin{equation}\label{tem107-20}
\lim_{\mu\rightarrow \infty}t_i^\mu=1, ~\text{ for any } i=1,2,3.
\end{equation}
We deduce from \eqref{tem107-13} that $(\sqrt{t_1^\mu}v_1, \sqrt{t_2^\mu}v_2, \sqrt{t_3^\mu}v_3)\in \mathcal{M}$. Therefore,
\begin{align}\label{tem107-18}
 A  & \leq I\left(\sqrt{t_1^\mu}v_1, \sqrt{t_2^\mu}v_2, \sqrt{t_3^\mu}v_3\right) \nonumber\\
   & =I_{12}(t_1^\mu v_1, t_2^\mu v_2)+\frac{1}{2}t_3^\mu\int_{\mathbb{R}^4}\left(|\nabla v_3|^{2}-\frac{\lambda_{3}}{|x|^2} v_3^{2}\right)-\frac{1}{4}(t_3^\mu)^2\int_{\mathbb{R}^4}|v_3|^{4} \nonumber\\
    & \quad -\frac{1}{2}\sum_{i=1}^2t_i^\mu t_3^\mu\int_{\mathbb{R}^4}\beta_{i3}|v_i|^{2}|v_3|^{2}.
\end{align}
Letting $\mu\rightarrow \infty$ in \eqref{tem107-18}, then by \eqref{tem107-20}, \eqref{tem107-12} and \eqref{tem107-17} we have
\begin{equation*}
A\leq A_3+A_{12}.
\end{equation*}
\end{proof}

\begin{proof}[\bf The proof of Theorem \ref{thm3}-(1)]
By contradiction, we assume that $A$ is achieved by a $(\overline{u}_1, \overline{u}_2, \overline{u}_3) \in \mathcal{M}$ with $\overline{u}_i\neq0, i=1,2,3$. It follows from Lemma \ref{tem106-2} and Lemma \ref{tem106-3} that $A=A_3+A_{12}$. Note that $\beta_{13}<0, \beta_{23}<0$, then similarly  to the proof of Lemma \ref{tem106-2} we have
\begin{align*}
   A & =I(\overline{u}_1, \overline{u}_2, \overline{u}_3) =I_{12}(\overline{u}_1, \overline{u}_2)+I_3(\overline{u}_3)-\frac{1}{2}\sum_{i=1}^2
\int_{\mathbb{R}^4}\beta_{i3}|\overline{u}_i|^{3}|\overline{u}_3|^{3}  \\
   & >I_{12}(\overline{u}_1, \overline{u}_2)+I_3(\overline{u}_3)\geq A_{12}+A_3,
\end{align*}
which is a contradiction.

\end{proof}

\subsection{The case for $N=3$}

In this subsection, we assume that $0<\beta_{12}<\widehat{\beta}, \beta_{13}<0, \beta_{23}<0$ and give the proof of Theorem \ref{thm3}-(2). The idea of the following proof is similar to that of Theorem \ref{thm3}-(1). But there are technical differences between $N=3$ and $N=4$. For example, when projecting on the Nehari manifold, the equations the coefficients comply to are a linear system for $N=4$ (see \eqref{tem107-16}) and the Cramer's rule can be used. While the corresponding equations is a nonlinear system for $N=3$ and the Cramer's rule is not available. Thus, we introduced a different technique to deal with the case $N=3$.

Define
\begin{equation}\label{tem127-5}
\widehat{\mathcal{M}}_{12}:=\left\{u_1\neq 0, u_2\neq 0,  \int_{\mathbb{R}^4}\left(|\nabla u_i|^{2}-\frac{\lambda_{i}}{|x|^2} u_i^{2}\right)\leq\sum_{j=1}^2\int_{\mathbb{R}^4}\beta_{ij}|u_i|^{3}|u_j|^{3}, i=1,2\right\}.
\end{equation}
and
\begin{equation}\label{tem127-5}
\widehat{A}_{12}:=\inf_{(u_1,u_2)\in \widehat{\mathcal{M}}_{12}}I_{12}(u_1,u_2).
\end{equation}

\begin{lemma}\label{tem1206-1}
Assume that $\lambda_1=\lambda_2$ and $0<\beta_{12}<\widehat{\beta}$. Then we have
\begin{equation*}
\widehat{A}_{12}=A_{12},
\end{equation*}
where $A_{12}$ is defined in \eqref{tem529-10}.
\end{lemma}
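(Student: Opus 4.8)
One inclusion is for free: the set $\mathcal{M}_{12}$ from \eqref{tem107-4} with $\Gamma=\{1,2\}$ (in which the relevant coupling is $\beta_{ij}|u_i|^{3}|u_j|^{3}$, since $2^*/2=3$ for $N=3$) is contained in the relaxed set $\widehat{\mathcal{M}}_{12}$, so $\widehat{A}_{12}\le A_{12}$ follows at once from the definitions. Everything therefore reduces to proving $A_{12}\le\widehat{A}_{12}$, and the plan is to copy the scheme behind Lemma \ref{tem1010-1}: exhibit a single synchronised pair that realises \emph{both} infima.

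Since $\lambda_1=\lambda_2=:\lambda$, the scalar Terracini equation \eqref{scalarequation} is one and the same for $i=1$ and $i=2$; let $\omega_1$ denote its positive solution, a representative of the family \eqref{tem512-3}. First I would transcribe the proof of \cite[Theorem 1.1]{Zou2012} to the present Hardy-critical setting in $\R^3$ with the pure power coupling $\beta_{12}|u_1|^{3}|u_2|^{3}$: because the two scalar operators $-\Delta-\lambda|x|^{-2}$ literally coincide, a symmetrisation argument shows that a minimiser of $I_{12}$ over $\mathcal{M}_{12}$ may be taken of the synchronised form $(a\omega_1,b\omega_1)$ with $a,b>0$, which collapses the problem to the two-parameter minimisation of $(a,b)\mapsto I_{12}(a\omega_1,b\omega_1)$; one then extracts a minimiser $(\sqrt{k}\,\omega_1,\sqrt{l}\,\omega_1)\in\mathcal{M}_{12}$ with $k,l>0$, so that $A_{12}=I_{12}(\sqrt{k}\,\omega_1,\sqrt{l}\,\omega_1)$. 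Next I would run the identical argument for $\widehat{A}_{12}$: the same symmetrisation applies to $\widehat{\mathcal{M}}_{12}$, the reduced minimisation now takes place over the corresponding relaxed parameter region, and --- exactly as in Lemma \ref{tem1010-1} --- passing from an equality constraint to an inequality one does not lower the value, because the synchronised fibering map attains its maximum precisely on the Nehari constraint; hence $\widehat{A}_{12}$ is attained at the very same pair $(\sqrt{k}\,\omega_1,\sqrt{l}\,\omega_1)$, and $\widehat{A}_{12}=I_{12}(\sqrt{k}\,\omega_1,\sqrt{l}\,\omega_1)=A_{12}$.

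The step that makes $N=3$ genuinely harder than the $N=4$ analogue in Lemma \ref{tem1010-1}, and the one I expect to be the main obstacle, is that the Euler--Lagrange/projection equations cutting out $\mathcal{M}_{12}$ inside the synchronised family form a nonlinear system (compare \eqref{2325-9}), so Cramer's rule, which trivialises the corresponding computation when $N=4$, is unavailable. I would get around this exactly as in the proof of Lemma \ref{Energyestimates6}: the assumption $0<\beta_{12}<\widehat{\beta}\le1$ forces the $2\times2$ matrix $M_{B}[\omega_1,\omega_1]$ (notation \eqref{Matrix-Def-2}) to be positive definite, whence, after the substitution used there, the reduced fibering functional is strictly concave and has a unique critical point, which is necessarily its global maximum and necessarily lies on $\mathcal{M}_{12}$; this is what pins the relaxed infimum to the strict one. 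The only ingredient I would import rather than reprove is the symmetrisation fact that $\lambda_1=\lambda_2$ forces a minimiser to be synchronisable, for which \cite[Theorem 1.1]{Zou2012}, suitably adapted, supplies the argument.
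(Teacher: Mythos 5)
Your proposal is correct and follows essentially the same route as the paper: both infima are identified with the energy of one and the same synchronized pair $(\sqrt{k_0}\,\omega_1,\sqrt{l_0}\,\omega_1)$, using the $\lambda_1=\lambda_2$ classification of least-energy pairs for the two-component problem, which immediately gives $\widehat{A}_{12}=A_{12}$. The only differences are cosmetic: the paper imports the synchronization result from \cite{Guo-Zou2016} (Theorem 1.1) for the $N=3$ setting rather than adapting \cite{Zou2012} as in Lemma \ref{tem1010-1}, and it leaves implicit the fibering/concavity details you spell out (which are consistent with the argument of Lemma \ref{Energyestimates6}).
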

\begin{proof}
Based on inequality \eqref{tem127-4}, employing the proof of \cite[Theorme 1.1]{Guo-Zou2016} we know that
there exist $k_0,l_0>0$ such that $A_{12}=(\sqrt{k_0}\omega_1, \sqrt{l_0}\omega_1)$, where $\omega_1$ is a lease energy positive solution of \eqref{scalarequation}. Similarly, we also can get that $\widehat{A}_{12}=(\sqrt{k_0}\omega_1, \sqrt{l_0}\omega_1)$. Thus, $\widehat{A}_{12}=A_{12}$.
\end{proof}

\begin{lemma}\label{tem1012-25}
Assume that $0<\beta_{12}<\widehat{\beta}, \beta_{13}<0, \beta_{23}<0$, then $A\geq A_3+A_{12}$.
\end{lemma}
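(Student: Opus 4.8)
The plan is to prove $A\ge A_3+A_{12}$ by imitating the proof of Lemma~\ref{tem106-2}, with the quartic nonlinearity of $N=4$ replaced by the cubic one ($2^*/2=3$) of $N=3$. Fix an arbitrary $\mathbf u=(u_1,u_2,u_3)\in\mathcal M$. Since $\beta_{13}<0$ and $\beta_{23}<0$, the part of $I(\mathbf u)$ coupling $u_3$ to the first two components, namely $-\tfrac13\beta_{13}\int_{\mathbb R^3}|u_1|^3|u_3|^3-\tfrac13\beta_{23}\int_{\mathbb R^3}|u_2|^3|u_3|^3$, is nonnegative; dropping it gives
\[
I(u_1,u_2,u_3)\ \ge\ I_{12}(u_1,u_2)+I_3(u_3),
\]
where $I_{12}$ is the $\{1,2\}$-subsystem functional of \eqref{1.10-4} and $I_3(u_3)=\tfrac12\|u_3\|_3^2-\tfrac16\int_{\mathbb R^3}|u_3|^6$.

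The second step reads off, from $\mathbf u\in\mathcal M$, membership in the relaxed constraint sets of this subsection. The Nehari identities $\partial_iI(\mathbf u)u_i=0$ give, for $i=1,2$,
\[
\|u_i\|_i^2=\int_{\mathbb R^3}|u_i|^6+\beta_{12}\int_{\mathbb R^3}|u_1|^3|u_2|^3+\beta_{i3}\int_{\mathbb R^3}|u_i|^3|u_3|^3,
\]
so $\beta_{i3}<0$ forces $\|u_i\|_i^2\le\int_{\mathbb R^3}|u_i|^6+\beta_{12}\int_{\mathbb R^3}|u_1|^3|u_2|^3$, i.e.\ $(u_1,u_2)\in\widehat{\mathcal M}_{12}$ (cf.\ \eqref{tem127-5}); likewise the identity for $u_3$ together with $\beta_{13},\beta_{23}<0$ gives $\|u_3\|_3^2\le\int_{\mathbb R^3}|u_3|^6$, i.e.\ $u_3\in\widetilde{\mathcal M}_3$. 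Now Lemma~\ref{tem1206-1} (this is where $\lambda_1=\lambda_2$ enters, together with $0<\beta_{12}<\widehat\beta$) gives $I_{12}(u_1,u_2)\ge\widehat A_{12}=A_{12}$, and Lemma~\ref{tem512-1} applied with index $3$ gives $I_3(u_3)\ge\widetilde A_3=A_3$. Combining with the first display, $I(\mathbf u)\ge A_{12}+A_3$ for every $\mathbf u\in\mathcal M$; taking the infimum over $\mathcal M$ proves $A\ge A_3+A_{12}$.

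The only substantive inputs are the two implications ``$(u_1,u_2)\in\widehat{\mathcal M}_{12}\Rightarrow I_{12}(u_1,u_2)\ge A_{12}$'' and ``$u_3\in\widetilde{\mathcal M}_3\Rightarrow I_3(u_3)\ge A_3$'', i.e.\ Lemmas~\ref{tem1206-1} and~\ref{tem512-1}, both already established; everything else is routine algebra, so I do not anticipate a real obstacle. It is worth emphasising why this lemma is no harder for $N=3$ than for $N=4$: here one only uses the one-sided consequences of the Nehari identities and never projects a triple back onto $\mathcal M$, so the nonlinearity of the projection system that makes Cramer's rule unavailable in dimension $3$ does not intervene. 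That difficulty, which is the source of the genuinely new $N=3$ technique, appears instead in the matching upper bound $A\le A_3+A_{12}$ (the $N=3$ analogue of Lemma~\ref{tem106-3}), whose proof must scale the triple $(\omega^{12}_1,\omega^{12}_2,z^3_\mu)$ onto $\mathcal M$ and will need a substitute for Cramer's rule of the kind used in Lemma~\ref{Energyestimates6}.
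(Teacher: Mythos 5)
Your overall strategy coincides with the paper's (which simply refers back to Lemma \ref{tem106-2}): drop the negative couplings and read off from the Nehari identities that $(u_1,u_2)\in\widehat{\mathcal M}_{12}$ and $u_3\in\widetilde{\mathcal M}_3$. The gap is in how you close the argument. Lemma \ref{tem512-1} does \emph{not} give ``$u_3\in\widetilde{\mathcal M}_3\Rightarrow I_3(u_3)\ge A_3$'': it identifies $A_3$ with the infimum of the quadratic quantity $\frac13\|u\|_3^2$ over the relaxed set, and on that set one only has $|u_3|_6^6\ge\|u_3\|_3^2$, hence $I_3(u_3)\le\frac13\|u_3\|_3^2$; in fact $I_3$ is unbounded below on $\widetilde{\mathcal M}_3$ (take $t\omega_3$ with $t\ge1$ large). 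The same issue hits ``$I_{12}(u_1,u_2)\ge\widehat A_{12}=A_{12}$'': the displayed definition of $\widehat A_{12}$ as $\inf_{\widehat{\mathcal M}_{12}}I_{12}$ must be read as $\inf_{\widehat{\mathcal M}_{12}}\frac13(\|u_1\|_1^2+\|u_2\|_2^2)$, since the literal infimum of $I_{12}$ over the relaxed set is $-\infty$ by scaling up, and it is for the quadratic quantity that the proof of Lemma \ref{tem1206-1} (via Guo--Zou) yields the value $A_{12}$. Moreover the intermediate bound you need can genuinely fail: algebraically $I_{12}(u_1,u_2)+I_3(u_3)=I(\mathbf u)-\frac13\sum_{i=1,2}|\beta_{i3}|\int_{\mathbb{R}^3}|u_i|^3|u_3|^3$, so there is no reason why $I_{12}+I_3$ should stay above $A_{12}+A_3$ for every $\mathbf u\in\mathcal M$; your chain ``$I\ge I_{12}+I_3\ge A_{12}+A_3$'' breaks at the second inequality even though the endpoint is true.

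The repair is short and is exactly what the two cited lemmas are built for: on $\mathcal M$ one has $I(\mathbf u)=\frac13\sum_{i=1}^3\|u_i\|_i^2$ (cf.\ \eqref{1.12}), so, using the relaxed memberships you already established, $I(\mathbf u)=\frac13\bigl(\|u_1\|_1^2+\|u_2\|_2^2\bigr)+\frac13\|u_3\|_3^2\ge \widehat A_{12}+\widetilde A_3=A_{12}+A_3$ by Lemma \ref{tem1206-1} (where, as you correctly note, $\lambda_1=\lambda_2$ and $0<\beta_{12}<\widehat\beta$ enter) and Lemma \ref{tem512-1}; taking the infimum over $\mathcal M$ gives the claim. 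In other words, run the comparison on the quadratic functional $\frac13\sum\|u_i\|_i^2$ rather than on $I_{12}+I_3$. To be fair, the paper's own sketch of Lemma \ref{tem106-2} is phrased with the same looseness, but as written your justification of the key step does not follow from the lemmas you invoke, so this reformulation is needed.
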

\begin{proof}
The proof is similar to that of Lemma \ref{tem106-2}, and so we omit it.
\end{proof}

\begin{prop}\label{tem1012-19}
Assume that $0<\beta_{12}<\widehat{\beta}, \beta_{13}<0, \beta_{23}<0$, then $A\leq A_3+A_{12}$.
\end{prop}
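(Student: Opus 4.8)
The plan is to follow the blueprint of the $N=4$ case (Lemma \ref{tem106-3}): test $A$ on the ground state of the $\{1,2\}$-block together with a highly spread-out bubble $z^3_\mu$ for the third component, project the triple onto $\mathcal{M}$, and let the coupling with the third component vanish as $\mu\to\infty$. The one genuine change, as flagged in the introduction, is that for $N=3$ the projection equations form a \emph{nonlinear} system, so the Cramer-rule argument used for $N=4$ to pin the projection coefficients to $1$ is unavailable; instead I would realize the projected point variationally as a maximizer of an explicit function $h_\mu$ and use only \emph{uniform bounds} on its coordinates.

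Since $0<\beta_{12}<\widehat\beta$, Corollary \ref{1107-1} provides a least energy positive solution $(\omega^{12}_1,\omega^{12}_2)$ of the $\{1,2\}$-subsystem with $I_{12}(\omega^{12}_1,\omega^{12}_2)=A_{12}$. Set $v_1=\omega^{12}_1$, $v_2=\omega^{12}_2$, $v_3=z^3_\mu$, so that $\|v_3\|_3^2=|v_3|_6^6=3A_3$ for every $\mu$ and, since $z^3_\mu\rightharpoonup0$ in $D^{1,2}(\R^3)$, $B^\mu_{i3}:=\int_{\R^3}|v_iv_3|^3\to0$ as $\mu\to\infty$ for $i=1,2$. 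Because $\beta_{13},\beta_{23}<0$ one has
\begin{align*}
h_\mu(t_1,t_2,t_3)&:=I(t_1v_1,t_2v_2,t_3v_3)\\
&\phantom{:}=I_{12}(t_1v_1,t_2v_2)+I_3(t_3v_3)+\tfrac{|\beta_{13}|}{3}t_1^3t_3^3B^\mu_{13}+\tfrac{|\beta_{23}|}{3}t_2^3t_3^3B^\mu_{23},\qquad t_i\ge0,
\end{align*}
with the last two terms nonnegative. First I would show that for $\mu$ large $h_\mu$ attains a global maximum on $\overline{(\R^+)^3}$ at an interior point. A Young-inequality bound on the two coupling terms (whose coefficients tend to $0$) gives $h_\mu(t)\le\frac12\sum_it_i^2\|v_i\|_i^2-c\sum_it_i^6$ with $c>0$ independent of $\mu$; hence $h_\mu\to-\infty$ at infinity, so a global maximum exists, and moreover $\sup h_\mu\le C_0$ uniformly in $\mu$ since the $\|v_i\|_i^2$ are bounded. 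Along each coordinate axis the quadratic term $\frac12t_i^2\|v_i\|_i^2>0$ dominates the cubic terms, so no maximizer can have a vanishing coordinate. Therefore the maximizer $(t_1^\mu,t_2^\mu,t_3^\mu)\in(\R^+)^3$ is a critical point of $h_\mu$, i.e. $(t_1^\mu v_1,t_2^\mu v_2,t_3^\mu v_3)\in\mathcal{M}$, and consequently $A\le h_\mu(t_1^\mu,t_2^\mu,t_3^\mu)\le C_0$; since $h_\mu(t^\mu)=\frac13\sum_i(t_i^\mu)^2\|v_i\|_i^2$ and $\|v_i\|_i^2$ is bounded below, the $t_i^\mu$ are bounded uniformly in $\mu$.

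It then remains to estimate $h_\mu(t^\mu)$ from above. By the boundedness of the $t_i^\mu$ and $B^\mu_{i3}\to0$ we get $h_\mu(t^\mu)=I_{12}(t_1^\mu v_1,t_2^\mu v_2)+I_3(t_3^\mu v_3)+o(1)$ as $\mu\to\infty$. A direct one-variable computation gives $\max_{s\ge0}I_3(sz^3_\mu)=A_3$. For the $\{1,2\}$-part, since $(\omega^{12}_1,\omega^{12}_2)$ solves the $\{1,2\}$-subsystem and $\beta_{12}<1$, the strict-concavity / uniqueness argument already carried out for $g_{12}$ in the proof of Lemma \ref{Energyestimates6} shows that $(1,1)$ is the unique critical point of $(s_1,s_2)\mapsto I_{12}(s_1\omega^{12}_1,s_2\omega^{12}_2)$ in $(\R^+)^2$ and is its global maximum (interior, by the same boundary-perturbation argument), so $\max_{s_1,s_2\ge0}I_{12}(s_1\omega^{12}_1,s_2\omega^{12}_2)=A_{12}$. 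Hence $h_\mu(t^\mu)\le A_{12}+A_3+o(1)$, and letting $\mu\to\infty$ yields $A\le A_{12}+A_3$.

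The main obstacle is precisely the nonlinearity of the projection system: unlike in the $N=4$ case one cannot identify the coefficients $t_i^\mu$ explicitly, so the substitute — obtaining the projected point as a maximizer of $h_\mu$ and then needing only that the $t_i^\mu$ stay bounded, together with the fact (supplied by Lemma \ref{Energyestimates6}) that $I_{12}$ restricted to the ray through its ground state is maximized at height exactly $A_{12}$ — is the conceptual core. The other delicate point is to make the estimate $\sup_\mu\sup_t h_\mu\le C_0$ and the property that maximizers of $h_\mu$ lie in $(\R^+)^3$ genuinely uniform in $\mu$; this is where the smallness of $B^\mu_{i3}$ for large $\mu$ and the bound $\beta_{12}<1$ enter.
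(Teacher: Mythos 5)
Your proposal is correct and follows essentially the paper's own argument: project $(\omega^{12}_1,\omega^{12}_2,z^3_\mu)$ onto $\mathcal{M}$ as an interior global maximizer of $(t_1,t_2,t_3)\mapsto I(t_1\omega^{12}_1,t_2\omega^{12}_2,t_3 z^3_\mu)$, keep the coefficients bounded uniformly in $\mu$, and use $\max_{s_1,s_2\ge0}I_{12}(s_1\omega^{12}_1,s_2\omega^{12}_2)=A_{12}$ (from Lemma \ref{Energyestimates6}) together with $\max_{s\ge0}I_3(s z^3_\mu)=A_3$ to pass to the limit $\mu\to\infty$. The only cosmetic difference is that you get the uniform coercivity, and hence the uniform bound on the projection coefficients, by absorbing the vanishing couplings $B^\mu_{i3}$ via Young's inequality, whereas the paper obtains the same control through strict diagonal dominance of $M_B[e_1,e_2,e_3]$ and a Gershgorin eigenvalue bound (Lemmas \ref{tem1012-12} and \ref{tem1012-20}).
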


\medbreak

Next, we show the proof of Proposition \ref{tem1012-19}. Note that $0<\beta_{12}<\widehat{\beta}$, then by Proposition \ref{1103-1} we know that $A_{12}$ is attained and we assume that $A_{12}=I_{12}(\omega^{12}_1,\omega^{12}_2)$. Recall that $z^3_\mu$ is defined in \eqref{tem512-3}. Note that $z^3_\mu\rightharpoonup 0$ in $D^{12}(\mathbb{R}^3)$ as $\mu\rightarrow \infty$, then $(z^3_\mu)^3\rightharpoonup 0$ in $L^{2}(\mathbb{R}^3)$ as $\mu\rightarrow \infty$. Set $e_1:= \omega^{12}_1, e_2:= \omega^{12}_2, e_3:= z^3_\mu$. Then we have
\begin{equation}\label{tem1012-3}
\int_{\mathbb{R}^3}\left(|\nabla e_3|^{2}-\frac{\lambda_{3}}{|x|^2} e_3^{2}\right)=\int_{\mathbb{R}^3}|e_3|^{6}=3A_3,
\end{equation}
and
\begin{equation}\label{tem1012-2}
\int_{\mathbb{R}^3}|e_i|^{3}|e_3|^{3}\rightarrow 0 \text{ as } \mu\rightarrow \infty, ~\text{ for any } i=1,2.
\end{equation}

Similarly to Lemma \ref{Estimate2-3} we have
\begin{lemma}\label{tem1012-1}
Assume that $0<\beta_{12}<\widehat{\beta}$. Then there exists $C_3,C_4>0$ such that
\begin{equation*}
C_3 \leq \int_{\mathbb{R}^3}|e_1|^{6} \leq C_4, \quad C_3 \leq \int_{\mathbb{R}^3}|e_2|^{6}\leq C_4,
\end{equation*}
$C_3$ is independent on $\beta_{12}$.
\end{lemma}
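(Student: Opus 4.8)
The plan is to reproduce, for the fixed pair $(e_1,e_2)=(\omega^{12}_1,\omega^{12}_2)$, the two-sided estimate of Lemma \ref{Estimate2-3} applied to the two-component subsystem indexed by $\Gamma=\{1,2\}$, the only additional point being that the lower constant $C_3$ has to be produced independently of $\beta_{12}$. First I would record two structural facts. Since $(e_1,e_2)$ is a least energy positive solution of that subsystem, testing the $i$-th equation against $e_i$ gives the Nehari identities $\|e_i\|_i^2=\sum_{j=1}^2\int_{\mathbb{R}^3}\beta_{ij}|e_i|^3|e_j|^3$ for $i=1,2$, so that $(e_1,e_2)\in\mathcal{M}_{12}$ and $I_{12}(e_1,e_2)=A_{12}$. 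And by Lemma \ref{Energyestimates6} with $\Gamma=\{1,2\}$ (its hypotheses hold because $\beta_{12}\in(0,\widehat{\beta})\subseteq(0,1)$ and $A_1,A_2$ are attained by $\omega_1,\omega_2$), together with \eqref{tem529-1} and $\overline{C}=\mathcal{S}^{3/2}$ for $N=3$, one has $A_{12}\leq A_1+A_2<\frac{2}{3}\mathcal{S}^{3/2}=\frac{2}{3}\overline{C}<2\overline{C}$.

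With these in hand the rest is the same computation as in Lemma \ref{Estimate2}/\ref{Estimate2-3}. For the upper bound: since $S|e_i|_6^2\leq\|e_i\|_i^2\leq\sum_{k=1}^2\|e_k\|_k^2=3I_{12}(e_1,e_2)\leq 6\overline{C}$, we get $|e_i|_6^2\leq 6\overline{C}/S$ and hence $\int_{\mathbb{R}^3}|e_i|^6\leq(6\overline{C}/S)^3=:C_4$. For the lower bound I would substitute this back into the Nehari identity: by the Cauchy-Schwarz inequality and $\beta_{12}<\widehat{\beta}_1$, $\|e_i\|_i^2\leq|e_i|_6^6+\widehat{\beta}_1|e_i|_6^3|e_j|_6^3$ with $j\neq i$; bounding $|e_j|_6^3\leq S^{-3/2}\|e_j\|_j^3\leq S^{-3/2}(6\overline{C})^{3/2}$, then dividing by $|e_i|_6^2>0$ and using $|e_i|_6\leq(6\overline{C}/S)^{1/2}$, one arrives at $S\leq|e_i|_6^4+\widehat{\beta}_1 S^{-2}(6\overline{C})^2$. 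By the definition \eqref{tem529-5} of $\widehat{\beta}_1$ the last term equals $\frac{7S}{12}$, so $|e_i|_6^4\geq\frac{5S}{12}$ and therefore $\int_{\mathbb{R}^3}|e_i|^6\geq(\frac{5S}{12})^{3/2}=:C_3$, which does not depend on $\beta_{12}$.

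I do not expect any genuine obstacle; the only delicate point is the bookkeeping that keeps $C_3$ free of $\beta_{12}$, which is why I replace $\beta_{12}$ by the cruder bound $\widehat{\beta}_1$ at each occurrence and rely on $\widehat{\beta}_1$ being calibrated in \eqref{tem529-5} so as to leave the strictly positive gap $S-\frac{7S}{12}=\frac{5S}{12}$. As a shortcut, one may instead simply invoke Lemma \ref{Estimate2-3} with $\Gamma=\{1,2\}$ for the mere existence of $C_3,C_4$ (its hypothesis $0\leq\beta_{ij}<\widehat{\beta}_1$ being satisfied since $\beta_{12}$ is the only off-diagonal entry and $\beta_{12}<\widehat{\beta}\leq\widehat{\beta}_1$), and then use only the short computation above to exhibit a $\beta_{12}$-independent choice of $C_3$.
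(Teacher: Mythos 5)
Your proposal is correct and is essentially the paper's own argument: the paper proves this lemma precisely by repeating the computation of Lemma \ref{Estimate2-3} for the pair $(e_1,e_2)\in\mathcal{M}_{12}$ with $I_{12}(e_1,e_2)=A_{12}\leq 2\overline{C}$, which is exactly what you do. Your explicit bookkeeping (replacing $\beta_{12}$ by $\widehat{\beta}_1$ and obtaining $|e_i|_6^4\geq \tfrac{5S}{12}$, so $C_3=(\tfrac{5S}{12})^{3/2}$ depends only on $S,\overline{C}$) just makes the $\beta_{12}$-independence, which the paper leaves implicit, transparent.
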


Denote
\begin{equation}\label{tem1012-7}
\gamma:=\min\left\{\frac{1}{4}C_3, \frac{3}{2}A_3\right\}
\end{equation}

\begin{lemma}\label{tem1012-12}
Assume that $0<\beta_{12}<\widehat{\beta}, \beta_{13}<0, \beta_{23}<0$. Then the matrix $M_B[e_1, e_2, e_3]$ is strictly diagonally dominant and the matrix $M_B[e_1, e_2, e_3]$ is positively definite  for $\mu$ large enough. Moreover,
\begin{equation}
\theta_{min}\geq \gamma,
\end{equation}
where $\theta_{min}$ is the minimum eigenvalues of $M_B[e_1, e_2, e_3]$ and $\gamma$ is defined in \eqref{tem1012-7}, independently on $\mu$.
\end{lemma}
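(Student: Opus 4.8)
The plan is to read off the entries of $M_B[e_1,e_2,e_3]$ from \eqref{Matrix-Def-2} — its diagonal entries are $\int_{\mathbb{R}^3}|e_i|^6$ and its off-diagonal entries are $\beta_{hk}\int_{\mathbb{R}^3}|e_h|^3|e_k|^3$ for $h\neq k$ — and then to verify strict diagonal dominance row by row, with a lower bound on the defect $M_{ii}-\sum_{j\neq i}|M_{ij}|$ that is uniform in $\mu$; positive definiteness and the eigenvalue estimate then drop out of the Gershgorin circle theorem applied to this symmetric matrix. The ingredients I would use are all already available: $C_3\leq\int_{\mathbb{R}^3}|e_i|^6\leq C_4$ for $i=1,2$ with $C_3$ independent of $\beta_{12}$ (Lemma \ref{tem1012-1}); $\int_{\mathbb{R}^3}|e_3|^6=3A_3$ (by \eqref{tem1012-3}); $\int_{\mathbb{R}^3}|e_i|^3|e_3|^3\to 0$ as $\mu\to\infty$ for $i=1,2$ (by \eqref{tem1012-2}); and the norm bound $\|e_i\|_i^2\leq 6\overline{C}$ for $i=1,2$, which holds since $I_{12}(e_1,e_2)=A_{12}\leq A_1+A_2<\frac{2}{3}\mathcal{S}^{3/2}<2\overline{C}$ (Lemma \ref{Energyestimates6} and \eqref{tem529-1}), so that by the Sobolev inequality one may take $C_4\leq(6\overline{C})^3/S^3$.

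For the $12$-entry, H\"older's inequality and these bounds give $\int_{\mathbb{R}^3}|e_1|^3|e_2|^3\leq|e_1|_6^3|e_2|_6^3\leq C_4\leq(6\overline{C})^3/S^3$, hence by the definition of $\widehat{\beta}_2$ in \eqref{tem529-6} (its first term) and $0<\beta_{12}<\widehat{\beta}\leq\widehat{\beta}_2$,
\[
\beta_{12}\int_{\mathbb{R}^3}|e_1|^3|e_2|^3\leq\widehat{\beta}_2\,\frac{(6\overline{C})^3}{S^3}\leq\frac{C_3}{2}.
\]
Since $\beta_{13}$ and $\beta_{23}$ are fixed, $|\beta_{i3}|\int_{\mathbb{R}^3}|e_i|^3|e_3|^3=o(1)$ as $\mu\to\infty$ for $i=1,2$. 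Taking $\mu$ large enough that each of these $o(1)$ quantities is smaller than $\min\{C_3/4,\,3A_3/2\}$, rows $1$ and $2$ give $M_{ii}-\sum_{j\neq i}|M_{ij}|\geq C_3-\frac{C_3}{2}-o(1)\geq\frac{C_3}{4}$, while row $3$ gives $M_{33}-\sum_{j\neq 3}|M_{3j}|=3A_3-o(1)\geq\frac{3}{2}A_3$. Hence $M_B[e_1,e_2,e_3]$ is strictly diagonally dominant; being symmetric with positive diagonal, it is positive definite by the Gershgorin circle theorem, and moreover every eigenvalue is at least $\min_i\big(M_{ii}-\sum_{j\neq i}|M_{ij}|\big)\geq\min\{C_3/4,\,3A_3/2\}=\gamma$, with $\gamma$ as in \eqref{tem1012-7} and independent of $\mu$.

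The only delicate point I anticipate is the constant bookkeeping — verifying that the first term of $\widehat{\beta}_2$ in \eqref{tem529-6} is calibrated precisely so that $\beta_{12}\int_{\mathbb{R}^3}|e_1|^3|e_2|^3\leq C_3/2$, and keeping in mind that the threshold ``$\mu$ large'' is allowed to depend on the fixed numbers $\beta_{13},\beta_{23}$ (through the vanishing integrals $\int|e_i|^3|e_3|^3$) whereas all the remaining bounds are uniform in $\mu$. Beyond that the argument uses only H\"older's inequality, the Sobolev inequality and the Gershgorin circle theorem, in the same spirit as Lemma \ref{PS-sequence} and Lemma \ref{estimation4-4}.
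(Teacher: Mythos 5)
Your proposal is correct and follows essentially the same route as the paper: row-by-row strict diagonal dominance of $M_B[e_1,e_2,e_3]$ using $C_3\le\int|e_i|^6\le C_4$, $\int|e_3|^6=3A_3$, the vanishing of $\int|e_i|^3|e_3|^3$ as $\mu\to\infty$, and the smallness $\beta_{12}<\widehat\beta\le\widehat\beta_2$, followed by the Gershgorin circle theorem to get positive definiteness and $\theta_{\min}\ge\gamma=\min\{C_3/4,\,3A_3/2\}$. Your explicit calibration of the $\beta_{12}$ cross term against the first entry of $\widehat\beta_2$ is in fact cleaner than the paper's corresponding display (which contains sign/index typos but encodes the same estimate), so no gap remains.
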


\begin{proof}
By \eqref{tem1012-2}, \eqref{tem1012-3} and Lemma \ref{tem1012-1} we have
\begin{align}\label{tem1012-9}
 &\int_{\mathbb{R}^3}|e_1|^{6}  - \sum_{j=2}^3\left|\int_{\mathbb{R}^3}\beta_{1j}|e_1|^{3}|e_j|^{3}\right| \nonumber \\
   & \geq C_3-\beta_{12}\|e_1\|^3_6 \|e_3\|^3_6+|\beta_{13}|\int_{\mathbb{R}^3}|e_1|^{3}|e_3|^{3}\nonumber\\
   & \geq C_3-\beta_{12}C_4^{\frac{1}{2}}(3A_3)^{\frac{1}{2}}+|\beta_{13}|\int_{\mathbb{R}^3}|e_1|^{3}|e_3|^{3}\nonumber\\
   & \geq \frac{1}{4}C_3,  \text{ for } \mu \text{ large enough. }
\end{align}
Similarly, we see that
\begin{equation}\label{tem1012-6}
\int_{\mathbb{R}^3}|e_2|^{6} - \sum_{j\neq 2}\left|\int_{\mathbb{R}^3}\beta_{2j}|e_2|^{3}|e_j|^{3}\right|\geq \frac{1}{4}C_3,  \text{ for } \mu \text{ large enough. }
\end{equation}
It follows from \eqref{tem1012-2} and \eqref{tem1012-3} that
\begin{align}\label{tem1012-10}
 \int_{\mathbb{R}^3}|e_3|^{6}  &- \sum_{j=1}^2\left|\int_{\mathbb{R}^3}\beta_{j3}|e_3|^{3}|e_j|^{3}\right| \nonumber \\
   & = \int_{\mathbb{R}^3}|e_3|^{6}+|\beta_{13}|\int_{\mathbb{R}^3}|e_1|^{3}|e_3|^{3}+
   |\beta_{23}|\int_{\mathbb{R}^3}|e_2|^{3}|e_3|^{3}\\
   & \geq \frac{3}{2}A_3, \text{ for } \mu \text{ large enough. }\nonumber
\end{align}

It follows from \eqref{tem1012-9}, \eqref{tem1012-6} and \eqref{tem1012-10} that the matrix $M_B[e_1, e_2, e_3]$ is strictly diagonally dominant, and so the matrix $M_B[e_1, e_2, e_3]$ is positively definite. For any eigenvalue $\theta$ of $M_B[e_1, e_2, e_3]$, by \eqref{tem1012-9}, \eqref{tem1012-6}, \eqref{tem1012-10} and the Gershgorin circle theorem we know that $\theta\geq \gamma$ and so  $\theta_{min}\geq \gamma$.
\end{proof}

Denote
\begin{equation}\label{tem1012-17}
\widehat{t}^4=\frac{\max\left\{\sum_{i=1}^2\|e_i\|_i^2, 3A_3\right\}}{\gamma}.
\end{equation}
Then we have the following lemma

\begin{lemma}\label{tem1012-20}
Suppose that the assumptions in Lemma \ref{tem1012-12} are satisfied, then there exists $(t_{1,\mu},t_{2,\mu},t_{3,\mu})\in (\R^+)^3$ such that $(t_{1,\mu}e_1,t_{2,\mu}e_2,t_{3,\mu}e_3)\in \mathcal{M}$. Moreover,
\begin{equation}\label{tem1012-14}
0<t_{i,\mu}<\widehat{t}, \quad i=1,2,3,
\end{equation}
where $\widehat{t}$ is defined in \eqref{tem1012-17} and independent on $\mu$.
\end{lemma}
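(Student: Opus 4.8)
The plan is to realize $(t_{1,\mu}e_1,t_{2,\mu}e_2,t_{3,\mu}e_3)$ as an interior global maximum point of the map $h(\mathbf t):=I(t_1e_1,t_2e_2,t_3e_3)$, $\mathbf t=(t_1,t_2,t_3)\in\overline{(\R^+)^3}$, and then to extract the bound \eqref{tem1012-14} from the associated Nehari identities. Explicitly,
\[
h(\mathbf t)=\frac{1}{2}\sum_{i=1}^3 t_i^2\|e_i\|_i^2-\frac{1}{6}\sum_{i,j=1}^3 t_i^3 t_j^3\,M_B[e_1,e_2,e_3]_{ij}.
\]
Note that $\|e_1\|_1^2,\|e_2\|_2^2$ are fixed and, by \eqref{tem1012-3}, $\|e_3\|_3^2=3A_3$ is also independent of $\mu$. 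By Lemma \ref{tem1012-12}, for $\mu$ large the matrix $M_B[e_1,e_2,e_3]$ is positive definite with smallest eigenvalue $\ge\gamma>0$, so $\sum_{i,j}t_i^3t_j^3M_B[e_1,e_2,e_3]_{ij}\ge\gamma\sum_i t_i^6$ and hence
\[
h(\mathbf t)\le\frac{1}{2}\sum_{i=1}^3 t_i^2\|e_i\|_i^2-\frac{\gamma}{6}\sum_{i=1}^3 t_i^6\longrightarrow-\infty\quad\text{as }|\mathbf t|\to\infty .
\]
By continuity, $h$ attains a global maximum at some $\mathbf t_\mu=(t_{1,\mu},t_{2,\mu},t_{3,\mu})\in\overline{(\R^+)^3}$.

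Next I would argue, exactly as in the proof of Lemma \ref{Energyestimates6} (Step 3), that the maximum point is interior: if $t_{i,\mu}=0$ for some $i$, then moving only the $i$-th variable gives $h(\ldots,s,\ldots)-h(\ldots,0,\ldots)=\frac{1}{2}s^2\|e_i\|_i^2+O(s^3)$, with $O(s^3)$-constant depending only on the (fixed, finite) remaining components of $\mathbf t_\mu$; since $\|e_i\|_i^2>0$ this is positive for small $s>0$, contradicting maximality. Thus $t_{i,\mu}>0$ for all $i$, and at this interior critical point $\partial_i h(\mathbf t_\mu)=0$ for $i=1,2,3$, i.e.
\[
t_{i,\mu}^2\|e_i\|_i^2=\sum_{j=1}^3 t_{i,\mu}^3 t_{j,\mu}^3\,M_B[e_1,e_2,e_3]_{ij},\qquad i=1,2,3 ,
\]
which is precisely $\partial_iI(t_{1,\mu}e_1,t_{2,\mu}e_2,t_{3,\mu}e_3)(t_{i,\mu}e_i)=0$. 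Since $t_{i,\mu}e_i\neq0$, this yields $(t_{1,\mu}e_1,t_{2,\mu}e_2,t_{3,\mu}e_3)\in\mathcal M$.

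For the a priori bound \eqref{tem1012-14}, I would sum the identities just obtained and again invoke $\theta_{\min}\ge\gamma$ (Lemma \ref{tem1012-12}) to get $\gamma\sum_i t_{i,\mu}^6\le\sum_i t_{i,\mu}^2\|e_i\|_i^2$; bounding the right-hand side by $(\max_i t_{i,\mu})^2(\|e_1\|_1^2+\|e_2\|_2^2+3A_3)$ and the left-hand side below by $\gamma(\max_i t_{i,\mu})^6$, and recalling the definition \eqref{tem1012-17} of $\widehat t$, one deduces $0<t_{i,\mu}<\widehat t$ with $\widehat t$ independent of $\mu$.

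The main obstacle is to keep every constant genuinely $\mu$-independent throughout the last two steps; this is exactly the role of Lemma \ref{tem1012-12}, which supplies the positive definiteness of $M_B[e_1,e_2,e_3]$ and the uniform lower bound $\theta_{\min}\ge\gamma$, together with the fact that $\gamma$ in \eqref{tem1012-7} was built only out of the $\mu$-independent quantities $C_3$ and $A_3$. The interiority step, though routine, crucially uses that $I$ contributes the quadratic terms $\frac{1}{2}t_i^2\|e_i\|_i^2$ with strictly positive coefficients, which prevents any component of a maximizer from collapsing to zero.
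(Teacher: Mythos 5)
Your argument follows essentially the same route as the paper: coercivity of $\mathbf t\mapsto I(t_1e_1,t_2e_2,t_3e_3)$ via the uniform eigenvalue bound $\theta_{\min}\ge\gamma$ from Lemma \ref{tem1012-12}, exclusion of boundary maximizers by the quadratic term exactly as in the proof of Lemma \ref{Energyestimates6}, and then the criticality (Nehari) identities at the interior maximum to conclude membership in $\mathcal{M}$; all of these steps are correct, and the constants involved are indeed $\mu$-independent. The one place you deviate is the derivation of \eqref{tem1012-14}: summing the Nehari identities and using $\theta_{\min}\ge\gamma$ gives $(\max_i t_{i,\mu})^4\le\bigl(\|e_1\|_1^2+\|e_2\|_2^2+3A_3\bigr)/\gamma$, which exceeds $\widehat t^{\,4}=\max\{\|e_1\|_1^2+\|e_2\|_2^2,\,3A_3\}/\gamma$ (though only by a factor at most $2$), so your computation does not literally produce $t_{i,\mu}<\widehat t$ with the $\widehat t$ of \eqref{tem1012-17}. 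The paper obtains its bound before invoking criticality, from the upper estimate $I(t_1e_1,t_2e_2,t_3e_3)\le\sum_{i=1}^3\bigl(\tfrac12\max\{\|e_1\|_1^2+\|e_2\|_2^2,3A_3\}\,t_i^2-\tfrac{\gamma}{6}t_i^6\bigr)$ and the location of the maximizers of the one-variable summands. This is a constant-level discrepancy rather than a conceptual gap: the only property used later (in Proposition \ref{tem1012-19}, to kill the cross terms as $\mu\to\infty$) is that the bound on $t_{i,\mu}$ is independent of $\mu$, which your constant is; so your proof is acceptable after either replacing $\widehat t$ by your larger $\mu$-independent constant or redoing the last step along the paper's lines.
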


\begin{proof}
For any $t_1,t_2,t_3>0$, consider
\begin{align*}
 f(t_1,t_2,t_3)&:= I(t_1e_1,t_2e_2,t_3e_3) \\
   & =\frac{1}{2}\sum_{i=1}^3t_i^2\|e_i\|_i^2-\frac{1}{6}\sum_{i=1,j}^3t_i^3t_j^3
   \int_{\mathbb{R}^3}\beta_{ij}|e_i|^{3}|e_j|^{3}.
\end{align*}
By Lemma \ref{tem1012-12} we get that
\begin{align*}
   f(t_1,t_2,t_3)& \leq \frac{1}{2}\sum_{i=1}^2t_i^2\|e_i\|_i^2+\frac{3}{2}A_3t_3^2-\frac{1}{6}\gamma\sum_{i=1}^3t_i^6\\
   & \leq \sum_{i=1}^3\left(\frac{1}{2}\max\left\{\sum_{i=1}^2\|e_i\|_i^2, 3A_3\right\}t_i^2-\frac{1}{6}\gamma t_i^6\right),
\end{align*}
which implies that $f(t_1,t_2,t_3)<0$ when $t_1,t_2,t_3>\widehat{t}$, where $\widehat{t}$ is defined in \eqref{tem1012-17}.
Thus,
\begin{equation*}
\max_{t_1,t_2,t_3>0}f(t_1,t_2,t_3)=\max_{0<t_1,t_2,t_3\leq\widehat{t}}f(t_1,t_2,t_3).
\end{equation*}

Based on the above arguments, we know that the function $f(t_1,t_2,t_3)$ has a global maximum point in $\overline{(\R^+)^3}$, and the global maximum point $t_{i,\mu}\leq \widehat{t}, i=1,2,3$. Then following the proof of Lemma \ref{Energyestimates6}, we get that the global maximum point $t_{i,\mu}>0, i=1,2,3$. Thus, the global maximum point $(t_{1,\mu}, t_{2,\mu}, t_{3,\mu})$ is an interior point in $ (\R^+)^3$,  and so $(t_{1,\mu}, t_{2,\mu}, t_{3,\mu})$ is a critical point of $f(t_1,t_2,t_3)$. Therefore, we get that $(t_{1,\mu}e_1,t_{2,\mu}e_2,t_{3,\mu}e_3)\in \mathcal{M}$.
\end{proof}

\begin{proof}[\bf Proof of Proposition \ref{tem1012-19}]
By Lemma \ref{tem1012-20}, we get that $(t_{1,\mu}e_1,t_{2,\mu}e_2,t_{3,\mu}e_3)\in \mathcal{M}$. Note that $\beta_{13},\beta_{23}<0$. Then we see that
\begin{align}\label{tem1012-22}
  A & \leq I(t_{1,\mu}e_1,t_{2,\mu}e_2,t_{3,\mu}e_3)=I_{12}(t_{1,\mu}e_1,t_{2,\mu}e_2)+I_3(t_{3,\mu}e_3) \nonumber\\
   & \quad -\frac{1}{3}\sum_{i=1}^2t_{i,\mu}^3t_{3,\mu}^3\int_{\mathbb{R}^3}\beta_{i3}|e_i|^{3}|e_3|^{3}\nonumber\\
   & \leq \max_{t_1,t_2>0}I_{12}(t_{1}e_1,t_{2}e_2)+\max_{t_3>0}I_3(t_{3}e_3)
   +\frac{1}{3}\sum_{i=1}^2\widehat{t}^6|\beta_{i3}|\int_{\mathbb{R}^3}|e_i|^{3}|e_3|^{3},
\end{align}
where $\widehat{t}$ is defined in \eqref{tem1012-17}. Letting $\mu\rightarrow \infty$ in \eqref{tem1012-22}, by \eqref{tem1012-2} we have
\begin{equation}\label{tem1012-24}
A \leq \max_{t_1,t_2>0}I_{12}(t_{1}e_1,t_{2}e_2)+\max_{t_3>0}I_3(t_{3}e_3).
\end{equation}
It is easy to see that $\max_{t_3>0}I_3(t_{3}e_3)=A_3$. By \eqref{tab1122-1-4} we have
\begin{equation*}
\max_{t_1,t_2>0}I_{12}(t_{1}e_1,t_{2}e_2)=A_{12}.
\end{equation*}
Combining this with \eqref{tem1012-24} that we know that
\begin{equation*}
A \leq A_{12}+A_3.
\end{equation*}
\end{proof}

\begin{proof}[\bf The proof of Theorem \ref{thm3}-(2)]
Based on Lemma \ref{tem1012-25} and Proposition \ref{tem1012-19}, the proof is same as proof of Theorem \ref{thm3}-(1), and so we omit it.
\end{proof}

\subsection{The case for $N\geq 5$}

In this subsection, we assume that $\beta_{12}>0, \beta_{13}<0, \beta_{23}<0$ and give the proof of Theorem \ref{thm4}. Set $p=\frac{N}{N-2}$. Note that $N\geq 5$, $1<p<2$.
Define
\begin{align*}
\mathcal{N}_{12}:=\Bigg\{(u_1,u_2)\neq \mathbf{0}:
 \sum_{i=1}^2\|u_i\|_i^2=\sum_{i,j=1}^2\int_{\mathbb{R}^N}\beta_{ij}|u_i|^{p}|u_j|^{p} \Bigg \},
\end{align*}
and
\begin{equation*}
\widehat{A}_{12}:=\inf_{\mathbf{u}\in \mathcal{N}_{12}}I_{12}(u_1,u_2),
\end{equation*}
where $\beta_{ii}=1$ and $I_{12}$ is defined in \eqref{1.10-4}. By \cite[Theorem 1.1]{Chen-Zou2015} we get that $\widehat{A}_{12}$ is attained and $\widehat{A}_{12}=I_{12}(\omega^{12}_1,\omega^{12}_2)$. Consider
\begin{align*}
\widetilde{\mathcal{N}}_{12}:=\Bigg\{(u_1,u_2)\neq \mathbf{0}:
 \sum_{i=1}^2\|u_i\|_i^2\leq\sum_{i,j=1}^2\int_{\mathbb{R}^N}\beta_{ij}|u_i|^{p}|u_j|^{p} \Bigg \},
\end{align*}
and
$$
\widetilde{A}_{12}:=\inf_{(u_1,u_2)\in \widetilde{\mathcal{N}}_{12}}\frac{1}{N}\sum_{i=1}^2\|u_i\|_i^2.
$$
Then we have the following lemma
\begin{lemma}
Assume that $N\geq 5$, then $\widetilde{A}_{12}=\widehat{A}_{12}$.
\end{lemma}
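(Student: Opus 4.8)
The plan is to establish the two inequalities $\widetilde{A}_{12}\le\widehat{A}_{12}$ and $\widehat{A}_{12}\le\widetilde{A}_{12}$ separately, by a scaling argument completely parallel to the proof of Lemma~\ref{tem512-1}. For the first inequality I would note that on $\mathcal{N}_{12}$ the defining identity $\sum_{i=1}^2\|u_i\|_i^2=\sum_{i,j=1}^2\int_{\mathbb{R}^N}\beta_{ij}|u_i|^p|u_j|^p$ together with $2p=2^*$ gives
\[
I_{12}(u_1,u_2)=\Big(\tfrac12-\tfrac{1}{2^*}\Big)\sum_{i=1}^2\|u_i\|_i^2=\frac1N\sum_{i=1}^2\|u_i\|_i^2 ;
\]
since $\mathcal{N}_{12}\subseteq\widetilde{\mathcal{N}}_{12}$, the infimum of $\frac1N\sum_{i=1}^2\|u_i\|_i^2$ over the larger set $\widetilde{\mathcal{N}}_{12}$ cannot exceed its infimum over $\mathcal{N}_{12}$, which equals $\widehat{A}_{12}$; hence $\widetilde{A}_{12}\le\widehat{A}_{12}$.

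For the reverse inequality I would fix an arbitrary $(v_1,v_2)\in\widetilde{\mathcal{N}}_{12}$ and set $A:=\sum_{i=1}^2\|v_i\|_i^2$, $B:=\sum_{i,j=1}^2\int_{\mathbb{R}^N}\beta_{ij}|v_i|^p|v_j|^p$. Since $(v_1,v_2)\ne\mathbf0$ and each $\|\cdot\|_i$ is equivalent to the $D^{1,2}$-norm, $A>0$; and since $\beta_{12}>0$ with at least one of $v_1,v_2$ nonzero, $B=\int_{\mathbb{R}^N}|v_1|^{2p}+\int_{\mathbb{R}^N}|v_2|^{2p}+2\beta_{12}\int_{\mathbb{R}^N}|v_1|^p|v_2|^p>0$, while the membership condition gives $A\le B$. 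I would then rescale by $t:=(A/B)^{1/(2p-2)}>0$: because $2p-2>0$ (indeed $1<p<2$ for $N\ge5$) and $A/B\le1$, one has $t\le1$, and $(tv_1,tv_2)$ satisfies $\sum_i\|tv_i\|_i^2=t^2A=t^{2p}B=\sum_{i,j}\int_{\mathbb{R}^N}\beta_{ij}|tv_i|^p|tv_j|^p$, so $(tv_1,tv_2)\in\mathcal{N}_{12}$. Therefore
\[
\widehat{A}_{12}\le I_{12}(tv_1,tv_2)=\frac1N t^2A\le\frac1N A=\frac1N\sum_{i=1}^2\|v_i\|_i^2 ,
\]
and taking the infimum over $(v_1,v_2)\in\widetilde{\mathcal{N}}_{12}$ yields $\widehat{A}_{12}\le\widetilde{A}_{12}$. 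Combining the two bounds gives $\widetilde{A}_{12}=\widehat{A}_{12}$.

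I do not anticipate any genuine difficulty: the whole argument is the standard "project onto the Nehari manifold by radial scaling" trick, identical in spirit to Lemma~\ref{tem512-1}. The only details needing a word of care are checking that $B>0$ so that the scaling factor $t$ is well defined, and tracking the monotonicity — i.e. that the exponent $2p-2$ is positive, so that the constraint $A\le B$ indeed forces $t\le1$ and hence a decrease of $\frac1N\sum_i\|u_i\|_i^2$ under the rescaling.
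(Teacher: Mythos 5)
Your proof is correct: the trivial inclusion $\mathcal{N}_{12}\subseteq\widetilde{\mathcal{N}}_{12}$ (with $I_{12}=\frac1N\sum_i\|u_i\|_i^2$ on $\mathcal{N}_{12}$) gives one inequality, and rescaling an arbitrary element of $\widetilde{\mathcal{N}}_{12}$ by $t=(A/B)^{1/(2p-2)}\le1$ onto $\mathcal{N}_{12}$ gives the other. This is exactly the argument the paper intends, since its proof is stated to be the same scaling argument as in Lemma \ref{tem512-1} adapted to the two-component constraint.
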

\begin{proof}
The proof is similar to that of Lemma \ref{tem512-1}, and so we omit it.
\end{proof}

\begin{lemma}\label{tem1013-25}
Assume that $\beta_{12}>0, \beta_{13}<0, \beta_{23}<0$, then $\widetilde{A}\geq A_3+\widehat{A}_{12}$, where $\widetilde{A}$ is defined in \eqref{tem512-5}.
\end{lemma}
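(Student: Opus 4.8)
The plan is to show that on the constraint set $\mathcal{N}_{12,3}$ the functional $I$ collapses to $\frac1N\sum_{i=1}^3\|u_i\|_i^2$, and then to decouple the $u_3$-block from the $(u_1,u_2)$-block using the sign conditions $\beta_{13}<0$, $\beta_{23}<0$, which push $u_3$ into the relaxed constraint $\widetilde{\mathcal{M}}_3$ and $(u_1,u_2)$ into $\widetilde{\mathcal{N}}_{12}$. On those relaxed sets the infima are exactly $A_3$ (Lemma \ref{tem512-1} with $i=3$) and $\widehat{A}_{12}$ (the lemma just proved above, for $N\geq5$), so the two blocks are each bounded below by the desired quantities.

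First I would fix an arbitrary $\mathbf{u}=(u_1,u_2,u_3)\in\mathcal{N}_{12,3}$ and substitute the two defining identities of $\mathcal{N}_{12,3}$ into $I(\mathbf{u})$. Adding the identity $\|u_3\|_3^2=\sum_{i=1}^3\int_{\mathbb{R}^N}\beta_{i3}|u_i|^{\frac{2^*}{2}}|u_3|^{\frac{2^*}{2}}$ to the identity $\sum_{i=1}^2\|u_i\|_i^2=\sum_{i,j=1}^2\int_{\mathbb{R}^N}\beta_{ij}|u_i|^{\frac{2^*}{2}}|u_j|^{\frac{2^*}{2}}+\sum_{i=1}^2\int_{\mathbb{R}^N}\beta_{i3}|u_i|^{\frac{2^*}{2}}|u_3|^{\frac{2^*}{2}}$ gives, using $\beta_{ii}=1$ and $\beta_{ij}=\beta_{ji}$, the relation $\sum_{i=1}^3\|u_i\|_i^2=\sum_{i=1}^3\int_{\mathbb{R}^N}|u_i|^{2^*}+\sum_{i\neq j}\int_{\mathbb{R}^N}\beta_{ij}|u_i|^{\frac{2^*}{2}}|u_j|^{\frac{2^*}{2}}$. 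Plugging this into the expression for $I$ yields $I(\mathbf{u})=\big(\tfrac12-\tfrac1{2^*}\big)\sum_{i=1}^3\|u_i\|_i^2=\tfrac1N\sum_{i=1}^3\|u_i\|_i^2$, so that $\widetilde{A}=\inf_{\mathbf{u}\in\mathcal{N}_{12,3}}\tfrac1N\sum_{i=1}^3\|u_i\|_i^2$, and it remains to bound $\tfrac1N\|u_3\|_3^2$ and $\tfrac1N(\|u_1\|_1^2+\|u_2\|_2^2)$ from below.

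For the $u_3$-block, the first defining identity reads $\|u_3\|_3^2=\int_{\mathbb{R}^N}|u_3|^{2^*}+\beta_{13}\int_{\mathbb{R}^N}|u_1|^{\frac{2^*}{2}}|u_3|^{\frac{2^*}{2}}+\beta_{23}\int_{\mathbb{R}^N}|u_2|^{\frac{2^*}{2}}|u_3|^{\frac{2^*}{2}}$, and since $\beta_{13},\beta_{23}<0$ this forces $\|u_3\|_3^2\leq\int_{\mathbb{R}^N}|u_3|^{2^*}$, i.e. $u_3\in\widetilde{\mathcal{M}}_3$ (note $u_3\neq0$ by definition of $\mathcal{N}_{12,3}$). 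Lemma \ref{tem512-1} with $i=3$ then gives $\tfrac1N\|u_3\|_3^2\geq\widetilde{A}_3=A_3$. Similarly, the second identity together with $\beta_{13},\beta_{23}<0$ forces $\sum_{i=1}^2\|u_i\|_i^2\leq\sum_{i,j=1}^2\int_{\mathbb{R}^N}\beta_{ij}|u_i|^{\frac{2^*}{2}}|u_j|^{\frac{2^*}{2}}$, i.e. $(u_1,u_2)\in\widetilde{\mathcal{N}}_{12}$ (with $(u_1,u_2)\neq\mathbf{0}$), so the preceding lemma ($N\geq5$) gives $\tfrac1N\sum_{i=1}^2\|u_i\|_i^2\geq\widetilde{A}_{12}=\widehat{A}_{12}$.

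Adding the two bounds, $I(\mathbf{u})=\tfrac1N\|u_3\|_3^2+\tfrac1N\sum_{i=1}^2\|u_i\|_i^2\geq A_3+\widehat{A}_{12}$ for every $\mathbf{u}\in\mathcal{N}_{12,3}$, and taking the infimum yields $\widetilde{A}\geq A_3+\widehat{A}_{12}$. I do not expect a genuine obstacle here; the only points demanding care are the algebraic bookkeeping in the energy reduction and making sure the sign conditions on $\beta_{13},\beta_{23}$ are used in the direction that places the reduced pair and singleton in the \emph{relaxed} constraint sets $\widetilde{\mathcal{N}}_{12}$ and $\widetilde{\mathcal{M}}_3$ — on which the relaxed infima coincide with $\widehat{A}_{12}$ and $A_3$ — rather than requiring them to sit on the genuine Nehari manifolds, where the corresponding inequality would fail.
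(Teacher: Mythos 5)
Your argument is correct and is essentially the proof the paper intends: the paper omits it by referring to Lemma \ref{tem106-2}, whose mechanism is exactly your decoupling — on the constraint set $I$ reduces to $\frac{1}{N}\sum_i\|u_i\|_i^2$, the signs $\beta_{13},\beta_{23}<0$ push $u_3$ into $\widetilde{\mathcal{M}}_3$ and $(u_1,u_2)$ into $\widetilde{\mathcal{N}}_{12}$, and the relaxed levels equal $A_3$ (Lemma \ref{tem512-1}) and $\widehat{A}_{12}$ (the preceding lemma). Your version just writes out explicitly the bookkeeping that the paper leaves implicit, and it correctly uses the relaxed infima of the quadratic quantity $\frac{1}{N}\sum\|u_i\|_i^2$, which is exactly how they are defined in the $N\geq 5$ setting.
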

\begin{proof}
The proof is same as Lemma \ref{tem106-2}, and so we omit it.
\end{proof}

\begin{prop}\label{tem1013-24}
Assume that $\beta_{12}>0, \beta_{13}<0, \beta_{23}<0$, then $ \widetilde{A}\leq A_3+\widehat{A}_{12}$, where $\widetilde{A}$ is defined in \eqref{tem512-5}.
\end{prop}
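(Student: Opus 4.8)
The plan is to imitate the competitor construction behind Lemma~\ref{tem106-3} and Proposition~\ref{tem1012-19}, producing for each large $\mu$ an element of $\mathcal{N}_{12,3}$ whose energy tends to $\widehat{A}_{12}+A_3$. The genuinely new difficulty compared with $N=3,4$ is that here $1<p<2$, so $I$ is \emph{unbounded} along the scaling rays $t\mapsto(\sqrt{t}\,e_1,\sqrt{t}\,e_2,\sqrt{s}\,e_3)$ (this is exactly why $\mathcal{M}$ fails to be a natural constraint for $N\ge5$); hence one cannot reach $\mathcal{N}_{12,3}$ by a maximization argument and must solve the projection equations directly.

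First I would fix a minimizer $(\omega^{12}_1,\omega^{12}_2)\in\mathcal{N}_{12}$ of $\widehat{A}_{12}$ (it exists by \cite[Theorem~1.1]{Chen-Zou2015}), take the family $z^3_\mu$ from \eqref{tem512-3}, and set $e_1:=\omega^{12}_1$, $e_2:=\omega^{12}_2$, $e_3:=z^3_\mu$. Since $z^3_\mu\rightharpoonup 0$ in $D^{1,2}(\R^N)$ with $|z^3_\mu|^{p}$ bounded in $L^2(\R^N)$ and $z^3_\mu\to0$ a.e., one gets $\rho_i(\mu):=\int_{\R^N}|e_i|^{p}|e_3|^{p}\to0$ as $\mu\to\infty$ for $i=1,2$; write $R_\mu:=\beta_{13}\rho_1(\mu)+\beta_{23}\rho_2(\mu)\to0$. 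Because $z^3_\mu$ solves \eqref{scalarequation} we have $\|e_3\|_3^2=\int_{\R^N}|e_3|^{2^*}=NA_3$, and because $(e_1,e_2)\in\mathcal{N}_{12}$ we have $\|e_1\|_1^2+\|e_2\|_2^2=\sum_{i,j=1}^2\beta_{ij}\int_{\R^N}|e_i|^{p}|e_j|^{p}=N\widehat{A}_{12}$. Substituting $u_1=\sqrt{t}\,e_1$, $u_2=\sqrt{t}\,e_2$, $u_3=\sqrt{s}\,e_3$ into the two defining identities of $\mathcal{N}_{12,3}$ then shows that $\mathbf{u}_{t,s}:=(\sqrt{t}\,e_1,\sqrt{t}\,e_2,\sqrt{s}\,e_3)\in\mathcal{N}_{12,3}$ if and only if $(t,s)\in(\R^+)^2$ solves
\begin{equation}\label{plan-sys-You}
t-t^{p}=\frac{t^{p/2}s^{p/2}}{N\widehat{A}_{12}}\,R_\mu,\qquad s-s^{p}=\frac{t^{p/2}s^{p/2}}{NA_3}\,R_\mu .
\end{equation}

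Next I would solve \eqref{plan-sys-You} for large $\mu$ by a perturbation argument. The map $\Psi\big((t,s),\tau\big):=\big(t-t^{p}-\tfrac{t^{p/2}s^{p/2}}{N\widehat{A}_{12}}\tau,\ s-s^{p}-\tfrac{t^{p/2}s^{p/2}}{NA_3}\tau\big)$ is $C^{1}$ near $\big((1,1),0\big)$, vanishes there, and satisfies $\partial_{(t,s)}\Psi\big((1,1),0\big)=(1-p)\,\mathrm{Id}$, which is invertible since $p\neq1$; the implicit function theorem then yields a branch $(t(\tau),s(\tau))\to(1,1)$ as $\tau\to0$ with $\Psi\big((t(\tau),s(\tau)),\tau\big)=0$ (alternatively, Miranda's theorem on a small box around $(1,1)$). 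Setting $(t_\mu,s_\mu):=(t(R_\mu),s(R_\mu))$, positive for $\mu$ large, gives $\mathbf{u}_\mu:=\mathbf{u}_{t_\mu,s_\mu}\in\mathcal{N}_{12,3}$ with $t_\mu,s_\mu\to1$. Finally, adding the two defining identities of $\mathcal{N}_{12,3}$ and using $\beta_{i3}=\beta_{3i}$, $\beta_{ii}=1$ gives $\sum_{i=1}^3\|u_i\|_i^2=\sum_{i,j=1}^3\beta_{ij}\int_{\R^N}|u_i|^{p}|u_j|^{p}$ on $\mathcal{N}_{12,3}$, hence $I(\mathbf{u})=\tfrac1N\sum_{i=1}^3\|u_i\|_i^2$ there, so
\begin{equation*}
\widetilde{A}\le I(\mathbf{u}_\mu)=\frac1N\big(t_\mu\|e_1\|_1^2+t_\mu\|e_2\|_2^2+s_\mu\|e_3\|_3^2\big)=t_\mu\widehat{A}_{12}+s_\mu A_3\longrightarrow\widehat{A}_{12}+A_3
\end{equation*}
as $\mu\to\infty$, which is the assertion.

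\textbf{Main obstacle.} Everything is routine once $\rho_i(\mu)\to0$ is in hand, \emph{except} the construction of the rescaled competitor on $\mathcal{N}_{12,3}$: because the maximization trick used for $N=3,4$ is unavailable, one must genuinely solve the two-dimensional nonlinear system \eqref{plan-sys-You} and control that its solution stays near $(1,1)$, equivalently that the projected energies $t_\mu\widehat{A}_{12}+s_\mu A_3$ converge to $\widehat{A}_{12}+A_3$. (As in Lemma~\ref{tem106-3}, combined with Lemma~\ref{tem1013-25} this equality $\widetilde A=\widehat A_{12}+A_3$ forces any minimizer with $(u_1,u_2)\neq\mathbf0$, $u_3\neq0$ to decouple, contradicting $\beta_{i3}<0$, so $\widetilde A$ is not attained — but that is the content of the subsequent argument, not of this proposition.)
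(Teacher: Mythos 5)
Your construction is exactly the one the paper uses: fix a minimizer $(\omega^{12}_1,\omega^{12}_2)$ of $\widehat{A}_{12}$, pair it with the rescaled bubble $z^3_\mu$, show the cross terms $\int_{\R^N}|\omega^{12}_i|^p|z^3_\mu|^p$ vanish as $\mu\to\infty$, project the triple onto $\mathcal{N}_{12,3}$ by solving a two-dimensional nonlinear system, and pass to the limit in the energy; so the overall route coincides with Lemma \ref{tem1013-13} plus the paper's proof of Proposition \ref{tem1013-24}. The only genuine difference is how the projection system is solved. The paper reduces it to one variable ($s=h(t)$ for $t>1$), finds a root of an auxiliary function $g$ by a sign change (using $B_\mu^2-N^2A_3A_{12}<0$ for $\mu$ large), and then proves a posteriori, by a contradiction/boundedness argument, that $t_\mu,s_\mu\to1$. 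You instead observe that in your parametrization the system is a small perturbation (of size $R_\mu\to0$) of $t-t^p=0$, $s-s^p=0$, whose linearization at $(1,1)$ is $(1-p)\,\mathrm{Id}$, invertible precisely because $N\ge5$ gives $p\neq1$, and invoke the implicit function theorem to get a branch $(t_\mu,s_\mu)\to(1,1)$ directly. Your version is a bit cleaner: it delivers existence and the convergence $t_\mu,s_\mu\to1$ in one stroke and, by using the identity $I=\frac1N\sum_i\|u_i\|_i^2$ on $\mathcal{N}_{12,3}$, avoids estimating the negative coupling term separately; the paper's argument is more elementary (intermediate value theorem) and makes the mechanism $t_\mu>1$ explicit, but needs the extra step ruling out $t_\mu\to\infty$. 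Both are correct and yield $\widetilde{A}\le t_\mu\widehat{A}_{12}+s_\mu A_3\to A_3+\widehat{A}_{12}$.
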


\medbreak

Next, we present the proof of Proposition \ref{tem1013-24}. Note that $\widehat{A}_{12}=I_{12}(\omega^{12}_1,\omega^{12}_2)$. Since $z^3_\mu\rightharpoonup 0$ in $D^{12}(\mathbb{R}^N)$ as $\mu\rightarrow \infty$, then $(z^3_\mu)^p\rightharpoonup 0$ in $L^{2}(\mathbb{R}^N)$ as $\mu\rightarrow \infty$. Thus
\begin{equation}\label{tem1013-9}
\int_{\mathbb{R}^N}|z^3_\mu|^{p}|\omega^{12}_i|^{p}\rightarrow 0 \text{ as } \mu\rightarrow \infty, ~\text{ for any } i=1,2.
\end{equation}
Set $z_1:= \omega^{12}_1, z_2:= \omega^{12}_2, z_3:= z^3_\mu$.
Then we have
\begin{equation}\label{tem1013-10}
\int_{\mathbb{R}^N}\left(|\nabla z_3|^{2}-\frac{\lambda_{3}}{|x|^2} z_3^{2}\right)=\int_{\mathbb{R}^N}|z_3|^{2p}=NA_3,
\end{equation}
and
\begin{equation}\label{tem1013-12}
\int_{\mathbb{R}^N}|z_i|^{p}|z_3|^{p}\rightarrow 0 \text{ as } \mu\rightarrow \infty, ~\text{ for any } i=1,2.
\end{equation}

\begin{lemma}\label{tem1013-13}
Suppose that $\beta_{12}>0, \beta_{13}<0, \beta_{23}<0$, then there exists $(t_{\mu},s_{\mu})\in (\R^+)^2$ such that $(t_{\mu}z_1,t_{\mu}z_2,s_{\mu}z_3)\in \mathcal{N}_{12,3}$ when $\mu$ is large enough. Moreover,
\begin{equation}\label{tem1012-14}
\lim_{\mu\rightarrow\infty}t_{\mu}=\lim_{\mu\rightarrow\infty}s_{\mu}=1.
\end{equation}
\end{lemma}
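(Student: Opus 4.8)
The plan is to reduce membership in $\mathcal{N}_{12,3}$ to a $2\times2$ algebraic system for $(t,s)$ and then solve that system perturbatively for $\mu$ large. First I would substitute the ansatz $(u_1,u_2,u_3)=(tz_1,tz_2,sz_3)$ into the two identities defining $\mathcal{N}_{12,3}$. Setting
\[
K:=\|z_1\|_1^2+\|z_2\|_2^2,\qquad \varepsilon_\mu:=\beta_{13}\int_{\mathbb{R}^N}|z_1|^p|z_3|^p+\beta_{23}\int_{\mathbb{R}^N}|z_2|^p|z_3|^p,
\]
and using that $(z_1,z_2)=(\omega^{12}_1,\omega^{12}_2)\in\mathcal{N}_{12}$ gives $K=|z_1|_{2p}^{2p}+|z_2|_{2p}^{2p}+2\beta_{12}\int_{\mathbb{R}^N}|z_1|^p|z_2|^p$, the second defining identity of $\mathcal{N}_{12,3}$ becomes $t^2K=t^{2p}K+t^ps^p\varepsilon_\mu$; by \eqref{tem1013-10}, i.e.\ $\|z_3\|_3^2=|z_3|_{2p}^{2p}=NA_3$, the first becomes $s^2NA_3=s^{2p}NA_3+t^ps^p\varepsilon_\mu$. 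For $t,s>0$, dividing these by $t^pK>0$ and $s^pNA_3>0$ respectively shows that $(tz_1,tz_2,sz_3)\in\mathcal{N}_{12,3}$ is equivalent to $\Phi(t,s;\varepsilon_\mu)=0$, where
\[
\Phi(t,s;\varepsilon):=\Bigl(t^{2-p}-t^p-\tfrac{\varepsilon}{K}s^p,\ s^{2-p}-s^p-\tfrac{\varepsilon}{NA_3}t^p\Bigr),\qquad (t,s)\in(0,\infty)^2.
\]

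Next, $\Phi$ is smooth on $(0,\infty)^2\times\mathbb{R}$, and the only $\mu$-dependence of the reduced system is through the scalar $\varepsilon_\mu$, which tends to $0$ as $\mu\to\infty$ by \eqref{tem1013-12}. At $\varepsilon=0$ the equations are $t^{2-p}=t^p$ and $s^{2-p}=s^p$, which for $t,s>0$ force $t=s=1$ since $2-p\neq p$ when $1<p<2$. A direct differentiation gives $D_{(t,s)}\Phi(1,1;0)=(2-2p)\,\mathrm{Id}_2$, which is invertible because $p\neq1$. Hence, by the implicit function theorem there are a neighbourhood $U\ni 0$ and a $C^1$ curve $\varepsilon\mapsto(t(\varepsilon),s(\varepsilon))$ with $(t(0),s(0))=(1,1)$ and $\Phi(t(\varepsilon),s(\varepsilon);\varepsilon)=0$ on $U$; one could equally run a Banach fixed-point argument, the linearisation at $(1,1;0)$ being a nonzero multiple of the identity.

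Finally, for $\mu$ large one has $\varepsilon_\mu\in U$; set $t_\mu:=t(\varepsilon_\mu)$ and $s_\mu:=s(\varepsilon_\mu)$. Then $t_\mu\to1$ and $s_\mu\to1$ as $\mu\to\infty$, so $t_\mu,s_\mu>0$ for $\mu$ large; undoing the divisions shows that the two defining identities of $\mathcal{N}_{12,3}$ hold at $(t_\mu z_1,t_\mu z_2,s_\mu z_3)$, and since $z_1\not\equiv0$ and $z_3\not\equiv0$ we also have $(t_\mu z_1,t_\mu z_2)\neq\mathbf{0}$ and $s_\mu z_3\neq0$, so $(t_\mu z_1,t_\mu z_2,s_\mu z_3)\in\mathcal{N}_{12,3}$, which is the claim. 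The one step that needs care is the reduction itself: one must observe that $\|z_3\|_3^2=|z_3|_{2p}^{2p}=NA_3$ is independent of $\mu$ by the scale invariance of $z^3_\mu$, so that after dividing through, all $\mu$-dependence collapses into the single vanishing parameter $\varepsilon_\mu$; once this is seen, the perturbation argument is routine and the nondegeneracy of the limiting system is automatic from $p\neq1$.
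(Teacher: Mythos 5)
Your proposal is correct, and the reduction step is identical to the paper's: substituting $(tz_1,tz_2,sz_3)$ into the two defining identities of $\mathcal{N}_{12,3}$ and using $(z_1,z_2)\in\mathcal{N}_{12}$ together with $\|z_3\|_3^2=|z_3|_{2p}^{2p}=NA_3$, one arrives at exactly the paper's $2\times2$ algebraic system (your $K$ is the paper's $N\widehat A_{12}$ and your $\varepsilon_\mu$ is $-B_\mu$). Where you genuinely diverge is in how the reduced system is solved. The paper works with the sign structure explicitly: it solves the first equation for $s=h(t)$, which forces the restriction $t>1$ and uses $B_\mu>0$ (i.e.\ $\beta_{13},\beta_{23}<0$), then applies the intermediate value theorem to a scalar function $g(t)$ (using $B_\mu^2<N^2A_3A_{12}$ for large $\mu$), and finally needs a separate contradiction argument to show $t_\mu,s_\mu$ are uniformly bounded before passing to the limit in the system to conclude $t_\mu,s_\mu\to1$. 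You instead observe that all $\mu$-dependence collapses into the single parameter $\varepsilon_\mu\to0$ and apply the implicit function theorem at $(t,s,\varepsilon)=(1,1,0)$, where the linearization is $(2-2p)\mathrm{Id}_2$, invertible since $1<p<2$ for $N\geq5$. This buys you existence, positivity, and the convergence $t_\mu,s_\mu\to1$ (indeed $C^1$ dependence on $\varepsilon_\mu$) in one stroke, with no need for the restriction $t>1$, the boundedness argument, or even the signs of $\beta_{13},\beta_{23}$ beyond $\varepsilon_\mu\to0$; the paper's route is more elementary (no implicit function theorem) and produces a solution without requiring $(t,s)$ to lie near $(1,1)$ a priori, but at the cost of the extra bookkeeping. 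Both arguments are complete; yours is arguably the cleaner one for this lemma.
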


\begin{proof}
For simplicity, we denote
\begin{equation*}
B_{\mu}:=|\beta_{13}|\int_{\mathbb{R}^N}|z_1|^{p}|z_3|^{p}+|\beta_{23}|\int_{\mathbb{R}^N}|z_2|^{p}|z_3|^{p}.
\end{equation*}
It is easy to see that $B_{\mu}>0$. By \eqref{tem1013-12} we have
\begin{equation}\label{tem1013-17}
\lim_{\mu\rightarrow\infty}B_{\mu}=0.
\end{equation}
It is standard to see that $(tz_1, tz_2, sz_3) \in \mathcal{N}_{12,3}$ for $t,s>0$ is equivalent to $t,s>0$ satisfying
\begin{equation}\label{tem1013-21}
NA_{12}t^2=NA_{12}t^{2p}-B_{\mu}t^ps^p, \quad NA_{3}s^2=NA_{3}s^{2p}-B_{\mu}t^ps^p,
\end{equation}
that is,
\begin{equation}\label{tem1013-16}
NA_{12}t^{2-p}=NA_{12}t^{p}-B_{\mu}s^p, \quad NA_{3}s^{2-p}=NA_{3}s^{p}-B_{\mu}t^p.
\end{equation}
Note that the first equation in \eqref{tem1013-16} is equivalent to
\begin{equation}
s=h(t):=\left(\frac{NA_{12}t^{p}-NA_{12}t^{2-p}}{B_{\mu}}\right)^{\frac{1}{p}}>0, \quad
t>1.
\end{equation}
Thus, it just needs to prove that
\begin{equation*}
g(t):= \left(\frac{NA_{12}-NA_{12}t^{2-2^*}}{B_{\mu}}\right)^{\frac{N-4}{N}}+
t^{2^*-2}\left(\frac{B_{\mu}^2-N^2A_3A_{12}+N^2A_3A_{12}t^{2-2^*}}{B_{\mu}}\right),
\end{equation*}
has a solution $t>1$. It is easy to see that $\lim_{t\searrow 1} g(t)=B_{\mu}>0$. We deduce from \eqref{tem1013-17} that
$B_{\mu}^2-N^2A_3A_{12}<0$ for $\mu$ large enough, then we get that $\lim_{t\rightarrow+\infty} g(t)=-\infty$. Hence, there exists $t_\mu>1$ such that $g(t_\mu)=0$. Set $s_\mu=h(t_\mu)$, then we have $s_\mu>0$. Thus, $(t_{\mu}z_1,t_{\mu}z_2,s_{\mu}z_3)\in \mathcal{N}_{12,3}$ for $\mu$ large enough.

Next, we claim that $t_{\mu}, s_{\mu}$ are uniformly bounded. By contradiction, up to a subsequence, we assume that $t_{\mu}\rightarrow +\infty$ as $\mu\rightarrow +\infty$. By \eqref{tem1013-21} we have
\begin{equation*}
NA_{12}t_\mu^{2p}-NA_{12}t_\mu^2=NA_{3}s_\mu^{2p}-NA_{3}s_\mu^2,
\end{equation*}
which yields that $s_{\mu}\rightarrow +\infty$ as $\mu\rightarrow +\infty$. It follows from  \eqref{tem1013-21} that
\begin{equation*}
\lim_{\mu\rightarrow\infty}B_{\mu}^2=\lim_{\mu\rightarrow\infty}\frac{N^2A_{12}A_{3}(t_\mu^{2p}-t_\mu^2)(s_\mu^{2p}-s_\mu^2)}{t_\mu^{2p}s_\mu^{2p}}
=N^2A_{12}A_{3}>0,
\end{equation*}
which contradicts with \eqref{tem1013-17}. Thus, $t_{\mu}, s_{\mu}$ are uniformly bounded. Passing to a subsequence, by \eqref{tem1013-17} and \eqref{tem1013-21} we have
$\lim_{\mu\rightarrow\infty}t_{\mu}=\lim_{\mu\rightarrow\infty}s_{\mu}=1$.

\end{proof}

\begin{proof}[\bf Proof of Proposition \ref{tem1013-24}]
By Lemma \ref{tem1013-13} we know that $(t_{\mu}z_1,t_{\mu}z_2,s_{\mu}z_3)\in \mathcal{N}_{12,3}$, thus
\begin{align}\label{tem1013-22}
  \widetilde{A} & \leq I(t_{\mu}z_1,t_{\mu}z_2,s_{\mu}z_3)=I_{12}(t_{\mu}z_1,t_{\mu}z_2)+I_3(s_{\mu}z_3)\nonumber\\
   & \quad -\frac{1}{p}\sum_{i=1}^2t_{\mu}^ps_{\mu}^p\int_{\mathbb{R}^N}\beta_{i3}|z_i|^{p}|z_3|^{p}.
\end{align}
Letting $\mu\rightarrow\infty$ in \eqref{tem1013-22}, by \eqref{tem1013-12} and Lemma \ref{tem1013-13} we have $\widetilde{A} \leq A_3+\widehat{A}_{12}$.
\end{proof}

\begin{proof}[\bf The proof of Theorem \ref{thm4}]
Based on Lemma \ref{tem1013-25} and Proposition \ref{tem1013-24}, the proof is same as proof of Theorem \ref{thm3}-(1), and so we omit it.
\end{proof}

\noindent{\bf Acknowledgements}\,\, The authors join to thank \textit{Hugo Tavares} for several fruitful discussions and valuable comments. Song You was supported by the Scientific Research Foundation of Chongqing Normal University (Grant No. 22XLB031), the Natural Science Foundation of Chongqing (Grant No. CSTB2023NSCQ-MSX0467), the Science and Technology Research Program of Chongqing Municipal Education Commission (Grant No. KJQN202300520) and NSFC (No.12301130). Jianjun Zhang was supported by NSFC (No.12371109,11871123).

\vskip0.1in
\noindent{\bf Data Availability Statement}\,\, Data is contained within the article.

\vskip0.1in
\noindent{\bf Conflicts of Interest}\,\, On behalf of all authors, the corresponding author states that there is no conflict of interest.


\begin{thebibliography}{1}

\bibitem{Abdellaoui2009} B. Abdellaoui, V. Felli, I. Peral, Some remarks on systems of elliptic equations doubly critical in the whole $\mathbb{R}^{N}$, {\it Calc. Var. Partial Differential Equations}, \textbf{34} (2009), 97--137.

\bibitem{Ambrosetti2007} A. Ambrosetti, E. Colorado, Standing waves of some coupled nonlinear Schr\"{o}dinger equations, {\it J. Lond. Math. Soc.}, {\bf75}(1) (2007) 67--82.

\bibitem{Bartsch-Dancer-Wang2010} T. Bartsch, N. Dancer, Z. Q. Wang, A Liouville theorem, a priori bounds, and bifurcating branches of positive solutions for a nonlinear elliptic system, {\it Calc. Var. Partial Differential Equations}, {\bf37} (2010), 345-361.

\bibitem{BSWang2016} J. Byeon, Y. Sato, Z. Q. Wang, Pattern formation via mixed attractive and repulsive interactions for nonlinear Schr\"{o}dinger systems, {\it J. Math. Pures Appl.}, {\bf106} (2016), 477--511.

\bibitem{BLWang2019} J. Byeon, Y. Lee, Z. Q. Wang, Formation of radial patterns via mixed attractive and repulsive interactions for Schr\"{o}dinger systems, {\it SIAM J. Math. Anal.}, {\bf51} (2019), 1514--1542.

\bibitem{Zou2012} Z. J. Chen, W. M. Zou, Positive least energy solutions and phase separation for coupled Schr\"{o}dinger equations with critical exponent, {\it Arch. Ration. Mech. Anal.}, \textbf{205} (2012), 515--551.

\bibitem{Zou2015} Z. J. Chen, W. M. Zou, Positive least energy solutions and phase separation for coupled Schr\"{o}dinger equations with critical exponent: higher dimensional case, {\it Calc. Var. Partial Differential Equations}, \textbf{52} (2015), 423--467

\bibitem{Chen-Zou2015}Z. J. Chen, W. M. Zou, Existence and symmetry of positive ground states for a doubly critical Schr\"{o}dinger system, {\it Trans. Amer. Math. Soc.}, \textbf{367} (2015), 3599--3646.

\bibitem{ClappSzulkin} M. Clapp, A. Szulkin, A simple variational approach to weakly coupled competitive elliptic systems, {\it NoDEA Nonlinear Differ. Equ. Appl.} , {\bf26} (2019), no. 4, Paper No. 26, 21 pp.

\bibitem{Correia-Oliveria-Tavares2016}  S.  Correia, F. Oliveira, H. Tavares, Semitrivial vs. fully nontrivial ground states in cooperative cubic Schr\"{o}dinger systems with $ d\geq $ 3 equations, {\it J. Funct. Anal.}, {\bf271} (2016), 2247--2273.	

\bibitem{Guo-Zou2016} Z. Y. Guo, W. M. Zou, On a class of coupled Schr\"{o}dinger systems with critical Sobolev exponent growth, {\it Math. Methods Appl. Sci.}, {\bf39} (2016), 1730--1746.

\bibitem{Guo-Luo-Zou2021} Z. Y. Guo, S. P. Luo, W. M. Zou, On a critical Schr\"{o}dinger system involving Hardy terms, {\it J. Fixed Point Theory Appl.}, {\bf23} (2021), Article number: 53.

\bibitem{Lin-Wei2005} T. C. Lin, J. C. Wei, Ground State of $N$ Coupled Nonlinear Schr\"{o}dinger Equations in $\R^n$, $n\leq$3, {\it Commun. Math. Phys.}, {\bf255} (2005), 629--653.

\bibitem{LiuYouZou2023} T. H. Liu, S. You, W. M. Zou, Least energy positive solutions for $d$-coupled Schr\"{o}dinger systems with critical exponent in dimension three, {\it J. Differential Equations}, {\bf367} (2023), 40--78.

\bibitem{MaiaMontefuscoPellacci} L. Maia, E Montefusco, B. Pellacci, Positive solutions for a weakly coupled nonlinear Schr\"odinger system, {\it J. Differential Equations}, {\bf229}(2006), no. 2, 743--767.

\bibitem{Mandel2015} R. Mandel, Minimal energy solutions for cooperative nonlinear Schr\"{o}dinger systems, {\it NoDEA Nonlinear Differ. Equ. Appl.}, {\bf22}(2015), no. 2, 239--262

\bibitem{TO2016} F. Oliveira, H. Tavares, Ground states for a nonlinear Schr\"{o}dinger system with sublinear coupling terms, {\it Adv. Nonlinear Stud.}, {\bf16}(2016), no. 2 381--387.

\bibitem{Peng-Wang2019} S. J. Peng, Q. F. Wang, Z. Q. Wang, On coupled nonlinear Schr\"odinger systems with mixed couplings, {\it Trans. Amer. Math. Soc.}, {\bf371}(2019), no. 11, 7559--7583.

\bibitem{Sato-Wang2015} Y. Sato, Z.-Q. Wang, Least energy solutions for nonlinear Schr\"odinger systems with mixed attractive and repulsive couplings, {\it Adv. Nonlinear Stud.}, {\bf15}(2015), no. 1, 1--22.

\bibitem{Sirakov2007} B. Sirakov, Least energy solitary waves for a system of nonlinear Schr\"{o}dinger equations in $\mathbb{R}^{n}$, {\it Comm. Math. Phys.}, {\bf271}(2007), no. 1, 199--221.

\bibitem{Soave-Tavares2016} N. Soave, H. Tavares, New existence and symmetry results for least energy positive solutions of Schr\"{o}dinger systems with mixed competition and cooperation terms, {\it J. Differential. Equations.}, {\bf261}(2016), no. 1 505--537.

\bibitem{Soave2015} N. Soave, On existence and phase separation of solitary waves for nonlinear Schr\"{o}dinger systems modelling simultaneous cooperation and competition, {\it Calc. Var. Partial Differential Equations}, {\bf53}(2015), no. 3, 689--718.

\bibitem{TavaresYou2019} H. Tavares, S. You, Existence of least energy positive solutions to Schr\"{o}dinger systems with mixed competition and cooperation terms: the critical case, {\it Calc. Var. Partial Differential Equations}, {\bf59}(2020), Article number: 26.

\bibitem{Hugo-You-Zou2022} H. Tavares, S. You, W. M. Zou, Least energy positive solutions of critical Schr{\"o}dinger systems with mixed competition and cooperation terms: the higher dimensional case, {\it J. Funct. Anal.}, {\bf283}(2022), no. 2, Article number: 109497.

\bibitem{Timmermans1998}E. Timmermans, Phase separation of Bose-Einstein condensates, {\it Phys. Rev. Lett.}, {\bf81}(1998), no. 26, 5718-5721.

\bibitem{Terracini1996} S. Terracini, On positive entire solutions to a class of equations with a singular coefficient and critical exponent, {\it Adv. Differential Equations}, {\bf1}(1996), No. 2, 241--264.

\bibitem{Terracini-Verzini2009} S. Terracini, G. Verzini, Multipulse phases in $k$-mixtures of Bose-Einstein condensates, {\it Arch. Ration. Mech. Anal.}, {\bf194}(2009), 717--741.

\bibitem{WT2008} J. C. Wei, T. Weth, Radial solutions and phase separation in a system of two coupled Schr\"{o}dinger equations, {\it Arch. Ration. Mech. Anal.}, {\bf190}(2008), 83--106.


\end{thebibliography}
\end{document}